\documentclass[11pt]{amsart}
\usepackage{amsfonts,amssymb,amsthm}
\usepackage{amsmath,amscd}
\usepackage{pstricks}
\usepackage{pstricks,pst-node}
\usepackage{mathrsfs}
\usepackage[all]{xy}

\DeclareMathAlphabet{\mathpzc}{OT1}{pzc}{m}{it}

\theoremstyle{plain}
\newtheorem{Thm}{Theorem}[section]
\newtheorem{Prop}[Thm]{Proposition}

\newtheorem{Lem}[Thm]{Lemma}
\newtheorem{Coro}[Thm]{Corollary}
\theoremstyle{definition}
\newtheorem{Def}[Thm]{Definition}

\newtheorem{Rem}[Thm]{Remark}

\numberwithin{equation}{section}

\def\bfUC#1{\bfU_{\mathbf v}({#1})}
\newcommand{\fsl}{{\frak{sl}_n}}

\newcommand{\Pn}{\mathcal P(n)}
\newcommand{\Qn}{\mathcal Q(n)}

\newcommand{\sfF}{\mathsf F}
\newcommand{\gl}{\frak{gl}_n}

\newcommand{\glm}{\frak{gl}_n^-}
\newcommand{\msC}{\mathcal G}

\newcommand{\msCc}{{\mathcal G}_{\mbc}}
\newcommand{\dmsC}{\dot{\mathcal G}}
\newcommand{\msCr}{{\mathcal G}_r}

\newcommand{\msPc}{{\mathcal P}_{\mbc}}
\newcommand{\dmsP}{\dot{\mathcal P}}
\newcommand{\msPr}{\mathcal P_r}

\newcommand{\dev}{\dot{\mathsf{ev}}}
\newcommand{\dtau}{\dot{\tau}}
\newcommand{\evr}{\mathsf{ev}_r}
\newcommand{\ev}{\mathsf{ev}}
\newcommand{\evc}{\mathsf{ev}_\mbc}
\newcommand{\ddHa}{\dot{{\mathfrak D}_\vtg}(n)_\sZ}
\newcommand{\ddbfHa}{\dot{\boldsymbol{\mathfrak D}_\vtg}(n)}

\def\su#1{^{#1}}
\newcommand{\Kbfj}{K^{\bfj}}
\newcommand{\afPin}{\Pi_\vtg(n)}

\newcommand{\afWn}{\mathcal W(n)_\sZ}

\newcommand{\afbfWn}{\boldsymbol{\mathcal W}(n)}

\newcommand{\tA}{{}^t\!A}
\newcommand{\tAm}{{}^t\!(A^-)}
\newcommand{\tB}{{}^t\!B}

\newcommand{\UglC}{\bfU_{\mathbf v}({\frak{gl}}_n)}
\newcommand{\ttv}{{v}}
\newcommand{\ttz}{\mathbf{v}}
\newcommand{\afUsl}{{\bfU}(\widehat{\frak{sl}}_n)}

\newcommand{\ms}{\mathscr}

\newcommand{\bin}{\bigcup}

\newcommand{\han}{\subseteq}

\newcommand{\scr}[1]{\mathscr #1}

\newcommand{\bsa}{{\boldsymbol{a}}}
\newcommand{\bsb}{\boldsymbol{b}}

\newcommand{\bsH}{{\boldsymbol{\sH}}}
\newcommand{\bsS}{{\boldsymbol{\sS}}}

\newcommand{\lan}{\langle}
\newcommand{\ran}{\rangle}
\newcommand{\dleb}{\left[\!\!\left[}

\newcommand{\leb}{\left[}
\newcommand{\drib}{\right]\!\!\right]}
\newcommand{\rib}{\right]}

\def\lr#1{\langle #1\rangle}

\def\ddet#1{|\!| #1 |\!|}

\def\ggp#1#2{\left[\kern-3.2pt\left[{#1\atop #2}\right]\kern-3.2pt\right]}

\def\fS{{\frak S}}

\def\fka{{\frak a}}

\def\sfz{{\mathsf z}}

\def\sfF{{\mathsf F}}
\def\sfG{{\mathsf G}}

\newcommand{\afmsD}{{\mathscr D}^\vtg}
\newcommand{\fSr}{\fS_r}
\newcommand{\affSr}{{\fS_{\vtg,r}}}

\newcommand{\afHr}{\widehat{\sH}(r)_\sZ}
\newcommand{\afbfHr}{\widehat{\boldsymbol{\mathcal H}}(r)}

\def\sB{{\mathcal B}}

\def\sH{{\mathcal H}}

\def\sO{{\mathcal O}}
\def\sP{{\mathcal P}}
\def\sQ{{\mathcal Q}}

\def\sS{{\mathcal S}}

\def\sU{{\mathcal U}}

\def\sX{{\mathcal X}}
\def\ttX{{\mathtt x}_n^+}
\def\ttY{{\mathtt x}_n^-}

\def\sZ{{\mathcal Z}}

\newcommand{\vtg}{{\!\vartriangle\!}}

\newcommand{\dbfHa}{{\boldsymbol{\mathfrak D}}_\vtg(n)}

\newcommand{\dbfHap}{{\boldsymbol{\mathfrak D}^+_\vtg}(n)}
\newcommand{\dbfHam}{{\boldsymbol{\mathfrak D}^-_\vtg}(n)}
\newcommand{\dbfHapm}{{\boldsymbol{\mathfrak D}^\pm_\vtg}(n)}

\def\field{{\mathbb F}}

\def\deg{{\rm deg}}

\newcommand{\mbzn}{\mathbb Z^{n}}
\newcommand{\mbnn}{\mathbb N^{n}}

\newcommand{\mnmod}{\!\!\!\mod\!}

\newcommand{\Lanr}{\Lambda(n,r)}
\newcommand{\tiThn}{\widetilde\Th(n)}
\newcommand{\Thn}{\Th(n)}
\newcommand{\Thnr}{\Th(n,r)}

\newcommand{\tri}{\triangle(n)}

\newcommand{\cgl}{{\frak{gl}}_n[t]}

\newcommand{\afsl}{\widehat{\frak{sl}}_n}
\newcommand{\afgl}{\widehat{\frak{gl}}_n}
\newcommand{\afglp}{\widehat{\frak{gl}}_n^+}
\newcommand{\afglm}{\widehat{\frak{gl}}_n^-}
\newcommand{\fh}{\frak{h}}

\newcommand{\bfUgln}{\bfU(\frak{gl}_n)}
\newcommand{\bfUglnp}{\bfU^+(\frak{gl}_n)}
\newcommand{\bfUglnm}{\bfU^-(\frak{gl}_n)}
\newcommand{\bfUglnz}{\bfU^0(\frak{gl}_n)}

\newcommand{\dbfUgln}{\dot{\bfU}(\frak{gl}_n)}
\newcommand{\bfUn}{\bfU(\frak{gl}_n[t])}
\newcommand{\bfUnC}{\bfU_{\mathbf v}(\frak{gl}_n[t])}
\newcommand{\afUslC}{{\bfU}_{\mathbf v}(\widehat{\frak{sl}}_n)}
\newcommand{\afUglC}{{\bfU}_{\mathbf v}(\widehat{\frak{gl}}_n)}
\newcommand{\dbfUn}{\dot{\bfU}(\frak{gl}_n[t])}
\newcommand{\dUn}{\dot U(\frak{gl}_n[t])_\sZ}
\newcommand{\bfUnp}{\bfU_{+}(\frak{gl}_n[t])}
\newcommand{\bfUnm}{\bfU_-(\frak{gl}_n[t])}
\newcommand{\bfUnz}{\bfU_0(\frak{gl}_n[t])}
\newcommand{\Un}{U(\frak{gl}_n[t])_\sZ}

\newcommand{\ddHac}{\dot{{\mathfrak D}_\vtg}(n)_\mbc}
\newcommand{\Unc}{U_\mbc(\frak{gl}_n[t])}
\newcommand{\dUnc}{\dot U_\mbc(\frak{gl}_n[t])}
\newcommand{\barUnc}{\bar U_\mbc(\frak{gl}_n[t])}

\newcommand{\bfUnr}{{\mathscr K}(n,r)}
\newcommand{\Unr}{{\mathscr K}(n,r)_\sZ}
\newcommand{\UpA}{{\mathscr K}(n,pn+\sg(A))_\sZ}

\newcommand{\msbfHr}{{\mathscr H}(r)}
\newcommand{\msHr}{{\mathscr H}(r)_\sZ}

\newcommand{\aftiLn}{{\ti{\mathscr K}(n)_{\sZ_1}}}
\newcommand{\aftiLnt}{{\ti{\mathscr K}(n)_{\sZ_2}}}
\newcommand{\afLn}{{\mathscr K}(n)_\sZ}

\newcommand{\afbfLn}{{\mathscr K}(n)}
\newcommand{\afhbfLn}{{\widehat{{\mathscr K}}(n)}}

\newcommand{\tiXin}{{\ti\Xi}(n)}
\newcommand{\Xin}{{\Xi}(n)}
\newcommand{\Xinpm}{{\Xi}^\pm(n)}
\newcommand{\Xinr}{{\Xi}(n,r)}
\newcommand{\XinpA}{{\Xi}(n,pn+\sg(A))}

\newcommand{\afg}{\nu}
\newcommand{\afFn}{{\mathscr F_\vtg}}

\newcommand{\afE}{E^\vartriangle}

\newcommand{\afSr}{\widehat{\mathcal S}(n,r)_\sZ}
\newcommand{\afSpA}{\widehat{\mathcal S}(n,pn+\sg(A))_\sZ}
\newcommand{\bfSr}{{\boldsymbol{\mathcal S}}(n,r)}

\newcommand{\afbfSr}{\widehat{\boldsymbol{\mathcal S}}(n,r)}

\newcommand{\afal}{{\boldsymbol\alpha}^\vartriangle}

\newcommand{\afbse}{\boldsymbol e^\vartriangle}
\newcommand{\afmbnn}{\mathbb N_\vtg^{n}}
\newcommand{\afmbzn}{\mathbb Z_\vtg^{n}}

\newcommand{\afLa}{\Lambda_\vtg}
\newcommand{\afLanr}{\Lambda_\vtg(n,r)}

\newcommand{\afThn}{\Theta_\vtg(n)}
\newcommand{\aftiThn}{\widetilde\Theta_\vtg(n)}

\newcommand{\afThnpm}{\Theta_\vtg^\pm(n)}
\newcommand{\afThnp}{\Theta_\vtg^+(n)}

\newcommand{\Thnpm}{\Theta^\pm(n)}
\newcommand{\Thnp}{\Theta^+(n)}
\newcommand{\Thnm}{\Theta^-(n)}
\newcommand{\afThnm}{\Theta_\vtg^-(n)}

\newcommand{\afThnr}{\Theta_\vtg(n,r)}

\newcommand{\afMnc}{M_{\vtg,n}(\mathbb C)}

\newcommand{\dzr}{\dot{\zeta}_r}

\newcommand{\ttk}{\mathtt{k}}
\newcommand{\tth}{\mathtt{h}}

\def\leq{\leqslant}\def\geq{\geqslant}
\def\le{\leqslant}\def\ge{\geqslant}

\newcommand{\Th}{\Theta}
\newcommand{\dt}{\delta}

\newcommand{\Dt}{\Delta}
\newcommand{\lm}{\longmapsto}

\newcommand{\og}{\omega}

\newcommand{\vi}{\varphi}

\newcommand{\up}{v}
 
 \newcommand{\ep}{\varepsilon}
  
 \newcommand{\al}{\alpha}
 \newcommand{\bt}{\beta}
 \newcommand{\h}{\widehat}
 \newcommand{\ti}{\widetilde}

\newcommand{\zr}{\zeta_r}

\newcommand{\sg}{\sigma}
\newcommand{\Sg}{\Sigma}
\def\th{\theta}

\newcommand{\p}{\prec}
\newcommand{\pr}{\preccurlyeq}
\newcommand{\bop}{\bigoplus}

\newcommand{\ot}{\otimes}

\newcommand{\bfl}{\mathbf{0}}

\newcommand{\Ar}{{A,r}}
\newcommand{\mpm}{\mathpzc m}
\newcommand{\mpM}{\mathpzc M}

\newcommand{\ol}{\overline}
\newcommand{\lra}{\longrightarrow}
\newcommand{\ra}{\rightarrow}
 \newcommand{\la}{{\lambda}}

 \newcommand{\La}{\Lambda}

 \newcommand{\mbn}{\mathbb N}
 \newcommand{\mbq}{\mathbb Q}
 \newcommand{\mbc}{\mathbb C}
 \newcommand{\mbz}{\mathbb Z}

  \newcommand{\bfd}{{\mathbf{d}}}
 
 \newcommand{\bfj}{{\mathbf{j}}}

\newcommand{\bfa}{{\boldsymbol{a}}}
\newcommand{\bfb}{{\boldsymbol{b}}}

\newcommand{\bfU}{{\mathbf{U}}}
\newcommand{\bfL}{{\mathbf{L}}}

\newcommand{\bfv}{{\mathbf{v}}}

\newcommand{\ga}{{\gamma}}
\newcommand{\Ga}{{\Gamma}}
\newcommand{\bfB}{\mathbf{B}}

\newcommand{\bfS}{\mathbf{S}}

\newcommand{\Aut}{\operatorname{Aut}}
\newcommand{\End}{\operatorname{End}}
\newcommand{\Hom}{\operatorname{Hom}}

\newcommand{\spann}{\operatorname{span}}
\newcommand{\diag}{\operatorname{diag}}

\newcommand{\Lcp}{\bar L}
\newcommand{\Mcp}{\bar M}
\newcommand{\Icp}{\bar I}

\def\ro{\text{\rm ro}}
\def\co{\text{\rm co}}
\def\wh{\widehat}

\newcommand{\bfsg}{{\boldsymbol\sigma}}

\def\afsygr{{\fS_{\vtg,r}}}

\def\ttx{{\tt x}}
\def\ttg{{\tt g}}

\def\ttk{{\tt k}}

\def\hmod{{\text-}{\mathsf{mod}}}

\newcommand{\afUgl}{{\bfU}(\widehat{\frak{gl}}_n)}

\newcommand{\afUglp}{{\bfU}^+(\widehat{\frak{gl}}_n)}
\newcommand{\afUglm}{{\bfU}^-(\widehat{\frak{gl}}_n)}
\newcommand{\afUglz}{{\bfU}^0(\widehat{\frak{gl}}_n)}
\newcommand{\afUglpz}{{\bfU}^{\geq 0}(\widehat{\frak{gl}}_n)}

\newcommand{\bfUnslC}{{\bfU}_{\mathbf v}({\frak{sl}}_n[t])}

\newcommand{\afUslCpz}{{\bfU}_{\mathbf v}^{\geq 0}(\widehat{\frak{sl}}_n)}

\newcommand{\bfP}{\mathbf{P}}
\newcommand{\bfQ}{\mathbf{Q}}

\def\dob{{[}}
\def\dcb{{]}}
\def\dop{{(}}
\def\dcp{{)}}

\newcommand{\dotBn}{\dot{\mathscr B}(n)}
\newcommand{\dotBnc}{\dot{\mathscr B}(n)_{\mathbb C}}
\newcommand{\Bnr}{{\mathscr B}(n,r)}

\begin{document}
\title{Quantum current algebra \(\bfUn\): canonical bases, rigidity, and relation with Yangians}

\author{Qiang Fu}
\address{School of Mathematical Sciences,
Key Laboratory of Intelligent Computing and Applications (Ministry of Education),
Tongji University, Shanghai, 200092, China.}
\email{q.fu@hotmail.com, q.fu@tongji.edu.cn}


\thanks{Supported by the National Natural Science Foundation
of China (12371032, 12431002)}

\begin{abstract}
We introduce a quantum deformation $\mathbf{U}(\mathfrak{gl}_n[t])$ of the universal enveloping algebra of the current algebra $\mathfrak{gl}_n[t]$, realized as a parabolic subalgebra of quantum affine $\mathfrak{gl}_n$. Unlike the Yangian---the standard quantization of the current algebra---our algebra admits a canonical basis.
We give a BLM-type realization of \(\mathbf{U}(\mathfrak{gl}_n[t])\) via certain subalgebras of affine quantum Schur algebras, and then
construct canonical bases for the modified quantum current algebra $\dot{\mathbf{U}}(\mathfrak{gl}_n[t])$ and for its finite dimensional irreducible graded modules. Moreover, we prove a rigidity theorem: every finite dimensional polynomial irreducible module for quantum affine $\mathfrak{gl}_n$ remains irreducible when restricted to $\bfUnC$ (the specialization of $\bfUn$  at a non-root-of-unity complex number $\bfv$); conversely, every finite dimensional polynomial irreducible $\bfUnC$-module extends uniquely to a polynomial irreducible module for quantum affine $\mathfrak{gl}_n$.
Consequently, the finite dimensional polynomial irreducible modules of $\bfUnC$ are in bijection with those of the Yangian $Y(\mathfrak{gl}_n)$.  
This provides the first example of a quantum current algebra with a well-developed canonical basis theory, providing new combinatorial approaches to the representation theory of current algebras.
\end{abstract}
 \sloppy \maketitle

\section{Introduction}

The modified quantum group $\dot\bfU(\frak g)$ is a variant of the quantum group
$\bfU(\frak g)$  associated with a symmetrizable Kac--Moody Lie algebra $\frak{g}$. In their seminal work,
Beilinson--Lusztig--MacPherson (BLM) \cite{BLM} provided a geometric realization of quantum $\mathfrak{gl}_n$ via quantum Schur algebra. In particular, they introduced a topological construction of a canonical basis for $\dot\bfU(\frak{gl}_n)$. Subsequently,
Lusztig \cite{Lubk} extended this construction to arbitrary symmetrizable Kac--Moody Lie algebras $\frak g$, establishing canonical bases for $\dot\bfU(\frak g)$.
These bases form a cornerstone of modern representation theory, offering a powerful combinatorial and algebraic framework for understanding the structure and representations of quantum groups, with far-reaching applications in combinatorics, geometry, and mathematical physics.

A natural question arises: can one construct a canonical basis for Lie algebras that are not of Kac--Moody type? For a finite dimensional simple Lie algebra $\frak{g}$ over $\mbc$, the polynomial current algebra
$\frak{g}[t]=\frak{g}\ot\mbc[t]$  plays a significant role in quantum field theory and is intimately  connected to  various problems in mathematical physics such as the the $X = M$ conjectures (see \cite{AK,FK,Naoi}). Although current algebras are not Kac--Moody algebras, they are deeply intertwined with affine Lie algebras and quantum affine algebras. This raises the question of whether a theory of canonical bases can be developed for current algebras, a problem deeply tied to the quantization of classical current algebras.

The universal enveloping algebra of the current algebra $\mathfrak{g}[t]$ admits two important  quantizations. The first is the  {Yangian} $Y(\mathfrak{g})$, introduced by Drinfeld, which has been extensively studied due to its applications in integrable systems and mathematical physics \cite{BGX}, \cite{CPbk}. However, despite its importance, the Yangian lacks a well-developed theory of canonical bases  that has been instrumental in understanding quantum groups and their representations. The second quantization arises from a different perspective: viewing the current algebra $\mathfrak{g}[t]$ as a parabolic subalgebra of the loop algebra $L(\mathfrak{g}) = \mathfrak{g} \otimes \mathbb{C}[t,t^{-1}]$, one may consider the corresponding parabolic subalgebra of the  {quantum affine algebra} $\mathbf{U}(L(\mathfrak{g}))$.

In this paper, we introduce and systematically study this second quantization, denoted by $\mathbf{U}(\mathfrak{gl}_n[t])$, as a quantum deformation of the universal enveloping algebra of $\mathfrak{gl}_n[t]$. Our main results are fourfold. First, we establish a BLM-type realization of \(\mathbf{U}(\mathfrak{gl}_n[t])\) via certain subalgebras of affine quantum Schur algebras (Theorem \ref{realization of Un}). Building on this realization, we then construct canonical bases for its modified form $\dot{\mathbf{U}}(\mathfrak{gl}_n[t])$ (Theorem \ref{canonical basis for afLn}) and for its finite dimensional irreducible graded modules (Theorem  \ref{canonical basis for graded modules of quantum current algebra}).
Third, we prove a rigidity theorem (Theorem \ref{classification bfUnC}): every finite dimensional polynomial irreducible module for quantum affine $\mathfrak{gl}_n$ remains irreducible when restricted to $\bfUnC$; conversely, every finite dimensional polynomial irreducible $\bfUnC$-module  extends uniquely to a polynomial irreducible module for quantum affine $\mathfrak{gl}_n$.
This demonstrates that the parabolic subalgebra $\bfUnC$ alone already captures the full polynomial representation theory of quantum affine $\mathfrak{gl}_n$. Fourth, we establish a bijection between the finite dimensional polynomial irreducible modules of $\bfUnC$ and those of the Yangian $Y(\mathfrak{gl}_n)$ (Theorem~\ref{thm:bijection}), showing that the two quantizations share the same representation-theoretic classification.

Our approach has several decisive advantages over the Yangian:
\begin{itemize}
\item
We obtain explicit formulas for the comultiplication acting on the generators of the quantum current algebra $\bfUn$ (see Proposition \ref{Hopf subalgebra}), whereas such formulas remain unknown for the Yangian.
\item
The quantum current algebra $\bfUn$ is naturally  a Hopf subalgebra  (in fact, a parabolic subalgebra) of quantum affine $\mathfrak{gl}_n$, whereas the Yangian is \emph{not} a Hopf subalgebra of quantum affine $\mathfrak{gl}_n$.
\item
Most importantly, the quantum current algebra $\bfUn$ inherits the rich combinatorial structure of the quantum affine $\frak{gl}_n$, allowing us to construct canonical bases for both the algebra and its irreducible graded modules.
The construction of a canonical basis for a quantum group relies on the existence of an integral form over  \( \mathbb{Z}[v, v^{-1}] \), where \( v \) is an indeterminate. For the Yangian,  no such integral form exists, and there is no analogue of the ``root of unity'' phenomenon for Yangians. Moreover, the absence of an embedding into a quantum affine algebra as a Hopf subalgebra suggests that such a basis may be intrinsically unavailable.
\end{itemize}

In addition to these advantages, we clarify the relation between $\bfUnC$ and the Yangian $Y(\frak{gl}_n)$ by establishing a bijection between their classes of finite-dimensional polynomial irreducible modules (Theorem~\ref{thm:bijection}). This shows that, despite being different quantizations, the two algebras share identical representation-theoretic data, and the advantage of our algebra lies in its integral form and canonical basis. Our work thus provides a new combinatorial perspective on quantum current algebras.

We organize this paper as follows.  In \S2 we recall necessary background on quantum affine $\frak{gl}_n$
and the affine quantum Schur algebras $\afbfSr$.
In \S3, we introduce the quantum current algebra $\bfUn$, establish its Hopf algebra structure, and provide a presentation of $\bfUn$ by generators and relations.
In \S4,  we construct  certain subalgebras
$\bfUnr$ of affine quantum Schur algebras and establish a Schur--Weyl reciprocity  between $\bfUn$ and these algebras. We further show that, for $n\geq r$, the categories
$\bfUnr\hmod$ and $\msbfHr\hmod$ are equivalent, where $\msbfHr$ is a subalgebra of the extended affine Hecke algebra.
Using the algebras $\bfUnr$, we give a BLM realization of $\bfUn$ in Theorem \ref{realization of Un}. Let $\bfB(n,r)$ be the canonical basis of the affine quantum Schur algebra $\afbfSr$ defined by Lusztig \cite{Lu99}. In Proposition \ref{canonical basis for Unr} we prove that
the set $\Bnr:=\bfB(n,r)\cap\bfUnr$ forms a basis of $\bfUnr$. Furthermore, we prove in Theorem \ref{canonical basis for afLn} that these bases $\Bnr$  can be ``glued together'' to form a canonical basis, denoted by $\dotBn$, of the modified quantum current algebra $\dbfUn$.
In addition we show that the finite dimensional irreducible graded $\cgl$-module $\bar L(\la,m)$ admits a quantum deformation: there exists a finite dimensional irreducible  graded $\bfUn$-module $L_v(\la,m)$ such that $L_v(\la,m)$ specializes to $\bar L(\la,m)$ as $v$ tends to $1$.
Let $w_{\la,m}$ be a highest weight vector of $L_v(\la,m)$. We show in Theorem \ref{canonical basis for graded modules of quantum current algebra} that
the set $\dotBn w_{\la,m}-\{0\}$ is a $\mbq(v)$-basis of $L_v(\la,m)$.
Specializing $v$ to $1$, we obtain canonical bases for the modified enveloping algebra $\dot\sU(\cgl)$ and for the irreducible modules $\bar L(\la,m)$ (see Theorem \ref{canonical basis for current algebra} and \ref{canonical basis for bar L(la,m)}). Finally, in \S9 we investigate the relationship between representations of
$\bfUnC$ and those of $\afUglC$, establishing a rigidity theorem that highlights the close connection between their polynomial representation theories, and further relate $\bfUnC$ to the Yangian $Y(\mathfrak{gl}_n)$ in \S9.5.

We now fix some general notation. Let $\mbq(\up)$ be the fraction field
of $\sZ=\mbz[\up,\up^{-1}]$, where $\up$ is an indeterminate. For integers $N,t$ with $t\geq 0$, let
$$
\dleb{N\atop t}\drib=\prod\limits_{1\leq
i\leq t}\frac{\up^{2(N-i+1)}-1}{\up^{2i}-1}
\,\,\text{ and }\,\,\leb{N\atop t}\rib=\up^{-t(N-t)}\dleb{N\atop t}\drib.
$$
For a positive integer $n$, let
 $\afMnc$  be the set of all $\mbz\times\mbz$ complex matrices
$A=(a_{i,j})_{i,j\in\mbz}$ with $a_{i,j}\in\mbc$ such that
\begin{itemize}
\item[(a)]$a_{i,j}=a_{i+n,j+n}$ for $i,j\in\mbz$, and \item[(b)] for
every $i\in\mbz$, the set $\{j\in\mbz\mid a_{i,j}\not=0\}$ is finite.
\end{itemize}
Furthermore let $\afThn=\{A\in\afMnc\mid a_{i,j}\in\mbn,\,\forall i,j\}$ and
$\aftiThn=\{A\in\afMnc\mid a_{i,j}\in\mbn,\,a_{i,i}\in\mbz,\,\forall i\not=j\}.$

\section{The quantum affine $\frak{gl}_n$ and affine quantum Schur algebras}
\subsection{The quantum affine $\frak{gl}_n$}\label{quantum affine gln}

For a positive integer $n$, let $\frak{gl}_n$ be the complex general linear Lie algebra, and let
$\afgl:={\frak{gl}_n}\ot\mbc[t,t^{-1}]$
be the loop algebra of $\frak{gl}_n$.
The set $\{E_{i,j}\ot t^m\mid 1\le i,j\le n,\,m\in\mbz\}$ forms
a $\mbc$-basis of $\afgl$,
where $E_{i,j}$ is the $n\times n$ matrix $(\delta_{k,i}\delta_{j,l})_{1\le
k,l\le n}$. Clearly we have
the following triangular decomposition
\begin{equation}\label{tri decomposition}
\afgl=\afglp\oplus\fh\oplus\afglm
\end{equation}
where $\afglp=\spann\{E_{i,j}\ot t^s\mid 1\leq i,j\leq n,\,s>0\}\oplus\spann\{E_{i,j}\mid 1\leq i<j\leq n\}$, $\fh=\spann\{E_{i,i}\mid 1\leq i\leq n\}$, and
$\afglm=\spann\{E_{i,j}\ot t^s\mid 1\leq i,j\leq n,\,s<0\}\oplus\spann\{E_{i,j}\mid 1\leq j<i\leq n\}$.

For $i,j\in\mbz$, let $\afE_{i,j}\in\afMnc$ be the matrix
$(e^{i,j}_{k,l})_{k,l\in\mbz}$ defined by
\begin{equation*}e_{k,l}^{i,j}=
\begin{cases}1,&\text{if $k=i+sn,l=j+sn$ for some $s\in\mbz$;}\\
0,&\text{otherwise}.\end{cases}
\end{equation*}
Clearly the map
$$\afMnc\lra\afgl,\,\,\,\afE_{i,j+ln}\longmapsto E_{i,j}\ot t^l, \,1\le i,j\le n,l\in\mbz $$
is a Lie algebra isomorphism. We will identify the loop algebra
$\afgl$ with $\afMnc$ in the sequel.

Let $C=(c_{i,j})$ be the Cartan matrix of affine type $A_{n-1}$. We recall the Drinfeld's new realization of quantum affine $\frak{gl}_n$ as follows.

\begin{Def}\label{QLA}
 The {\it quantum loop algebra} $\afUgl$  (or {\it
quantum affine $\mathfrak {gl}_n$})  is the $\mbq(v)$-algebra generated by $\ttx^\pm_{i,s}$
($1\leq i<n$, $s\in\mbz$), $\ttk_i^{\pm1}$ and $\ttg_{i,t}$ ($1\leq
i\leq n$, $t\in\mbz\backslash\{0\}$) with the following relations:
\begin{itemize}
 \item[(QLA1)] $\ttk_i\ttk_i^{-1}=1=\ttk_i^{-1}\ttk_i,\,\;[\ttk_i,\ttk_j]=0$,
 \item[(QLA2)]
 $\ttk_i\ttx^\pm_{j,s}=\ttv^{\pm(\dt_{i,j}-\dt_{i,j+1})}\ttx^\pm_{j,s}\ttk_i,\;
               [\ttk_i,\ttg_{j,s}]=0$,
 \item[(QLA3)] $[\ttg_{i,s},\ttx^\pm_{j,t}]
               =\begin{cases}0,\;\;&\text{if $i\not=j,\,j+1$};\\
                  \pm \ttv^{-js}\frac{[s]}{s}\ttx^\pm_{j,s+t},\;\;\;&\text{if $i=j$};\\
                  \mp \ttv^{-js}\frac{[s]}{s}\ttx_{j,s+t}^\pm,\;\;\;&\text{if $i=j+1$,}
                \end{cases}$
 \item[(QLA4)] $[\ttg_{i,s},\ttg_{j,t}]=0$,
 \item[(QLA5)]
 $[\ttx_{i,s}^+,\ttx_{j,t}^-]=\dt_{i,j}\frac{\phi^+_{i,s+t}
 -\phi^-_{i,s+t}}{\ttv-\ttv^{-1}}$,
 \item[(QLA6)] $\ttx^\pm_{i,s}\ttx^\pm_{j,t}=\ttx^\pm_{j,t}\ttx^\pm_{i,s}$, for $|i-j|>1$, and
 $[\ttx_{i,s+1}^\pm,\ttx^\pm_{j,t}]_{\ttv^{\pm c_{ij}}}
               =-[\ttx_{j,t+1}^\pm,\ttx^\pm_{i,s}]_{\ttv^{\pm c_{ij}}}$,
 \item[(QLA7)]
 $[\ttx_{i,s}^\pm,[\ttx^\pm_{j,t},\ttx^\pm_{i,p}]_\ttv]_\ttv
 =-[\ttx_{i,p}^\pm,[\ttx^\pm_{j,t},\ttx^\pm_{i,s}]_\ttv]_\ttv\;$ for
 $|i-j|=1$,
\end{itemize}
 where $[x,y]_a=xy-ayx$, $[s]=\frac{v^s-v^{-s}}{v-v^{-1}}$  and $\phi_{i,s}^\pm$ are defined via the
 generating functions in indeterminate $u$ by
$$\Phi_i^\pm(u):={\ti\ttk}_i^{\pm 1}
\exp\bigl(\pm(\ttv-\ttv^{-1})\sum_{m\geq 1}\tth_{i,\pm m}u^{\pm
m}\bigr)=\sum_{s\geq 0}\phi_{i,\pm s}^\pm u^{\pm s}$$ with
$\ti\ttk_i=\ttk_i/\ttk_{i+1}$ ($\ttk_{n+1}=\ttk_1$) and $\tth_{i,\pm
m}=\ttv^{\pm(i-1)m}\ttg_{i,\pm m}-\ttv^{\pm(i+1)m}\ttg_{i+1,\pm
m}\,(1\leq i<n).$
\end{Def}
Let $\afUsl$ be
the subalgebra of $\afUgl$ generated by all $\ttx^\pm_{i,s}$, $\ti\ttk_i^{\pm1}$ and $\tth_{i,t}$ for $1\leq i<n$, $s\in\mbz$ and $t\in\mbz\backslash\{0\}$.
For $s\geq 1$ and $1\leq j<n$ let
\begin{equation}\label{ths}
\th_{\pm s} =\mp\frac1{[s]}(\ttg_{1,\pm s}+\cdots+\ttg_{n,\pm s}),\quad
\ttx_j^+=\ttx_{j,0}^+,\quad\ttx_j^-=\ttx_{j,0}^-
\end{equation}
Furthermore let
\begin{equation}\label{ttX}
\begin{split}
\ttX& =v[\ttx_{n-1,0}^-,[\ttx_{n-2,0}^-,\cdots,
[\ttx_{2,0}^-,\ttx_{1,1}^-]_{\ttv^{-1}}\cdots
]_{\ttv^{-1}}]_{\ttv^{-1}}\ti\ttk_n,\\
\ttY& =v^{-1}\ti\ttk_n^{-1}[\cdots[[\ttx_{1,-1}^+,\ttx_{2,0}^+]_\ttv,\ttx_{3,0}^+]_\ttv,
 \cdots,\ttx_{n-1,0}^+]_\ttv.
 \end{split}
\end{equation}
The algebra $\afUgl$ is generated by the elements  $\ttx_{i}^+$, $\ttx_{i}^-$, $\ttk_i^{\pm 1}$ and $\th_{\pm s}$ for $1\leq i\leq n$ and $s\geq 1$.
The following result was given in \cite[Cor. 2.3.5]{DDF}.
\begin{Prop}\label{Hopf algebra}
The algebra $\afUgl$ is a Hopf algebra with
comultiplication $\Dt$, counit $\ep$, and antipode $\sg$ defined
by
\begin{eqnarray*}\label{Hopf}
&\Delta(\ttx_i^+)=\ttx_i^+\otimes\ti \ttk_i+1\otimes
\ttx_i^+,\quad\Delta(\ttx_i^-)=\ttx_i^-\otimes
1+\ti \ttk_i^{-1}\otimes \ttx_i^-,&\\
&\Delta(\ttk^{\pm 1}_i)=\ttk^{\pm 1}_i\otimes \ttk^{\pm 1}_i,\quad
\Delta(\th_{\pm s})=\th_{\pm s}\otimes1+1\otimes
\th_{\pm s};&\\
&\ep(\ttx_i^+)=\ep(\ttx_i^-)=0=\ep(\th_{\pm s}),
\quad \ep(\ttk_i)=1;&\\
&\sg(\ttx_i^+)=-\ttx_i^+\ti\ttk_i^{-1},\quad \sg(\ttx_i^-)=-\ti\ttk_i\ttx_i^-,\quad
\sg(\ttk^{\pm 1}_i)=\ttk^{\mp 1}_i,&\\
&\text{and}\;\;\sg(\th_{\pm s})=-\th_{\pm s},&
\end{eqnarray*}
where $1\leq i\leq n$ and $s\in \mbz^+$.
\end{Prop}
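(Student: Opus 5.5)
The plan is to transport the Hopf structure from a presentation of $\afUgl$ in which these formulas are already known, rather than to verify the axioms directly on Drinfeld's new realization. Two presentations are useful here. The first is the Ringel--Hall / double Ringel--Hall algebra presentation of quantum affine $\frak{gl}_n$. It is generated by the Chevalley-type generators $E_i,F_i,K_i^{\pm1}$ ($i\in\mbz/n\mbz$) together with central-type elements $\mathsf z^\pm_s$ coming from the centre of the Hall algebra of the cyclic quiver. The second is the identification of that double Ringel--Hall algebra with $\afUgl$ via Beck's isomorphism. Under this identification, $\ttx_i^\pm$ ($1\le i<n$) correspond to $E_i,F_i$, the elements $\ttX,\ttY$ of (\ref{ttX}) correspond to $E_n,F_n$ (up to the $\ti\ttk_n$ factors), and $\th_{\pm s}$ correspond, up to normalization, to the central generators $\mathsf z^\pm_s$.

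The steps, in order, are as follows.

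\emph{Step 1.} Recall that the double Ringel--Hall algebra $\mathfrak D_\vtg(n)$ is a Hopf algebra (Xiao's extension of Green's comultiplication). On generators its comultiplication is $\Dt(E_i)=E_i\ot\ti K_i+1\ot E_i$ and $\Dt(F_i)=F_i\ot1+\ti K_i^{-1}\ot F_i$. The central elements $\mathsf z^\pm_s$ are primitive, since they are built from Hall polynomials of homogeneous semisimple modules and the Green coproduct of these central generators is primitive.

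\emph{Step 2.} Use the isomorphism $\mathfrak D_\vtg(n)\cong\afUgl$, which sends $\mathsf z^\pm_s\mapsto\th_{\pm s}$ with the normalization $\th_{\pm s}=\mp\frac1{[s]}\sum_i\ttg_{i,\pm s}$ fixed in (\ref{ths}). Transport the structure maps through it. The formulas listed for $\Dt$, $\ep$, $\sg$ then follow immediately on the generators $\ttx_i^\pm$, $\ttk_i^{\pm1}$, $\th_{\pm s}$. By the generation statement just before the proposition, these elements generate $\afUgl$, so $\Dt,\ep,\sg$ are determined by these values.

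\emph{Step 3.} Check that the antipode values are forced by the Hopf axioms. For example, $m(\sg\ot1)\Dt(\ttx_i^+)=\sg(\ttx_i^+)\ti\ttk_i+\ttx_i^+=0$ gives $\sg(\ttx_i^+)=-\ttx_i^+\ti\ttk_i^{-1}$. Primitivity of $\th_{\pm s}$ gives $\sg(\th_{\pm s})=-\th_{\pm s}$.

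The main obstacle is the matching in Step 2: one must show that the central elements correspond exactly to $\th_{\pm s}$, including the scalar $\mp 1/[s]$. I would try this first by computing the action of both sides on the natural representation (the affine tensor space, or $\Omega$). There $\sum_i\ttg_{i,s}$ acts by a multiple of $\sum_j t^{s}$ on each tensor factor, and this compares directly with the action of $\mathsf z_s$. Faithfulness of tensor space over all $r$ then pins down the identification. Alternatively, once Step 2's identification is accepted, the proposition is simply \cite[Cor. 2.3.5]{DDF}.
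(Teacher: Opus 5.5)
Your proposal is correct and is essentially the paper's route: the paper simply quotes \cite[Cor.~2.3.5]{DDF}, which amounts to the Hopf structure of the double Ringel--Hall algebra read through the isomorphism $\mathsf F$ of Proposition~\ref{DDFIsoThm}, with the $\sfz^\pm_s$ primitive (a consequence of Hubery's results on the central elements $c^\pm_m$, not of any semisimplicity). One correction: $\mathsf F$ matches $E_n,F_n$ with $\ttx^\pm_n$ exactly, since the $\ti\ttk_n^{\pm1}$ factors are already built into \eqref{ttX}, but it sends $\sfz^\pm_s$ to $\mp s\ttv^{\pm s}\th_{\pm s}$ rather than to $\th_{\pm s}$; because primitivity, $\ep(\th_{\pm s})=0$ and $\sg(\th_{\pm s})=-\th_{\pm s}$ are unaffected by a nonzero scalar, your ``main obstacle'' of pinning down the factor $\mp1/[s]$ does not need to be addressed.
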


Let $\afUglp$ (resp. $\afUglm$) be the subalgebra of $\afUgl$ generated by the elements $\ttx_{i}^+$, $\th_s$ (resp. $\ttx_{i}^-$, $\th_{-s}$) for $1\leq i\leq n$ and $s\geq 1$. Furthermore let $\afUglz$ be the subalgebra of $\afUgl$ generated by the elements $\ttk_i$ for $1\leq i\leq n$. Then we have $\afUgl\cong\afUglp\ot\afUglz\ot\afUglm$.

Let $\bfUgln$ be the subalgebra of $\afUgl$ generated by the elements
$\ttx_{j}^+$, $\ttx_{j}^-$ and $\ttk_i^{\pm 1}$ for $1\leq j<n$ and $1\leq i\leq n$.
Then $\bfUgln$ is the quantized enveloping algebra of $\frak{gl}_n$.
Let $\bfUglnp=\bfUgln\cap\afUglp$, $\bfUglnm=\bfUgln\cap\afUglm$ and
$\bfUglnz=\afUglz$. Then we have $\bfUgln\cong\bfUglnp\ot\bfUglnz\ot\bfUglnm$.

\subsection{The double Ringel--Hall algebra $\dbfHa$}

Let $\tri$ ($n\geq 1$) be
the quiver of type $\ti A_{n-1}$
with vertex set $I=\mbz/n\mbz=\{1,2,\ldots,n\}$ and arrow set
$\{i\to i+1\mid i\in I\}$. Note that $\ti A_{0}$ has one vertex and one loop.

Let $\field_q$ be a finite field. For $i\in I$ let $S_i$
denote the irreducible representation of $\tri$ over $\field_q$ with $(S_i)_i=\field_q$ and $(S_i)_k=0$ for $i\neq k$.
Let
\begin{equation}\label{afThnp}
\begin{split}
\afThnp&=\{A\in\afThn\mid a_{i,j}=0\text{ for }i\geq j\},
\end{split}
\end{equation}
For $A=(a_{i,j})\in\afThnp$, let
\begin{equation}\label{isoclass}
M(A) =M_{\field_q}(A)=\bop_{1\leq i\leq n,i<j}a_{i,j}M^{i,j},
\end{equation}
where $M^{i,j}$ is the unique indecomposable nilpotent representation for $\tri$ of dimension $j-i$ with top $S_i$.

Let $\afmbzn=\{(\la_i)_{i\in\mbz}\mid
\la_i\in\mbz,\,\la_i=\la_{i-n}\ \text{for}\ i\in\mbz\}$ and $\afmbnn=\{(\la_i)_{i\in\mbz}\in \afmbzn\mid \la_i\ge0\text{ for  }i\in\mbz\}.$ We will identify $\afmbzn$ with $\mbz I=\mbz^n$ via the  natural bijection $\flat:\afmbzn\lra\mbz^n$
defined by sending $\bfj$ to $\flat(\bfj)=(j_1,\cdots,j_n).$
Define an order relation $\leq$ on $\afmbzn$ by
\begin{equation*}\label{order on afmbzn}
\la\leq\mu  \iff\la_i\leq \mu_i\,(1\leq i\leq n).
\end{equation*}
We say that $\la<\mu$ if $\la\leq\mu$ and $\la\not=\mu$.

For $\la\in\afmbnn$ let $$S_\la=\sum_{1\leq i\leq n}\la_i\afE_{i,i+1}\in\afThnp.$$
Then $M(S_\la)=\oplus_{1\leq i\leq n}\la_i S_i$ is the semisimple representation of the cyclic quiver $\tri$.

By \cite{Ri93}, for $A,B,C\in\afThnp$,
there exists $\vi^{C}_{A,B}\in\mbz[\up^2]$  such
that, for any finite field $\field_q$,
$\vi^{C}_{A,B}|_{\up^2=q}$ is equal to the number of submodules $N$ of
$M_{\field_q}(C)$ satisfying $N\cong M_{\field_q}(B)$ and $M_{\field_q}(C)/N\cong M_{\field_q}(A)$.
For each $A=(a_{i,j})\in \afThnp$, there is a polynomial
$\fka_A=\fka_A(\up^2)\in\sZ$ in $\up^2$ such that, for each finite
field $\field$ with $q$ elements,
$\fka_A|_{\up^2=q}=|\Aut(M_{\field}(A))|$ (cf. \cite[Cor.~2.1.1]{Peng}).

For $A\in\afThnp$ let $\bfd(A)\in\mbz I=\mbz^n$ be the dimension vector of $M(A)$.
For $\bfa,\bfb \in\mbz I$, the  Euler form  associated with $\tri$ is the bilinear form $\lan-,-\ran:\mbz I\times\mbz I\ra\mbz$ defined by
\begin{equation*}
\lan \bfa,\bfb\ran=\sum_{i\in I}a_ib_i-\sum_{i\in I}a_ib_{i+1}.
\end{equation*}

Let $\dbfHa$ be the double Ringel--Hall algebra of $\tri$ over $\mbq(v)$
(cf. \cite[(2.1.3.2)]{DDF}). By \cite[2.6.1, 2.6.3(5) and 3.9.2]{DDF} we obtain the following.
\begin{Lem} \label{presentation-dbfHa}
The algebra $\dbfHa$ is the algebra over $\mbq(\up)$ generated by
$u_A^+$, $K_{i}^{\pm 1}$, $u_A^-$ $(A\in\afThnp,\,i\in I)$ subject to
the following relations:
\begin{itemize}
\item[(1)]
$K_iK_j=K_jK_i$, $K_iK_i^{-1}=K_i^{-1}K_i=1$, $u_0^+=u_0^-=1$;
\item[(2)]
$K\su{\bfj} u_A^+=\up^{\lr{\bfd(A),\bfj}}u_A^+K\su\bfj$,
$u_A^-K\su\bfj=\up^{\lr{\bfd(A),\bfj}}K\su\bfj u_A^-$, where
$K\su\bfj=K_1^{j_1}\cdots K_n^{j_n}$ for $\bfj\in\afmbzn$;
\item[(3)]
$u_A^+u_B^+=\sum_{C\in\afThnp}\up^{\lan \bfd(A),\bfd(B)\ran}\vi_{A,B}^C u_C^+$;
\item[(4)]
$u_A^-u_B^-=\sum_{C\in\afThnp}\up^{\lan \bfd(B),\bfd(A)\ran}\vi_{B,A}^C u_C^-$;
\item[(5)] {\rm commutator relations}:  for all $\la,\mu\in\afmbnn$,
\begin{equation*}
\aligned
\up^{\lan\mu,\mu\ran}&\sum_{\al,\bt\in\afmbnn\atop\la-\al=\mu-\bt\geq 0}\vi_{\la,\mu}^{\al,\bt}
\up^{\lan \bt,\la+\mu-\bt\ran}\ti K\su{\mu-\bt}u_{S_\bt}^-u_{S_\al}^+
=\up^{\lan\mu,\la\ran}\sum_{\al,\bt\in\afmbnn\atop\la-\al=\mu-\bt\geq 0}
{\vi_{\la,\mu}^{\al,\bt}}\up^{\lan \mu-\bt,\al\ran+\lan \mu,\bt\ran}
\ti K\su{\bt-\mu}u_{S_\al}^+u_{S_\bt}^-,\endaligned
\end{equation*}
\end{itemize}
where $\ti K\su\nu :=(\ti K_1)^{\nu_1}\cdots(\ti K_n)^{\nu_n}$ with $\ti K_i=K_iK_{i+1}^{-1}$ for $\nu\in\afmbzn$, and
\begin{equation*}
\vi_{\la,\mu}^{\al,\bt}=\up^{2\sum_{1\leq i\leq n}(\la_i-\al_i)(1-\al_i-\bt_i)}\prod_{1\leq i\leq n\atop 0\leq s\leq\la_i-\al_i-1}\frac{1}{\up^{2(\la_i-\al_i)}-\up^{2s}}.
\end{equation*}
\end{Lem}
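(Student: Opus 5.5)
This lemma is a presentation result recalled from \cite{DDF}: the paper says it follows from \cite[2.6.1, 2.6.3(5) and 3.9.2]{DDF}. So the plan is to assemble it from those results rather than prove it from scratch. The approach is to start from the definition of $\dbfHa$ as the Drinfeld double of the extended Ringel--Hall algebras $\bsH^{\geq0}_\vtg(n)$ and $\bsH^{\leq0}_\vtg(n)$ with respect to the Green skew-Hopf pairing, as in \cite[(2.1.3.2)]{DDF}.

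The steps, in order:
\begin{enumerate}
\item[(a)] Relations (1)--(4) are built into the double. The positive and negative parts are Ringel--Hall algebras twisted by the Euler form. Their multiplication is given by the Hall polynomials $\vi^C_{A,B}$ of \cite{Ri93}, and the torus $K\su\bfj$ acts by the characters $\up^{\lr{\bfd(A),\bfj}}$; this is the content of \cite[2.6.1]{DDF}.
\item[(b)] The double construction imposes mixed relations $\sum (a_{(1)},b_{(2)})\,b_{(1)}a_{(2)}=\sum (a_{(2)},b_{(1)})\,a_{(1)}b_{(2)}$ for all $a$ in the positive part and $b$ in the negative part. A priori there is one such relation for every pair.
\item[(c)] By \cite[3.9.2]{DDF}, the Hall algebra is generated by the semisimple elements $u^\pm_{S_\la}$ together with the central elements coming from the Hall algebra's center. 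So it suffices to impose the mixed relations for $a=u^+_{S_\la}$ and $b=u^-_{S_\mu}$.
\item[(d)] Compute Green's comultiplication on $u^+_{S_\la}$. Its components are sums of $u^+_{S_\al}\ot u^+_{S_\bt}$ times $\ti K$-factors, because a submodule or quotient of a semisimple module is again semisimple. The Hopf pairing of two semisimple elements is an explicit multiple of $1/\fka_{S_\la}$, and $\fka_{S_\la}=\prod_i|GL_{\la_i}(\field_q)|$.
\item[(e)] Substitute these into the mixed relation. This yields exactly the commutator relation (5) with the coefficient $\vi^{\al,\bt}_{\la,\mu}$, which is \cite[2.6.3(5)]{DDF}.
\end{enumerate}

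The main obstacle is step (c). One must justify that the mixed relations for all pairs $(a,b)$ follow from those for semisimple generators. The cleanest route is the general fact that in a Drinfeld double, if the mixed relations hold for algebra generators of each side, they hold for all elements, since the relation set is closed under products by coassociativity. The remaining work is tracking the powers of $\up$ and the product $\prod_{0\le s\le\la_i-\al_i-1}(\up^{2(\la_i-\al_i)}-\up^{2s})^{-1}$, which comes from the order of $GL_{\la_i-\al_i}$. I would cite these computations from \cite{DDF} rather than redo them.
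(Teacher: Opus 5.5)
Your proposal is correct and takes the paper's approach: the paper gives no argument beyond assembling the lemma from \cite[2.6.1, 2.6.3(5), 3.9.2]{DDF}, and your steps (a)--(e) spell out what those citations contain. One point in (c) should be stated more carefully: the reduction works because the semisimple elements $u^\pm_{S_\la}$ alone generate each half, so no central elements are needed (cf.\ Proposition~\ref{tri Hall}), and because, as you note in (d), their span together with the torus is closed under Green's comultiplication --- generation by itself would not make the set of pairs satisfying the cross relation closed under products.
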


Let $\dbfHap$ (resp., $\dbfHam$) be the $\mbq(\up)$-subalgebra
of $\dbfHa$ generated by $u_A^+$ (resp., $u^-_A$) for all
$A\in\afThnp$.
For $m\geq 1$, let
\begin{equation}\label{central elements-pm}
c_m^\pm=(-1)^m \up^{-2nm}\sum_{A}(-1)^{{\rm dim}\,{\rm
End}(M(A))}\fka_A u^\pm_A \in\dbfHa ,\end{equation}
where the sum is taken over all $A\in \afThnp$ such that $\bfd (A)=m\dt$ and
${\rm soc}\,M(A)$ is square-free. By \cite{Hub}, the elements $c^+_m$
and $c^-_m$ are central in $\dbfHap$ and $\dbfHam$,
respectively.
Following \cite[\S4]{Hub}, let $C^\pm(u)=1+\sum_{m\geq
1}c^\pm_mu^m$ be the generating functions in indeterminate $u$ associated with the sequence $\{c_m^\pm\}_{m\geq1}$
and define elements $x_m^\pm$ by
$$X^\pm(u)=\sum_{m\geq 1}x_m^\pm u^{m-1}=\frac{\rm d}{{\rm d}u}{\rm log\,}C^\pm(u)=\frac1{C^\pm(u)}\frac {\rm d}{{\rm d}u}C^\pm(u).$$
We further set
$$\sfz_m^\pm=\frac{\up^{nm}}{\up^m-\up^{-m}}x_m^\pm\in\dbfHapm,\;\;\;\text{for $m\geq
1$}.$$

For $i\in I$ let $$E_i=u_{\afE_{i,i+1}}^+,\quad F_i=u_{\afE_{i,i+1}}^-.$$
Beck \cite{Be} proved that $\afUsl$ is isomorphic to   the subalgebra of $\dbfHa$ generated by $E_i$,  $F_i$,   $\ti K_i^{\pm 1}$ for $i\in I$.
The following result was given in {\cite[Prop. 4.4.1]{DDF}}.
\begin{Prop}  \label{DDFIsoThm}
There is a Hopf  algebra isomorphism
 $$\mathsf F:\dbfHa\lra
\afUgl$$ such that
$$\aligned
&K_i^{\pm1}\lm\ttk_i^{\pm1},\;\quad
E_i\lm \ttx^+_{i},\;\quad
F_i\lm \ttx^-_{i}
\;(1\leq i\leq n),\;\\
& \quad
\sfz^\pm_s\lm
\mp s\ttv^{\pm s}\th_{\pm
s}\;(s\geq 1),
\endaligned$$
where $\th_{\pm s}$ and $\ttx_i^\pm$ are defined in \eqref{ths} and \eqref{ttX}.
\end{Prop}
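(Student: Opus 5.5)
The plan is to build $\mathsf F$ on generators, check that it respects the defining relations of both algebras, and then prove bijectivity and compatibility with the Hopf structures separately. The central structural input on the Hall side is the Schiffmann--Hubery decomposition of each half:
$$\dbfHapm\cong \mathfrak C^\pm_\vtg(n)\otimes\mbq(\up)[\sfz^\pm_1,\sfz^\pm_2,\ldots].$$
Here $\mathfrak C^\pm_\vtg(n)$ is the composition subalgebra generated by the $E_i$ (resp. $F_i$). The elements $\sfz^\pm_m$, which are obtained from the Hubery central elements $c^\pm_m$ through the logarithmic derivative, are central in $\dbfHapm$ and algebraically independent. I will assume this decomposition, together with a matching triangular decomposition $\dbfHa\cong\dbfHap\otimes\mbq(\up)[K_i^{\pm1}]\otimes\dbfHam$.

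First I define $\mathsf F$ on the generators $E_i,F_i,K_i^{\pm1},\sfz^\pm_s$ by the stated formulas. To see that it extends to an algebra homomorphism, I pass to the generating set of Lemma \ref{presentation-dbfHa}. I then check three groups of relations.
\begin{itemize}
\item[(a)] On the subalgebra generated by $E_i,F_i,\ti K_i^{\pm1}$, all relations already hold in $\afUsl$ by Beck's isomorphism. The $K_i$ and $\ttk_i$ relations of type (2) and (QLA2) match directly.
\item[(b)] The images $\th_{\pm s}$ are central in $\afUglpm$. This follows from (QLA3): summing over $i$ the commutators $[\ttg_{i,s},\ttx^\pm_{j,t}]$, the contributions from $i=j$ and $i=j+1$ cancel. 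The same relations give $[\ttk_i,\th_{\pm s}]=0$.
\item[(c)] The mixed relations between $\sfz^+_s$ and $\sfz^-_t$ are governed by the Hopf pairing on the Drinfeld double. Since each $\sfz^\pm_m$ is primitive up to a $K$-factor that is trivial in degree $m\delta$, the $\sfz^+_s$ commute with $\dbfHam$ and vice versa. On the loop side this matches $[\th_s,\th_{-t}]=0$ from (QLA4) and the centrality in (b).
\end{itemize}
Checks (b) and (c) show that the commutator relations (5), written in terms of the generators, are sent to valid identities.

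Next, for bijectivity, I construct the inverse map. Define $\mathsf G:\afUgl\to\dbfHa$ on the generators $\ttx^\pm_i,\ttk_i^{\pm1},\th_{\pm s}$ listed before Proposition \ref{Hopf algebra}. The relations of Definition \ref{QLA} are checked through the same decomposition: the $\afUsl$-relations come from Beck, and the remaining relations (QLA3) and (QLA4) involving $\ttg_{i,s}$ are re-expressed through $\th_{\pm s}$ and $\tth_{i,\pm m}$. The composites $\mathsf G\circ\mathsf F$ and $\mathsf F\circ\mathsf G$ are identity on generators, so each is the identity. If constructing $\mathsf G$ directly proves awkward, the fallback is to show $\mathsf F$ is surjective, since the images generate, and injective by comparing triangular decompositions. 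The latter means matching the graded characters of $\dbfHap$ and $\afUglp$, whose factorization is (composition part) $\otimes$ (polynomial algebra in one central generator per degree $m\delta$).

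Finally, for the Hopf structure, I compare Green's comultiplication on $\dbfHa$ with the $\Dt$ of Proposition \ref{Hopf algebra} on generators. For $E_i,F_i,K_i$ this is the standard Green formula. For $\sfz^\pm_s$, I use Hubery's computation that $X^\pm(u)$, the logarithmic derivative of the group-like series $C^\pm(u)$, has primitive coefficients. This gives $\Dt(\sfz^\pm_s)=\sfz^\pm_s\ot1+1\ot\sfz^\pm_s$, matching $\Dt(\th_{\pm s})$. Counit and antipode then agree automatically.

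I expect the main obstacle to be the normalization and identification of the central part. Concretely, one must prove that $\mathsf F(\sfz^\pm_s)=\mp s\ttv^{\pm s}\th_{\pm s}$ is forced, that is, that this choice is consistent with the action of both elements on the natural representation. The approach I would try is to compute both elements on the tensor space $\Omega^{\ot r}$ (equivalently, on affine quantum Schur algebras), where $\sfz^\pm_s$ acts via the symmetric power sums $\sum_j X_j^{\pm s}$ of the affine Hecke algebra. I would then compare this with the known action of the $\ttg_{i,\pm s}$, exploiting faithfulness of the family of tensor spaces as $r$ varies.
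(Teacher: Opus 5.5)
The paper gives no proof of this statement; it quotes \cite[Prop.~4.4.1]{DDF}. Your overall strategy is the right one. On both sides you identify a Drinfeld--Jimbo part plus a polynomial algebra in central primitive elements: via Beck on the loop side, and via Ringel--Green and Schiffmann--Hubery on the Hall side. You then compare $\Dt$ on generators, and that Hopf check is fine. The gap is in how you make $\mathsf F$ well defined.

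Lemma \ref{presentation-dbfHa} presents $\dbfHa$ on \emph{all} $u^\pm_A$, $A\in\afThnp$, with the full Hall products (3), (4) and the semisimple commutator relations (5). A map specified only on $E_i,F_i,K_i^{\pm1},\sfz^\pm_s$ cannot be checked against that presentation. Your checks (a)--(c) never touch (3)--(5). The claim that (b) and (c) yield (5) is also unsupported, since you never compute the images of the $u^\pm_{S_\la}$.

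What you need is that $\dbfHa$ is presented by $E_i,F_i,K_i^{\pm1},\sfz^\pm_s$ subject to the affine Drinfeld--Jimbo relations and centrality of the $\sfz^\pm_s$. This does follow from your assumed decompositions, by the argument the paper uses for Proposition \ref{presentation dHallAlg}:
\begin{enumerate}
\item The presented algebra maps onto $\dbfHa$. The relations hold by your (c), checked on the generators $F_i,\sfz^-_t$ of $\dbfHam$ and symmetrically.
\item The presented algebra is spanned by products (positive part)(torus)(negative part).
\item Each half maps injectively, because $\mathfrak C^\pm_\vtg(n)\cong\bfU^\pm(\afsl)$ is defined by the Serre relations \cite[33.1.3]{Lubk}, and $\dbfHapm\cong\mathfrak C^\pm_\vtg(n)\ot\mbq(\up)[\sfz^\pm_1,\sfz^\pm_2,\ldots]$.
\end{enumerate}

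Similarly, (QLA1)--(QLA7) are relations among the Drinfeld generators, so $\mathsf G$ must be defined on $\ttx^\pm_{i,s},\ttk_i^{\pm1},\ttg_{i,t}$, not on $\ttx_i^\pm,\ttk_i^{\pm1},\th_{\pm s}$. Use Beck's inverse for $\ttx^\pm_{i,s}$ and $\tth_{i,t}$. Then obtain $\ttg_{1,t},\ldots,\ttg_{n,t}$ by inverting the linear system given by $\tth_{1,t},\ldots,\tth_{n-1,t}$ and $\sum_i\ttg_{i,t}$; this system is invertible over $\mbq(\up)$.

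Your fallback does not work on its own. Injectivity of $\mathsf F$ needs a lower bound on the size of $\afUgl$, for example algebraic independence of the $\th_s$ over $\bfU^+(\afsl)$. That cannot be read off from (QLA1)--(QLA7); it is exactly what $\mathsf G$ (or a PBW theorem) supplies.

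The ``main obstacle'' you single out is not one. Nothing in the statement forces the scalars $\mp s\ttv^{\pm s}$. Given the presentation above, the map $\sfz^\pm_s\mapsto\la_{\pm s}\sfz^\pm_s$, for any $\la_{\pm s}\in\mbq(\up)^\times$ and fixing $E_i,F_i,K_i^{\pm1}$, is a Hopf automorphism of $\dbfHa$. This is because the $\sfz^\pm_s$ are central, primitive, and subject to no further relations. In your construction the prescribed images are therefore part of the definition of $\mathsf F$ and need no verification. The constants matter only for an additional compatibility, for instance that the Hall-algebra and loop-algebra actions on the natural representation correspond under $\mathsf F$. That is a separate statement, and the power-sum computation on $\Omega^{\ot r}$ you propose belongs there, not in this proof.
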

We will identify $\dbfHa$ with $\afUgl$, and hence identify $E_i$ with $\ttx_i^+$, etc., in the sequel.

Let $(\mbn_\vtg^*)^n=\{\la\in\afmbnn\mid\la_i>0,\,\forall i\in\mbz\}$.
For $1\leq i\leq n$ let  $\afbse_i\in\afmbnn$ be the element
satisfying $(\afbse_i)_j=\dt_{\bar i,\bar j}$ for $j\in\mbz$. Here
$\bar i$ is the congruence class of $i$ modulo $n$.
Let
$$\widetilde I=\{\afbse_1,\afbse_2,\cdots,\afbse_n\}\cup(\mbn_\vtg^*)^n.$$
Let $\ti\Sg$ be the set of words on the alphabet $\ti I$.
Any word $w=\bsa_1\bsa_2\cdots\bsa_m$ in $\ti\Sg$ can be uniquely expressed in the {\it tight form} $w=\bsb_1^{x_1}\bsb_2^{x_2}\cdots\bsb_t^{x_t}$ where $x_i=1$ if $\bsb_i\in(\mbn_\vtg^*)^n$, and $x_i$ is the number of consecutive occurrences of $\bsb_i$ if $\bsb_i\in\{\afbse_1,\afbse_2,\cdots,\afbse_n\}$.

For $A\in\afThnp$, let $$\ti u_A^\pm=\up^{\dim \End(M(A))-\dim M(A)}u_A^\pm.$$
For $\la\in\afmbnn$ let $\ti u_\la^\pm=\ti u_{S_\la}^\pm$.
For $w=\bsa_1\bsa_2\cdots\bsa_m\in\ti\Sg$ with the tight form
$\bsb_1^{x_1}\bsb_2^{x_2}\cdots\bsb_t^{x_t}$, let
\begin{equation*}
\ti u_{(w)}^\pm =\ti u_{x_1\bsb_1}^\pm\ti u_{x_2\bsb_2}^\pm\cdots\ti u_{x_t\bsb_t}^\pm\in\dbfHa.
\end{equation*}

Following \cite[3.5]{BLM} we may define the order relation $\pr$
on $\aftiThn$ as follows.
For $A\in\aftiThn$ and $i\not=j\in\mbz$, let\vspace{-2ex}
$$\sg_{i,j}(A)=\begin{cases}\sum\limits_{s\leq i,t\geq j}a_{s,t},\;&\text{ if $i<j$};\\
\sum\limits_{s\geq i,t\leq j}a_{s,t},\; & \text{ if
$i>j$}.\end{cases}\vspace{-2ex}$$
 For $A,B\in\aftiThn$, define
\begin{equation}\label{order pref}
B\pr A \text{ if and only if } \sg_{i,j}(B)\leq\sg_{i,j}(A) \text{ for all } i\not=j.
\end{equation}
Put $B\p A$ if $B\pr A$ and, for some pair $(i,j)$ with $i\not=j$,
$\sg_{i,j}(B)<\sg_{i,j}(A)$.

The following result is given in \cite[(9.2)]{DDX} (cf. \cite[2.2]{DF18}).

\begin{Prop}\label{tri Hall}
For $A\in\afThnp$, there exist $w_{A}\in\ti\Sg$ such that
\begin{equation}\label{eq tri Hall}
\ti u_{(w_{A})}^+=\ti u_A^++\sum_{B\in\afThnp\atop B\prec A,\,\bfd(A)=\bfd(B)}f_{{B,A}}\ti u_B^+.\end{equation}
where $f_{{B,A}}\in\sZ$.
\end{Prop}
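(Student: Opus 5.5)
We prove Proposition \ref{tri Hall} by an explicit construction of $w_A$ followed by a triangularity analysis, in the spirit of the distinguished words of \cite{DDX}. For $A\in\afThnp$ consider the representation $M(A)=\bigoplus a_{i,j}M^{i,j}$. Its Loewy layers are semisimple, and the natural candidate for $w_A$ records them. Concretely, split $A=A^{\rm ap}+A^{\rm p}$, where $A^{\rm p}$ collects the periodic part. In the usual Deng--Du--Xiao decomposition, the $(\mbn_\vtg^*)^n$-letters carry the "sincere" pieces coming from $\mbz\dt$-dimensional subquotients that cannot be reached by simples. The aperiodic part is handled by words in the letters $\afbse_i$ with multiplicities, as in the Ringel--Hall theory of the cyclic quiver. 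We then set $w_A$ to be the word whose consecutive letters are the dimension vectors of the successive socle (or top) layers. A layer that is sincere is kept as one letter in $(\mbn_\vtg^*)^n$. A non-sincere layer is decomposed into its simple summands $x\afbse_i$, ordered so that no extension between them can occur. Passing to the tight form then gives exactly the product $\ti u^+_{(w_A)}$ of elements $\ti u^+_{x\bsb}$ with $\bsb$ a semisimple letter.

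Next expand $\ti u^+_{(w_A)}$ using Lemma \ref{presentation-dbfHa}(3). Iterated Hall multiplication gives
\[\ti u^+_{(w_A)}=\sum_{B}v^{\ast}\,\vi^{B}_{(w_A)}\,\ti u^+_B,\]
where $\vi^{B}_{(w_A)}$ counts filtrations of $M(B)$ whose subquotients are the semisimple modules prescribed by $w_A$. Since every filtration preserves total dimension, only $B$ with $\bfd(B)=\bfd(A)$ occur. There are then three things to check.

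\begin{itemize}
\item[(i)] Any $M(B)$ admitting such a filtration is a degeneration of $M(A)$. That is, $M(A)$ lies in the closure of the orbit of $M(B)$ in the sense of the generic extension order, and this degeneration order is translated into the combinatorial statement $B\pr A$ via the ranks $\sg_{i,j}$. This is the standard fact that $\sg_{i,j}(B)$ measures dimensions of Hom-spaces from or to indecomposables $M^{s,t}$ with $s\le i$, $t\ge j$.
\item[(ii)] For $B=A$ the filtration is unique, because the layers were chosen as the canonical radical/socle filtration. So the coefficient of $u^+_A$ is a pure power of $v$.
\item[(iii)] The normalization $\ti u^\pm=v^{\dim\End-\dim}u^\pm$ makes that power equal to $1$. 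The identity $\dim\End M(A)-\dim M(A)$ versus $\lan\ ,\ \ran$-exponents follows from the Euler-form relation $\lan\bfd(X),\bfd(Y)\ran=\dim\Hom(X,Y)-\dim{\rm Ext}^1(X,Y)$, summed along the filtration.
\end{itemize}

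The remaining coefficients $f_{B,A}$ lie in $\sZ$ because the $\vi$'s are polynomials in $v^2$ and the normalizing exponents are integers.

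The main obstacle is (ii) combined with (i): the word must be chosen so that $A$ is the unique maximal term. This is delicate precisely for the periodic (sincere-layer) parts, where semisimple layers of dimension $m\dt$ glue in many ways. If the socle-layer word fails there, I would instead induct on $\dim M(A)$. Pick $i$ with $M(A)$ having a summand $M^{i,j}$ of maximal length and a suitable extremal condition. Write $\ti u^+_A$ as $\ti u^+_{x\afbse_i}\ti u^+_{A'}$ plus lower terms, using the explicit Hall numbers for multiplication by a semisimple module with single-vertex support. Apply induction to $A'$, and fall back on a sincere letter only when every length-extremal summand is periodic. The monotonicity of $\pr$ under such multiplications, namely that $B'\pr A'$ implies the products' leading terms are $\pr$, is the technical lemma to verify. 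Since the statement is quoted from \cite[(9.2)]{DDX}, in the paper one may simply cite it, but the above is how I would reprove it.
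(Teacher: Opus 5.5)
The paper gives no proof of this statement; it quotes \cite[(9.2)]{DDX}, where $w_A$ comes from writing $M(A)$ as a generic extension of semisimple modules. Your radical-layer word is in fact a valid choice of $w_A$ for every $A$, periodic or not. However, your proposal does not prove the point everything rests on, so as written there is a genuine gap.

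\textbf{The gap in (i) and (ii).} In (i) you only assert that every $M(B)$ with a filtration of type $w_A$ is a degeneration of $M(A)$. The inclusion you then write is also reversed: it should be $\mathcal O_{M(B)}\subseteq\overline{\mathcal O_{M(A)}}$. In (ii), uniqueness is justified only by the word ``canonical''. You then say you are unsure the word works and fall back on an induction whose monotonicity lemma is exactly the nontrivial content.

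\textbf{How to close it.} The missing argument is short and needs no degeneration theory.
\begin{itemize}
\item[(a)] Put $T_k=\mathrm{rad}^{k-1}M(A)/\mathrm{rad}^kM(A)$. Suppose $M(B)=F^0\supseteq F^1\supseteq\cdots\supseteq F^\ell=0$ with $F^{k-1}/F^k$ semisimple of dimension vector $\mathbf d(T_k)$. Then $\mathrm{rad}\,F^{k-1}\subseteq F^k$, so $\mathrm{rad}^kM(B)\subseteq F^k$.
\item[(b)] In the cyclic quiver there is exactly one path $p$ of length $m$ ending at a given vertex $j$, and $(\mathrm{rad}^mX)_j$ is the image of $p$ on $X$. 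Hence $\operatorname{rank}(p|_{M(B)})\le\dim(F^m)_j=\dim(\mathrm{rad}^mM(A))_j=\operatorname{rank}(p|_{M(A)})$.
\item[(c)] These ranks are exactly the numbers $\sigma_{j-m,j+1}$, and $\sigma_{i,j}=0$ for $i>j$ on $\Theta^+_\vartriangle(n)$. So $B\preccurlyeq A$. Since the $\sigma_{i,j}$ determine the matrix, $B\prec A$ unless $B=A$.
\item[(d)] For $B=A$, the containment $\mathrm{rad}^kM(A)\subseteq F^k$ together with equal dimensions forces $F^k=\mathrm{rad}^kM(A)$. So the filtration is unique.
\item[(e)] Make the order inside a non-sincere layer explicit: for each arc $\{i,i+1,\dots,k\}$ of its support, take $\afbse_k,\afbse_{k-1},\dots,\afbse_i$. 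Then every module filtered by these pieces is semisimple, and the block contributes exactly $\widetilde u^+_{S_{\mathbf d(T_k)}}$.
\item[(f)] No letters merge across layers in the tight form. Indeed $(T_{k+1})_i\neq0$ forces $(T_k)_{i-1}\neq0$, while a non-sincere block ends at the start $i$ of an arc, where $(T_k)_{i-1}=0$.
\end{itemize}

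\textbf{The gap in (iii).} Summing $\langle\mathbf d(X),\mathbf d(Y)\rangle=\dim\operatorname{Hom}(X,Y)-\dim\operatorname{Ext}^1(X,Y)$ along the filtration only gives the symmetric quantity $\langle\mathbf d(A),\mathbf d(A)\rangle$, which involves $\operatorname{Ext}^1(M(A),M(A))$. Because there is exactly one filtration, the coefficient of $u^+_A$ in $u^+_{S_{\mathbf d(T_1)}}\cdots u^+_{S_{\mathbf d(T_\ell)}}$ is $v^{\sum_{k<l}\langle\mathbf d(T_k),\mathbf d(T_l)\rangle}$. What you actually need is
\[\dim\operatorname{End}M(A)=\sum_k\dim\operatorname{End}T_k+\sum_{k<l}\langle\mathbf d(T_k),\mathbf d(T_l)\rangle.\]
This follows from a dimension count.
\begin{itemize}
\item[(a)] Let $Z$ be the variety of pairs (nilpotent representation, flag of subrepresentations with semisimple subquotients of dimension vectors $\mathbf d(T_k)$). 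It is a vector bundle over a partial flag variety, of dimension $\sum_{k<l}\sum_i\mathbf d(T_k)_i\bigl(\mathbf d(T_l)_i+\mathbf d(T_l)_{i+1}\bigr)$.
\item[(b)] Its image is closed, irreducible and a finite union of orbits, so it is the closure of a single orbit $\mathcal O_{M'}$. Comparing ranks in both directions, using (b)--(c) above, gives $M'\cong M(A)$.
\item[(c)] The fibre over $M(A)$ is a single point by (d) above.
\item[(d)] Hence $\dim Z=\dim\mathcal O_{M(A)}=\sum_i\mathbf d(A)_i^2-\dim\operatorname{End}M(A)$, which is the identity. So the leading coefficient is $1$ after the $\widetilde u$-normalization.
\end{itemize}

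With these repairs your route is a complete proof, and more elementary than the generic-extension machinery behind \cite{DDX}. What it does not give, and the statement does not need, is that aperiodic $A$ receive words in the letters $\afbse_i$ alone.
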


Let
 $$\afThnm=\{A\in\afThn\mid a_{i,j}=0\text{ for }i\leq j\}.$$
Furthermore, let
\begin{equation}\label{Thnp}
\Thnp=\afThnp\cap\Thn,\,\Thnm=\afThnm\cap\Thn,
\end{equation}
where
\begin{equation}\label{Thn}
\Thn=\{A\in\afThn\mid a_{i,j}=0 \text{ for }1\leq i\leq n,\,j\not\in\{1,2,\cdots,n\}\}.
\end{equation}
Let $\Ga$ be the set of words on the alphabet $\{\afbse_1,\afbse_2,\cdots,\afbse_{n-1}\}$. Then $\Ga$ is a subset of $\ti\Sg$. By \cite[5.5(c)]{BLM} we have the following result.
\begin{Prop}\label{tri quantum gln}
For $A\in\Thnp$, there exist $w_{A}\in\Ga$ such that
\begin{equation}\label{eq tri Hall}
\ti u_{(w_{A})}^-=\ti u_A^-+\sum_{B\in\Thnp\atop B\prec A,\,\bfd(A)=\bfd(B)}f_{{B,A}}\ti u_B^-.\end{equation}
where $f_{{B,A}}\in\sZ$.
\end{Prop}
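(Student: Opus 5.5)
The plan is to reduce to the Ringel--Hall algebra of the linearly oriented quiver of type $A_{n-1}$ and run the usual ``generic extension'' argument.

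\textbf{Reduction to type $A_{n-1}$.} For $A\in\Thnp$ we have $a_{i,j}=0$ unless $1\le i<j\le n$. Hence $M(A)=\bigoplus_{1\le i<j\le n}a_{i,j}M^{i,j}$ is supported on the vertices $1,\dots,n$, and the arrow $n\to 1$ acts by zero. So $M(A)$ is a representation of the linear quiver $1\to2\to\cdots\to n$. The full subcategory of such representations is closed under submodules, quotients and extensions inside nilpotent representations of $\tri$. Therefore the Hall numbers $\vi^C_{A,B}$ with $A,B\in\Thnp$ are nonzero only for $C\in\Thnp$. It follows that the $u^-_A$ ($A\in\Thnp$) span a subalgebra of $\dbfHam$, namely the negative part of the Hall algebra of type $A_{n-1}$. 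This subalgebra is generated by the $u^-_{x\afbse_i}$ with $1\le i\le n-1$, which is why words in $\Ga$ should suffice.

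\textbf{Choice of $w_A$ and expansion.} For a word $w=\afbse_{i_1}^{x_1}\cdots\afbse_{i_t}^{x_t}$, I will use relation (4) of Lemma \ref{presentation-dbfHa} repeatedly. This shows that $\ti u^-_{(w)}$ is a $\sZ$-combination of the $\ti u^-_B$. The coefficient of $\ti u^-_B$ is a power of $v$ times the number of filtrations $0=N_0\subset\cdots\subset N_t=M(B)$ with semisimple subquotients $x_k S_{i_k}$, taken in the order dictated by (4), which reverses the order relative to (3). Next I fix the word. For each $j=n,n-1,\dots,2$ I list the letters $\afbse_{j-1}^{y_{j-1}}$, where $y_{j-1}=\sum_{i\le j-1<k}a_{i,k}$ is the multiplicity of $S_{j-1}$ in the corresponding layer. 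More concretely, I peel $M(A)$ by socle/radical layers from vertex $n-1$ down to vertex $1$, as in \cite[5.5]{BLM}. This choice makes $M(A)$ the \emph{generic} (maximal-orbit) module admitting a filtration of the prescribed type. Moreover, $M(A)$ admits exactly one such filtration, namely its canonical layer filtration.

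\textbf{Triangularity and leading coefficient.} Every other $M(B)$ admitting a filtration of this type lies in the closure of the orbit of $M(A)$. In type $A$, $M(B)\in\overline{\mathcal O_{M(A)}}$ is characterized by rank inequalities, $\operatorname{rank}$ of the composite maps $M(B)_i\to M(B)_{j-1}$ being at most that for $M(A)$. These ranks are exactly $\sg_{i,j}(B)\le\sg_{i,j}(A)$, which gives $B\pr A$, and $B\neq A$ gives $B\p A$. The dimension vectors agree automatically. For the coefficient of $\ti u^-_A$, uniqueness of the filtration means the raw coefficient is a single power of $v$. The normalization $v^{\dim\End(M(A))-\dim M(A)}$ in $\ti u^-_A$, together with the normalizations of the semisimple factors, is designed to cancel that power. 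The cancellation uses the identity $\dim\End M(A)-\dim\operatorname{Ext}^1$-type terms $=\lan\bfd(A),\bfd(A)\ran$, with $\lan\bfd(A),\bfd(A)\ran$ expanded via the Euler form exponents in (4). Integrality of the remaining coefficients $f_{B,A}$ follows because Hall numbers are polynomials in $v^2$ with integer coefficients.

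The main obstacle is this last bookkeeping: showing the $v$-exponent of the leading term is exactly $0$. The orbit-closure and triangularity part is standard. I would first check the exponent by induction on the number of letters in $w_A$. Each step adds one semisimple layer $yS_i$ to a module $N$ via a unique extension, and the change in $\dim\End-\dim$ should match $\lan y\afbse_i,\bfd(N)\ran$ plus the internal normalization of $\ti u^-_{y\afbse_i}$. If this fails, I would fall back to citing \cite[5.5(c)]{BLM} through the identification of this subalgebra with $\bfUglnm$.
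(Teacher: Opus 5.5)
\textbf{The gap is in the choice of $w_A$.} Your overall route is a legitimate, more self-contained alternative to the paper, which proves nothing here and simply cites \cite[5.5(c)]{BLM}. That route is: pass to the type $A_{n-1}$ Hall algebra, realize $M(A)$ as the generic extension of semisimple layers with a unique filtration, and get triangularity from rank semicontinuity. But the word you actually write down does not work.

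Your word is $w_A=\afbse_{n-1}^{y_{n-1}}\cdots\afbse_1^{y_1}$ with $y_h=\sum_{i\le h<k}a_{i,k}$. This is a single pass through the vertices, and $y_h$ is just the total multiplicity of $S_h$ in $M(A)$, i.e.\ $\bfd(A)_h$. So $w_A$ depends only on $\bfd(A)$. Hence $\ti u^-_{(w_A)}$ cannot have leading term $\ti u^-_A$ for two different $A$ with the same dimension vector.

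Concretely, take $n=3$ and $a_{1,2}=a_{1,3}=a_{2,3}=1$, so $M(A)=M^{1,3}\oplus S_1\oplus S_2$. Your word is $\afbse_2^2\afbse_1^2$. By relation (4) of Lemma~\ref{presentation-dbfHa} the left factor is the bottom of the filtration, so $\ti u^-_{2\afbse_2}\ti u^-_{2\afbse_1}$ has generic module $2M^{1,3}$. Since $\sigma_{1,3}(2\afE_{1,3})=2>1=\sigma_{1,3}(A)$, a term not $\prec A$ occurs. The reverse order $\afbse_1^2\afbse_2^2$ only produces $2S_1\oplus 2S_2$.

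So your claim that this choice ``makes $M(A)$ the generic module'' is false, and ``peel by socle/radical layers as in BLM'' does not specify a word. You need several passes. For example, run through the columns $j=2,\dots,n$ in increasing order, and for column $j$ append $\afbse_{j-1}^{c_{j-1,j}}\afbse_{j-2}^{c_{j-2,j}}\cdots\afbse_1^{c_{1,j}}$, where $c_{h,j}=\sum_{i\le h}a_{i,j}$ and zero exponents are dropped. In the example this gives $\afbse_1\afbse_2^2\afbse_1$. Its expansion is $\ti u^-_A$ plus a multiple of $\ti u^-_B$ with $B=2\afE_{1,2}+2\afE_{2,3}\prec A$. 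However, showing in general that $M(A)$ is the generic extension for such a word and admits exactly one filtration of that type is precisely what the paper takes from \cite[5.5(c)]{BLM}. Your proposal asserts these two facts rather than proving them.

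\textbf{The rest of the argument is sound.} Only the easy (semicontinuity) direction of the rank criterion is needed for triangularity. The step you flagged as the main obstacle is actually immediate once genericity and uniqueness are known:
\begin{itemize}
\item Let $\bfd_1,\dots,\bfd_t$ be the layer dimension vectors from bottom to top. The coefficient of $\ti u^-_A$ is $v^e$ with $e=\lan\bfd(A),\bfd(A)\ran-\sum_{k<l}\lan\bfd_k,\bfd_l\ran-\dim\End M(A)$.
\item The variety of pairs (representation, compatible flag of that type) is a vector bundle over a product of flag varieties. Its dimension is $\dim\mathrm{Rep}(\bfd(A))+\sum_{k<l}\lan\bfd_k,\bfd_l\ran$.
\item This variety maps onto $\overline{\mathcal O_{M(A)}}$ with a one-point fibre over $M(A)$. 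So its dimension equals $\dim\mathcal O_{M(A)}=\dim\mathrm{Rep}(\bfd(A))-\dim\operatorname{Ext}^1(M(A),M(A))$, which forces $e=0$.
\end{itemize}

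\textbf{A small slip.} For $A\in\Thnp$ the module $M(A)$ is supported on $\{1,\dots,n-1\}$. Representations on which merely the arrow $n\to 1$ acts by zero are not closed under extensions; for instance $M^{n,n+2}$ is a nonsplit extension of $S_n$ by $S_1$. Representations supported on $\{1,\dots,n-1\}$ are closed under extensions, and that is what your Hall-number closure argument actually needs.
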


\subsection{The affine quantum Schur algebra $\afbfSr$}
We now  recall the geometric definition of affine
quantum Schur algebras introduced in \cite{GV,Lu99}.
Let $V$ be a free $\field[\ep,\ep^{-1}]$ module of rank $r$,
where
$\field$ is a field and $\ep$ is an indeterminate.
Let $\afFn={\mathscr F}_{\vtg,n}$ be the set of all collections $\bfL=(L_i)_{i\in\mbz}$, where each $L_i$ is a lattice in $V$ such that
$L_{i-1}\han L_i$ and $L_{i-n}=\ep L_i$, for all $i\in\mbz$.
Let $G$ be the group of automorphisms of the $\field[\ep,\ep^{-1}]$-module $V$.
The group $G$ acts on $\afFn\times\afFn$  by
$g: (\bfL,\bfL')\mapsto((g(L_i))_{i\in\mbz},(g(L_i'))_{i\in\mbz})$.

For $A\in\aftiThn$ and $r\in\mbn$,
let
$\sg(A)=\sum_{1\leq i\leq n,\,
j\in\mbz}a_{i,j}$ and let
\begin{equation*}\label{afThnr}
\afThnr=\{A\in\afThn\mid\sg(A)=r\}.
\end{equation*}
By \cite[1.5]{Lu99}
there is a bijection between the set of $G$-orbits in
$\afFn\times\afFn$ and the matrix set $\afThnr$ by sending $(\bfL,\bfL')$
to $A=(a_{i,j})_{ij\in\mbz}$,
where $$a_{i,j}=\dim_\field\frac{L_i\cap L_j'}{L_{i-1}\cap L_j'+L_i\cap L_{j-1}'}.$$
For $A\in\afThnr$, let $\sO_A\han\afFn\times\afFn$ be
the $G$-orbit corresponding to $A$.

Assume now that $\field=\field_q$ is the finite field of $q$
elements. For any fixed $(\bfL,\bfL'')\in\sO_{A''}$ let
\begin{equation*}
\afg_{A,A',A'';q}=|\{\bfL'\in\afFn \mid (\bfL,\bfL')
\in\sO_{A},(\bfL',\bfL'')\in\sO_{A'}\}|
\end{equation*}
By \cite[1.8]{Lu99}, there exists a polynomial
$\nu_{A,A',A''}\in\sZ$  in $\up^2$ such that, for
each finite field $\field$ with $q$ elements,
$\nu_{A,A',A'';q}=\nu_{A,A',A''}|_{\up^2=q}$.

For $A\in\aftiThn$, let
$\ro(A)=\bigl(\sum_{j\in\mbz}a_{i,j}\bigr)_{i\in\mbz}$ and
$\co(A)=\bigl(\sum_{i\in\mbz}a_{i,j}\bigr)_{j\in\mbz}.$
Let $\afSr$ be the free $\sZ$-module with
basis $\{e_{A}\mid A\in\afThnr\}$. There is a unique associative $\sZ$-algebra structure on $\afSr$ with multiplication
\begin{equation*}
e_{A}e_{{A'}}=
\begin{cases}
\sum_{A''\in\afThnr}\nu_{A,A',A''}e_{{A''}},&
\text{if $\co(A)=\ro(A')$;}\\
0,&\text{otherwise.}
\end{cases}\end{equation*}
Let $\afbfSr=\afSr\ot_\sZ\mbq(v)$. The algebras $\afSr$ and $\afbfSr$ are called affine quantum Schur algebras (see \cite{GV,Gr99,Lu99}).

\subsection{The algebra homomorphism $\zr$}
The quantum affine $\frak{gl}_n$ and the affine quantum Schur algebra $\afbfSr$ are
related by a surjective algebra homomorphism $\zeta_r$, which we now describe.

For $A\in\afThnr$ let
\begin{equation*}
[A]=\up^{-d_A}e_{A},\quad\text{ where } \quad
d_{A}=\sum_{1\leq i\leq n\atop i\geq k,j<l}a_{i,j}a_{k,l}.
\end{equation*}
For $r\in\mbn$  we set
\begin{equation*}
\afLanr=\{\la\in\afmbnn\mid\sg(\la)=r\},
\end{equation*}
where
$\sg(\la)=\sum_{1\leq i\leq n}\la_i$.
Let
\begin{equation*}\label{afThnpm}
\afThnpm=\{A\in\afThn\mid a_{i,i}
=0\text{ for all $i$}\}\end{equation*}
For $A\in\afThnpm$, $\bfj\in\afmbzn$, let
\begin{equation*}\label{A(j,la)}
\begin{split}
A(\bfj,r)&=\sum_{\mu\in\afLa(n,r-\sg(A))}\up^{\mu\centerdot\bfj}
[A+\diag(\mu)]\in\afbfSr,
\end{split}
\end{equation*}
where $\mu\centerdot\bfj=\sum_{1\leq i\leq n}\mu_ij_i.$
The following result was given in \cite[3.6.3, 3.8.1]{DDF}.
\begin{Thm}  \label{zr}
For $r\in\mbn$, there is a surjective algebra homomorphism $\zr:\afUgl\ra \afbfSr$ such that
$$\zr(K^\bfj)=0(\bfj,r),\;\zr(\ti u_A^+)=A(\bfl,r),\;\;\text{and}\;\;
\zr(\ti u_A^-)=(\tA)(\bfl,r),$$
for all $\bfj\in \afmbzn$, $A\in \afThnp$, where $\tA$ is the transpose of $A$.
\end{Thm}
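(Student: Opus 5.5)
This theorem is quoted from \cite[3.6.3, 3.8.1]{DDF}, so here is how I would reconstruct its proof.

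The plan is to build $\zr$ through the Ringel--Hall realization $\dbfHa\cong\afUgl$ of Proposition~\ref{DDFIsoThm}. The target $\afbfSr$ is identified with $\End_{\afbfHr}(\Omega^{\otimes r})$ acting on the affine tensor space $\Omega^{\otimes r}$, where $\Omega$ is the natural module with basis $\omega_i$, $i\in\mbz$, which is free of rank $n$ over $\mbq(v)[\ep,\ep^{-1}]$. Equivalently, $\afbfSr$ is the convolution algebra of $G$-invariant functions on $\afFn\times\afFn$.

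\textbf{Step 1: the half-parts.} I first define algebra maps $\zr^{\pm}:\dbfHapm\to\afbfSr$ on the Hall basis by $\ti u_A^+\mapsto A(\bfl,r)$ and $\ti u_A^-\mapsto (\tA)(\bfl,r)$. To check that $\zr^+$ is multiplicative, I compute the structure constants $\nu_{A,A',A''}$ of the product $A(\bfl,r)B(\bfl,r)$ in the convolution algebra, using lattice chains. When $A$ and $B$ are strictly upper triangular, the count of intermediate flags $\bfL'$ reduces to counting submodules of a nilpotent representation of the cyclic quiver $\tri$ with prescribed sub and quotient types. This gives Ringel's Hall numbers $\vi^C_{A,B}$, and after summing over the diagonal parts $\mu$, the powers $v^{-d_A}$ and the twists $\ti u$ combine into exactly the factor $\up^{\lan\bfd(A),\bfd(B)\ran}$ in Lemma~\ref{presentation-dbfHa}(3). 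This is Lusztig's argument in \cite{Lu99}, adapted from BLM. The negative part follows from it via the transpose anti-involution $[A]\mapsto[\tA]$ of $\afbfSr$. I also send $K^\bfj\mapsto 0(\bfj,r)$; this is clearly a homomorphism from the torus, and relation (2) holds because $0(\bfj,r)A(\bfl,r)$ and $A(\bfl,r)0(\bfj,r)$ differ only by $\up^{\ro(A)\cdot\bfj-\co(A)\cdot\bfj}=\up^{\lan\bfd(A),\bfj\rangle}$.

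\textbf{Step 2: the commutator relations.} This is the step I expect to be the main obstacle. The relations of Lemma~\ref{presentation-dbfHa}(5) involve only the semisimple elements $u^\pm_{S_\la}$. I would avoid verifying them directly in $\afbfSr$. Instead I would use the faithful action on tensor space: $\afUgl$ acts on $\Omega^{\otimes r}$ via the coproduct of Proposition~\ref{Hopf algebra}, and this action commutes with $\afbfHr$. Hence the action factors through a map $\afUgl\to\End_{\afbfHr}(\Omega^{\otimes r})=\afbfSr$, which is automatically an algebra homomorphism. It then remains to show that this map agrees with the formulas above on generators. For $E_i$, $F_i$ and $K_i$ this is the BLM-type computation \cite[\S5]{BLM} done on $\Omega^{\otimes r}$. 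For $u^\pm_{S_\la}$, or equivalently for the central generators $\sfz^\pm_s$, I would compare actions on the basis $\omega_{\bf i}$ using the fact that $u^+_{S_\la}$ acts through a ``symmetrizer'' over positions. I expect this to be a delicate $v$-power bookkeeping. Alternatively, the relations of (5) can be verified in $\afbfSr$ by a direct count of lattice chains with semisimple quotients.

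\textbf{Step 3: surjectivity.} I argue by triangularity. Suppose $A\in\afThnr$, and write $A=A^++\diag(\la)+A^-$. By the multiplication formulas, $\zr(\ti u^+_{A^+}K^\bfj\ti u^-_{{}^t(A^-)})$, taken with suitable $\bfj$, equals $[A]$ plus a combination of $[B]$ with $B\prec A$. Here the point is that the elements $0(\bfj,r)$, for varying $\bfj$, span the idempotents $[\diag(\la)]$ by Vandermonde-type interpolation, since $\afLanr$ is finite. Induction on $\pr$, which has finite lower sets with fixed row and column sums, then shows that every $[A]$ lies in the image.
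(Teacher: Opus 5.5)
Your outline is essentially the argument behind the result the paper quotes from \cite[3.6.3, 3.8.1]{DDF}; the paper itself gives no proof. The ingredients match: the lattice-counting identification of the two Hall halves with $A(\bfl,r)$ and ${}^t\!A(\bfl,r)$ (this half is due to Varagnolo--Vasserot rather than Lusztig), the $\afUgl$--$\afbfHr$-bimodule $\Omega^{\otimes r}$ giving a homomorphism on the whole double without checking the commutator relations by hand, and, for surjectivity, the triangular relation $A^+(\bfl,r)[\diag(\la)]A^-(\bfl,r)=[A+\diag(\la-\bfsg(A))]+(\text{lower terms})$ together with the fact that the $0(\bfj,r)$ span the idempotents $[\diag(\la)]$.

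Be aware that the step you defer as bookkeeping is where the real content lies: showing that the tensor-space map agrees with $\zr^\pm$ on generators of $\dbfHapm$ that include the imaginary part (the $\ti u^\pm_{S_\la}$, or the $E_i$, $F_i$ together with the $\sfz^\pm_s$), and that the $\afUgl$-action commutes with $\afbfHr$. As written, Step 2 is a plan rather than a proof.
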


\section{The quantum current algebra $\bfUn$}
\subsection{The quantum current algebra $\bfUn$}

For a positive integer $n$ let
$\cgl=\frak{gl}_n\ot\mbc[t]$
be the current algebra of $\frak{gl}_n$. The current algebra $\cgl$ is a parabolic subalgebra of $\afgl$.
We have the following direct sum decomposition
$$\cgl=\afglp\oplus\fh\oplus\glm$$
where $\afglp$, $\fh$ are defined in \eqref{tri decomposition},
and $\glm=\spann\{E_{i,j}\mid 1\leq j<i\leq n\}$.

We now introduce the the quantum current algebra of $\frak{gl}_n$ as a certain parabolic subalgebra of quantum affine $\frak{gl}_n$ as follows.
Let $\afUglpz=\afUglp\afUglz$ be the Borel subalgebra of $\afUgl$. Furthermore
let $$\bfUn=\afUglpz\bfUglnm=\afUglp\afUglz\bfUglnm\han\afUgl.$$
Then $\bfUn$ is the subalgebra of $\afUgl$ generated by the elements
$\ttx_{i}^+$, $\ttx_{j}^-$, $\ttk_i^{\pm 1}$ and $\th_{s}$ for  $1\leq i\leq n$, $1\leq j<n$ and $s\geq 1$. By Proposition \ref{DDFIsoThm}, the algebra $\bfUn$ is generated by the elements $E_i$, $F_j$, $K_i^{\pm 1}$ and $\sfz_{s}^+$ for  $1\leq i\leq n$, $1\leq j<n$ and $s\geq 1$.
We refer to $\bfUn$ as the quantum current algebra of $\frak{gl}_n$.

\begin{Prop}\label{Hopf subalgebra}
The algebra $\bfUn$ is a $\mbn$-graded Hopf algebra with $\deg(E_j)=\deg(F_j)=\deg(K_i)=0$ for $i\in I$, $1\leq j\leq n-1$, $\deg(E_n)=1$, $\deg(\sfz^+_m)=m$ for $m\geq 1$,
comultiplication $\Dt$, counit $\ep$,
and antipode $\sg$ defined
by
\begin{eqnarray*}
&\Delta(E_i)=E_i\otimes\ti K_i+1\otimes
E_i,\quad\Delta(F_j)=F_j\otimes
1+\ti K_j^{-1}\otimes F_j,&\\
&\Delta(K^{\pm 1}_i)=K^{\pm 1}_i\otimes K^{\pm 1}_i,\quad
\Delta(\sfz_s^+)=\sfz_s^+\otimes1+1\otimes
\sfz_s^+;&\\
&\varepsilon(E_i)=\varepsilon(F_j)=0=\varepsilon(\sfz_s^+),
\quad \varepsilon(K_i)=1;&\\
&\sg(E_i)=-E_i\ti K_i^{-1},\quad \sg(F_j)=-\ti K_jF_j,\quad
\sg(K^{\pm 1}_i)=K^{\mp 1}_i,&\\
&\text{and }\;\;\sg(\sfz_s^+)=-\sfz_s^+,&
\end{eqnarray*}
 where $i\in I$, $1\leq j\leq n-1$ and $s\geq 1$.
\end{Prop}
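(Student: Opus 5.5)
The plan is to deduce everything from the Hopf structure of $\afUgl$ in Proposition \ref{Hopf algebra}, transported to $\dbfHa$ by the isomorphism $\sfF$ of Proposition \ref{DDFIsoThm}. Under $\sfF$ we have $E_i\mapsto\ttx_i^+$, $F_j\mapsto\ttx_j^-$, $K_i\mapsto\ttk_i$, and $\sfz_s^+\mapsto -s\ttv^{s}\th_s$. Since $\sfF$ is a Hopf isomorphism, the displayed formulas for $\Dt$, $\ep$, $\sg$ on the generators $E_i,F_j,K_i^{\pm1},\sfz_s^+$ follow immediately. For $\sfz_s^+$ we use that $\th_s$ is primitive with $\sg(\th_s)=-\th_s$ and $\ep(\th_s)=0$, and these properties are preserved under the scalar multiple $-s\ttv^s$. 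It therefore remains to show that $\bfUn$ is a Hopf subalgebra and that it carries the stated $\mbn$-grading compatible with the structure maps.

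For the Hopf subalgebra claim, $\bfUn$ is generated by $E_i,F_j,K_i^{\pm1},\sfz_s^+$ with $i\in I$, $1\le j<n$, $s\ge1$. Because $\Dt$ and $\ep$ are algebra homomorphisms and $\sg$ is an anti-homomorphism, it suffices to check that each generator is sent into $\bfUn\ot\bfUn$, into $\mbq(v)$, and into $\bfUn$ respectively. The formulas only involve $E_i$, $F_j$ with $j<n$, the $\ti K_i^{\pm1}=K_iK_{i+1}^{-1}$, $K_i^{\pm1}$, and $\sfz^+_s$, all of which lie in $\bfUn$. The essential point is that we never need $F_n$, which is exactly why $\bfUglnm$ rather than $\afUglm$ appears.

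The grading is the main obstacle. I would introduce the $\mbz$-grading on $\afUgl$ by the power of $t$, i.e.\ the principal-type grading determined by the imaginary root $\dt$. In Hall algebra terms, set $\deg u_A^\pm=\pm(\text{coefficient of }\dt\text{ in }\bfd(A))$, with the coefficient read off from $d_n$ via the identification $\bfd=\sum d_i\afbse_i$ with $\dt$-part $d_n$, and $\deg K_i=0$. We must check that this is well defined. Relations (1)–(4) of Lemma \ref{presentation-dbfHa} are homogeneous because $\bfd$ is additive on extensions. For (5), both sides have degree $\sum_i\al_i\cdot 0$-adjusted, i.e.\ $\al_n-\bt_n$ in each term, and $\la-\al=\mu-\bt$ makes all terms of fixed degree $\la_n-\mu_n$. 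With this grading, $\deg E_n=1$, $\deg E_j=\deg F_j=0$ for $j<n$, and $\deg c_m^+=m$ since $\bfd(A)=m\dt$. Hence $x_m^+$ and $\sfz_m^+$ have degree $m$, because the logarithmic derivative of $C^+(u)$ preserves the weight where $u$ is given degree $1$ with a shift by one. Alternatively, one can define the grading on $\afUgl$ directly by $\deg\ttx^\pm_{i,s}=\deg\ttg_{i,s}=s$ and check that (QLA1)–(QLA7) are homogeneous.

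Restricting to $\bfUn$, which is generated by elements of nonnegative degree, gives an $\mbn$-grading. Finally, $\Dt$, $\ep$, $\sg$ respect the grading, since they do so on the homogeneous generators by the explicit formulas: each term in $\Dt(E_n)$ has total degree $1$, each term in $\Dt(\sfz_s^+)$ has total degree $s$, and so on. This completes the plan.
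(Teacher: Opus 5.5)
Your proposal is correct. The Hopf-subalgebra part is the same as the paper's: transport Proposition \ref{Hopf algebra} through $\sfF$, and note that $\Dt,\ep,\sg$ applied to the generators $E_i$, $F_j$ ($j<n$), $K_i^{\pm1}$, $\sfz^+_s$ stay inside $\bfUn$, because $F_n$ never appears.

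The difference is in how you obtain the grading.

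\emph{The paper's route.} It works intrinsically, using the presentation of $\bfUn$ in Proposition \ref{presentation dHallAlg}, which is stated later and only for $n\ge 2$. Relations (1)--(5) there are visibly homogeneous for $\deg E_n=1$, $\deg\sfz^+_m=m$ and all other generators of degree $0$. For example, $E_nF_j=F_jE_n$ because $j\le n-1$.

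\emph{Your route.} You put a $\mbz$-grading on all of $\dbfHa\cong\afUgl$ using the presentation in Lemma \ref{presentation-dbfHa}. You set $\deg u^\pm_A=\pm d_n$, where $\bfd(A)=(d_1,\dots,d_n)$, and check homogeneity there. In (5), every term has degree $\al_n-\bt_n=\la_n-\mu_n$; your wording at this step is garbled, but the conclusion is right. You then get $\deg c^+_m=m$, hence $\deg x^+_m=\deg\sfz^+_m=m$. The cleanest convention is to give $u$ degree $-1$, so that $C^+(u)$ is homogeneous of degree $0$ and $X^+(u)$ of degree $1$. Finally you restrict to $\bfUn$, which is generated by homogeneous elements of nonnegative degree.

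\emph{What each buys.} Your route never uses the defining relations of $\bfUn$, so it avoids the forward reference and the $n\ge2$ caveat and handles $n=1$ uniformly. It also identifies the degree of every $u^+_A$ as its $t$-degree $d_n$, consistent with the formula later used in Lemma \ref{graded algebra dbfUn}. And it shows $\Dt,\ep,\sg$ on $\bfUn$ are restrictions of graded maps on $\afUgl$. The paper's route is shorter once the presentation of $\bfUn$ is available, and it never leaves $\bfUn$ (no negative degrees, no $F_n$).
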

\begin{proof}
By Proposition \ref{Hopf algebra}, we see that the algebra $\bfUn$ is a Hopf subalgebra of $\afUgl$. Furthermore by Proposition \ref{presentation dHallAlg} we conclude that the algebra $\bfUn$ is a $\mbn$-graded algebra with $\deg(E_j)=\deg(F_j)=\deg(K_i)=0$ for $i\in I$, $1\leq j\leq n-1$, $\deg(E_n)=1$, $\deg(\sfz^+_m)=m$ for $m\geq 1$. Clearly $\Delta$, $\sg$ and $\varepsilon$ are all graded algebra homomorphisms. The proposition is proved.
\end{proof}

Recall the notation $\afThnp$ defined in \eqref{afThnp} and the notation $\Thnp$ defined in \eqref{Thnp}.
By Proposition \ref{DDFIsoThm} the algebra $\bfUn$ is  spanned by the elements
$u_A^+K^\bfj u_B^-$ for $A\in\afThnp$, $\bfj\in\afmbzn$ and $B\in\Thnp$.
For $i\in I$ and $t\in\mbn$, let
$$\bigg[{K_i;0 \atop t}\bigg]=
\prod_{s=1}^t \frac{K_i\up^{-s+1}-K_i^{-1}\up^{s-1}}{\up^s-\up^{-s}}.$$
Let $\Un$ be the $\sZ$-submodule of $\bfUn$ spanned by the elements
$u_A^+\prod_{1\leq i\leq n}K_i^{j_i}\big[{K_i;0 \atop \la_i}\big] u_B^-$  for $A\in\afThnp$, $\bfj\in\afmbzn$, $\la\in\afmbnn$ and $B\in\Thnp$. We will prove in Theorem \ref{realization of Un} that
$\Un$ is a $\sZ$-subalgebra of $\bfUn$.

Let $\sU(\cgl)$ be the universal enveloping algebra of the current algebra $\cgl$.
We will prove in Proposition \ref{vi} that
$$\sU(\cgl)\cong\Un\ot_\sZ\mbc/\lan K_i -1 \mid 1\leq i\leq n\ran,$$
where $\mbc$ is regarded as a $\sZ$-module by specializing $v$ to $1$.

Clearly, we have the following result.
\begin{Lem}\label{basis of bfUn}
The set $\{u_A^+K^\bfj u_B^-\mid A\in\afThnp,\,\bfj\in\afmbzn,\,B\in\Thnp\}$ forms a $\mbq(v)$ basis of $\bfUn$.
\end{Lem}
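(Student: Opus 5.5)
The plan is to deduce Lemma \ref{basis of bfUn} from the triangular decomposition of $\dbfHa$, transported to $\afUgl$ by the isomorphism $\sfF$ of Proposition \ref{DDFIsoThm}. First I would recall from \cite{DDF} that multiplication gives a vector space isomorphism $\dbfHap\ot\afUglz\ot\dbfHam\to\dbfHa$. Moreover, $\{u_A^+\mid A\in\afThnp\}$ is a basis of $\dbfHap$, $\{u_A^-\mid A\in\afThnp\}$ is a basis of $\dbfHam$, and $\{K^\bfj\mid\bfj\in\afmbzn\}$ is a basis of $\afUglz$. Together these show that the elements $u_A^+K^\bfj u_B^-$ with $A,B\in\afThnp$ form a basis of $\afUgl$. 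Linear independence of the proposed set is then immediate, since it is a subset of this basis.

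For spanning, I would use the definition $\bfUn=\afUglp\afUglz\bfUglnm$. Under $\sfF$, $\afUglp$ corresponds to $\dbfHap$: it is generated by the $E_i$ and the $\sfz_s^+$, and by \cite{DDF} the Hall algebra $\dbfHap$ is generated by the $E_i=u^+_{\afE_{i,i+1}}$ together with the central elements $c_m^+$, equivalently the $\sfz_m^+$. Hence $\afUglp$ is spanned by the $u_A^+$ with $A\in\afThnp$.

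The remaining point is that $\bfUglnm$, the algebra generated by $F_1,\dots,F_{n-1}$, is exactly the span of the $u_B^-$ with $B\in\Thnp$. I expect this identification to be the main obstacle. One inclusion comes from Proposition \ref{tri quantum gln}: its unitriangular relations, taken over the finite poset of $B\in\Thnp$ with fixed dimension vector, express each $\ti u_B^-$ as a $\sZ$-combination of monomials $\ti u^-_{(w)}$ with $w\in\Ga$. These monomials are products of divided powers of the $F_j$ with $j<n$, so $\ti u_B^-\in\bfUglnm$. For the reverse inclusion, I would note that the representations $M(B)$ with $B\in\Thnp$ are exactly the modules supported away from the arrow $n\to n+1$. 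These are the representations of the linear quiver $A_{n-1}$, and they form an extension-closed subcategory. Consequently the span of these $u_B^-$ is a subalgebra, by Lemma \ref{presentation-dbfHa}(4). It contains every $F_j$ with $j<n$, and hence contains $\bfUglnm$.

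Combining the three pieces, $\bfUn$ is spanned by the $u_A^+K^\bfj u_B^-$ with $A\in\afThnp$, $\bfj\in\afmbzn$ and $B\in\Thnp$, and these elements are linearly independent. This proves the lemma.
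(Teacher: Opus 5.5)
Your proof is correct and follows essentially the paper's route: the paper treats the lemma as immediate from the isomorphism $\sfF$ of Proposition~\ref{DDFIsoThm} and the triangular basis $\{u_A^+K^\bfj u_B^-\}$ of $\dbfHa$, and you supply the details, including the identification of $\bfUglnm$ with the span of the $u_B^-$, $B\in\Thnp$. One wording fix: $B\in\Thnp$ means $\bfd(B)_n=0$, i.e.\ $M(B)$ is supported away from the \emph{vertex} $n$, not merely the arrow $n\to n+1$ (the modules killing that arrow include $S_n$ and are not extension-closed, e.g.\ $M^{n,n+2}$ is an extension of $S_n$ by $S_1$); with the vertex reading, closure under multiplication follows at once from $\bfd(C)=\bfd(A)+\bfd(B)$ in Lemma~\ref{presentation-dbfHa}(4).
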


\begin{Lem}
The algebra $\bfUn$ is generated by the elements $K_i^{\pm 1}$, $u^+_{\afE_{i,j}}$, $u^-_{\afE_{k,l}}$ for $1\leq i\leq n$, $j\in\mbz$ with $i<j$, and $1\leq k<l\leq n$.
\end{Lem}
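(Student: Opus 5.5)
The plan is a two-way inclusion. Write $\mathcal A$ for the subalgebra of $\bfUn$ generated by $K_i^{\pm1}$, $u^+_{\afE_{i,j}}$ ($1\le i\le n$, $j>i$) and $u^-_{\afE_{k,l}}$ ($1\le k<l\le n$). Each of these elements lies in $\bfUn$. The $K_i^{\pm1}$ are generators. Each $u^+_{\afE_{i,j}}$ lies in $\afUglp=\dbfHap$ because $\afE_{i,j}\in\afThnp$. Each $u^-_{\afE_{k,l}}$ with $1\le k<l\le n$ lies in $\bfUglnm$, since $\afE_{k,l}\in\Thnp$ and, by Proposition \ref{tri quantum gln}, every $\ti u^-_A$ with $A\in\Thnp$ is a combination of monomials in the $F_j$ with $1\le j<n$. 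Hence $\mathcal A\subseteq\bfUn$.

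For the reverse inclusion, Lemma \ref{basis of bfUn} reduces the task to showing that $u_A^+\in\mathcal A$ for every $A\in\afThnp$ and $u_B^-\in\mathcal A$ for every $B\in\Thnp$.

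The $u^-$ part comes straight from Proposition \ref{tri quantum gln}. Since $F_j=u^-_{\afE_{j,j+1}}$ for $1\le j<n$ is among our generators, every monomial $\ti u^-_{(w)}$ with $w\in\Ga$ lies in $\mathcal A$. The unitriangularity of \eqref{eq tri Hall} with respect to $\prec$, combined with induction on $\prec$ within a fixed dimension vector (a finite set), then gives $\ti u^-_B\in\mathcal A$ for all $B\in\Thnp$.

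For the $u^+$ part I will use Proposition \ref{tri Hall} in the same way. Here $w_A$ is a word in $\ti I$, and the factors $\ti u^+_{x\bsb}$ with $\bsb\in(\mbn_\vtg^*)^n$ are semisimple elements $\ti u^+_\la=\ti u^+_{S_\la}$ with all $\la_i>0$. These are not among the generators, so they must be expressed through the $u^+_{\afE_{i,j}}$. I will show by induction that $u^+_{S_\la}$ (indeed every $u^+_A$) lies in the subalgebra generated by the $u^+_{\afE_{i,j}}$. The idea is to compare the product $u^+_{\afE_{1,1+a}}u^+_{\afE_{1+a,1+a+b}}\cdots$ against Hall multiplication, using Lemma \ref{presentation-dbfHa}(3). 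A cleaner route is to prove directly that the elements $u^+_{\afE_{i,j}}$ generate $\dbfHap$, following the Ringel--Hall argument for the cyclic quiver. Order the indecomposables $M^{i,j}$ by length. For a suitably ordered product $\prod u^+_{a_{i,j}\afE_{i,j}}$, the Hall number for the extension having $M(A)$ as the generic (degenerate-order maximal) term is nonzero, while all other terms are $\prec A$. The divided powers $u^+_{a\afE_{i,j}}$ are scalar multiples of $(u^+_{\afE_{i,j}})^a$ over $\mbq(v)$, since $M^{i,j}$ has no self-extensions among its own multiples in the relevant sense: $\vi$ gives $(u^+_{\afE_{i,j}})^a=c\,u^+_{a\afE_{i,j}}+\text{lower}$ with $c\neq0$. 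Combining these, triangularity with respect to $\prec$ followed by induction gives all $u^+_A$.

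The main obstacle is this last triangularity statement: showing that the ordered product of indecomposable elements has $u^+_A$ as its leading term with nonzero coefficient, and that the lower terms are strictly smaller in $\prec$. I would try first the standard choice of ordering by decreasing $j-i$, which yields a filtration whose subquotients are the indecomposable summands. Extension-closure of orbit closures (the degeneration order coincides with $\preceq$ on $\afThnp$, as used in \cite{DDX}) then forces every other term into $\overline{\sO}$, that is, into $B\prec A$. The leading coefficient is $\vi_{\ldots}^{A}$ evaluated at $q$; it counts filtrations of $M(A)$ of the prescribed type, and so is positive.
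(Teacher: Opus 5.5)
Your outer reduction matches the paper's. Both arguments use $\bfUn=\bfUnp\bfUnz\bfUnm$, take the $u^-$ part from finite type, and reduce to showing that the $u^+_{\afE_{i,j}}$ generate $\bfUnp$. The paper simply cites \cite[1.4.5]{DDF} for that last fact. Your direct proof of it has the triangularity backwards, and the step you flag as the main obstacle fails as stated.

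In a product of the $u^+_{\afE_{i,j}}$ over the indecomposable summands of $M(A)$, the module $M(A)$ is the \emph{split} extension. It is the most degenerate term, not the generic one. Moreover, in a tube no ordering of the summands removes the extensions in general. Take $n=2$ and $A=2\afE_{1,3}$. The uniserial module $M^{1,5}$ has a unique submodule of length $2$, namely $M^{3,5}\cong M^{1,3}$, and the quotient is $\cong M^{1,3}$. So $\vi^{\afE_{1,5}}_{\afE_{1,3},\afE_{1,3}}=1$, and $u^+_{\afE_{1,5}}$ occurs in $(u^+_{\afE_{1,3}})^2$. But $\sg_{1,5}(\afE_{1,5})=1>0=\sg_{1,5}(2\afE_{1,3})$, so $\afE_{1,5}\not\preceq 2\afE_{1,3}$. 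Consequently $u^+_{a\afE_{i,j}}$ is not a scalar multiple of $(u^+_{\afE_{i,j}})^a$ once $j-i\geq n$, because self-extensions appear. The extra terms lie strictly above $A$, not $\prec A$. Your claim that $M(A)$ becomes the generic extension for a suitable ordering is false.

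The argument is repaired by reversing the induction.
Take the summands of $M(A)$ in any order, with multiplicity, and multiply the corresponding $u^+_{\afE_{i,j}}$. Iterating Lemma \ref{presentation-dbfHa}(3), the coefficient of $u^+_C$ is a power of $v$ times the Hall polynomial counting filtrations of $M(C)$ with the prescribed subquotients. For $C=A$ this is nonzero, since $M(A)$ visibly has such a filtration.

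Any other $C$ that occurs has $M(A)$ as a proper degeneration, because a module with a filtration degenerates to the direct sum of its subquotients. Now $\sg_{i,j}(C)$ is the rank on $M(C)$ of the path of length $j-1-i$ starting at vertex $i$. Ranks can only drop under degeneration, so $A\prec C$. The set $\{A'\in\afThnp\mid\bfd(A')=\bfd(A)\}$ is finite. A \emph{descending} induction on it, starting from the $\preceq$-maximal elements, then puts every $u^+_A$ in your algebra $\mathcal A$. This needs no divided powers and no special ordering.

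With this correction your proposal becomes a self-contained proof of the fact the paper takes from \cite{DDF}.
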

\begin{proof}
Let $\bfUnp$ be the subspace of $\bfUn$
spanned by the elements $u_A^+$ for $A\in\afThnp$.
Let $\bfUnm$ be the subspace of $\bfUn$
spanned by the elements  $u_B^-$ for $B\in\Thnp$. Let  $\bfUnz$ be the subalgebra of $\bfUn$
generated by the elements $K_i^{\pm 1}$ for $1\leq i\leq n$. Then we have $$\bfUn=\bfUnp\bfUnz\bfUnm.$$
By \cite[1.4.5]{DDF} we know  that
$\bfUnp$ is the subalgebra of $\bfUn$ generated by the elements $u^+_{\afE_{i,j}}$ for $i,j\in\mbz$ with $i<j$. Furthermore, $\bfUnm$ is the subalgebra of $\bfUn$
generated by the elements $u^-_{\afE_{i,j}}$ for $1\leq i<j\leq n$.
The assertion follows.
\end{proof}

\subsection{A presentation of $\bfUn$}
 We now describe a presentation for
$\bfUn$ as follows.

\begin{Prop}\label{presentation dHallAlg}
If $n\geq 2$, then the  algebra $\bfUn$ is the $\mbq(v)$-algebra generated by
$E_i,\ F_j,\  K_i,\ K_i^{-1},\ \sfz^+_m,$ $i\in I$, $1\leq j\leq n-1$,
$m\in\mbz^+$  with relations:
\begin{itemize}
\item[(1)]
$K_{i}K_{j}=K_{j}K_{i},\ K_{i}K_{i}^{-1}=K_{i}^{-1}K_i=1$,
$K_{i}E_j=\up^{\dt_{i,j}-\dt_{i,j+1}}E_jK_{i}$ for $i,j\in I$;

\item[(2)]
$K_{i}F_j=\up^{-\dt_{i, j}+\dt_{ i,j+1}} F_jK_i$,
$E_iF_j-F_jE_i=\delta_{i,j}\frac
{\ti K_{i}-{\ti K_{i}}^{-1}}{\up-\up^{-1}}$ for $i\in I$, $1\leq j\leq n-1$;

\item[(3)]
$\displaystyle\sum_{a+b=1-c_{i,j}}(-1)^a\leb{1-c_{i,j}\atop a}\rib
E_i^{a}E_jE_i^{b}=0$ for $i\not=j\in I$;

\item[(4)]
$\displaystyle\sum_{a+b=1-c_{i,j}}(-1)^a\leb{1-c_{i,j}\atop a}\rib
F_i^{a}F_jF_i^{b}=0$ for $1\leq i\not=j\leq n-1$;

\item[(5)]
$\sfz^+_mK_i=K_i\sfz^+_m$, $\sfz^+_mE_i=E_i\sfz^+_m$, $\sfz^+_mF_j=F_j\sfz^+_m$ for $i\in I$, $1\leq j\leq n-1$,
$m\in\mbz^+$.
\end{itemize}
\end{Prop}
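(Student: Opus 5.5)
We compare the abstract algebra with the subalgebra it should present. Let $\mathcal A$ be the $\mbq(v)$-algebra defined by generators $E_i,F_j,K_i^{\pm1},\sfz^+_m$ and relations (1)--(5). Write $\psi:\mathcal A\to\bfUn$ for the map sending each generator to the element of the same name. The claim is that $\psi$ is an isomorphism.

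\emph{Step 1: $\psi$ is a well-defined surjection.} The relations (1)--(5) hold in $\afUgl\cong\dbfHa$. Relations (1)--(4) are among the defining relations of the affine quantum group $\afUsl$ extended by the $K_i$ (Beck's theorem, together with Lemma \ref{presentation-dbfHa}). Relation (5) holds because $\sfz^+_m$ is central in $\dbfHap$. It commutes with the $K_i$ by weight considerations, since $\bfd=m\dt$ pairs trivially. It commutes with $F_j$ by the commutator relation Lemma \ref{presentation-dbfHa}(5), or equivalently by (QLA3) with $t=0$ and the identification $\sfz^+_s\mapsto -s v^s\th_s$: $\th_s$ is a multiple of $\sum_i\ttg_{i,s}$, whose bracket with $\ttx^\pm_{j,t}$ vanishes. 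Surjectivity follows because $\bfUn$ is generated by these elements.

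\emph{Step 2: triangular decomposition of $\mathcal A$.} Let $\mathcal A^+$ be the subalgebra generated by the $E_i$ and $\sfz^+_m$, $\mathcal A^0$ the subalgebra generated by the $K_i^{\pm1}$, and $\mathcal A^-$ the subalgebra generated by the $F_j$. Relations (1), (2) and (5) let us move every $F_j$ to the right of the $E_i$ and $\sfz^+_m$. The error terms from $E_iF_j$ lie in $\mathcal A^0$. Hence $\mathcal A=\mathcal A^+\mathcal A^0\mathcal A^-$. By (3), $\mathcal A^+$ is a quotient of the algebra generated by the $E_i$ with the affine Serre relations, with the central polynomial algebra $\mbq(v)[\sfz^+_1,\sfz^+_2,\dots]$ adjoined. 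By (4), $\mathcal A^-$ is a quotient of $\bfUglnm$.

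\emph{Step 3: injectivity by comparing sizes.} It suffices to show that $\psi$ maps a spanning set of $\mathcal A$ to linearly independent elements. We use the known structure of $\dbfHap$: it is generated by the $E_i$ (the composition algebra $\mathfrak C^+$, which is the positive part of $\afUsl$ with the Serre relations as its only relations) together with the central $\sfz^+_m$. Moreover, multiplication gives an isomorphism $\dbfHap\cong\mathfrak C^+\otimes\mbq(v)[\sfz^+_1,\sfz^+_2,\dots]$ (Schiffmann, Hubery, cf.\ \cite{DDF}). Therefore the surjection $\mathcal A^+\to\dbfHap$ is an isomorphism: a spanning set of $\mathcal A^+$ given by products of Serre-reduced monomials with monomials in the $\sfz$'s maps onto a basis. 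Similarly, $\mathcal A^-\to\bfUglnm$ is an isomorphism because the Serre relations present $\bfUglnm$. The elements $K^\bfj$ map to linearly independent elements. Combining this with Step 2 and the triangular basis of Lemma \ref{basis of bfUn}, namely $\bfUn\cong\bfUnp\ot\bfUnz\ot\bfUnm$, a spanning set of $\mathcal A$ of the form $x^+K^\bfj y^-$ maps bijectively onto a basis of $\bfUn$. Hence $\psi$ is injective.

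\emph{Main obstacle.} The delicate point is Step 3 for the positive part: we need that $\dbfHap$ is exactly the tensor product of the Serre-presented composition algebra and a free polynomial algebra in the $\sfz^+_m$. We quote this from \cite{DDF} (Theorem 2.3.8/Cor.\ 2.3.5 there), where the full presentation of $\dbfHa$ is proved by the same generators. An alternative is to deduce the claim from that full presentation by restricting to the parabolic part, using the triangular decomposition on both sides. The hypothesis $n\ge2$ is needed so that the $E_i$ form a genuine Cartan datum of affine type $A_{n-1}$ with the stated Serre relations.
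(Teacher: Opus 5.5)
Your route is the paper's route. The presented algebra surjects onto $\bfUn$, you factor it triangularly, and you combine Lusztig's Serre presentation of the $E$- and $F$-parts with Hubery's description of $\dbfHap$ as the composition algebra tensored with $\mbq(v)[\sfz^+_1,\sfz^+_2,\dots]$. This sends a spanning set to the basis of Lemma~\ref{basis of bfUn}. The paper orders the factors as $E$'s, then $F$'s, then $K$'s and $\sfz$'s, but that difference is cosmetic.

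\textbf{Gap: the $\sfz^+_m$ are not forced to commute.} In Step 2 you treat the $\sfz^+_m$ as generating a commutative polynomial algebra. However, the relations (1)--(5) contain no relation $\sfz^+_m\sfz^+_{m'}=\sfz^+_{m'}\sfz^+_m$. Relation (5) only makes each $\sfz^+_m$ commute with $E_i$, $F_j$ and $K_i^{\pm1}$. Taken literally, the presented algebra is therefore $B\otimes\mbq(v)\langle\sfz^+_1,\sfz^+_2,\dots\rangle$, where $B$ is presented by (1)--(4) and the second factor is the \emph{free} algebra.

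Consequently $\sfz^+_1\sfz^+_2-\sfz^+_2\sfz^+_1$ is a nonzero element of $\ker\psi$: in $\bfUn$ the $\sfz^+_m$ do commute, since they are multiples of the $\th_s$ and (QLA4) applies. Your comparison in Step 3 then breaks down. If you use ordered monomials in the $\sfz$'s, they do not span $\mathcal A^+$. If you use all words, their images are not linearly independent.

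This omission is in the statement itself. The paper's proof makes the same tacit assumption when it fixes a basis $\sB_3$ of the subalgebra generated by the $K_i^{\pm1}$ and $\sfz^+_m$ and asserts that $\Phi(\sB)$ is a basis of $\bfUn$. Add the relation $\sfz^+_m\sfz^+_{m'}=\sfz^+_{m'}\sfz^+_m$, as in the Deng--Du--Fu presentation of $\dbfHa$. It holds in $\bfUn$ by (QLA4). With it, your Steps 1--3 go through as written.
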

\begin{proof}
Let $\sU$ be the
$\mbq(v)$-algebra generated by
$E_i,\ F_j,\  K_i,\ K_i^{-1},\ \sfz^+_m$ ($1\leq i\leq n$, $1\leq j\leq n-1$, $m\in\mbz^+$) with the defining relations (1)-(5).
There is a surjective algebra homomorphism
$\Phi:\sU\ra\bfUn$ satisfying $\Phi(E_i)=E_i$, $\Phi(F_j)=F_j$,
$\Phi(K_i)=K_i$, $\Phi(\sfz^+_m)=\sfz^+_m$ for $i\in I$, $1\leq j\leq n-1$ and $m\in\mbz^+$.

Let $\sU_1$ (respectively, $\sU_2$) be the subalgebra of $\sU$ generated by the elements $E_i$, $i\in I$ (respectively $F_j$, $1\leq j\leq n-1$).
Let $\sU_3$ be  the subalgebra of $\sU$ generated by the elements
$K_i^{\pm 1}$, $\sfz^+_m$ for $i\in I$ and $m\in\mbz^+$. Let $\sB_i$  be a $\mbq(v)$-basis of $\sU_i$ for $1\leq i\leq 3$.
Let $\sB=\sB_1\sB_2\sB_3$. Then we have
$$\sU=\spann_{\mbq(v)}\sB.$$
Let $\bfU^+(\afsl)$ be the subalgebra of $\bfUn$ generated by the elements
$u_i^+$ for $i\in I$. By \cite[33.1.3]{Lubk} we see that
there is a natural algebra homomorphism $\iota:\bfU^+(\afsl)\ra\sU$ such that $\iota(u_i^+)=E_i$ for $i\in I$. Since $\Phi\circ\iota$ is injective and $\sU_1=\iota(\bfU^+(\afsl))$, we conclude that the map $\Phi|_{\sU_1}:\sU_1\ra\bfUn$ is injective.  Similarly, $\Phi|_{\sU_2}:\sU_2\ra\bfUn$ is injective.
By Lemma \ref{basis of bfUn} and \cite{Hub}
 we have $$\bfUn=\bfU^+(\afsl)\ot\mbq(v)[\sfz^+_m\mid m\in\mbz^+]\ot\bfUnz\ot\bfUnm.$$
Hence $\Phi(\sB)$ is the basis of $\bfUn$. It follows that $\Phi$ is an algebra isomorphism. The proposition is proved.
\end{proof}
\begin{Rem}
If $n=1$, then the  algebra $\bfUn$ is the $\mbq(v)$-algebra generated by
$K_1,\ K_1^{-1},\ \sfz^+_m$, $i\in I$,
$m\in\mbz^+$ with relations:
\begin{itemize}
\item[(1)]
$K_{1}K_{1}^{-1}=K_{1}^{-1}K_{1}=1$,

\item[(2)]
$\sfz^+_mK_1=K_1\sfz^+_m$, for $m\in\mbz^+$.
\end{itemize}
\end{Rem}

\subsection{The modified quantum current algebra $\dbfUn$}
 Let  $\afPin=\{\afal_j:=\afbse_j-\afbse_{j+1}\mid 1\leq j\leq n\}.$
The algebra $\bfUn$ is a $\mbz\afPin$-graded algebra
$$\bfUn=\bop_{\nu\in\mbz\afPin}\bfUn(\nu),$$
with $u_A^+\in\bfUn({\sum\limits_{1\leq i\leq n}d_i\afal_i})$, $u_B^-\in\bfUn(-
\sum_{1\leq i\leq n}d_i'\afal_i)$ and $K_i^{\pm 1}\in\bfUn(0)$.
for $A\in\afThnp$, $B\in\Thnp$ and $1\leq i\leq n$, where $(d_i)_{i\in\mbz}=\bfd(A)$ and
$(d_i')_{i\in\mbz}=\bfd(B)$.
By \cite[3.5.2]{Fu13}, we have
$$u_A^+\in\bfUn(\ro(A)-\co(A)),\
 u_B^-\in\bfUn(\co(B)-\ro(B))$$
for $A\in\afThnp$, $B\in\Thnp$.

Following \cite{Lubk}, we introduce the modified quantum current algebra $\dbfUn$ as follows.
For $\la,\mu\in\afmbzn$ let
$$\la\centerdot\mu=\la_1\mu_1+\la_2\mu_2+\cdots+\la_n\mu_n.$$
For $\la,\mu\in\afmbzn$ we set ${}_\la\bfUn_\mu=\bfUn/{}_\la I_\mu$, where
\begin{equation*}
{}_\la I_\mu=\sum_{\bfj\in\afmbzn}(\Kbfj-
 \up^{\la\centerdot\bfj})\bfUn+\sum_{\bfj\in\afmbzn}\bfUn(\Kbfj
 -\up^{\mu\centerdot\bfj}).
 \end{equation*}
Let $\pi_{\la,\mu}:\bfUn\ra{}_\la\bfUn_\mu$ be the canonical projection.
Let
$$\dbfUn:=\bop_{\la,\mu\in\afmbzn}{}_\la\bfUn_\mu.$$

For $\la',\mu',\la'',\mu''\in\afmbzn$ with
$\la'-\mu',\la''-\mu''\in\mbz\afPin$ and any $x\in\bfUn({\la'-\mu'})$,
$y\in\bfUn({\la''-\mu''})$,  define
\begin{equation*}
\pi_{\la',\mu'}(x)\pi_{\la'',\mu''}(y)=
\begin{cases}
\pi_{\la',\mu''}(xy)& \text{if  $\mu'=\la''$},\\
0 &\text{otherwise.}
\end{cases}
\end{equation*}
Then $\dbfUn$ becomes an associative $\mbq(\up)$-algebra. The algebra $\dbfUn$ is naturally a $\bfUn$-bimodule defined by
$x'\pi_{\la',\la''}(y)x''=\pi_{\la'+\nu',\la''-\nu''}(x'yx'')$,
for $x'\in\bfUn({\nu'})$, $y\in\bfUn$, $x''\in\bfUn({\nu''})$ and $\la',\la''\in\afmbzn$.

By Lemma \ref{basis of bfUn}, we have the following result.
\begin{Lem}
The set $\{u_A^+1_\la u_B^-\mid A\in\afThnp,\,\la\in\afmbzn,\,B\in\Thnp\}$ forms a $\mbq(v)$ basis of $\dbfUn$, where $1_\la=\pi_{\la,\la}(1)$.
\end{Lem}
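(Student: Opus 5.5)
The plan is to reduce everything to the corresponding statement for $\bfUn$ (Lemma \ref{basis of bfUn}) by a weight-space analysis, in the same spirit as Lusztig's treatment of $\dot\bfU$ in \cite{Lubk}.

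First, spanning. Fix $\la,\mu$. The space ${}_\la\bfUn_\mu$ is a quotient of $\bfUn$, so by Lemma \ref{basis of bfUn} it is spanned by the images $\pi_{\la,\mu}(u_A^+K^\bfj u_B^-)$. Since $K^\bfj$ commutes with $u_B^-$ up to a power of $v$, we may move $K^\bfj$ to the right end. The relation $\bfUn(K^\bfj-\up^{\mu\centerdot\bfj})\subseteq{}_\la I_\mu$ then shows that $K^\bfj$ acts as the scalar $\up^{\mu\centerdot\bfj}$ there. Hence ${}_\la\bfUn_\mu$ is spanned by the $\pi_{\la,\mu}(u_A^+u_B^-)$.

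By the weight grading, $\pi_{\la,\mu}(x)=0$ for homogeneous $x$ of weight $\nu\neq\la-\mu$. Indeed, $K^\bfj x=\up^{\nu\centerdot\bfj}xK^\bfj$, and $K^\bfj x \equiv \up^{\la\centerdot\bfj}x$ while $xK^\bfj\equiv\up^{\mu\centerdot\bfj}x$ modulo ${}_\la I_\mu$. This gives $(\up^{\la\centerdot\bfj}-\up^{(\nu+\mu)\centerdot\bfj})\pi_{\la,\mu}(x)=0$ for all $\bfj$. Here one should check that the pairing $\centerdot$ separates weights, which is clear since $\bfj$ ranges over all of $\mbz^n$.

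Using the bimodule structure, $\pi_{\la,\mu}(u_A^+u_B^-)=u_A^+1_{\la'}u_B^-$ with $\la'=\mu+(\co(B)-\ro(B))$, which is also $\la-(\ro(A)-\co(A))$. So the elements $u_A^+1_{\la'}u_B^-$ span $\dbfUn$. Each one lies in a unique summand ${}_\la\bfUn_\mu$, determined by $\la'$, $A$ and $B$.

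Second, linear independence, which is the main obstacle. Write ${}_\la\bfUn_\mu$ as $\bfUn(\la-\mu)/({}_\la I_\mu\cap\bfUn(\la-\mu))$. I would describe this intersection as $\sum_\bfj \bfUn(\la-\mu)(K^\bfj-\up^{\mu\centerdot\bfj})$, since left multiplication by $K^\bfj$ can be converted into right multiplication with a twisted scalar. The component $\bfUn(\la-\mu)$ has basis $u_A^+K^\bfj u_B^-$ over the appropriate pairs $(A,B)$. By Lemma \ref{basis of bfUn}, as a free module it is $\bigoplus_{(A,B)}u_A^+\,\mbq(v)[K^{\pm1}]\,u_B^-$.

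Quotienting by the ideal generated by the $K^\bfj-\up^{\mu'\centerdot\bfj}$, with the correct shifted weight $\mu'$ inserted in the middle, is evaluation of the Laurent polynomial ring $\mbq(v)[K_1^{\pm1},\dots,K_n^{\pm1}]$ at a point. So each summand $u_A^+\mbq(v)[K^{\pm1}]u_B^-$ becomes one-dimensional, spanned by $u_A^+1_{\la'}u_B^-$.

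The delicate point is to verify the following: moving the right-hand relation $(K^\bfj-\up^{\mu\centerdot\bfj})$ past $u_B^-$ yields exactly $u_A^+\,\mbq(v)[K^{\pm1}](K^\bfj-\up^{\la'\centerdot\bfj})\,u_B^-$, so the kernel respects the direct sum decomposition. Given this, the images are linearly independent, which completes the proof.
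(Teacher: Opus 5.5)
Your proposal is correct and follows essentially the same route as the paper. The paper just deduces the lemma from the basis $\{u_A^+K^\bfj u_B^-\}$ of $\bfUn$ in Lemma~\ref{basis of bfUn}. Your argument fills in the details it leaves implicit: the weight-space reduction, and the commutation $u_B^-(K^\bfj-\up^{\mu\centerdot\bfj})=\up^{-w\centerdot\bfj}(K^\bfj-\up^{\la'\centerdot\bfj})u_B^-$ with $w=\co(B)-\ro(B)$ and $\la'=\mu+w$, which makes each summand $u_A^+\mbq(v)[K^{\pm1}]u_B^-$ collapse to a one-dimensional evaluation quotient, so your ``delicate point'' is settled by this one-line computation.
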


Let $\dUn$ be the $\sZ$-submodule of $\dbfUn$ spanned by the elements $u_A^+1_\la u_B^-$ for $A\in\afThnp$, $B\in\Thnp$
and $\la\in \afmbzn$.

\section{Schur--Weyl reciprocity for the quantum current algebra $\bfUn$}

\subsection{The algebra $\bfUnr$}
Let $\Unr$ be the $\sZ$-submodule of the affine quantum Schur algebra $\afbfSr$ spanned by the set
$\{[A]\mid A\in\Xinr\}$, where $$\Xinr=\{A\in\afThnr\mid a_{i,j}=0 \text{ for }1\leq i\leq n,\, j<1\}.$$
Let $$\bfUnr=\spann_{\mbq(v)}\{[A]\mid A\in\Xinr\}\han\afbfSr.$$
We will prove in Proposition \ref{subalgebra} that $\Unr$  is a $\sZ$-subalgebra of $\afbfSr$. Thus,
$\bfUnr$ is a $\mbq(v)$-subalgebra of $\afbfSr$.
Furthermore, we shall prove in Theorem \ref{quotient} that the algebra $\bfUnr$ is a quotient algebra of $\bfUn$. These algebras $\bfUnr$ will be used to give a BLM realization of $\bfUn$ in \S5.
The algebra $\bfUnr$ plays the role of the affine quantum Schur algebra $\afbfSr$.
Categorifications of affine quantum Schur algebras $\afbfSr$ for $n\geq r$ was given in \cite{MT15,MT17}.
It would be interesting to investigate the categorification of the algebra $\bfUnr$.

Let
\begin{equation}\label{Xin}
\Xin=\{A\in\afThn\mid a_{i,j}=0 \text{ for }1\leq i\leq n,\, j<1\}\text{ and }
\Xinpm=\Xin\cap\afThnpm.
\end{equation}
The following result can be easily proved.
\begin{Lem}
The set $\{A(\bfj,r)\mid A\in\Xinpm,\,\bfj\in\afmbnn,\,\sg(A)+\sg(\bfj)\leq r\}$
forms a $\mbq(v)$-basis for $\bfUnr$.
\end{Lem}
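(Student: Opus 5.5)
The plan is to reduce everything to one fixed off-diagonal matrix $A$ at a time. For $A\in\Xinpm$ with $\sg(A)\le r$, set $m=r-\sg(A)$ and
$$V_A=\spann_{\mbq(v)}\{[A+\diag(\mu)]\mid \mu\in\afLa(n,m)\}.$$
Adding a diagonal matrix does not change the entries $a_{i,j}$ with $j<1$, so $A+\diag(\mu)\in\Xinr$ exactly when $A\in\Xinpm$. Every element of $\Xinr$ decomposes uniquely as $A+\diag(\mu)$ with $A$ its off-diagonal part. Hence
$$\bfUnr=\bigoplus_{A\in\Xinpm,\ \sg(A)\le r}V_A,$$
and this sum is direct because the $[B]$, $B\in\afThnr$, form a basis of $\afbfSr$. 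By definition each $A(\bfj,r)$ lies in $V_A$. So it suffices to prove, for each fixed $A$, that the family $\{A(\bfj,r)\mid \bfj\in\afmbnn,\ \sg(\bfj)\le m\}$ is a basis of $V_A$.

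For fixed $A$, write $A(\bfj,r)=\sum_\mu v^{\mu\centerdot\bfj}[A+\diag(\mu)]$. This identifies $V_A$ with the space of $\mbq(v)$-valued functions on the finite set $\afLa(n,m)$, and identifies $A(\bfj,r)$ with the function $\mu\mapsto v^{\mu\centerdot\bfj}=\prod_i (v^{\mu_i})^{j_i}$. In other words, $A(\bfj,r)$ is the monomial $x^{\bfj}$ evaluated at the points $x=(v^{\mu_1},\dots,v^{\mu_n})$, and these points are pairwise distinct for distinct $\mu$.

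Spanning then becomes a polynomial interpolation statement: every function on these points is the restriction of a polynomial of total degree $\le m$. I would prove it directly. For a given $\mu$, take
$$P_\mu(x)=\prod_{i=1}^{n-1}\prod_{0\le s\le m,\ s\ne\mu_i}\frac{x_i-v^{s}}{v^{\mu_i}-v^{s}},$$
restricted to monomials in $x_1,\dots,x_{n-1}$. Since $\mu_n$ is determined by the other coordinates, this polynomial separates $\mu$ from all other points. Its degree can be reduced to $\le m$ by using the constraint $\sum_i\mu_i=m$ through $x_n$ when needed. Expanding $P_\mu$ expresses each $[A+\diag(\mu)]$ as a combination of the $A(\bfj,r)$ with $\sg(\bfj)\le m$. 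This shows that the stated family spans $\bfUnr$.

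The main obstacle is linear independence, which is a dimension count. We have $\dim V_A=|\afLa(n,m)|=\binom{m+n-1}{n-1}$. The number of indices $\bfj\in\afmbnn$ with $\sg(\bfj)\le m$ is $\binom{m+n}{n}$, which is larger as soon as $n\ge 1$ and $m\ge 1$. For instance, $x_1x_2\cdots$-type relations show that $\sum_i v^{\cdot}$-multiples of $A(\bfj+\afbse_i,r)$ are dependent on $A(\bfj,r)$ through $\sum_i\mu_i=m$. So the first thing to do in the write-up is check this count against the intended reading of the statement.

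If the count is indeed off, independence cannot hold for the literal index set. I would then establish independence for the subfamily $\sg(\bfj)=m$. The restriction of degree-$m$ homogeneous monomials to the points $(v^{\mu_i})$ has the correct cardinality, and its evaluation matrix is a generalized Vandermonde matrix in distinct points. Its nonvanishing determinant I would verify by specializing $v$ to a generic value, or by a leading-term argument in the order $\pr$ on $\mu$. With that, the full stated family would span, with the independent part supplied by this subfamily.
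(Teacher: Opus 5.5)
The paper gives no proof of this lemma; it only says it ``can be easily proved''. Your dimension count settles the matter: as printed, the lemma is false for every $r\geq 1$. Already for $r=1$ and $A=0$,
\[
\sum_{i=1}^n 0(\afbse_i,1)=(v+n-1)\,0(\bfl,1).
\]
More generally, for any $A\in\Xinpm$ with $m=r-\sg(A)\geq 1$, the $\binom{m+n}{n}$ elements $A(\bfj,r)$ with $\sg(\bfj)\le m$ lie in your space $V_A$, which has dimension only $\binom{m+n-1}{n-1}$. Your decomposition $\bfUnr=\bigoplus_A V_A$ is correct, and so is your identification of $A(\bfj,r)$ with the function $\mu\mapsto v^{\mu\centerdot\bfj}$ on $\afLa(n,m)$. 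The repaired statement you arrive at is the right one: the stated family spans $\bfUnr$, and the subfamily with $\sg(A)+\sg(\bfj)=r$ is a basis. The lemma is never cited later in the paper, so nothing downstream breaks.

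However, the two steps that carry the content of the repaired statement are only gestured at.

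\emph{Spanning.} Your $P_\mu$ has total degree up to $(n-1)m$, and ``reduce using $x_n$'' is not an argument. Use instead
\[
Q_\mu(x)=\prod_{i=1}^{n}\prod_{0\le s<\mu_i}\frac{x_i-v^{s}}{v^{\mu_i}-v^{s}} .
\]
At $x=(v^{\nu_1},\dots,v^{\nu_n})$ with $\nu\in\afLa(n,m)$, this vanishes unless $\nu_i\ge\mu_i$ for all $i$. Together with $\sg(\nu)=\sg(\mu)$, that forces $\nu=\mu$, so $Q_\mu$ is the indicator of $\mu$. It involves only monomials $x^{\bfj}$ with $\bfj\le\mu$. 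Hence $[A+\diag(\mu)]$ is a combination of the $A(\bfj,r)$ with $\sg(\bfj)\le m$.

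\emph{Independence.} ``Specialize $v$ generically'' presupposes that the determinant is nonzero, so it proves nothing by itself. The order $\pr$ cannot help either, because it does not see the diagonal part $\mu$ at all. Argue directly instead. Expand
\[
D=\det\bigl(v^{\mu\centerdot\bfj}\bigr)_{\mu,\bfj\in\afLa(n,m)}
\]
over bijections $w$ of $\afLa(n,m)$. For each $\mu$,
\[
\mu\centerdot w(\mu)\le\tfrac12\bigl(\mu\centerdot\mu+w(\mu)\centerdot w(\mu)\bigr),
\]
with equality iff $w(\mu)=\mu$. Summing over $\mu$, the exponent $\sum_\mu\mu\centerdot w(\mu)$ attains its maximum $\sum_\mu\mu\centerdot\mu$ only at $w=\mathrm{id}$. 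So $D$ has leading term $v^{\sum_\mu\mu\centerdot\mu}$ with coefficient $1$, and $D\neq 0$. This square nonsingular matrix gives both independence and spanning for the subfamily $\sg(\bfj)=m$ at once, so the interpolation step becomes unnecessary.
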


For $T=(t_{i,j})\in\aftiThn$ let $\dt_T=(t_{i,i})_{i\in\mbz}\in\afmbzn,$ and $\ti T=(\ti t_{i,j})$,
where $\ti t_{i,j}=t_{i-1,j}$ for all $i,j\in\mbz$.
Let $\bar\ :\sZ\ra\sZ$ be the ring homomorphism defined by $\bar \up=\up^{-1}$.
To prove that $\Unr$ is a $\sZ$-subalgebra of the affine quantum Schur algebra $\afbfSr$, we need the following multiplication formulas.
\begin{Prop}\label{[B][A]}
Let $A\in\Xinr$ and  $\al,\ga\in\afmbnn$ with $\ga_n=0$.

$(1)$ If $B\in\afThnr$ satisfies that $B-\sum\limits_{1\leq i\leq n}\al_i\afE_{i,i+1}$ is a diagonal matrix and $\co(B)=\ro(A)$, then in $\Unr$:
$$[B][A]=\sum_{T\in\afThn,\,\ro(T)=\al \atop
A+T-\ti T\in\Xinr}f_{T,A}[A+T-\ti T],$$
where $$f_{T,A}=v^{\bt(T,A)}\prod_{1\leq i\leq n\atop j\in\mbz}\ol{\dleb{a_{i,j}+t_{i,j}-t_{i-1,j}\atop t_{i,j}}\drib}$$
and
$\bt(T,A)=\sum_{1\leq i\leq n,\,j\geq l}(a_{i,j}-t_{i-1,j})t_{i,l}-\sum_{1\leq i\leq n,\,j>l}(a_{i+1,j}-t_{i,j})t_{i,l}$.

$(2)$ If $C\in\afThnr$ satisfies that $C-\sum_{1\leq i\leq n}\ga_i\afE_{i+1,i}$ is a diagonal matrix and $\co(C)=\ro(A)$, then in $\Unr$:
$$[C][A]=\sum_{T\in\afThn,\,\ro(T)=\ga\atop
A-T+\ti T\in\Xinr}f'_{T,A}[A-T+\ti T],$$
where $$f'_{T,A}=v^{\bt'(T,A)}\prod_{1\leq i\leq n\atop j\in\mbz}\ol{\dleb{a_{i,j}-t_{i,j}+t_{i-1,j}\atop t_{i-1,j}}\drib}$$ and
$\bt'(T,A)=\sum_{1\leq i\leq n,\,l\geq j}(a_{i,j}-t_{i,j})t_{i-1,l}-\sum_{1\leq i\leq n,\,l>j}(a_{i,j}-t_{i,j})t_{i,l}$.
\end{Prop}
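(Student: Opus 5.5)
The plan is to take the known multiplication formulas in the full affine quantum Schur algebra $\afbfSr$ as the starting point. These are Lusztig's formulas \cite{Lu99}, as normalized in \cite[Thm.~3.3.1]{DDF} and \cite[Prop.~3.5]{DF18}, and they describe $[B][A]$ and $[C][A]$ for arbitrary $A\in\afThnr$. Concretely, for $B$ with $B-\sum_i\al_i\afE_{i,i+1}$ diagonal and $\co(B)=\ro(A)$, they give
$$[B][A]=\sum_{T\in\afThn,\ \ro(T)=\al\atop A+T-\ti T\in\afThn}f_{T,A}[A+T-\ti T],$$
with exactly the coefficient $f_{T,A}=v^{\bt(T,A)}\prod\ol{\dleb{\cdots}\drib}$ displayed in the statement. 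The analogous formula holds for $[C][A]$, with $f'_{T,A}$. I would recall these formulas and check that the normalization $[A]=v^{-d_A}e_A$ and the exponents $\bt,\bt'$ match those of the cited source, so that the stated coefficients need no recomputation. If the reference uses a different convention, I would re-derive the exponent. This is done by the standard lattice count: count the $\bfL'$ with $(\bfL,\bfL')\in\sO_B$ and $(\bfL',\bfL'')\in\sO_A$, noting that $L_i\subseteq L'_i$ with $L'_i/L_i$ of dimension $\al_i$, then add the shift $d_{A+T-\ti T}-d_A-d_B$.

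The genuinely new content is the restriction of the index set. I need to show that when $A\in\Xinr$, every term with $f_{T,A}\neq0$ automatically has $A+T-\ti T\in\Xinr$, or else that the product is indexed by those $T$ anyway. For part (1), the entries of $A+T-\ti T$ are $a_{i,j}+t_{i,j}-t_{i-1,j}$. Nonvanishing of the Gaussian binomial forces $0\le t_{i,j}\le a_{i,j}+t_{i,j}-t_{i-1,j}$, and $A+T-\ti T\in\afThn$ forces all entries to be nonnegative. So I want $t_{i,j}=0$ whenever $1\le i\le n$ and $j<1$.

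The key observation is that $T$ arises as a decomposition of rows of the target and $t_{i,j}\le a_{i,j}+t_{i,j}-t_{i-1,j}$. I would argue row by row, using periodicity $t_{i,j}=t_{i+n,j+n}$. The condition on the support of $T$ (that $t_{i,j}\ne0$ only where $a_{i+1,j}$ or the new entry lives) together with $a_{i,j}=0$ for $j<1$, $1\le i\le n$, propagates to $t_{i,j}=0$ in that region. Here $t_{i,j}\le a_{i+1,j}$ holds, since the lattice moves $L_i\subset L'_i\subset L_{i+1}$ sit inside the next row's data. For $i=n$ this uses $a_{n+1,j}=a_{1,j-n}=0$ for $j<n+1$. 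Hence $A+T-\ti T\in\Xinr$ when the remaining indices $j\ge1$ are taken into account. If this attempt shows some $T$ producing entries with $j<1$, I would check that those entries are forced to vanish, in which case the sum can be restricted as in the statement.

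Part (2) is analogous. The entries are $a_{i,j}-t_{i,j}+t_{i-1,j}$, with $t_{i,j}\le a_{i,j}$ from the binomial. Here the hypothesis $\ga_n=0$ is crucial: it gives $t_{n,j}=0$, so row $1$ receives nothing from row $0\equiv n$ shifted by $n$. Without it, multiplying by $\afE_{n+1,n}$ (the lowering in the affine direction) could create entries $a_{1,j}$ with $j\le0$. So for $1\le i\le n$ and $j<1$, the new entry is $-t_{i,j}+t_{i-1,j}$, and both terms vanish: $t_{i,j}\le a_{i,j}=0$, and $t_{i-1,j}$ is either $t_{0,j}=t_{n,j+n}=0$ by $\ga_n=0$, or is bounded by $a_{i-1,j}=0$. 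Consequently $\Unr$ is closed under these multiplications.

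I expect the main obstacle to be the bookkeeping in part (1). There the boundary row $i=n$ interacts with row $n+1$ through periodicity, so one must carefully verify that the superdiagonal multiplication never pushes mass into columns $j<1$. The fallback is to state the result simply as the general formula with the observation that terms outside $\Xinr$ have zero coefficient.
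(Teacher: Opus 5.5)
Your proposal is correct and takes essentially the paper's route. You quote the general multiplication formulas in $\afbfSr$ (the paper cites \cite[3.6]{DF13}) and observe that nonvanishing of the Gaussian binomials forces $t_{i-1,j}\le a_{i,j}$ in part (1) and $t_{i,j}\le a_{i,j}$ in part (2); combined with $a_{i,j}=0$ for $1\le i\le n$, $j<1$, with periodicity, and in (2) with $t_{0,j}=t_{n,j+n}=0$ from $\ga_n=0$, this shows every term with nonzero coefficient lies in $\Xinr$. One tidy-up: your inequality $t_{i,j}\le a_{i+1,j}$ in part (1) is just the binomial condition you already stated, re-indexed, so neither the lattice heuristic nor the fallback is needed.
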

\begin{proof}
Since $A\in\Xinr$, we have $a_{i,j}=0$ for $1\leq i\leq n$ and $j<1$.
By \cite[3.6]{DF13} we have
$$[B][A]=\sum_{T\in\afThn,\,\ro(T)=\al \atop
A+T-\ti T\in\afThnr}f_{T,A}[A+T-\ti T].$$
If $f_{T,A}\not=0$ then we have $$a_{i,j}+t_{i,j}-t_{i-1,j}\geq t_{i,j}$$ for $i,j\in\mbz$.
It follows that $0=a_{i,j}\geq t_{i-1,j}$ for $1\leq i\leq n$ and $j<1$.
This implies that $t_{n,j}=t_{0,n-j}=0$ for $j<1$. Hence we have
$A+T-\ti T\in\Xinr$. The assertion (1) follows.

By \cite[3.6]{DF13}  we have
$$[C][A]=\sum_{T\in\afThn,\,\ro(T)=\ga\atop
A-T+\ti T\in\afThnr}f'_{T,A}[A-T+\ti T].$$
If $f_{T,A}'\not=0$ then we have $$a_{i,j}-t_{i,j}+t_{i-1,j}\geq t_{i-1,j}$$ for $i,j\in\mbz$.
This implies  that $0=a_{i,j}\geq t_{i,j}$ for $1\leq i\leq n$ and $j<1$.
Furthermore since $\ga_0=\ga_n=0$ and  $\ro(T)=\ga$, we have $t_{0,s}=0$ for $s\in\mbz$.
Hence we have $A-T+\ti T\in\Xinr$. The assertion (2) follows.
\end{proof}

\begin{Coro}\label{cor of formulas}
Let $\al,\ga\in\afmbnn$ with $\ga_n=0$.
Assume that $B,C\in\afThnr$ is such that $B-\sum_{1\leq i\leq n}\al_i\afE_{i,i+1}$ is a diagonal matrix
and $C-\sum_{1\leq i\leq n}\ga_i\afE_{i+1,i}$ is a diagonal matrix. Then
we have
$[B]\Unr\han\Unr$ and
$[C]\Unr\han\Unr.$
\end{Coro}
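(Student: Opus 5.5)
Since $\Unr$ is by definition the $\sZ$-span of $\{[A]\mid A\in\Xinr\}$, it suffices to show that $[B][A]\in\Unr$ and $[C][A]\in\Unr$ for every $A\in\Xinr$. Both are then read off from Proposition~\ref{[B][A]}.

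First we dispose of the degenerate case. If $\co(B)\neq\ro(A)$, then the multiplication rule of $\afSr$ gives $e_Be_A=0$, so $[B][A]=0\in\Unr$; the same holds for $C$. We may therefore assume $\co(B)=\ro(A)$ (respectively $\co(C)=\ro(A)$).

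Next we check that the hypotheses of Proposition~\ref{[B][A]} hold. By hypothesis, $B-\sum_i\al_i\afE_{i,i+1}$ is diagonal and $C-\sum_i\ga_i\afE_{i+1,i}$ is diagonal with $\ga_n=0$, which is exactly what parts (1) and (2) require. Part (1) then expresses $[B][A]$ as $\sum f_{T,A}[A+T-\ti T]$ over $T$ with $A+T-\ti T\in\Xinr$. Part (2) expresses $[C][A]$ as $\sum f'_{T,A}[A-T+\ti T]$ over $T$ with $A-T+\ti T\in\Xinr$.

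It remains to see that these sums are finite $\sZ$-combinations. The coefficients $f_{T,A}$ and $f'_{T,A}$ are a power of $v$ times products of bar-conjugated Gaussian binomials $\dleb{N\atop t}\drib$. Each such binomial lies in $\sZ$ whenever it is nonzero, i.e. when $N\ge t\ge 0$; when $t>N\ge 0$ it vanishes. The entries $a_{i,j}\pm\cdots$ appearing in the binomials are entries of the result matrix, which lies in $\afThn$ and so has nonnegative entries. The sums are finite because $\ro(T)=\al$ (or $\ga$) is fixed with $T\in\afThn$, so only finitely many $T$ contribute.

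Hence $[B][A]$ and $[C][A]$ lie in the $\sZ$-span of $\{[A']\mid A'\in\Xinr\}$, which is $\Unr$. Extending $\sZ$-linearly over $A$ gives $[B]\Unr\han\Unr$ and $[C]\Unr\han\Unr$. The only point needing care is the integrality of the coefficients, and this is standard from the definition of the Gaussian binomials.
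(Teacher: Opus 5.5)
Your proposal is correct and matches the paper, which gives no separate proof because the corollary is immediate from Proposition~\ref{[B][A]} (whose sums run only over $T$ with $A+T-\ti T\in\Xinr$, resp.\ $A-T+\ti T\in\Xinr$), together with the trivial vanishing $[B][A]=0$ when $\co(B)\neq\ro(A)$. One small inaccuracy: fixing $\ro(T)$ does not by itself leave finitely many $T$, since the column positions are unconstrained. Finiteness instead comes from $f_{T,A}\neq 0$ forcing $t_{i-1,j}\le a_{i,j}$ (resp.\ $f'_{T,A}\neq 0$ forcing $t_{i,j}\le a_{i,j}$), or simply from the product being computed in the free $\sZ$-module $\afSr$, which also makes your integrality check automatic.
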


For $A\in\aftiThn$ with $\sg(A)=r$, we denote $[A]=0\in\afSr$ if $a_{i,i}<0$ for some $i\in\mbz$.
For $A\in\aftiThn$ let $$\bfsg(A)=(\sg_i(A))_{i\in\mbz}\in\afmbnn$$ where $\sg_i(A)=a_{i,i}+\sum_{j<i}(a_{i,j}+a_{j,i})$.
For $A,B\in\aftiThn$ define
\begin{equation*}\label{order sqsubset}
B\sqsubseteq A\text{ if and only if $B\pr A$, $\co(B)=\co(A)$ and $\ro(B)=\ro(A)$.}
\end{equation*}
 Put $B\sqsubset A$ if $B\sqsubseteq A$ and $B\not=A$.

For $A\in\afThnpm$, we write
\begin{equation*}
A=A^++A^-,
\end{equation*}
 where $A^+\in\afThnp$ and
$A^-\in\afThnm$. The following triangular relation in affine quantum Schur algebras is given in
{\cite[3.7.7]{DDF}}.
\begin{Prop} \label{tri-affine Schur algebras}
For $A\in\afThnpm$ and $\la\in\afLanr$, we have
\begin{equation*}
A^+(\bfl,r)[\diag(\la)]A^-(\bfl,r)
=[A+\diag(\la-\bfsg(A))]+f,
\end{equation*}
where $f$ is a $\sZ$-linear combination of $[B]$ with $B\sqsubset A+\diag(\la-\bfsg(A))$.
\end{Prop}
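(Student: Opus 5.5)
The formula is the affine analogue of the BLM triangular relation \cite[3.9]{BLM}. I would prove it by factoring both $A^+(\bfl,r)$ and $A^-(\bfl,r)$ into products of ``generator-type'' elements, $[B]$ with $B$ diagonal plus $\sum\al_i\afE_{i,i+1}$ and $[C]$ with $C$ diagonal plus $\sum\ga_i\afE_{i+1,i}$. Then the multiplication formulas of Proposition \ref{[B][A]}, in their general form \cite[3.6]{DF13} valid in all of $\afbfSr$, can be applied one factor at a time, while keeping track of the leading term in the order $\sqsubseteq$.

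\textbf{Step 1: triangular expansions of $A^\pm$.} By Proposition \ref{tri Hall} and Theorem \ref{zr}, $\zr(\ti u^+_{(w_{A^+})})$ is a product of elements $(x\bsb)(\bfl,r)$, each a sum of generator-type $[B]$'s. It equals $A^+(\bfl,r)+\sum_{B'\prec A^+}f_{B',A^+}B'(\bfl,r)$. The same holds on the minus side, applying $\zr$ to the transpose version of Proposition \ref{tri Hall}.

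\textbf{Step 2: leading term of the word products.} Multiply $\zr(\ti u^+_{(w_{A^+})})[\diag(\la)]\zr(\ti u^-_{(w_{A^-})})$ in the order ``minus factors into $[\diag(\la)]$ first, then plus factors''. At each step the formula of \cite[3.6]{DF13} gives a sum over $T$ of $f_{T,A}[A+T-\ti T]$. I would show that exactly one choice of $T$ yields a $\pr$-maximal matrix with coefficient $1$, and that every other term is strictly smaller in $\sqsubset$. This is the BLM argument \cite[3.9, 4.6]{BLM}, where each generator adds entries as far from the diagonal as possible. Carried out inductively along the tight form of $w_{A^\pm}$, it produces $[A+\diag(\la-\bfsg(A))]$ plus $\sqsubset$-lower terms.

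The diagonal correction $\la-\bfsg(A)$ is forced by the row and column sums. The product is nonzero only in the block with $\ro=\ro(A^+)+\la-\dots$, and $\bfsg(A)$ records exactly the diagonal mass consumed by the off-diagonal entries of $A$ at positions $j<i$ and $i<j$. I would verify this by comparing $\ro$ and $\co$ of both sides.

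\textbf{Step 3: removing the lower words.} Invert the unitriangular relations from Step 1. The lower matrices $B'\prec A^+$ and $C'\prec A^-$, when multiplied into $[\diag(\la)]$, contribute $[B'+C'+\diag(\dots)]$ plus lower terms. I would check that $B'+C'+\diag(\la-\bfsg(B'+C'))\sqsubset A+\diag(\la-\bfsg(A))$. This holds because $\sg_{i,j}$ depends only on off-diagonal entries, so it is additive in $A^+$ and $A^-$ for the relevant $(i,j)$. An induction on $\prec$ then gives the statement.

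\textbf{Main obstacle.} The hardest part is Step 2, specifically controlling the integer coefficients: showing that every non-leading $T$ gives a matrix strictly lower in $\pr$ with the same $\ro$ and $\co$, and that the leading coefficient is exactly $1$. The leading coefficient must come out as $1$ rather than some power $v^{\bt}$, which the normalizations $\ti u$ and $[A]=v^{-d_A}e_A$ are designed to guarantee. The difficulty is that in the affine setting $\pr$ involves infinitely many pairs $(i,j)$, so I would need periodicity to reduce to finitely many checks. I would first try to copy Lusztig's geometric argument (orbit closures: $\sO_{B}\subset\ol{\sO_A}$ implies $B\pr A$). The product of orbit-indicator functions is then supported on the closure of the orbit corresponding to the leading matrix, and has value $1$ on the open orbit, which sidesteps the computation.
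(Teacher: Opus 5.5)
The paper does not prove this statement itself; it quotes \cite[3.7.7]{DDF}. Your outline has a genuine gap. All of the content sits in Step 2, which you announce rather than prove, and the mechanism you announce does not work.

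\textbf{The detour through words gains nothing.} By your own Step 1, $\zr(\ti u^+_{(w_{A^+})})[\diag(\la)]\zr(\ti u^-_{(w')})$ equals $A^+(\bfl,r)[\diag(\la)]A^-(\bfl,r)$ plus products of the same shape for $\prec$-smaller matrices. Given the induction in Step 3 (whose bookkeeping is fine), the leading-term claim for the word product is therefore equivalent to the proposition itself. The paper in fact argues the other way, deducing Corollary \ref{tri2-affine Schur algebras} from this proposition.

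\textbf{The step-by-step claim is false as stated.} Left multiplication by a generator $[B]$ does have a unique $\pr$-maximal term: $f_{T,A}\neq 0$ forces $t_{i,j}\le a_{i+1,j}$, and one fills each row of $T$ from the right. But that term's coefficient is $v^{\bt(T,A)}$ times a product of Gaussian binomials. It is not $1$ as soon as a moved entry lands on a nonzero entry; already $E_i\cdot E_i=[2]E_i^{(2)}$. Moreover, the leading term of a product equals the product of leading terms only if you prove the greedy choice is monotone for $\pr$.

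\textbf{The geometric fallback has the same defect.} For a product of many orbit functions, the number of intermediate chains over the generic orbit need not be $1$. This is exactly why Proposition \ref{tri Hall} needs tight words and the normalization $\ti u$, and that result controls only one triangular half. You also do not treat the case where $\la-\bfsg(A)$ has a negative entry. There $[A+\diag(\la-\bfsg(A))]=0$ and there is no top orbit.

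\textbf{The missing observation is that no words are needed.} Since $[X][\diag(\la)]$ is $[X]$ if $\co(X)=\la$ and $0$ otherwise, one has $A^+(\bfl,r)[\diag(\la)]=[A_1]$ and $[\diag(\la)]A^-(\bfl,r)=[A_2]$. Here $A_1=A^++\diag(\mu)$ and $A_2=A^-+\diag(\mu')$, with $\mu=\la-\co(A^+)$ and $\mu'=\la-\ro(A^-)$; if $\mu$ or $\mu'$ has a negative entry, both sides vanish. Also $\la-\bfsg(A)=\mu+\mu'-\la$. So the statement concerns the single product $[A_1][A_2]$, and the lattice model settles it directly:
\begin{itemize}
\item If $(\bfL,\bfL')\in\sO_{A_1}$ and $(\bfL',\bfL'')\in\sO_{A_2}$, triangularity of $A_1$ and $A_2$ gives $L'_i\subseteq L_i\cap L''_i$.
\item If $(\bfL,\bfL'')\in\sO_M$, then $\sg_{i,j}(M)=\dim L_i/(L_i\cap L''_{j-1})$ for $i<j$, and $\sg_{i,j}(M)=\dim L''_j/(L_{i-1}\cap L''_j)$ for $i>j$.
\item Comparing these with the analogous expressions for $A_1$ (when $i<j$) and $A_2$ (when $i>j$) gives $\sg_{i,j}(M)\le\sg_{i,j}(A)$ for every $M$ in the support. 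Such $M$ automatically have the row and column sums of $M_0:=A+\diag(\la-\bfsg(A))$.
\item Equality for all $(i,j)$ forces $M=M_0$ and $L'_i=L_i\cap L''_i$, and this $\bfL'$ does realize $A_1$ and $A_2$. Hence $\nu_{A_1,A_2,M_0}=1$.
\item A direct expansion of $d_A$ gives $d_{M_0}=d_{A_1}+d_{A_2}$, so the coefficient of $[M_0]$ is exactly $1$.
\item When $M_0\notin\afThn$, the same inequalities show every term is strictly lower.
\end{itemize}
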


\begin{Coro}\label{tri2-affine Schur algebras}
Let $A\in\Xinpm$. There exists $w_{A^+}\in\ti\Sg$ and $w_{{}^t(A^-)}\in\Ga$ such that
$$\zeta_r(\ti u^+_{(w_{A^+})})[\diag(\la)]\zeta_r(\ti u^-_{(w_{{}^t(A^-)})})=[A+\diag(\la-\bfsg(A))]+f,$$
for $\la\in\afLanr$, where $f$ is  a $\sZ$-linear combination of $[B]$ with $B\in\Xinr$ and $B\sqsubset A+\diag(\la-\bfsg(A))$.
\end{Coro}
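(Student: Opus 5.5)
The plan is to combine Propositions \ref{tri Hall}, \ref{tri quantum gln} and \ref{tri-affine Schur algebras} with Theorem \ref{zr}, and then use Proposition \ref{[B][A]} (through Corollary \ref{cor of formulas}) to see that every term stays inside $\Xinr$.

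For $A\in\Xinpm$ write $A=A^++A^-$. Then $A^+\in\afThnp$, and $A^-\in\afThnm$ has nonzero entries only in positions $(i,j)$ with $1\le j<i\le n$ (mod $n$). Indeed, $a_{i,j}=0$ for $1\le i\le n$ and $j<1$, so for $i\in[1,n]$ the strictly lower entries $j<i$ force $j\ge1$. Hence ${}^t(A^-)\in\Thnp$.

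Choose $w_{A^+}\in\ti\Sg$ by Proposition \ref{tri Hall} and $w_{{}^t(A^-)}\in\Ga$ by Proposition \ref{tri quantum gln}. This gives
\[
\ti u^+_{(w_{A^+})}=\ti u^+_{A^+}+\sum_{B'\prec A^+}f_{B',A^+}\ti u^+_{B'},
\qquad
\ti u^-_{(w_{{}^t(A^-)})}=\ti u^-_{{}^t(A^-)}+\sum_{C'\prec {}^t(A^-)}f_{C',{}^t(A^-)}\ti u^-_{C'},
\]
where $B'\in\afThnp$ and $C'\in\Thnp$ have the same dimension vectors as $A^+$ and ${}^t(A^-)$. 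Apply $\zr$ using Theorem \ref{zr}: $\zr(\ti u^+_{B'})=B'(\bfl,r)$ and $\zr(\ti u^-_{C'})=({}^tC')(\bfl,r)$. Multiplying by $[\diag(\la)]$, the leading term is $A^+(\bfl,r)[\diag(\la)]A^-(\bfl,r)$. By Proposition \ref{tri-affine Schur algebras} this equals $[A+\diag(\la-\bfsg(A))]$ plus a $\sZ$-combination of $[B]$ with $B\sqsubset A+\diag(\la-\bfsg(A))$.

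Each cross term $B'(\bfl,r)[\diag(\la)]({}^tC')(\bfl,r)$ is, again by Proposition \ref{tri-affine Schur algebras}, $[M]$ plus lower terms, where $M=B'+{}^tC'+\diag(\la-\bfsg(B'+{}^tC'))$. Since $B'\prec A^+$ or ${}^tC'\prec A^-$, each $\sg_{i,j}$ involves only one triangular part, and the row and column sums agree (equal dimension vectors and the same $\la$). From this I expect $M\sqsubset A+\diag(\la-\bfsg(A))$, and transitivity of $\sqsubseteq$ handles the lower terms as well. Terms with a negative diagonal entry vanish by convention. All coefficients lie in $\sZ$ because the $f$'s do.

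It remains to show that every $[B]$ that appears has $B\in\Xinr$. Here I would use Corollary \ref{cor of formulas} rather than tracking supports. Since $w_{A^+}$ is a word in $\ti I$ and $w_{{}^t(A^-)}$ is a word in $\afbse_1,\dots,\afbse_{n-1}$, the element $\zr(\ti u^+_{(w_{A^+})})[\diag(\la)]\zr(\ti u^-_{(w_{{}^t(A^-)})})$ is a product of elements of the form $[B]$ (with $B-\sum\al_i\afE_{i,i+1}$ diagonal, $\al=x\bsb$), then $[\diag(\la)]\in\Unr$, then $[C]$ (with $C-\sum\ga_i\afE_{i+1,i}$ diagonal and $\ga_n=0$). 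This holds because $\zr(\ti u^\pm_{x\bsb})$ is a sum of such $[B]$ or $[C]$ over diagonals.

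Expanding $[\diag(\la)]$ times the $C$-factors from the right, each step stays in $\Unr$ by Corollary \ref{cor of formulas}. The $B$-factors are then applied on the left, and again the result stays in $\Unr$. So the whole product lies in $\Unr=\spann_\sZ\{[B]\mid B\in\Xinr\}$, and since the $[B]$ form a basis of $\afSr$, all terms in $f$ have $B\in\Xinr$.

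The main obstacle is the order comparison for the cross terms. I have to check that $B'\prec A^+$ with equal dimension vectors really gives $\sg_{i,j}(M)\le\sg_{i,j}(A)$ for all $i\ne j$ once diagonals are adjusted, and that $\ro$ and $\co$ match. I would verify this directly from the definition of $\sg_{i,j}$, which only sees off-diagonal entries, together with $\ro(M)=\ro(A+\diag(\la-\bfsg(A)))$, which follows from $\bfd(B')=\bfd(A^+)$.
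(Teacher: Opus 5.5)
Your proposal is correct and follows essentially the same route as the paper. The paper likewise chooses the words via Propositions \ref{tri Hall} and \ref{tri quantum gln}, applies $\zeta_r$ and Proposition \ref{tri-affine Schur algebras} to the leading term and the lower terms, and then uses Corollary \ref{cor of formulas} to place the product in $\Unr$, which forces every $B$ to lie in $\Xinr$. You also check explicitly that the off-diagonal $\sigma_{i,j}$ comparison and the equal row/column sums (from equal dimension vectors) give $M\sqsubset A+\diag(\la-\bfsg(A))$, a step the paper only asserts.
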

\begin{proof}
By Proposition \ref{tri Hall} and \ref{tri quantum gln} we see that
there exists $w_{A^+}\in\ti\Sg$ and $w_{{}^t(A^-)}\in\Ga$ such that
\begin{equation*}
\begin{split}
\zeta_r(\ti u^+_{(w_{A^+})})&=A^+(\bfl,r)+g_1\\
\zeta_r(\ti u^-_{(w_{{}^t(A^-)})})&=A^-(\bfl,r)+g_2,
\end{split}
\end{equation*}
where
$g_1$ is a $\sZ$-linear combination of $B(\bfl,r)$ for $B\in\afThnp$ with $B\p A^+$, and
$g_2$ is a $\sZ$-linear combination of $C(\bfl,r)$ for $C\in\Thnm$ with $C\p A^-$. It follows that
$$\zeta_r(\ti u^+_{(w_{A^+})})[\diag(\la)]\zeta_r(\ti u^-_{(w_{{}^t(A^-)})})=A^+(\bfl,r)[\diag(\la)]A^-(\bfl,r)+g,$$
where $g$ is a $\sZ$-linear combination of $B^+(\bfl,r)[\diag(\la)]B^-(\bfl,r)$ for $B\in\Xinpm$ with $B\p A$.
Hence by Proposition  \ref{tri-affine Schur algebras} we have
$$\zeta_r(\ti u^+_{(w_{A^+})})[\diag(\la)]\zeta_r(\ti u^-_{(w_{{}^t(A^-)})})=[A+\diag(\la-\bfsg(A))]+f,$$
where $f$ is  a $\sZ$-linear combination of $[B]$ with $B\in\afThnr$ and $B\sqsubset A$. From Corollary \ref{cor of formulas} we see that $\zeta_r(\ti u^+_{(w_{A^+})})[\diag(\la)]\zeta_r(\ti u^-_{(w_{{}^t(A^-)})})\in\Unr$. Thus, $f$ must be a $\sZ$-linear combination of $[B]$ with $B\in\Xinr$ and $B\sqsubset A$.
\end{proof}

For $A\in\aftiThn$, let
$$\ddet{A}=\sum_{i<j\atop1\leq i\leq n}{j-i+1\choose 2}(a_{i,j}+a_{j,i}).$$
We are now prepared to prove that $\Unr$ is a $\sZ$-subalgebra of the affine quantum Schur algebra $\afbfSr$.

\begin{Prop}\label{subalgebra}
$\Unr$ is a $\sZ$-subalgebra of $\afbfSr$ generated by the elements
$[\sum_{1\leq i\leq n}\al_i\afE_{i,i+1}+\diag(\la)]$ and
$[\sum_{1\leq i\leq n}\ga_i\afE_{i+1,i}+\diag(\la)]$ for
$\al,\la,\ga\in\afmbnn$ with $\sg(\al)+\sg(\la)=r$,
$\sg(\ga)+\sg(\la)=r$ and $\ga_n=0$. In particular, $\bfUnr$ is a $\mbq(v)$-subalgebra of $\afbfSr$.
\end{Prop}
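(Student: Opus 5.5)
Let $\sU_r$ denote the $\sZ$-subalgebra of $\afbfSr$ generated by the listed elements $[\sum_i\al_i\afE_{i,i+1}+\diag(\la)]$ and $[\sum_i\ga_i\afE_{i+1,i}+\diag(\la)]$ with $\ga_n=0$. The plan is to prove $\sU_r=\Unr$ by proving both inclusions.

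For $\sU_r\han\Unr$, note first that every generator lies in $\Unr$. Indeed, for $1\leq i\leq n$ the only possibly nonzero entries of row $i$ of such a generator are in columns $i$ and $i\pm1$. The only way one could land in a column $j<1$ is the entry at $(1,0)$, i.e. $\ga_0=\ga_n=0$, which is excluded. So the generators are $[B]$ or $[C]$ with $B,C$ as in Corollary \ref{cor of formulas}, and that corollary gives $[B]\Unr\han\Unr$ and $[C]\Unr\han\Unr$. Any product of generators is obtained by repeatedly multiplying such elements on the left of an element of $\Unr$, starting from the generator itself (or from an idempotent $[\diag(\la)]$, itself a generator with $\al=0$). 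Hence $\sU_r\han\Unr$. The same argument shows $\Unr$ is closed under left multiplication by generators. Once we know $\Unr\han\sU_r$, we get $\Unr=\sU_r$, which is a subalgebra.

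For $\Unr\han\sU_r$, I would show $[M]\in\sU_r$ for every $M\in\Xinr$ by induction on $M$ with respect to $\sqsubset$. Write $M=A+\diag(\la-\bfsg(A))$ with $A\in\Xinpm$ the off-diagonal part. Corollary \ref{tri2-affine Schur algebras} then gives
\[
\zeta_r(\ti u^+_{(w_{A^+})})[\diag(\la)]\zeta_r(\ti u^-_{(w_{{}^t(A^-)})})=[M]+f,
\]
where $f$ is a $\sZ$-combination of $[B]$ with $B\in\Xinr$ and $B\sqsubset M$. The set of such $B$ is finite, since row and column sums are fixed and $\sg_{i,j}$ is bounded. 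So induction applies provided the left-hand side lies in $\sU_r$.

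To see this, expand $\ti u^+_{(w_{A^+})}$ as a product of $\ti u^+_{x\bsb}$, and use $\zeta_r(\ti u^+_{S_\mu})=S_\mu(\bfl,r)=\sum_\nu[S_\mu+\diag(\nu)]$ (Theorem \ref{zr}). Each summand is a generator of the first type. For the $\ti u^-$ factors, the words lie in $\Ga$, so they involve only $\afbse_i$ with $i\leq n-1$. Thus they map to sums of $[x\afE_{i+1,i}+\diag(\nu)]$ with $i<n$, i.e. generators with $\ga_n=0$. Finally, $[\diag(\la)]$ is a generator.

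The main obstacle I expect is justifying the well-foundedness of the induction and making sure every subleading term stays in $\Xinr$. The second point is exactly what Corollary \ref{tri2-affine Schur algebras} provides. For the first, I would argue that the $B$'s occurring share $\ro$ and $\co$ with $M$, so only finitely many occur; minimal elements need no correction and can serve as the base of the induction.
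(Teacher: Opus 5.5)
Your proof is correct and is essentially the paper's: both inclusions come from Corollary~\ref{cor of formulas} and Corollary~\ref{tri2-affine Schur algebras}, exactly as you describe. The only differences are that you induct directly along $\sqsubset$, justifying well-foundedness by the finiteness of $\{B\in\Xinr\mid B\sqsubset M\}$, whereas the paper inducts on the integer $\ddet{A}$ using that $B\sqsubset A$ forces $\ddet{B}<\ddet{A}$; and you spell out why the leading product $\zeta_r(\ti u^+_{(w_{A^+})})[\diag(\la)]\zeta_r(\ti u^-_{(w_{{}^t(A^-)})})$ lies in the generated subalgebra, which the paper leaves implicit.
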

\begin{proof}
Let $\Unr'$ be the $\sZ$-subalgebra of $\afbfSr$ generated by the indicated elements. From Corollary \ref{cor of formulas} we see that
$$\Unr'\han \Unr'\Unr\han\Unr.$$
We shall show by induction on
$\ddet A$ that $[A]\in\Unr'$ for $A\in\Xinr$.
If $\ddet A=0$ then $A$ is a diagonal matrix. Hence we have
$[A]\in\Unr'$. Now we assume that  $\ddet A>0$ and $[B]\in\Unr'$
for $B\in\Xinr$ with $\ddet B<\ddet A$.
By Corollary \ref{tri2-affine Schur algebras} and \cite[3.7.6]{DDF}, there exists $w_{A^+}\in\ti\Sg$ and $w_{{}^t(A^-)}\in\Ga$ such that
$$\zeta_r(\ti u^+_{(w_{A^+})})[\diag(\bfsg(A))]\zeta_r(\ti u^-_{(w_{{}^t(A^-)})})=[A]+f,$$
where $f$ is a $\sZ$-linear combination of $[B]$ with $B\in\Xinr$ and $\ddet B<\ddet A$. By the induction hypothesis, we have $f\in\Unr'$ and it follows that $[A]\in\Unr'$. The proposition is proved.
\end{proof}

\subsection{Schur--Weyl reciprocity for $\bfUn$}
The classical Schur--Weyl reciprocity is a fundamental result in representation theory, establishing a deep connection between the irreducible representations of the symmetric group and the irreducible polynomial representations of the general linear group of a complex vector space. This reciprocity  is known to hold over any infinite field (see \cite{CL,CP,Gr80,Doty}). The quantum Schur--Weyl reciprocity between quantum $\frak{gl}_n$ and Hecke algebras of type $A$ was established  in the generic case by Jimbo \cite{Ji86}.
Over the years, numerous variations and generalizations of Schur--Weyl reciprocity have been developed; see, for example,  \cite{Ar,ATY,DDH,DH,SS,Hu}.

We now discuss Schur--Weyl reciprocity for $\bfUn$.
We prove that the algebra $\bfUnr$ is a quotient algebra of $\bfUn$.

\begin{Prop}\label{quotient}
For $r\in\mbn$, we have $\zeta_r(\bfUn)=\bfUnr$ and $\zeta_r(\Un)=\Unr$.
\end{Prop}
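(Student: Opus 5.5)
We prove both inclusions, first integrally and then over $\mbq(v)$.

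\emph{Step 1: $\zr(\Un)\han\Unr$.} By definition $\Un$ is spanned by the elements $u_A^+\prod_i K_i^{j_i}\big[{K_i;0\atop \la_i}\big]u_B^-$ with $A\in\afThnp$ and $B\in\Thnp$. Since $\Unr$ is a $\sZ$-subalgebra by Proposition \ref{subalgebra}, it suffices to show that the image of each factor lies in $\Unr$.

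For the Cartan part, $\zr(K^\bfj)=0(\bfj,r)=\sum_\mu v^{\mu\centerdot\bfj}[\diag(\mu)]$. Likewise $\zr\big(\big[{K_i;0\atop t}\big]\big)=\sum_\mu\leb{\mu_i\atop t}\rib[\diag(\mu)]$, which is a $\sZ$-combination of diagonal $[\diag(\mu)]$. Diagonal matrices lie in $\Xinr$.

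For the positive part, $\ti u_A^+$ differs from $u_A^+$ by a power of $v$. Theorem \ref{zr} gives $\zr(\ti u_A^+)=A(\bfl,r)=\sum_\mu[A+\diag(\mu)]$. Since $A\in\afThnp$ has $a_{i,j}=0$ for $j\le i$, we have $A+\diag(\mu)\in\Xinr$.

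For the negative part, take $B\in\Thnp$. Then $\tB$ is lower triangular with entries only in the block $1\le i,j\le n$, so $\tB+\diag(\mu)\in\Xinr$. Hence $\zr(u_B^-)\in\Unr$.

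Combining these gives $\zr(\Un)\han\Unr$. The same argument, after tensoring with $\mbq(v)$ (or applied to the spanning set $u_A^+K^\bfj u_B^-$ of Lemma \ref{basis of bfUn}), gives $\zr(\bfUn)\han\bfUnr$.

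\emph{Step 2: the reverse inclusion $\Unr\han\zr(\Un)$.} By Proposition \ref{subalgebra}, $\Unr$ is generated as a $\sZ$-algebra by the elements $[\sum\al_i\afE_{i,i+1}+\diag(\la)]$ and $[\sum\ga_i\afE_{i+1,i}+\diag(\la)]$ with $\ga_n=0$. So it suffices to show that each generator lies in $\zr(\Un)$.

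Put $S=\sum\al_i\afE_{i,i+1}\in\afThnp$. Then $\zr(\ti u_S^+)=S(\bfl,r)=\sum_\mu[S+\diag(\mu)]$. Multiplying on the right by the idempotent $[\diag(\co)]$ selects the single term we want. The idempotent $[\diag(\la)]$ equals $\zr\big(\prod_i\big[{K_i;0\atop\la_i}\big]\big)$. Indeed, $\leb{\mu_i\atop\la_i}\rib$ vanishes unless $\mu_i\ge\la_i$, and $\sum\mu_i=\sum\la_i=r$ forces $\mu=\la$. This is the standard BLM trick, and it gives $[\diag(\la)]\in\zr(\Un)$. Hence
$$[S+\diag(\la')]=\zr(\ti u_S^+)[\diag(\co(S+\diag(\la')))]\in\zr(\Un).$$

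For the other generators, $\ga_n=0$ ensures that $\sum\ga_i\afE_{i,i+1}$ lies in $\Thnp$, since it uses only $E_{i,i+1}$ with $1\le i\le n-1$. Theorem \ref{zr} then handles these generators in the same way through $u^-$.

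\emph{Step 3: the rational case.} Tensoring Step 2 with $\mbq(v)$ gives $\bfUnr\han\zr(\bfUn)$, which together with Step 1 yields $\zr(\bfUn)=\bfUnr$.

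\emph{Main obstacle.} The only delicate point is checking that $[\diag(\la)]$ lies in $\zr(\Un)$ integrally, that is, verifying the Gaussian binomial identity for $\zr\big(\big[{K_i;0\atop t}\big]\big)$. This can be checked directly from $\zr(K_i)=\sum_\mu v^{\mu_i}[\diag(\mu)]$. The remaining steps are bookkeeping of matrix supports.
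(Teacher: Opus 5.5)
Your Step 1 is fine, and so are the individual computations in Step 2. In particular, $\zr\bigl(\prod_i\big[{K_i;0\atop\la_i}\big]\bigr)=[\diag(\la)]$ for $\la\in\afLanr$ is correct.

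\textbf{The gap.} It lies in the reduction at the start of Step 2: ``$\Unr$ is generated as a $\sZ$-algebra by these elements, so it suffices to show that each generator lies in $\zr(\Un)$.'' That inference needs $\zr(\Un)$ to be closed under multiplication, and nothing available at this point gives it. $\Un$ is only defined as the $\sZ$-span of the elements $u_A^+\prod_iK_i^{j_i}\big[{K_i;0\atop\la_i}\big]u_B^-$. That it is a subalgebra is proved only later, in Theorem \ref{realization of Un}(2), and it is a genuine integrality statement: one must straighten products $u_B^-u_{A'}^+$ back into this normal form with $\sZ$-coefficients. Let $R$ be the $\sZ$-subalgebra of $\bfUn$ generated by $\Un$. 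What Steps 1 and 2 together actually prove is $\zr(R)=\Unr$. A priori $\zr(\Un)$ could be a proper submodule of $\zr(R)$, so the integral equality $\zr(\Un)=\Unr$ does not follow.

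The rational statement is fine, but not by ``tensoring Step 2''. The right reason is that $\zr(\bfUn)$ is the image of an algebra, hence a $\mbq(v)$-subalgebra, and it contains your generators, which generate $\bfUnr$. Note also that the point you flag as the main obstacle, the Gaussian binomial identity, is routine; the closure issue is the real difficulty.

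\textbf{The fix.} This is the paper's argument: avoid products inside $\Un$ entirely and use triangularity against the basis $\{[C]\mid C\in\Xinr\}$.
\begin{enumerate}
\item Given $C\in\Xinr$, let $A\in\Xinpm$ be its off-diagonal part and put $\la=\dt_C+\bfsg(A)$. Then $\la\in\afLanr$ and $C=A+\diag(\la-\bfsg(A))$.
\item Since $A\in\Xin$, the part $A^-$ is supported in the block $1\le j<i\le n$, so $\tAm\in\Thnp$.
\item Hence $x=u^+_{A^+}\prod_i\big[{K_i;0\atop\la_i}\big]u^-_{\tAm}$ is, up to a power of $v$, one of the spanning elements of $\Un$. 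No multiplication inside $\Un$ is needed.
\item By your computation and Proposition \ref{tri-affine Schur algebras}, $\zr(x)$ equals, up to a unit $v^c$, the element $A^+(\bfl,r)[\diag(\la)]A^-(\bfl,r)=[C]+f$.
\item Because the left side lies in $\Unr$ (Corollary \ref{cor of formulas}, Proposition \ref{subalgebra}), $f$ is a $\sZ$-combination of $[B]$ with $B\in\Xinr$ and $B\sqsubset C$.
\item Induction on $\ddet{C}$, as in the proof of Proposition \ref{subalgebra}, then gives $[C]\in\zr(\Un)$ for every $C\in\Xinr$.
\end{enumerate}
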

\begin{proof}
By Lemma \ref{basis of bfUn} and \cite[3.7.4(2)]{DDF} we have
$$\zeta_r(\Un)=\spann_\sZ\{A^+(\bfl,r)[\diag(\la)]A^-(\bfl,r)\mid A\in\Xinpm,\,\la\in\afLanr\}.$$
Furthermore, by Proposition \ref{tri-affine Schur algebras} and \ref{subalgebra}, for $A\in\Xinpm$ and $\la\in\afLanr$, we have
\begin{equation}\label{tri Unr}
A^+(\bfl,r)[\diag(\la)]A^-(\bfl,r)
=[A+\diag(\la-\bfsg(A))]+f,
\end{equation}
where $f$ is a $\sZ$-linear combination of $[B]$ for $B\in\Xinr$ with $B\sqsubset A+\diag(\la-\bfsg(A))$. Thus we have $\zeta_r(\Un)=\Unr$ and $\zeta_r(\bfUn)=\bfUnr$.
\end{proof}

\subsection{The algebra $\msbfHr$}
For any algebra
$\mathscr A$ over $\mbq(v)$, the notation ${\scr A}\hmod$
represents the category of all finite dimensional left $\scr
A$-modules.
We will establish in Proposition \ref{category equivalence}
an equivalence of categories between the
categories $\bfUnr\hmod$ and $\msbfHr\hmod$ when $n\geq r$, where $\msbfHr$ is a certain subalgebra of the
extended affine Hecke algebra of type $A$ defined in \eqref{msbfHr}.

Let $\afsygr$ be the group consisting of all permutations
$w:\mbz\ra\mbz$ such that $w(i+r)=w(i)+r$ for $i\in\mbz$.
The extended affine Hecke algebra $\afHr$ over $\sZ$ associated to
$\affSr$ is the (unital) $\sZ$-algebra with basis
$\{T_w\}_{w\in\affSr}$, and multiplication defined by
\begin{equation*}
\begin{cases} T_{s_i}^2=(\up^2-1)T_{s_i}+\up^2,\quad&\text{for }1\leq i\leq r\\
T_{w}T_{w'}=T_{ww'},\quad&\text{if}\
\ell(ww')=\ell(w)+\ell(w'),
\end{cases}
\end{equation*}
where $s_i\in\affSr$ is defined by
setting
$s_i(j)=j$ for $j\not\equiv i,i+1\mnmod r$, $s_i(j)=j-1$ for
$j\equiv i+1\mnmod r$ and $s_i(j)=j+1$ for $j\equiv i\mnmod r$. Let $\afbfHr=\afHr\ot_\sZ\mbq(\up)$.

Let
\begin{equation}\label{msbfHr}
\begin{split}
\msHr&=\spann_{\sZ}\{T_w\mid w\in\affSr,\,w^{-1}(i)>0
\text{ for }1\leq i\leq r\}\han\afHr\\
\msbfHr&=\spann_{\mbq(v)}\{T_w\mid w\in\affSr,\,w^{-1}(i)>0
\text{ for }1\leq i\leq r\}\han\afbfHr.
\end{split}
\end{equation}
We will prove in Lemma \ref{subalgebra Hr} that $\msHr$ is a $\sZ$-subalgebra of $\afHr$.

For $\la\in\afLanr$, let
$\afmsD_\la=\{d\mid d\in\affSr,\ell(wd)=\ell(w)+\ell(d)\text{ for
$w\in\fS_\la$}\}$ and $\afmsD_{\la,\mu}=\afmsD_{\la}\cap{\afmsD_{\mu}}^{-1}$.
There is
a bijective map
\begin{equation*}
{\jmath_\vtg}:\{(\la, d,\mu)\mid
d\in\afmsD_{\la,\mu},\la,\mu\in\afLanr\}\lra\afThnr
 \end{equation*}
sending $(\la, d,\mu)$ to the matrix $A=(|R_k^\la\cap dR_l^\mu|)_{k,l\in\mbz}$, where
\begin{equation*}
R_{i+kn}^{\nu}=\{\nu_{k,i-1}+1,\nu_{k,i-1}+2,\ldots,\nu_{k,i-1}+\nu_i=\nu_{k,i}\}\; \text{ with }\;\nu_{k,i-1}=kr+\sum_{1\leq t\leq i-1}\nu_t,
\end{equation*}
for all $1\leq i\leq n$, $k\in\mbz$ and $\nu\in\afLa(n,r)$.

Assume $n\geq r$. Let $$e_\og=[\diag(\og)]\in\bfUnr,$$ where $\og=(\ldots,1^r,0^{n-r},1^r,0^{n-r},\ldots)\in\afLanr$.
Clearly there is an algebra isomorphism
$$\theta_{n,r}:\afbfHr\ra e_\og\afbfSr e_\og$$
such that $\theta_{n,r}(T_d)=e_A$ for $d\in\fSr$, where $A=\jmath_\vtg(\og,d,\og)\in\afThnr$.

\begin{Lem}\label{subalgebra Hr}
Assume $n\geq r$. Then we have $\theta_{n,r}(\msHr)=e_\og\Unr e_\og$. In particular, $\msHr$ is a $\sZ$-subalgebra of $\afHr$.
\end{Lem}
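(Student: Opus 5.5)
The plan is to compare spanning sets on both sides through the bijection $\jmath_\vtg$. Since $\theta_{n,r}$ is an isomorphism onto $e_\og\afbfSr e_\og$, and $\theta_{n,r}$ sends the standard basis $\{T_w\}_{w\in\affSr}$ to the standard basis $\{e_A\}$, it suffices to show the following. For $w\in\affSr$, the matrix $A=\jmath_\vtg(\og,w,\og)$ lies in $\Xinr$ if and only if $w^{-1}(i)>0$ for all $1\le i\le r$.

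Here I use that $\fS_\og$ is trivial, so $\afmsD_{\og,\og}=\affSr$ and $\theta_{n,r}(T_w)=e_A$ for every $w\in\affSr$. This is the natural extension of the stated formula for $d\in\fSr$, and it is the standard definition of $\theta_{n,r}$. Moreover, $e_\og\Unr e_\og$ is the $\sZ$-span of $[A]=\up^{-d_A}e_A$ for $A\in\Xinr$ with $\ro(A)=\co(A)=\og$. This holds because $e_\og[A]e_\og=[A]$ if $\ro(A)=\co(A)=\og$ and is $0$ otherwise. The powers $\up^{-d_A}$ are units in $\sZ$, so they do not affect the spans.

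The key step is the combinatorial identification. With $\og=(1^r,0^{n-r})$, we have $R^\og_{i+kn}=\{kr+i\}$ for $1\le i\le r$, and $R^\og_{i+kn}=\emptyset$ for $r<i\le n$. Hence $a_{i+kn,j+ln}=1$ exactly when $kr+i=w(lr+j)$, for $1\le i,j\le r$. The condition $A\in\Xinr$ requires $a_{i,j'}=0$ for $1\le i\le n$ and $j'<1$. Only rows $1\le i\le r$ are nonzero, so it suffices to check those rows. Row $i$ has its unique nonzero entry in column $j'=j+ln$, where $w^{-1}(i)=lr+j$ with $1\le j\le r$.

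We then have $j'\ge1$ if and only if $l\ge0$, which holds if and only if $w^{-1}(i)\ge1$, since $1\le j\le r$ and $r\le n$. Thus $A\in\Xinr$ exactly when $w^{-1}(i)>0$ for all $1\le i\le r$. This gives $\theta_{n,r}(\msHr)=e_\og\Unr e_\og$.

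The "in particular" follows from two facts. First, $\Unr$ is a $\sZ$-subalgebra by Proposition \ref{subalgebra}. Second, $e_\og$ is an idempotent lying in $\Unr$, because diagonal matrices lie in $\Xinr$. So $e_\og\Unr e_\og$ is closed under multiplication and has identity $e_\og=\theta_{n,r}(T_1)$. Pulling back along the injective algebra map $\theta_{n,r}$ shows that $\msHr$ is a $\sZ$-subalgebra of $\afHr$.

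The main obstacle is bookkeeping in the definition of $\jmath_\vtg$ rather than anything conceptual. The points to get right are: (i) the index convention, whether the entry is $|R_k\cap wR_l|$, which is the convention used above, or one involving $w^{-1}$; (ii) checking that the integral span matches, which it does because the normalization is only by powers of $\up$. If the convention turns out to be reversed, the same argument goes through with $w$ and $w^{-1}$ interchanged, and the defining condition of $\msHr$ in \eqref{msbfHr}, namely $w^{-1}(i)>0$, is what fixes the correct orientation.
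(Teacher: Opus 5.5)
Your proposal is correct and follows the paper's proof. Both arguments use $R^\og_{i+kn}=\{kr+i\}$ for $1\le i\le r$, together with $r\le n$, to show that $\jmath_\vtg(\og,w,\og)\in\Xinr$ if and only if $\{1,\dots,r\}\cap w(\{j\in\mbz\mid j\le 0\})=\emptyset$, i.e.\ $w^{-1}(i)>0$ for $1\le i\le r$, and then deduce the subalgebra claim from Proposition~\ref{subalgebra}. Your added bookkeeping (the units $\up^{-d_A}$, the fact that $e_\og\in\Unr$ is an idempotent, and pulling back along the injective map $\theta_{n,r}$) only makes explicit what the paper leaves implicit.
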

\begin{proof}
By definition we have $$e_\og\Unr e_\og=\spann_{\sZ}\{[A]\mid \ro(A)=\co(A)=\og,\,A\in\Xinr\}.$$
For $A=\jmath_\vtg(\og,d,\og)$ with $d\in\affSr$, we have
\begin{equation*}
\begin{split}
A\in\Xinr&\Leftrightarrow R_i^\og\cap dR_j^\og
=\emptyset\text{ for $1\leq i\leq r$, $j<1$}\\
&\Leftrightarrow\{1,2,\cdots,r\}\cap\{d(j)\mid j\in\mbz,\,j\leq 0\}
=\bin_{1\leq i\leq r\atop j\in\mbz,\,j<1}R_i^\og\cap dR_j^\og=\emptyset\\
&\Leftrightarrow d^{-1}(i)>0 \text{ for } 1\leq i\leq r.
\end{split}
\end{equation*}
It follows that $\theta_{n,r}(\msHr)=e_\og\Unr e_\og$.
Therefore by Proposition \ref{subalgebra} we conclude that
$\msHr$ is a $\sZ$-subalgebra of $\afHr$.
\end{proof}

With the above result, we will identify $\msHr$ with $e_\og\Unr e_\og$ for $n\geq r$.

\subsection{An equivalence of categories}
We are now ready to show that the categories $\bfUnr\hmod$ and
$\msbfHr\hmod$ are equivalent.
Recall the notation $\Thn$ introduced in \eqref{Thn}. Let
\begin{equation}\label{bfSr}
\bfSr =\spann_{\mbq(v)}\{[A]\mid A\in\Thnr\}\han\bfUnr,
\end{equation}
where $\Thnr=\{A\in\Thn\mid\sg(A)=r\}$. Then
$\bsS(n,r)$ is the $q$-Schur algebra over $\mbq(v)$.
Let $\bsH(r)$ be the subalgebra of $\afbfHr$ generated by the elements
$T_{s_i}$ for $1\leq i\leq r-1$.
\begin{Lem}\label{module isomorphism}
Assume $n\geq r$. Then there is a right $\msbfHr$-module isomorphism
$$\bsS(n,r)e_\og\ot_{\bsH(r)}\msbfHr\stackrel{\sim}{\lra}\bfUnr e_\og,\;
x\ot h\longmapsto xh.$$
\end{Lem}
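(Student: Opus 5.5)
We identify $\msbfHr$ with $e_\og\bfUnr e_\og$ through $\theta_{n,r}$ (Lemma \ref{subalgebra Hr}) and $\bsH(r)$ with $e_\og\bsS(n,r)e_\og$, the finite Hecke algebra sitting inside the $q$-Schur algebra. The plan is to compare explicit bases on both sides.

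The map is $x\ot h\mapsto xh$, the product computed in $\afbfSr$. It is well defined and right $\msbfHr$-linear because $\bsS(n,r)\han\bfUnr$ and $\bfUnr$ is a subalgebra (Proposition \ref{subalgebra}). Indeed, for $x\in\bsS(n,r)e_\og$ and $h\in e_\og\bfUnr e_\og$ we get $xh\in\bfUnr e_\og$, and the relation $xb\ot h=x\ot bh$ for $b\in\bsH(r)$ holds by associativity.

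\textbf{Step 1: bases via $\jmath_\vtg$.} A basis of $\bfUnr e_\og$ is given by the $[A]$ with $A\in\Xinr$ and $\co(A)=\og$. Under $\jmath_\vtg$ these correspond to triples $(\la,d,\og)$ with $d\in\afmsD_{\la,\og}$, where $\afmsD_{\og}$ is all of $\affSr$ since $\fS_\og$ is trivial. Exactly as in the proof of Lemma \ref{subalgebra Hr}, the condition $A\in\Xinr$ translates to $d^{-1}(i)>0$ for $1\le i\le r$. Similarly, $\bsS(n,r)e_\og$ has a basis $[A]$ with $A=\jmath_\vtg(\la,d,\og)$ and $d\in\fS_r$ a distinguished coset representative.

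\textbf{Step 2: factor the affine permutations.} Let $W'=\{w\in\affSr\mid w^{-1}(i)>0,\ 1\le i\le r\}$. We claim that every $w\in W'$ factors uniquely as $w=yw_0$, with $y\in\fS_r$ and $w_0$ a minimal-length representative in $\fS_r\backslash\affSr$, and that $w_0\in W'$ again. The underlying observation is that left multiplication by $\fS_r$ permutes the values in $\{1,\dots,r\}$ and so does not change the set $w^{-1}(\{1,\dots,r\})$. Lengths add in this factorization.

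It follows that $\msbfHr$ is a free left $\bsH(r)$-module with basis $T_{w_0}$, where $w_0$ runs over the minimal representatives lying in $W'$. Hence the left-hand side is $\bigoplus_{w_0}\bsS(n,r)e_\og\, T_{w_0}$. Counting, $\dim \bsS(n,r)e_\og\ot_{\bsH(r)}\msbfHr$ equals the number of pairs ($\la$, double coset $\fS_\la d\fS_r$) times the number of such $w_0$. This matches the number of pairs $(\la,d)$ with $d\in\afmsD_{\la}\cap W'$, by the analogous factorization of $\fS_\la$-minimal elements. Both sides are infinite-dimensional, so we do the count for each fixed $\la$ and each fixed length or degree filtration. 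Alternatively, and preferably, we skip dimensions altogether and prove directly that the images form a basis.

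\textbf{Step 3: triangularity (main obstacle).} For $x=[\jmath_\vtg(\la,y,\og)]$ we need
$$x\,\theta_{n,r}(T_{w_0})=[\jmath_\vtg(\la,yw_0,\og)]+\text{lower terms},$$
with lower terms taken in the order $\sqsubset$ or by length. This follows from the standard fact that $e_A$ corresponds to $\sum_{w\in \fS_\la d}T_w$ in the Hecke-module realization $\afbfSr\cong\End_{\afbfHr}(\bigoplus_\mu x_\mu\afbfHr)$. With $\co=\og$ there is no right symmetrizer, so $[\jmath_\vtg(\la,d,\og)]$ corresponds to $x_\la T_d$ up to a power of $v$. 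Then $x_\la T_yT_{w_0}=x_\la T_{yw_0}$ exactly, because lengths add. The map therefore sends the tensor basis bijectively, up to units, onto the basis of $\bfUnr e_\og$, which gives surjectivity and injectivity at once.

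The hardest part is verifying, via Step 2, that $\fS_\la$-distinguished elements of $W'$ factor as $y w_0$ with $y\in\fS_r$ distinguished. We intend to check this by matching descent conditions in the realization $\bfUnr e_\og\cong x_\la\msbfHr$.
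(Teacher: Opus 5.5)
Your proof is correct. Its core is the same as the paper's: both sides are matched with spans of the $[\jmath_\vtg(\la,d,\og)]$, using the criterion $d^{-1}(i)>0$ ($1\le i\le r$) for membership in $\bfUnr e_\og$.

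Where you differ is the treatment of the left-hand side.
\begin{itemize}
\item You prove that $\msbfHr$ is a free left $\bsH(r)$-module on the $T_{w_0}$, with $w_0$ minimal in $\fS_r\backslash\affSr$ and lying in $W'$. This gives a genuine basis $[\jmath_\vtg(\la,y,\og)]\ot T_{w_0}$ of the tensor product. To match it with the target basis you then need the nested factorization $d=yw_0$ of $\fS_\la$-distinguished elements.
\item The paper only uses the spanning set $\{[\jmath_\vtg(\la,1,\og)]\ot T_d\mid \la\in\afLanr,\ d\in\afmsD_\la,\ d^{-1}(i)>0\}$. For distinguished $y\in\fS_r$, $[\jmath_\vtg(\la,y,\og)]$ is a power of $v$ times $[\jmath_\vtg(\la,1,\og)]\,\theta_{n,r}(T_y)$, so everything can be pushed into the right tensor factor. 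Then $x_\la T_{y'}\in v^{\mbz}x_\la$ for $y'\in\fS_\la$ reduces $T_w$ to $T_d$, where $w=y'd$.
\end{itemize}

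The images of the paper's spanning set are, up to units, exactly the basis of $\bfUnr e_\og$, so injectivity is automatic. The step you flag as hardest is therefore never needed. In any case it is the standard transitivity of distinguished coset representatives for $\fS_\la\subseteq\fS_r\subseteq\affSr$: write $w=x\rho^k$ with $x$ in the Coxeter part and $\rho\colon i\mapsto i+1$ of length $0$.

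What your version buys is an explicit basis of the tensor product and an explicit freeness statement, both of which the paper leaves implicit.

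Finally, delete the counting paragraph of Step 2, since as written it is off in two ways. For each $\la$, the index set for $\bsS(n,r)e_\og$ is $\fS_\la\backslash\fS_r$, not double cosets. Fixing the length also does not give finite pieces, because every $\rho^k$ with $k\le 0$ has length $0$ and lies in $W'$.
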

\begin{proof}
Clearly there is a right $\msbfHr$-module isomorphism
$$\vi:\bsS(n,r)e_\og\ot_{\bsH(r)}\msbfHr\stackrel{\sim}{\lra}\bfUnr e_\og,\;
x\ot h\longmapsto xh.$$
For $\la\in\afLanr$, $d\in\affSr$ and $A=\jmath_\vtg(\la,d,\og)$ we have
\begin{equation*}
\begin{split}
A\in\Xinr&\Leftrightarrow \{1,2,\cdots,r\}\cap\{d(j)\mid j\in\mbz,\,j\leq 0\}=\bin_{1\leq i\leq n\atop j\in\mbz,\,j<1}R_i^\la\cap dR_j^\og=\emptyset\\
 &\Leftrightarrow d^{-1}(i)>0 \text{ for } 1\leq i\leq r.
\end{split}
\end{equation*}
It follows that
\begin{equation}\label{eq module isomorphism}
\begin{split}
\bfUnr e_\og&=\spann_{\mbq(v)}\{[A]\mid A\in\Xinr,\,\co(A)=\og\}\\
&=\spann_{\mbq(v)}\{[\jmath_\vtg(\la,d,\og)]\mid \la\in\afLanr,\,d^{-1}(i)>0,\,\text{ for }1\leq i\leq r\}.
\end{split}
\end{equation}
Furthermore we have
$\bsS(n,r)e_\og\ot_{\bsH(r)}\msbfHr=\spann_{\mbq(v)}\sX$,
where
$$\sX=\{[\jmath_\vtg(\la,1,\og)]\ot T_d\mid\la\in\afLanr,\,
d\in\afmsD_\la,\,d^{-1}(i)>0,\,1\leq i\leq r\}.$$
By \eqref{eq module isomorphism}, we see that
the set $\vi(\sX)$ forms a basis for $\bfUnr e_\og$.
Therefore $\vi$ is a right $\msbfHr$-module isomorphism.
\end{proof}

\begin{Prop}\label{category equivalence}
Assume $n\geq r$. Then the categories $\bfUnr\hmod$ and
$\msbfHr\hmod$ are equivalent.
\end{Prop}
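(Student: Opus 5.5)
We use the idempotent $e_\og$ and standard Morita theory. Since $e_\og\bfUnr e_\og\cong\msbfHr$ by Lemma \ref{subalgebra Hr}, we have the functor
$$\mathcal F:\bfUnr\hmod\lra\msbfHr\hmod,\qquad M\longmapsto e_\og M,$$
with left adjoint $N\mapsto \bfUnr e_\og\ot_{\msbfHr}N$. By the standard criterion, $\mathcal F$ is an equivalence as soon as $e_\og$ is a \emph{full} idempotent, i.e. $\bfUnr e_\og\bfUnr=\bfUnr$. Equivalently, it suffices to show that $[\diag(\la)]\in\bfUnr e_\og\bfUnr$ for every $\la\in\afLanr$. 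Indeed, $1=\sum_{\la\in\afLanr}[\diag(\la)]$ in $\bfUnr$, so the two-sided ideal generated by $e_\og$ would then contain $1$.

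To show that each $[\diag(\la)]$ factors through $e_\og$, I would work inside the $q$-Schur algebra $\bsS(n,r)\han\bfUnr$ from \eqref{bfSr}. Since $n\geq r$, $\og$ is a weight of $\frak{gl}_n$ with all parts $0$ or $1$ summing to $r$. For the finite $q$-Schur algebra over the field $\mbq(v)$, which is semisimple, the classical fact is that $\bsS(n,r)e_\og\bsS(n,r)=\bsS(n,r)$ when $n\geq r$: this is the Schur functor equivalence $\bsS(n,r)\hmod\simeq\bsH(r)\hmod$. Concretely, with $\la$ viewed as a composition of $r$ into $n$ parts, write $x_\la=\sum_{w\in\fS_\la}T_w$. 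Then
$$[\jmath_\vtg(\la,1,\og)]\,[\jmath_\vtg(\og,1,\la)]=c_\la[\diag(\la)],$$
where $c_\la$ is the Poincar\'e polynomial of $\fS_\la$ (up to a power of $v$). This scalar is nonzero in $\mbq(v)$, hence invertible. Both factors are $[A]$ with $A\in\Thnr\subseteq\Xinr$, so they lie in $\bfUnr$. Therefore $[\diag(\la)]\in\bfUnr e_\og\bfUnr$.

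It remains to handle $\afLanr$ versus compositions of $r$ into $n$ parts. The elements of $\afLanr$ are precisely the $n$-periodic extensions of such compositions, and $\diag(\la)\in\Thnr$. So the computation above takes place within the finite $q$-Schur algebra inside $\bfUnr$, and it transfers verbatim.

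The main obstacle is verifying this product formula with the normalizations $[A]=v^{-d_A}e_A$, in particular that the product equals a single nonzero multiple of $[\diag(\la)]$ with no other terms. I would do this by counting: the product $e_{\jmath(\la,1,\og)}e_{\jmath(\og,1,\la)}$ counts intermediate flags $\bfL'$ of type $\og$ refining a given flag of type $\la$. That count is $\prod_i[\la_i]^!$ evaluated at $v^2=q$, which is nonzero. Alternatively, one can invoke the standard isomorphism $\bsS(n,r)\cong\End_{\bsH(r)}(\oplus_\la x_\la\bsH(r))$. Once fullness holds, Lemma \ref{module isomorphism} identifies $\bfUnr e_\og$ with the explicit bimodule implementing the equivalence, although that identification is not needed for the bare equivalence statement.
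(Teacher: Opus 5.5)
Your argument is correct, but it reaches the equivalence by a different route than the paper.

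\textbf{The paper's route.} The paper never checks fullness of $e_\og$ directly. It combines two isomorphisms:
\begin{itemize}
\item the Schur-functor isomorphism $\bsS(n,r)e_\og\otimes_{\bsH(r)}e_\og M\cong M$ for the finite $q$-Schur algebra;
\item Lemma~\ref{module isomorphism}, the induction isomorphism $\bsS(n,r)e_\og\otimes_{\bsH(r)}\msbfHr\cong\bfUnr e_\og$, proved via the combinatorics of $\jmath_\vtg$ and the condition $d^{-1}(i)>0$.
\end{itemize}
From these it builds $\bfUnr e_\og\otimes_{\msbfHr}e_\og M\cong M$ by hand, and cites Green for the other composite.

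\textbf{Your route.} You observe that $\bsS(n,r)$ sits inside $\bfUnr$ with the same identity $\sum_{\la\in\afLanr}[\diag(\la)]$. Fullness of $e_\og$ in $\bsS(n,r)$ then immediately gives $\bfUnr e_\og\bfUnr=\bfUnr$, and general Morita theory applies. Your factorization $[\jmath_\vtg(\la,1,\og)][\jmath_\vtg(\og,1,\la)]=c_\la[\diag(\la)]$ is right. Both factors lie in $\bsS(n,r)\subseteq\bfUnr$. No other terms appear because the $\la$-coarsening of a lattice chain of type $\og$ in relative position $\jmath_\vtg(\la,1,\og)$ is unique, which forces the two outer chains to coincide.

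\textbf{Comparison.} Your argument bypasses Lemma~\ref{module isomorphism} entirely and is shorter and more transparent. What the paper's route buys in exchange is an explicit description of the progenerator $\bfUnr e_\og$ as a module induced from $\bsH(r)$ to $\msbfHr$. Both rest on the same input, the finite Schur functor equivalence for $n\ge r$.

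\textbf{One point to make explicit.} Since $\bfUnr$ and $\msbfHr$ are infinite dimensional, the Morita criterion only gives an equivalence of full module categories. To restrict it to finite dimensional modules you need $\bfUnr e_\og\otimes_{\msbfHr}N$ to be finite dimensional whenever $N$ is. This follows from your own identity $1=\sum_{\la\in\afLanr}c_\la^{-1}[\jmath_\vtg(\la,1,\og)]\,e_\og\,[\jmath_\vtg(\og,1,\la)]$. It shows that $\bfUnr e_\og$ is generated as a right $e_\og\bfUnr e_\og$-module by the finitely many elements $[\jmath_\vtg(\la,1,\og)]$, $\la\in\afLanr$. Hence the tensor product is a quotient of finitely many copies of $N$.
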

\begin{proof}
We define the following two functors
\begin{equation*}\label{functor sfF}
\begin{split}
&\sfF:\msbfHr\hmod\lra\bfUnr\hmod,\;L\longmapsto\bfUnr e_\og\ot_{\msbfHr}L\\
&\sfG:\bfUnr\hmod\lra\msbfHr\hmod,\; M\longmapsto
e_\og M.
\end{split}
\end{equation*}
Here we have identified $ e_\og \bfUnr e_\og $ with
$\msbfHr$.

Clearly, for any left $\bsS(n,r)$-module $M$, there
is a left $\bsS(n,r)$-module isomorphism
\begin{equation}\label{the iso f}
f:\bsS(n,r) e_\og  \ot_{\bsH(r)} e_\og M\cong M
\end{equation}
defined by $f(x\ot m)=xm$ for $x\in\bsS(n,r) e_\og $ and
$m\in  e_\og M$. By Lemma \ref{module isomorphism},
\eqref{the iso f} induces a left $\bsS(n,r)$-module
isomorphism
$$g:\bfUnr e_\og
\ot_{\msbfHr} e_\og M\cong M$$
 satisfying $g(x\ot m)=xm$, for $x\in\bsS(n,r) e_\og $
and $m\in  e_\og M$. By \eqref{eq module isomorphism} we have
$$\bfUnr e_\og =\bop_{\la\in\afLanr}
[\jmath_\vtg(\la,1,\og)] \msbfHr.$$ Furthermore, for any $\la\in\afLanr$, $h\in\msbfHr$,
and $m\in e_\og M$,
we have
$$g([\jmath_\vtg(\la,1,\og)]h\ot m)=g([\jmath_\vtg(\la,1,\og)]\ot
hm)=([\jmath_\vtg(\la,1,\og)] h) m.$$
Hence, $g(x\ot
m)=xm$, for all $x\in\bfUnr e_\og $ and $m\in  e_\og M$. Thus,
$g$ is an $\bfUnr$-module isomorphism. Therefore, we have $\sfF\circ
\sfG\cong \mathsf{id}_{\bfUnr\hmod}$. In addition by \cite[(6.2d)]{Gr80} we have
$\sfG\circ \sfF\cong \mathsf{id}_{\msbfHr\hmod}$. The proposition is proved.
\end{proof}

\section{BLM realization of the quantum current algebra $\bfUn$}
In this section, we will use the algebras $\bfUnr$ introduced in \S4.1 to give a BLM realization of $\bfUn$, and prove that
$\Un$ is a $\sZ$-subalgebra of $\bfUn$.

\subsection{The algebras $\afbfLn$ and $\afhbfLn$}
Let $v'$ be an indeterminate independent of $v$.
Let $\sZ_1$ be the subring of $\mbq(\up)[\up']$ generated  by
$\prod_{1\leq i\leq t}\frac{\up^{-2(a-i)}\up'^2-1} {\up^{-2i}-1}$
and $\up^j$
for all $a\in\mbz$, $t\geq 1$ and $j\in\mbz$.
Let $\sZ_2$ be the subring of $\mbq(\up)[\up',v'^{-1}]$ generated  by
$\prod_{1\leq i\leq t}\frac{\up^{-2(a-i)}\up'^2-1} {\up^{-2i}-1}$, $\prod_{1\leq i\leq t}\frac{\up^{2(a-i)}\up'^{-2}-1} {\up^{2i}-1}$,
and $\up^j$
for all $a\in\mbz$, $t\geq 1$ and $j\in\mbz$.

For $A\in\aftiThn$ and $p\in\mbz$, let
$${}_pA=A+pI$$
where $I\in\afThn$ is the identity matrix.
If $p$ is large enough, then we have ${}_pA\in\afThn$.
Let
$$\tiXin=\{A\in\aftiThn\mid a_{i,j}=0 \text{ for }1\leq i\leq n,\, j<1\}.$$

\begin{Prop}\label{stabilization property}
Let $A,B\in\tiXin$ and assume $\co(B)=\ro(A)$. There exist $X_1,\cdots,X_m\in\tiXin$, elements $P_1(\up,\up'),\cdots,P_m(\up,\up')\in\sZ_1$ and an integer $p_0\geq 0$ such that, in $\UpA$,
\begin{equation}\label{eq stabilization}
[{}_pB][{}_pA]=\sum_{1\leq i\leq m}P_i(\up,\up^{-p})[{}_pX_i]
\end{equation}
for all $p\geq p_0$.
\end{Prop}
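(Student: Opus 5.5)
This is the analogue of the BLM stabilization property \cite[4.2]{BLM}, proved for affine quantum Schur algebras in \cite[Prop.~4.1]{DF13}. The plan is to deduce it from the affine version and then use the triangularity from \S4 to keep all the matrices that appear inside $\tiXin$.

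\textbf{Step 1: import the affine stabilization.} For $A,B\in\tiXin\subseteq\aftiThn$ with $\co(B)=\ro(A)$, the affine stabilization result gives the following data: matrices $X_1,\dots,X_m\in\aftiThn$, elements $P_i\in\sZ_1$, and $p_0$. With these, $[{}_pB][{}_pA]=\sum_i P_i(\up,\up^{-p})[{}_pX_i]$ holds in $\afSpA$ for all $p\ge p_0$. Here $\sg({}_pA)=pn+\sg(A)$.

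If that result is not available in the exact form needed, I would rebuild it in the BLM way. By Proposition \ref{subalgebra} and the triangular relation (\ref{tri Unr}), write $[{}_pB]$ as a $\sZ$-combination of products of the generators $[\sum\al_i\afE_{i,i+1}+\diag(\la)]$ and $[\sum\ga_i\afE_{i+1,i}+\diag(\la)]$. Then do an induction on $\ddet B$ that reduces everything to the case of one such generator.

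\textbf{Step 2: the generator case.} For a generator, use the explicit formulas of Proposition \ref{[B][A]}. The matrices occurring are $A+T-\ti T$ and $A-T+\ti T$, with $T$ ranging over a finite set independent of $p$, since $\ro(T)$ is fixed. Replacing $A$ by ${}_pA$ shifts only the diagonal entries by $p$. Two things then need checking.
\begin{itemize}
\item The off-diagonal Gaussian factors are independent of $p$.
\item The diagonal factors $\ol{\dleb{a_{i,i}+p+t_{i,i}-t_{i-1,i}\atop t_{i,i}}\drib}$ and the powers $v^{\bt(T,{}_pA)}$ are of the form $P(\up,\up^{-p})$ with $P\in\sZ_1$. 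The exponent $\bt$ is linear in $p$, so this is the usual check as in \cite[4.2]{BLM}.
\end{itemize}
Compatibility of the induction with multiplication uses that $\sZ_1$ is a ring.

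\textbf{Step 3: show the $X_i$ lie in $\tiXin$.} This is the main step specific to this paper. The natural tool is Proposition \ref{subalgebra}: for $p\ge p_0$ we have ${}_pA,{}_pB\in\XinpA$, so the product $[{}_pB][{}_pA]$ lies in $\UpA$. Hence its expansion in the basis $\{[C]\}$ involves only $C\in\XinpA$. Because the $[{}_pX_i]$ for distinct $X_i$ are distinct basis elements, any $X_i\notin\tiXin$ has coefficient $P_i(\up,\up^{-p})=0$ for all $p\ge p_0$. Note that $a_{i,j}=0$ for $j<1$ is unaffected by adding $pI$, because all entries with $j<1\le i$ are off-diagonal. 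A polynomial in $\up'$ over $\mbq(\up)$ vanishing at infinitely many $\up'=\up^{-p}$ is zero, so $P_i=0$ and such terms can be discarded.

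Alternatively, in the generator case one can argue directly as in the proof of Proposition \ref{[B][A]}: nonvanishing coefficients force $t_{i-1,j}=0$ (resp.\ $t_{i,j}=0$) for $1\le i\le n$, $j<1$. This argument needs only the off-diagonal entries, so it is uniform in $p$.

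\textbf{Main obstacle.} I expect the hard part to be the bookkeeping in Step 2 that shows the coefficients lie in $\sZ_1$ uniformly in $p$, in particular the sign of the $v^{\bt}$ exponent's dependence on $p$. If $\up^{+p}$ appears instead of $\up^{-p}$, I would absorb it via the identity $\up^{-p}\cdot(\text{suitable generators})$ or renormalize as in \cite{DF13}.
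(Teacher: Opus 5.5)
Your proposal is correct and matches the paper's proof. The paper imports the affine stabilization from \cite[6.3]{DF18}, which already covers arbitrary $A,B$, so your Step 2 fallback is not needed. Then, exactly as in your Step 3, it uses Proposition \ref{subalgebra} to see that $[{}_pB][{}_pA]$ involves only $[C]$ with $C\in\XinpA$. Hence any $X_i\notin\tiXin$ has $P_i(\up,\up^{-p})=0$ for all $p\geq p_0$, and therefore $P_i=0$.
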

\begin{proof}
By \cite[6.3]{DF18}, there exist $X_1,\cdots,X_m\in\aftiThn$, elements $P_1(\up,\up'),\cdots,P_m(\up,\up')\in\sZ_1$ and an integer $p_0\geq 0$ such that in $\afSpA$,
\begin{equation*}
[{}_pB][{}_pA]=\sum_{1\leq i\leq m}P_i(\up,\up^{-p})[{}_pX_i]
\end{equation*}
for all $p\geq p_0$. Since $A,B\in\tiXin$, by Proposition \ref{subalgebra}, we conclude that for $p\geq p_0$, $[{}_pB][{}_pA]$ is a $\sZ$-linear combination of $[C]$ for $C\in\XinpA$. Therefore, if $X_i\not\in\tiXin$ for some $i$, then $P_i(\up,\up^{-p})=0$ for $p\geq p_0$ and hence $P_i(v,v')=0$. The proposition is proved.
\end{proof}

Let $\aftiLn$ be the free $\sZ_1$-module with basis $\{A\mid A\in\tiXin\}$. By Proposition \ref{stabilization property}, there is a unique
associative $\sZ_1$-algebra structure on
 $\aftiLn$ such that
\begin{equation*}\label{tiKn}
B\cdot A=\begin{cases}\sum_{1\leq i\leq m}P_i(\up,\up')X_i, &\text{  if $\co(B)=\ro(A)$};\\
0,&\text{ otherwise.}\end{cases}
\end{equation*}
Let
\begin{equation}\label{afLn}
\afLn=\aftiLn\ot_{\sZ_1}\sZ,
\end{equation}
where $\sZ$ is regarded as a $\sZ_1$-module by specializing $\up'$ to $1$.
For $A\in\tiXin$ let $[A]=A\ot 1\in\afLn$.
Then $\afLn$ is an associative $\sZ$-algebra with basis $\{[A]\in\tiXin\}$.
Let $\afbfLn=\afLn\ot_\sZ\mbq(\up)$.

Following \cite[5.1]{BLM}, let $\afhbfLn$   be
the vector space of all formal (possibly infinite) $\mbq(\up)$-linear combinations
$\sum_{A\in\tiXin}\beta_A\dob A\dcb$ such that,
for ${\bf x}\in\afmbzn$, the sets
${\{A\in\tiXin\ |\ \beta_A\neq0,\ \ro(A)={\bf
x}\}}$ and ${\{A\in\tiXin\ |\ \beta_A\neq0,\ \co(A)={\bf
x}\}}$ are finite. For $\sum_{A\in\tiXin}\beta_A\dob A\dcb, \sum_{B\in\tiXin}\gamma_B\dob B\dcb\in\afhbfLn$, we define
$$\sum_{A\in\tiXin}\beta_A\dob A\dcb\sum_{B\in\tiXin}\gamma_B\dob B\dcb=\sum_{A,B\in\tiXin}\beta_A\gamma_B\dob A\dcb\dob B\dcb,$$
where $[A][B]$ is the product in $\afLn$. Then $\afhbfLn$ becomes  an associative algebra over $\mbq(v)$.

Recall the notation $\Xinpm$ introduced in \eqref{Xin}.
For $A\in\Xinpm$, $\bfj\in\afmbzn$ and $\la\in\afmbnn$, let
\begin{equation}
\begin{split}
A(\bfj)&=\sum_{\mu\in\afmbzn}\up^{\mu\centerdot\bfj}
[A+\diag(\mu)]\in\afhbfLn,\\
A(\bfj,\la)&=\sum_{\mu\in\afmbzn}\up^{\mu\centerdot\bfj}
\leb{\mu\atop\la}\rib[A+\diag(\mu)]\in\afhbfLn,
\end{split}
\end{equation}
where  $[{\mu\atop\la}]=
\prod_{1\leq i\leq n}[{\mu_i\atop\la_i}]$.
By Proposition \ref{stabilization property}, the algebra homomorphisms $\zr$ given in Theorem \ref{zr} induce
an algebra homomorphism
\begin{equation}\label{zeta}
\zeta:\bfUn\ra\afhbfLn
\end{equation}
such that $\zeta(\ti u_A^+)=A(\bfl)$, $\zeta(\ti u_B^-)=(\tB)(\bfl)$ and
$\zeta(K^{\bfj})=0(\bfj)$ for $A\in\afThnp$, $B\in\Thnp$ and $\bfj\in\afmbzn$.

\subsection{The algebra $\afbfWn$}
Let $\afbfWn$ be the $\mbq(\up)$-subspace of $\afhbfLn$ spanned by
$A(\bfj)$ for $A\in\Xinpm$ and $\bfj\in\afmbzn$. We shall prove in Lemma \ref{Wn-subalgebra} that $\afbfWn$ is a $\mbq(v)$-subalgebra of $\afhbfLn$. Furthermore, we will prove in Theorem \ref{realization of Un} that
$\afbfWn$ is a realization of $\bfUn$. We need several preliminary lemmas.
\begin{Lem}
For $A\in\Xinpm$, $\bfj\in\afmbzn$ and $\la\in\afmbnn$
we have $A(\bfj,\la)\in\afbfWn$.
\end{Lem}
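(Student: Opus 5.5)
This is the standard BLM argument (cf. \cite[5.2]{BLM} and its affine analogue in \cite{DF18}). The idea is to write the Gaussian binomial $\leb{\mu\atop\la}\rib$, viewed as a function of $\mu\in\afmbzn$, as a finite $\mbq(v)$-linear combination of the characters $\mu\mapsto \up^{\mu\centerdot\bfj}$.

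First I would reduce to a single coordinate. Since $\leb{\mu\atop\la}\rib=\prod_{1\le i\le n}\leb{\mu_i\atop\la_i}\rib$, it suffices to show that for each $t\in\mbn$ there are finitely many integers $m$ and scalars $c_{t,m}\in\mbq(\up)$, independent of $a$, with
\begin{equation*}
\leb{a\atop t}\rib=\sum_{m}c_{t,m}\,\up^{ma}\quad\text{for all } a\in\mbz.
\end{equation*}
This follows from the definition. We have
\begin{equation*}
\leb{a\atop t}\rib=\prod_{s=1}^{t}\frac{\up^{a-s+1}-\up^{-(a-s+1)}}{\up^{s}-\up^{-s}}.
\end{equation*}
Expanding the numerator gives a Laurent polynomial in $\up^{a}$ whose coefficients lie in $\mbq(\up)$ and do not depend on $a$. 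The expression is valid for negative $a$ as well, because the generalized binomial is defined by the same formula for all $N\in\mbz$. Taking the product over $i$, I get finitely many $\bfj\in\afmbzn$ and scalars $c_\bfj\in\mbq(\up)$ such that
\begin{equation*}
\leb{\mu\atop\la}\rib=\sum_{\bfj}c_\bfj\,\up^{\mu\centerdot\bfj}\quad\text{for all }\mu\in\afmbzn.
\end{equation*}

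Next I substitute this into the definition of $A(\bfj',\la)$:
\begin{equation*}
A(\bfj',\la)=\sum_{\mu}\sum_{\bfj}c_\bfj\,\up^{\mu\centerdot(\bfj'+\bfj)}[A+\diag(\mu)]=\sum_{\bfj}c_\bfj\,A(\bfj'+\bfj).
\end{equation*}
Interchanging the two sums is allowed because the $\bfj$-sum is finite, so the equality holds coefficientwise in the formal space $\afhbfLn$. Each $A(\bfj'+\bfj)$ lies in $\afbfWn$ by definition, hence so does $A(\bfj',\la)$.

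The only delicate point is the convention for $\leb{\mu_i\atop\la_i}\rib$ when $\mu_i$ is negative or smaller than $\la_i$. I would check that the paper's definition via $\dleb{N\atop t}\drib$ indeed gives the polynomial expression above for every integer $N$, which it does since $\leb{N\atop t}\rib=\up^{-t(N-t)}\dleb{N\atop t}\drib$ simplifies to the symmetric product. I would also check that the basis elements $[A+\diag(\mu)]$ with negative diagonal entries cause no issue: they are legitimate elements of $\tiXin$ in $\afLn$, and they appear on both sides with the same coefficients, so nothing needs to vanish.
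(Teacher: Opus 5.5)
Your proof is correct, but it takes a more direct route than the paper's.

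\textbf{Your route.} You observe that
\[
\leb{\mu_i\atop\la_i}\rib=\prod_{s=1}^{\la_i}\frac{[\mu_i-s+1]}{[s]}
\]
is a Laurent polynomial in $\up^{\mu_i}$ with $\mbq(\up)$-coefficients independent of $\mu_i$. From this you read off $A(\bfj,\la)=\sum_{\bfj'}c_{\bfj'}A(\bfj+\bfj')$ coefficientwise in $\afhbfLn$, without ever using the multiplication there.

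\textbf{The paper's route.} The paper works inside the algebra.
\begin{itemize}
\item It first writes down the identity \eqref{0{la}A{0}}. This is a $q$-Vandermonde expansion of $0(\bfj,\la)A(\bfl)$. Its leading term is $\up^{\ro(A)\centerdot(\bfj+\la)}A(\bfj,\la)$, and every other term involves some $A(\bfj+\bfj'-\la,\bfj')$ with $\bfj'<\la$.
\item Induction on $\la$ then places $A(\bfj,\la)$ in the $\mbq(\up)$-span of the products $0(\dt,\mu)A(\bfl)$.
\item Next it expands $0(\dt,\mu)$ as a polynomial in the elements $0(\pm\afbse_i)$. This is exactly your Laurent expansion of the Gaussian binomial, carried out on the Cartan part.
\item Finally it uses $0(\dt)A(\bfl)=\up^{\ro(A)\centerdot\dt}A(\dt)$.
\end{itemize}

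\textbf{Comparison.} Your argument is shorter and self-contained, and it suffices for the $\mbq(\up)$-statement of the lemma. What the paper's detour buys is the identity \eqref{0{la}A{0}} itself. That identity has coefficients in $\sZ$ and leading coefficient a power of $\up$. It is reused in Lemma \ref{Wn-subalgebra} to prove $\spann_\sZ\{0(\bfj,\la)A(\bfl)\}=\spann_\sZ\{A(\bfj,\la)\}$, which is an integral statement needed for the $\sZ$-form. Your expansion cannot supply this, because its coefficients have denominators $\prod_s[s]$.
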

\begin{proof}
Clearly
we have
\begin{equation}\label{0{la}A{0}}
\begin{split}
0(\bfj,\la) A(\bfl)&=
\up^{\ro(A)\centerdot(\bfj+\la)}A(\bfj,\la)+
\sum_{\bfj'\in\afmbnn\atop\bfj'<\la}\up^{\ro(A)\centerdot(\bfj+\bfj')}
\leb{\ro(A)\atop\la-\bfj'}\rib A(\bfj+\bfj'-\la,\bfj').
\end{split}
\end{equation}
It follows that $A(\bfj,\la)\in\spann_{\mbq(v)}\{0(\dt,\mu)A(\bfl)\mid\dt\in\afmbzn,
\,\mu\in\afmbnn\}$.
In addition, we have
\begin{equation*}
0(\dt,\mu)=\prod_{1\leq i\leq n}\bigg(0(\afbse_i)^{\dt_i}
\prod_{1\leq s\leq\mu_i}\frac{0(\afbse_i)v^{-s+1}-0(-\afbse_i)v^{s-1}}{v^s-v^{-s}}
\bigg).
\end{equation*}
Hence we have $A(\bfj,\la)\in\spann_{\mbq(v)}\{0(\dt)A(\bfl)\mid\dt\in\afmbzn\}
=\spann_{\mbq(v)}\{A(\dt)\mid\dt\in\afmbzn\}$. The lemma is proved.
\end{proof}

Let $\afWn$ be the $\sZ$-submodule of $\afbfWn$ spanned by
$A(\bfj,\la)$ for $A\in\Xinpm,\,\bfj\in\afmbzn,\,\la\in\afmbnn.$
Clearly we have the following result.
\begin{Lem}\label{basis for afVn}
The set $\{A(\bfj,\la)\mid A\in\Xinpm,\,
\bfj=(j_i)_{i\in\mbz},\la\in\afmbnn,\,j_i\in\{0,1\},\forall i\}$ forms a $\sZ$-basis for $\afWn.$
\end{Lem}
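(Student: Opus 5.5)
We must show that the set $\{A(\bfj,\la)\mid A\in\Xinpm,\ \bfj\in\{0,1\}^n,\ \la\in\afmbnn\}$ spans $\afWn$ over $\sZ$ and is $\sZ$-linearly independent. The model is the finite-type argument of \cite[5.5]{BLM}, adapted to the affine index set $\afmbzn$.

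\emph{Independence.} The sets $\{A+\diag(\mu)\mid \mu\in\afmbzn\}$ are disjoint for distinct $A\in\Xinpm$. Hence it suffices to fix $A$ and show that the formal series
\[
A(\bfj,\la)=\sum_{\mu}\up^{\mu\centerdot\bfj}\leb{\mu\atop\la}\rib[A+\diag(\mu)],\qquad \bfj\in\{0,1\}^n,\ \la\in\afmbnn,
\]
are linearly independent. Each series factors as a product over $i$ of a function of $\mu_i$. So independence reduces to the one-variable statement that the functions $\mu\mapsto \up^{j\mu}\leb{\mu\atop\la}\rib$ on $\mbz$, for $j\in\{0,1\}$ and $\la\ge0$, are linearly independent over $\mbq(\up)$. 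I would prove this by viewing them as Laurent polynomials in $\up^{\mu}$ and $\up^{-\mu}$: the leading term of $\up^{j\mu}\leb{\mu\atop\la}\rib$ in $\up^{\mu}$ has $\up^\mu$-degree $\la+j$, and its lowest term has degree $-\la+j$. These pairs $(\la+j,-\la+j)$ are pairwise distinct, and a triangularity argument on the pair of extreme degrees then gives independence.

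\emph{Spanning.} It suffices to show $A(\bfj,\la)\in\spann_\sZ$ of the proposed set for arbitrary $\bfj\in\afmbzn$. Again this is coordinatewise, so it reduces to the one-variable claim that for every $j\in\mbz$ and $\la\ge 0$, the function $\up^{j\mu}\leb{\mu\atop\la}\rib$ is a $\sZ$-combination of $\up^{\epsilon\mu}\leb{\mu\atop\la'}\rib$ with $\epsilon\in\{0,1\}$.

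This one-variable claim is the main obstacle, because the reduction must stay integral over $\sZ$. I would get it from the standard identities, valid as functions of $\mu$:
\[
\up^{\mu}\leb{\mu\atop \la}\rib\up^{-\mu}\ \text{type relations},\qquad \up^{2\mu}=\up^{\mu}\bigl((\up^{\la+1}-\up^{-\la-1})\textstyle\leb{\mu\atop\la+1}\rib\cdots\bigr).
\]
More concretely, I would use the identity $\leb{\mu\atop 1}\rib\leb{\mu\atop\la}\rib=[\la+1]\leb{\mu\atop\la+1}\rib+\up^{\pm\cdots}[\la]\leb{\mu\atop\la}\rib$ together with $(\up-\up^{-1})\leb{\mu\atop 1}\rib=\up^{\mu}-\up^{-\mu}$. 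These allow $\up^{2\mu}$ and $\up^{-\mu}$ to be rewritten in terms of $\up^{\epsilon\mu}\leb{\mu\atop\la'}\rib$.

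The cleaner route is Lusztig's integral basis of $\sZ[K,K^{-1}]$-type Laurent rings. The functions $K^{\epsilon}\bigl[{K;0\atop t}\bigr]$ with $\epsilon\in\{0,1\}$ form a $\sZ$-basis of the $\sZ$-algebra generated by $K^{\pm1}$ and the $\bigl[{K;0\atop t}\bigr]$ (\cite[Lem.~2.3 / 23.1]{Lubk}, in the BLM form). Evaluating at $K=\up^{\mu}$ gives $\bigl[{K;0\atop t}\bigr]\mapsto\leb{\mu\atop t}\rib$ and $K\mapsto \up^\mu$. A product of such basis functions, and in particular $\up^{j\mu}\leb{\mu\atop\la}\rib$, is therefore a $\sZ$-combination of the proposed ones.

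Combining the factorization over the $n$ coordinates with these one-variable facts yields both spanning and independence, which proves the lemma.
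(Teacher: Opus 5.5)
Your argument is correct and is the standard one that the paper leaves implicit (it only says ``Clearly''): the matrices $A+\diag(\mu)$ are distinct basis elements of $\afhbfLn$, so the evaluation $K_i\mapsto \up^{\mu_i}$, which is injective, reduces everything to Lusztig's theorem that the elements $\prod_i K_i^{\delta_i}\big[{K_i;0\atop t_i}\big]$ with $\delta_i\in\{0,1\}$ form a $\sZ$-basis of the $\sZ$-algebra generated by the $K_i^{\pm1}$ and $\big[{K_i;0\atop t}\big]$, and this gives spanning and independence at once. The hand-written identities in your spanning paragraph are garbled (the correct one is $\leb{\mu\atop1}\rib\leb{\mu\atop\la}\rib=\up^{-\la}\bigl([\la+1]\leb{\mu\atop\la+1}\rib+[\la]\up^{\mu}\leb{\mu\atop\la}\rib\bigr)$), but the citation of Lusztig's basis supersedes them, so you can simply drop them.
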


\begin{Lem}\label{formula in Wn}
For $\al,\la\in\afmbnn$, $\bfj\in\afmbzn$,
we have
$(\sum_{1\leq i\leq n}\al_i\afE_{i,i+1})(\bfl)\afWn\han\afWn$,
$(\sum_{1\leq i\leq n-1}\al_i\afE_{i+1,i})(\bfl)\afWn\han\afWn$ and $0(\bfj,\la)\afWn\han\afWn$.
\end{Lem}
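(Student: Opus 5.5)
The plan is to prove the three inclusions by computing in the stabilized algebra $\afhbfLn$, using the multiplication formulas of Proposition \ref{[B][A]} in the limit $p\to\infty$, exactly as in \cite[\S5]{BLM} and its affine analogue in \cite{DF18}. By Lemma \ref{basis for afVn} it suffices to check the claim on the $\sZ$-basis elements $A(\bfj',\mu)$ with $A\in\Xinpm$.

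The third inclusion is the easiest, and I would do it first. Since $0(\bfj,\la)=\sum_\nu v^{\nu\centerdot\bfj}\leb{\nu\atop\la}\rib[\diag(\nu)]$ and $[\diag(\nu)][A+\diag(\mu)]$ is nonzero only when $\nu=\ro(A+\diag(\mu))$, we get
\[
0(\bfj,\la)A(\bfj',\mu)=\sum_{\mu'}v^{(\ro(A)+\mu')\centerdot\bfj+\mu'\centerdot\bfj'}\leb{\ro(A)+\mu'\atop\la}\rib\leb{\mu'\atop\mu}\rib[A+\diag(\mu')].
\]
I would then expand $\leb{\mu'+c\atop\la}\rib$ by the quantum Vandermonde identity as a $\sZ$-combination of products $v^{\pm(\cdot)\centerdot\mu'}\leb{\mu'\atop\la'}\rib$, and rewrite each product $\leb{\mu'\atop\la'}\rib\leb{\mu'\atop\mu}\rib$ as a $\sZ$-combination of terms $v^{k\centerdot\mu'}\leb{\mu'\atop\ga}\rib$. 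This is the standard fact that these functions span an integral form of the ring of functions on $\mbz$, as in \cite[4.x]{BLM}. Each resulting term is of the form $A(\bfj'',\ga)$, which lies in $\afWn$.

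For the first two inclusions I would use Proposition \ref{[B][A]} with $B={}_p(\sum\al_i\afE_{i,i+1})+\diag$ and $A$ replaced by ${}_pA+\diag(\mu')$, and then let $p\to\infty$ as in Proposition \ref{stabilization property}. This gives
\[
(\textstyle\sum\al_i\afE_{i,i+1})(\bfl)\,[A+\diag(\mu')]=\sum_{T}f_{T,A+\diag(\mu')}\,[A+\diag(\mu')+T-\ti T],
\]
with $T$ running over matrices with $\ro(T)=\al$. Only finitely many off-diagonal shapes $A+T_{\rm off}-\ti T_{\rm off}$ occur, and they are independent of $\mu'$. Proposition \ref{[B][A]} guarantees these shapes lie in $\tiXin$, hence in $\Xinpm$ after removing the diagonal.

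The coefficient $f_{T,\cdot}$ depends on $\mu'$ only through the diagonal factors $\ol{\dleb{\mu'_i+a_{i,i}+t_{i,i}-t_{i-1,i}\atop t_{i,i}}\drib}$ and the powers $v^{\bt}$, which are linear in $\mu'$. Multiplying by $\leb{\mu'\atop\mu}\rib$ and summing over $\mu'$, I would reduce again to the same integrality statement: each product
\[
v^{\text{linear}(\mu')}\,\ol{\dleb{\mu'+c\atop t}\drib}\,\leb{\mu'\atop\mu}\rib
\]
is a $\sZ$-combination of $v^{k\centerdot\mu'}\leb{\mu'\atop\ga}\rib$. After shifting the index $\mu'$ by the diagonal of $T-\ti T$, this expresses the product as a $\sZ$-combination of $A'(\bfj'',\ga)$, which lies in $\afWn$. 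The lower-triangular case (2) is handled identically, using $f'_{T,A}$ and the hypothesis $\al_n=0$.

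I expect the main obstacle to be the bookkeeping for this integrality lemma. Concretely, one must show that products of shifted $v$-binomials in $\mu'$ with exponential factors $v^{k\centerdot\mu'}$ lie in the $\sZ$-span of $\{v^{k\centerdot\mu'}\leb{\mu'\atop\ga}\rib\}$, and one must also track the index shift $\mu'\mapsto\mu'-\dt_{T-\ti T}$. I would first try to cite the corresponding BLM lemma componentwise in each coordinate $i$. Failing that, I would prove it by induction on $t$, using the relation $\leb{x+1\atop t}\rib=v^{t}\leb{x\atop t}\rib+v^{-(x+1-t)}\leb{x\atop t-1}\rib$.
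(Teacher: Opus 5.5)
Your proposal is correct and takes essentially the paper's route. The paper's proof is a one-line citation of Proposition \ref{[B][A]}, Proposition \ref{stabilization property} and \cite[4.2]{DF18}. That last reference supplies exactly the $\sZ$-integral multiplication formulas in terms of the elements $A(\bfj,\la)$ that you plan to derive by hand: from the stabilized coefficients $f_{T,A}$ and $f'_{T,A}$, together with the standard integrality of the Cartan part $U^0_\sZ$ (shifts and products of $v$-binomials, times factors $v^{k\centerdot\mu'}$, stay in the $\sZ$-span of the functions $v^{k\centerdot\mu'}\leb{\mu'\atop\ga}\rib$), while Proposition \ref{[B][A]} keeps the resulting off-diagonal shapes inside $\Xinpm$.
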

\begin{proof}
The assertion follows from Proposition \ref{[B][A]}, \ref{stabilization property} and \cite[4.2]{DF18}.
\end{proof}

\begin{Lem}\label{tri Wn}
For $A\in\Xinpm$,
there exists $w_{A^+}\in\ti\Sg$ and $w_{{}^t(A^-)}\in\Ga$ such that
$$\zeta(\ti u^+_{(w_{A^+})})\zeta(\ti u^-_{(w_{{}^t(A^-)})})=
A(\bfl)+g$$
where $g$ is a $\sZ$-linear combination of $B(\bfj,\dt)$ such that $B\in\Xinpm$, $B\p A$, $\dt\in\afmbnn$ and $\bfj\in\afmbzn$.
\end{Lem}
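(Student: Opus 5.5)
We follow the proof of Corollary \ref{tri2-affine Schur algebras}, now in the stabilized algebra $\afhbfLn$ instead of in a single $\afbfSr$. First, by Proposition \ref{tri Hall} and Proposition \ref{tri quantum gln}, we choose $w_{A^+}\in\ti\Sg$ and $w_{{}^t(A^-)}\in\Ga$ with
$$\ti u^+_{(w_{A^+})}=\ti u^+_{A^+}+\sum_{B'\p A^+}f_{B',A^+}\ti u^+_{B'},\qquad
\ti u^-_{(w_{{}^t(A^-)})}=\ti u^-_{{}^t(A^-)}+\sum_{C'\p {}^t(A^-)}f_{C',{}^t(A^-)}\ti u^-_{C'},$$
with coefficients in $\sZ$. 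Here $B'\in\afThnp$ and $C'\in\Thnp$ have the same dimension vectors as $A^+$ and ${}^t(A^-)$. Applying the homomorphism $\zeta$ of \eqref{zeta} gives $\zeta(\ti u^+_{(w_{A^+})})=A^+(\bfl)+\sum f_{B',A^+}B'(\bfl)$ and $\zeta(\ti u^-_{(w_{{}^t(A^-)})})=A^-(\bfl)+\sum f_{C',{}^t(A^-)}\,{}^tC'(\bfl)$. Since transposition reverses the roles of $\sg_{i,j}$ for $i<j$ and $i>j$, the relation $C'\p{}^t(A^-)$ becomes ${}^tC'\p A^-$.

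Expanding the product reduces the problem to products of the form $X^+(\bfl)Y^-(\bfl)$, where $X\in\afThnp$ with $X\pr A^+$ and $Y\in\Thnm$ with $Y\pr A^-$. Note that $X+Y\in\Xinpm$: $X$ is strictly upper triangular, and $Y$ is supported in the block $1\le i,j\le n$. The key step is an affine BLM-type triangular formula in $\afhbfLn$:
$$X^+(\bfl)Y^-(\bfl)=(X+Y)(\bfl)+\sum_{\substack{D\in\Xinpm,\ D\p X+Y\\ \bfj\in\afmbzn,\ \dt\in\afmbnn}}g_{D,\bfj,\dt}\,D(\bfj,\dt),\qquad g_{D,\bfj,\dt}\in\sZ.$$
To get it, we work at level $pn+\sg(A)$. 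By Proposition \ref{tri-affine Schur algebras} and \cite[3.7.4]{DDF},
$$X^+(\bfl,r)Y^-(\bfl,r)=\sum_\la X^+(\bfl,r)[\diag(\la)]Y^-(\bfl,r)=\sum_\la\Bigl([X+Y+\diag(\la-\bfsg(X+Y))]+\text{lower terms}\Bigr),$$
where the lower terms are $\sZ$-combinations of $[B]$ with $B\sqsubset X+Y+\diag(\cdot)$. By Proposition \ref{subalgebra} these $B$ lie in $\Xinr$, and the off-diagonal parts of such $B$ satisfy $B^{\pm}\p X+Y$. We then pass to the limit $p\to\infty$ using Proposition \ref{stabilization property}. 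The lower terms have coefficients that are polynomials in the diagonal entries; re-expressing them as $\sZ$-combinations of $D(\bfj,\dt)$ mirrors \cite[4.2]{DF18}, with Lemma \ref{formula in Wn} controlling integrality. Finally, $X+Y\pr A$ with equality only when $X=A^+$ and $Y=A^-$, so every product other than the leading one contributes only terms $D(\bfj,\dt)$ with $D\p A$. This gives the statement.

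The main obstacle is the stabilization step. The lower terms in Proposition \ref{tri-affine Schur algebras} carry coefficients depending on $\la$, and we need these to assemble into a $\sZ$-combination of the elements $D(\bfj,\dt)$ rather than just of some formal series. We would first try to copy the finite-type argument of \cite[4.1--4.2]{BLM} (and its affine version \cite[4.2]{DF18}). That is, compute $X^+(\bfl)Y^-(\bfl)$ by successive multiplication by the generators of Lemma \ref{formula in Wn}. Since $\afWn$ is stable under these generators, the result automatically lies in $\afWn$. Lemma \ref{basis for afVn} then lets us read off its leading term and its $\p$-lower terms.
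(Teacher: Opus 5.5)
Your proposal is correct and is essentially the paper's argument. Both use the same ingredients: level-wise triangularity (Proposition \ref{tri-affine Schur algebras}, which the paper uses in packaged form through Corollary \ref{tri2-affine Schur algebras}), passage to the limit by Proposition \ref{stabilization property}, and finally closure of $\afWn$ under the generators (Lemma \ref{formula in Wn}) together with the basis of Lemma \ref{basis for afVn}, which gives $\sZ$-integrality and lets you read off the $\p$-lower terms.

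The only difference is where you expand the monomials via Propositions \ref{tri Hall} and \ref{tri quantum gln}: you do it in $\afhbfLn$, whereas the paper does it inside each $\afbfSr$. You also do not need to show separately that each $X(\bfl)Y(\bfl)$ lies in $\afWn$. The full word product is already a product of the generators, so it lies in $\afWn$ directly, and that is exactly how the paper concludes.
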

\begin{proof}
By Corollary \ref{tri2-affine Schur algebras} and Proposition \ref{stabilization property},  there exists $w_{A^+}\in\ti\Sg$ and $w_{{}^t(A^-)}\in\Ga$ such that
\begin{equation}\label{tri in Ln}
\zeta(\ti u^+_{(w_{A^+})})[\diag(\la)]\zeta(\ti u^-_{(w_{{}^t(A^-)})})=[A+\diag(\la-\bfsg(A))]+f,
\end{equation}
for $\la\in\afmbzn$, where $f$ is  a $\sZ$-linear combination of $[B]$ with $B\in\tiXin$ and $B\sqsubset A+\diag(\la-\bfsg(A))$. It follows that
$$\zeta(\ti u^+_{(w_{A^+})})\zeta(\ti u^-_{(w_{{}^t(A^-)})})=\sum_{\la\in\afmbzn}
\zeta(\ti u^+_{(w_{A^+})})[\diag(\la)]\zeta(\ti u^-_{(w_{{}^t(A^-)})})=A(\bfl)+g$$
where $g$ is a $\sZ$-linear combination of $[B]$ with $B\in\tiXin$ and $B\sqsubset A+\diag(\la-\bfsg(A))$.
By Lemma \ref{formula in Wn} we conclude that  $g$ must be a $\sZ$-linear combination of $B(\bfj,\dt)$ such that $B\in\Xinpm$, $B\p A$, $\dt\in\afmbnn$ and $\bfj\in\afmbzn$.
\end{proof}
\begin{Lem}\label{Wn-subalgebra}
The $\sZ$-module $\afWn$ is a $\sZ$-subalgebra of $\afhbfLn$. In particular, $\afbfWn$ is a $\mbq(v)$-subalgebra of $\afhbfLn$.
\end{Lem}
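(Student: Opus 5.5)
We need to show that $\afWn$ is closed under multiplication. The standard BLM route is to show that $\afWn$ is generated, as a $\sZ$-module closed under left multiplication, by the elements of the form given in Lemma \ref{formula in Wn}. We then show that every basis element $A(\bfj,\la)$ of $\afWn$ (Lemma \ref{basis for afVn}) can be written as a $\sZ$-linear combination of products of such generators.

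\textbf{Step 1.} We take as generators of a $\sZ$-subalgebra $\mathcal W'$ of $\afhbfLn$ the following elements:
\begin{itemize}
\item $(\sum_{1\leq i\leq n}\al_i\afE_{i,i+1})(\bfl)$ for $\al\in\afmbnn$;
\item $(\sum_{1\leq i\leq n-1}\al_i\afE_{i+1,i})(\bfl)$;
\item $0(\bfj,\la)$ for $\bfj\in\afmbzn$ and $\la\in\afmbnn$.
\end{itemize}
By Lemma \ref{formula in Wn}, each generator maps $\afWn$ into $\afWn$. Hence $\mathcal W'\afWn\han\afWn$. Since $0(\bfl)=\sum_\mu[\diag(\mu)]$ is the identity of $\afhbfLn$ and lies in $\afWn$, it suffices to prove $\afWn\han\mathcal W'$. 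Then $\afWn\afWn\han\mathcal W'\afWn\han\afWn$, and the $\mbq(v)$-statement follows by base change.

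\textbf{Step 2.} We prove $A(\bfj,\la)\in\mathcal W'$ for all $A\in\Xinpm$, by induction on $A$ with respect to $\p$, refined by $\ddet{A}$ as in Proposition \ref{subalgebra}.

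For $A=0$ the claim is immediate. For general $A$, we use Lemma \ref{tri Wn}. The elements $\ti u^\pm_{(w)}$ are products of $\ti u^+_{x\bsb}$ and $\ti u^-_{x\afbse_i}$. Under $\zeta$, the ones with $\bsb=\afbse_i$ map to $(x\afE_{i,i+1})(\bfl)$ or $(x\afE_{i+1,i})(\bfl)$. The ones with $\bsb\in(\mbn_\vtg^*)^n$ map to $(S_\bsb)(\bfl)$. Both kinds are generators of $\mathcal W'$. Hence $\zeta(\ti u^+_{(w_{A^+})})\zeta(\ti u^-_{(w_{{}^t(A^-)})})\in\mathcal W'$, and Lemma \ref{tri Wn} gives
\[
A(\bfl)\in\mathcal W'+\sum_{B\p A}\sZ\, B(\bfj',\dt).
\]
The induction hypothesis puts every term $B(\bfj',\dt)$ in $\mathcal W'$, so $A(\bfl)\in\mathcal W'$.

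\textbf{Step 3.} We pass from $A(\bfl)$ to a general $A(\bfj,\la)$ using formula \eqref{0{la}A{0}}. It expresses $0(\bfj,\la)A(\bfl)$ as a unit $v^{\ro(A)\centerdot(\bfj+\la)}$ times $A(\bfj,\la)$, plus a $\sZ$-combination of terms $A(\bfj+\bfj'-\la,\bfj')$ with $\bfj'<\la$. Since $0(\bfj,\la)\in\mathcal W'$, a secondary induction on $\la$ with respect to $<$ gives $A(\bfj,\la)\in\mathcal W'$ for all $\bfj$ and $\la$. The base case $\la=0$ is $A(\bfj)=0(\bfj)A(\bfl)$.

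\textbf{Main obstacle.} The delicate point is making the induction in Step 2 well-founded. Two things need checking:
\begin{itemize}
\item The order $\p$ restricted to $\{B\in\Xinpm\mid \bfd\text{ fixed}\}$ has finite descending chains. Here I would use $\ddet{B}<\ddet{A}$, as in the proof of Proposition \ref{subalgebra}, via \cite[3.7.6]{DDF}.
\item The error term $g$ in Lemma \ref{tri Wn} genuinely has coefficients in $\sZ$ and not merely in $\mbq(v)$.
\end{itemize}
If the latter fails, I would pass through the finite levels instead. Using Corollary \ref{tri2-affine Schur algebras} and the stabilization of Proposition \ref{stabilization property}, which is uniform in $p$, I would read off integrality of the coefficients in $\sZ_1$ and then specialize $v'\mapsto1$.
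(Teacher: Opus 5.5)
Your proposal is correct and is essentially the paper's proof. The paper likewise takes the $\sZ$-subalgebra $\afWn'$ generated by $(\sum_{i}\al_i\afE_{i,i+1})(\bfl)$, $(\sum_{i<n}\al_i\afE_{i+1,i})(\bfl)$ and $0(\bfj,\la)$, gets $\afWn'\han\afWn'\afWn\han\afWn$ from Lemma~\ref{formula in Wn}, and deduces $\afWn\han\afWn'$ from \eqref{0{la}A{0}} together with Lemma~\ref{tri Wn}, leaving implicit the induction you spell out.

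Two minor corrections, neither affecting correctness:
\begin{itemize}
\item The base case is really $0(\bfj)A(\bfl)=v^{\ro(A)\centerdot\bfj}A(\bfj)$. This differs from what you wrote only by a unit, so it is harmless.
\item Neither of your ``obstacles'' is a real issue. Lemma~\ref{tri Wn} already asserts that the coefficients lie in $\sZ$. Also, $\p$ is well-founded on $\Xinpm$ without fixing $\bfd$, since $B\p A$ forces $\ddet{B}<\ddet{A}$.
\end{itemize}
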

\begin{proof}
Let $\afWn'$ be the $\sZ$-subalgebra of $\afhbfLn$ generated by the  elements $(\sum_{1\leq i\leq n}\al_i\afE_{i,i+1})(\bfl)$, $(\sum_{1\leq i\leq n-1}\al_i\afE_{i+1,i})(\bfl)$, $0(\bfj,\la)$ for  $\al,\la\in\afmbnn$, $\bfj\in\afmbzn$.
By Lemma \ref{formula in Wn}, we have $$\afWn'\han\afWn'\afWn\han\afWn.$$
Furthermore by \eqref{0{la}A{0}},
we have
$$\spann_{\sZ}\{0(\bfj,\la) A(\bfl)\mid \bfj\in\afmbzn, \la\in\afmbnn\}
=\spann_{\sZ}\{A(\bfj,\la)\mid \bfj\in\afmbzn, \la\in\afmbnn\}$$
for $A\in\Xinpm$.
This together with Lemma \ref{tri Wn} implies that
$$\afWn\han\afWn'.$$
Hence we have $\afWn=\afWn'$. The assertion follows.
\end{proof}

\subsection{A realization of the quantum current algebra $\bfUn$}
We are now prepared to prove that $\afbfWn$ is isomorphic to the quantum current algebra $\bfUn$.
\begin{Lem}\label{zeta(Un)}
We have $\zeta(\Un)=\afWn$.
\end{Lem}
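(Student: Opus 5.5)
We prove the two inclusions $\zeta(\Un)\han\afWn$ and $\afWn\han\zeta(\Un)$ separately, using the spanning set of $\Un$ given by the elements $u_A^+\prod_{1\leq i\leq n}K_i^{j_i}\big[{K_i;0\atop \la_i}\big]u_B^-$ with $A\in\afThnp$, $\bfj\in\afmbzn$, $\la\in\afmbnn$, $B\in\Thnp$.

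For $\zeta(\Un)\han\afWn$, we first compute the images of the three kinds of factors. We have $\zeta(\ti u_A^+)=A(\bfl)$ and $\zeta(\ti u_B^-)=(\tB)(\bfl)$, and $u_A^\pm$ differs from $\ti u_A^\pm$ only by a power of $v$. Moreover
\[\zeta\Big(K^\bfj\prod_i\Big[{K_i;0\atop\la_i}\Big]\Big)=0(\bfj,\la),\]
since $\zeta(K_i)=0(\afbse_i)$ acts on $[\diag(\mu)]$ by $v^{\mu_i}$, and $\prod_{s}\frac{v^{\mu_i-s+1}-v^{-\mu_i+s-1}}{v^s-v^{-s}}=\leb{\mu_i\atop\la_i}\rib$. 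Next we show that each of $A(\bfl)$ and $(\tB)(\bfl)$ lies in $\afWn$. We write $\ti u_A^+$ through the triangular relation of Proposition \ref{tri Hall}, so that it becomes a $\sZ$-combination of monomials $\ti u^+_{(w)}$ in divided-power type generators $\ti u^+_{x\bsb}$. By Proposition \ref{DDFIsoThm}, $\zeta$ sends each such generator $\ti u^+_\la$ to $(\sum_i\la_i\afE_{i,i+1})(\bfl)$; for $B$ we use Proposition \ref{tri quantum gln} in the same way. Then $\zeta(u_A^+)\,0(\bfj,\la)\,\zeta(u_B^-)$ is obtained by letting these generators and $0(\bfj,\la)$ act successively, starting from $0(\bfl)=1\in\afWn$. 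By Lemma \ref{formula in Wn} each of them preserves $\afWn$, so the whole product lies in $\afWn$.

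For $\afWn\han\zeta(\Un)$, we induct on $A\in\Xinpm$ with respect to $\p$, or equivalently on $\ddet A$ together with the degree, to show that $A(\bfj,\la)\in\zeta(\Un)$. Lemma \ref{tri Wn} gives
\[\zeta(\ti u^+_{(w_{A^+})})\zeta(\ti u^-_{(w_{{}^t(A^-)})})=A(\bfl)+g,\]
where $g$ is a $\sZ$-combination of elements $B(\bfj',\dt)$ with $B\p A$. By induction these lie in $\zeta(\Un)$, hence so does $A(\bfl)$. Since $\Un$ is not yet known to be a ring, we must keep the product inside the $\Un$-span. We therefore multiply on the left by $K^\bfj\prod_i\big[{K_i;0\atop\la_i}\big]$ and apply $\zeta$, which gives $0(\bfj,\la)A(\bfl)$. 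We expand this with \eqref{0{la}A{0}}: the leading term is $v^{\ro(A)\centerdot(\bfj+\la)}A(\bfj,\la)$, and the remaining terms involve $A(\bfj',\bfj'')$ with $\bfj''<\la$. A second induction on $\la$ then yields $A(\bfj,\la)\in\zeta(\Un)$.

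The hard part is that all these manipulations must take place in the $\sZ$-span $\Un$, whose closure under multiplication is not yet available. The way out is to move the Cartan part to the left, where it stays within the spanning set. Concretely, $0(\bfj,\la)\zeta(u_A^+u_B^-)$ equals $\zeta$ of $u_A^+K^{\bfj}\prod_i\big[{K_i;0\atop\la_i}\big]u_B^-$, up to the substitution $\la_i\mapsto$ shifted binomials. This uses the commutation formula $\big[{K_i;0\atop t}\big]u_A^+=u_A^+\big[{K_i;c\atop t}\big]$, and the shifted binomials $\big[{K_i;c\atop t}\big]$ are $\sZ$-combinations of the unshifted ones. Likewise, products of the monomials $\ti u^+_{(w)}$ expand $\sZ$-integrally in the $u_C^+$ by the Hall algebra structure constants together with Proposition \ref{tri Hall}, so these products also stay inside $\Un$.
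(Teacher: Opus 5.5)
Your proposal is correct and follows essentially the same route as the paper, whose proof is a compressed version of yours. In both, the inclusion $\zeta(\Un)\han\afWn$ comes from $\afWn$ being closed under multiplication; you could simply cite Lemma~\ref{Wn-subalgebra} here instead of re-deriving it from Lemma~\ref{formula in Wn}. The reverse inclusion comes from the triangular relation of Lemma~\ref{tri Wn} (the paper uses it in the form \eqref{tri2 Wn}), together with $\zeta\bigl(K^{\bfj}\prod_i\bigl[{K_i;0\atop\la_i}\bigr]\bigr)=0(\bfj,\la)$ and the expansion \eqref{0{la}A{0}}. You also check explicitly that the relevant products stay inside the $\sZ$-span $\Un$, using integrality of the Hall algebra and moving the Cartan binomials past $u_A^+$ via shifted binomials; this fills in a step the paper passes over with a single ``therefore''.
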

\begin{proof}
By Proposition \ref{tri Hall}, \ref{tri quantum gln} and Lemma \ref{tri Wn},
for $A\in\Xinpm$, we have
\begin{equation}\label{tri2 Wn}
\zeta(\ti u_{A^+})\zeta(\ti u_{{}^t(A^-)})=A(\bfl)+g
\end{equation}
where $g$ is a $\sZ$-linear combination of $B(\bfj,\dt)$ such that $B\in\Xinpm$, $B\p A$, $\dt\in\afmbnn$ and $\bfj\in\afmbzn$.
Clearly we have $$\zeta\bigg(\prod_{1\leq i\leq n}
K_i^{j_i}\bigg[{K_i;0\atop\la_i}\bigg]\bigg)=0(\bfj,\la)$$
for $\bfj\in\afmbzn$ and $\la\in\afmbnn$.
Therefore we have
$$\zeta(\Un)=\spann_\sZ\{\zeta(\ti u_{A^+})0(\bfj,\la)
\zeta(\ti u_{\tAm})\mid A\in\Xinpm,\,\bfj\in\afmbzn,\,\la\in\afmbnn\}=\afWn.$$
The proposition is proved.
\end{proof}

\begin{Thm}\label{realization of Un}
$(1)$ The map $\zeta$ defined in \eqref{zeta} induces an algebra isomorphism
$\bfUn\overset\zeta\cong\afbfWn$.

$(2)$ $\Un$ is a $\sZ$-subalgebra of $\bfUn$ isomorphic to $\afWn$. Moreover, $\Un$ is a Hopf subalgebra of $\bfUn$.

$(3)$ $\dUn$ is a $\sZ$-subalgebra of $\dbfUn$.
\end{Thm}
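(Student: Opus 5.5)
We follow the BLM strategy: surjectivity of $\zeta$ onto $\afbfWn$ comes from Lemma \ref{zeta(Un)}, and injectivity from a triangularity argument comparing the bases of Lemma \ref{basis of bfUn} and of $\afbfWn$.

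For (1), since $\Un\ot_\sZ\mbq(v)=\bfUn$, Lemma \ref{zeta(Un)} gives $\zeta(\bfUn)=\afWn\ot_\sZ\mbq(v)=\afbfWn$, so $\zeta$ is surjective. For injectivity we use the basis of $\bfUn$ obtained from Lemma \ref{basis of bfUn} together with Propositions \ref{tri Hall} and \ref{tri quantum gln}. These show that the elements $\ti u^+_{(w_{A^+})}K^\bfj\ti u^-_{(w_{\tAm})}$, for $A\in\Xinpm$ and $\bfj\in\afmbzn$, are related to $\{u_{A^+}^+K^\bfj u^-_{\tAm}\}$ by a unitriangular transition with respect to $\p$. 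Here $A\mapsto(A^+,\tAm)$ is a bijection from $\Xinpm$ to $\afThnp\times\Thnp$, because $\Thnm$ consists of matrices supported in the finite block $1\le j<i\le n$. Hence these elements form a basis of $\bfUn$.

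By Lemma \ref{tri Wn} and the commutation of $0(\bfj)$ past $A(\bfl)$ (which only introduces a power of $v$ given by $\ro(A)$), their images are $v^{c}A(\bfj)+(\text{lower terms in }B(\bfj',\dt),\ B\p A)$. Using \eqref{0{la}A{0}} to rewrite $B(\bfj',\dt)$ as $\mbq(v)$-combinations of $B(\bfj'')$, the images are unitriangular modulo lower $A$ against the family $\{A(\bfj)\}$. It therefore suffices to show that $\{A(\bfj)\mid A\in\Xinpm,\bfj\in\afmbzn\}$ is linearly independent in $\afhbfLn$. For fixed $A$, the elements $A(\bfj)$ are supported on the disjoint sets $\{[A+\diag(\mu)]\}$, and independence in $\bfj$ reduces to independence of the characters $\mu\mapsto v^{\mu\centerdot\bfj}$ on $\afmbzn$, which is standard. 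A finite relation $\sum c_{A,\bfj}\zeta(\cdot)=0$ is handled by choosing a $\p$-maximal $A$ occurring in it, which gives $c_{A,\bfj}=0$. I expect this step to be the main obstacle: one must check that the "lower terms" in Lemma \ref{tri Wn} really involve only $B\ne A$, i.e. that $\p$ is strict on off-diagonal parts, so that maximality applies.

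For (2), $\zeta$ is injective and $\zeta(\Un)=\afWn$ is a $\sZ$-subalgebra by Lemma \ref{Wn-subalgebra}, so $\Un=\zeta^{-1}(\afWn)$ is closed under multiplication and $\zeta$ restricts to an isomorphism $\Un\cong\afWn$. For the Hopf property, note that $\Un$ is generated as a $\sZ$-algebra by the divided powers $\ti u^{\pm}_{x\bsb}$, the elements $K_i^{\pm1}$ and $\big[{K_i;0\atop t}\big]$, using Lemma \ref{Wn-subalgebra} transported through $\zeta$. We then check $\Dt$, $\ep$ and $\sg$ on these generators. For the $K$-part and $\bfUglnm$ this is the standard integral Hopf structure of Lusztig's form. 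For $\ti u^+_\la$ we use the Green/Ringel–Hall comultiplication formula in $\dbfHa$ restricted to the positive part, whose structure constants lie in $\sZ$ after the $\ti u$ normalization.

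For (3), $\dUn$ is spanned by $u^+_A1_\la u^-_B$, and products of such elements are computed via products in $\Un$ projected by $\pi_{\la,\mu}$. Since $\Un$ is a subalgebra containing $\big[{K_i;0\atop t}\big]$, whose images under $\pi_{\la,\la}$ are integral binomials, the $\sZ$-span is closed under multiplication.
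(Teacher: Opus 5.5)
Your proposal is correct and follows essentially the paper's route: the triangular relation \eqref{tri2 Wn} (equivalently Lemma \ref{tri Wn}) shows that $\zeta$ sends a PBW-type basis of $\bfUn$ to a basis of $\afbfWn$, and your character-independence argument justifies the linear independence of $\{A(\bfj)\}$, which the paper simply asserts. Part (2) then follows, as in the paper, from Lemmas \ref{Wn-subalgebra} and \ref{zeta(Un)} together with Green's integral comultiplication \cite{Gr95}; the step you flag as the main obstacle is automatic, since $B\p A$ by definition forces $\sg_{i,j}(B)<\sg_{i,j}(A)$ for some $(i,j)$, hence $B\neq A$.
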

\begin{proof}
Clearly the set $\{A(\bfj)\mid A\in\Xinpm,\,
\bfj\in\afmbzn\}$ forms a $\mbq(v)$-basis for $\afbfWn.$
It follows from \eqref{tri2 Wn} that the set $\{\zeta(\ti u_{A^+}K^{\bfj}\ti u_{{}^t(A^-)})A\in\Xinpm,\,
\bfj\in\afmbzn\}$ forms a $\mbq(v)$-basis for $\afbfWn.$
Hence $\zeta$ is injective and $\zeta(\bfUn)=\afbfWn$. Now the result follows from Lemma \ref{Wn-subalgebra}, \ref{zeta(Un)} and \cite{Gr95}.
\end{proof}
With the above result, we will identify $\bfUn$ with $\afbfWn$.

\section{Canonical bases for the modified quantum current algebra $\dbfUn$}

Recall the algebra $\bfUnr$ introduced in \S4.1.
In this section, we first construct a canonical basis of $\bfUnr$ in Proposition \ref{canonical basis for Unr}, and then prove  in Theorem \ref{canonical basis for afLn} that
the canonical basis of $\bfUnr$ can be lifted to a canonical basis of the modified quantum current algebra $\dbfUn$.

\subsection{Canonical bases for $\bfUnr$}
For $r\in\mbn$, let
$$\bfB(n,r):=\{\{A\}\mid A\in\afThnr\}$$
be the canonical basis of $\afSr$ defined by Lusztig in \cite[4.1(d)]{Lu99}.
Let $\bar\ :\afSr\ra\afSr$  be
the (involutive) group homomorphism defined by
$$\ol{v^j\{A\}}=v^{-j} \{A\}$$
for $A\in\afThnr$ and $j\in\mbz$. By \cite[4.13]{Lu99},
 the map $\bar\ :\afSr\ra\afSr$ is a ring involution.

\begin{Lem}[{\cite[7.2]{DF18}}]\label{bar monomial} For $\al,\beta\in\afmbnn$, let $A=S_\al+\diag(\bt)\in\afThnr$. Then, in $\afSr$, $\ol{[A]}=[A]$ and $\ol{[\tA]}=[\tA]$.
\end{Lem}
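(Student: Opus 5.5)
The natural route is to show that the orbit closure attached to $A=S_\al+\diag(\bt)$ in Lusztig's geometric model is smooth, so that the standard basis element coincides with the canonical basis element. That element is bar-invariant by construction. The transpose case should then follow from an anti-involution.

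\emph{Step 1.} Recall from \cite[\S4]{Lu99} that the canonical basis element is
$$\{A\}=v^{-\dim\sO_A}\sum_{B}\Pi_{B,A}[\,\cdot\,]\text{-type sum},$$
built from the intersection cohomology of the closure $\ol{\sO_A}$. The bar involution on $\afSr$ fixes each $\{A\}$. Hence it suffices to prove that $[A]=\{A\}$, i.e. that $\ol{\sO_A}=\sO_A$ is closed (or at least smooth with no smaller orbits in its closure). Recall also that $[A]=v^{-d_A}e_A$, and that $d_A$ is the normalization making $[A]$ equal to $v^{-\dim}$ times the constant sheaf.

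\emph{Step 2.} Show that $\sO_A$ is minimal for the order $\sqsubseteq$, i.e. there is no $B\sqsubset A$ with the same $\ro,\co$. The matrix $A$ has nonzero off-diagonal entries only on the superdiagonal $a_{i,i+1}=\al_i$. For any $B$ with $\ro(B)=\ro(A)$, $\co(B)=\co(A)$ and $\sg_{i,j}(B)\le\sg_{i,j}(A)$ for all $i\ne j$, we get:
\begin{itemize}
\item[(a)] for $j\ge i+2$ and for all $i>j$, $\sg_{i,j}(A)=0$, which forces $b_{s,t}=0$ whenever $t\ge s+2$ or $t<s$;
\item[(b)] with only diagonal and superdiagonal entries left, the row and column sums determine $b_{i,i+1}$ and $b_{i,i}$ recursively. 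Concretely, $\ro_i=b_{i,i}+b_{i,i+1}$ and $\co_{i+1}=b_{i,i+1}+b_{i+1,i+1}$.
\end{itemize}
This recursion is around a cycle, so it needs care. Instead of solving it, I would use $\sg_{i,i+1}(B)\le\sg_{i,i+1}(A)$ together with the equalities coming from the row and column sums to force equality throughout. Hence $B=A$, so $\sO_A$ is closed in its $(\ro,\co)$-component.

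Geometrically, $\sO_A$ consists of pairs $(\bfL,\bfL')$ with $L_{i-1}\subseteq L'_i\subseteq L_i$ of prescribed codimensions. This is a partial-flag (Grassmannian) bundle over the $\bfL$-component, hence smooth. The intersection complex is therefore the shifted constant sheaf, giving $\{A\}=[A]$ under the $d_A$ normalization. The normalization check, that the shift $\dim\sO_A-\dim(\text{base})$ matches $d_A$, is the main technical point. I would verify it by comparing with the known finite-type computation of \cite[3.x]{BLM} for $[S_\al+\diag\bt]$, or alternatively by noting that bar-invariance only needs $[A]$ to be a $\sZ$-multiple of a bar-invariant element with a symmetric coefficient.

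\emph{Step 3.} For ${}^t\!A$, apply the anti-automorphism $\tau$ of $\afSr$ sending $[X]\mapsto[{}^tX]$ (swapping $\bfL,\bfL'$). It commutes with the bar involution, since it preserves the geometric construction and $d_X=d_{{}^tX}$. Applying $\tau$ to $\ol{[A]}=[A]$ gives $\ol{[\tA]}=[\tA]$.

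I expect the main obstacle to be Step 2's normalization bookkeeping and the cyclic recursion, since there is no endpoint from which to start the induction. As a fallback, I would prove bar-invariance algebraically. By Theorem \ref{zr}, $[A]$ is the component $\zr(\ti u^+_{S_\al})[\diag\bt]$-piece, and $\ti u^+_{S_\al}$ is bar-invariant in the Hall algebra, because semisimple modules give bar-invariant elements up to the chosen twist $v^{\dim\End-\dim}$. Since $\zr$ is compatible with the bar involutions, this would give $\ol{[A]}=[A]$ directly.
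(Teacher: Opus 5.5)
\textbf{Gap in Step 2.} Your overall strategy is the right one: show $\ol{\sO_A}=\sO_A$, so that Lusztig's $\{A\}$ equals $[A]$ and is therefore bar-invariant. But Step~2, the step you rely on for closedness, is false in the affine setting. After (a), the row and column conditions only give
$\ro(B)_i-\co(B)_i=b_{i,i+1}-b_{i-1,i}=\al_i-\al_{i-1}$.
So $b_{i,i+1}=\al_i+c$ for a constant $c$ independent of $i$, and $\sg_{i,i+1}(B)=b_{i,i+1}\le\al_i$ yields only $c\le 0$. The cyclic recursion never closes.

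Concretely, suppose $\al_i\ge1$ for all $i$. Let $B$ have $b_{i,i+1}=\al_i-1$, $b_{i,i}=\bt_i+1$, and all other entries $0$. Then $B\in\afThnr$, $\ro(B)=\ro(A)$, $\co(B)=\co(A)$, $\sg_{i,i+1}(B)<\sg_{i,i+1}(A)$, and $\sg_{i,j}(B)=0=\sg_{i,j}(A)$ for all other $i\neq j$. So $B\sqsubset A$. In fact $A$ is $\sqsubseteq$-minimal only when some $\al_i=0$. For sincere $\al$ you therefore cannot deduce closedness of $\sO_A$ from the combinatorics. Nor can you get $\{A\}=[A]$ from the triangularity $\{A\}-[A]\in\sum_{B\sqsubset A}v^{-1}\mbz[v^{-1}][B]$ used in Proposition~\ref{canonical basis for Unr}. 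The order $\sqsubseteq$ cannot detect that pairs in $\sO_B$ have $\dim L_i/L_i'=\al_i-1$ rather than $\al_i$.

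\textbf{How to repair it.} The fix is already in your geometric paragraph, but you must use it for closedness, not smoothness; smoothness is automatic because orbits are homogeneous. For fixed $\bfL$, the fibre $\{\bfL'\mid(\bfL,\bfL')\in\sO_A\}$ consists exactly of the $\bfL'$ with $L_{i-1}\subseteq L_i'\subseteq L_i$ and $\dim L_i'/L_{i-1}=\bt_i$. So it is $\prod_{i=1}^n\mathrm{Gr}(\bt_i,L_i/L_{i-1})$, which is projective and hence closed. Thus $\ol{\sO_A}=\sO_A$ and $\{A\}=[A]$. The normalization you worried about is automatic: $d_A=\sum_{i=1}^n\al_i\bt_i$ is exactly the dimension of this fibre.

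\textbf{The transpose case.} For $\tA$, run the same argument directly. The fibre is $\prod_{i=1}^n\mathrm{Gr}(\al_i,L_{i+1}/L_i)$, and $d_{\tA}=\sum_{i}\al_i\bt_{i+1}$. This also shows that your justification in Step~3 is wrong, since $d_X\neq d_{{}^tX}$ in general. The map $[X]\mapsto[{}^tX]$ is nonetheless an anti-automorphism permuting the canonical basis. The reason is that $d_X-d_{{}^tX}=\frac12\sum_{i=1}^n(\ro(X)_i^2-\co(X)_i^2)$ depends only on $\ro(X)$ and $\co(X)$.

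\textbf{The fallback.} The argument via $\zr$ does not stand on its own. The paper provides no bar involution on $\afUgl$ that is intertwined by $\zr$. Moreover, $[A]=\zr(\ti u^+_{S_\al})[\diag(\co(A))]$, so compatibility of $\zr$ with bar on the generators $\ti u^+_{S_\al}$ is essentially the statement you are trying to prove.
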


By Proposition \ref{subalgebra} and Lemma \ref{bar monomial} we
see that the restriction of $\bar\ :\afSr\ra\afSr$ gives an
involution $$\bar\ :\Unr\ra\Unr.$$

\begin{Prop}\label{canonical basis for Unr}
The set $\Bnr:=\{\{A\}\mid A\in\Xinr\}$ forms a $\sZ$-basis for $\Unr$. In particular,
we have
$\Bnr=\bfB(n,r)\cap\Unr$.
\end{Prop}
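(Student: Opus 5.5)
The plan is the standard Lusztig/BLM argument: characterize the canonical basis elements of $\Unr$ by bar-invariance plus a unitriangularity condition inside $\Unr$, and compare this with Lusztig's characterization in $\afSr$.

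\textbf{Recollection.} By \cite[4.1, 4.13]{Lu99} (or \cite[\S7]{DF18}), for $A\in\afThnr$ the element $\{A\}$ is the unique bar-invariant element of the form
$$\{A\}=[A]+\sum_{B\sqsubset A}\pi_{B,A}[B],\qquad \pi_{B,A}\in v^{-1}\mbz[v^{-1}].$$
Moreover $\ol{[A]}=[A]+\sum_{B\sqsubset A} r_{B,A}[B]$ with $r_{B,A}\in\sZ$. Here $\sqsubset$ is the BLM-type order, equivalently the orbit-closure order.

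\textbf{Step 1 (bar-expansion stays in $\Xinr$).} For $A\in\Xinr$, the involution $\bar\ $ preserves $\Unr$, since it restricts to $\Unr$ by Proposition \ref{subalgebra} and Lemma \ref{bar monomial}. Hence $\ol{[A]}\in\Unr$, and since $\{[C]\mid C\in\Xinr\}$ is a $\sZ$-basis of $\Unr$, we get $r_{B,A}=0$ whenever $B\notin\Xinr$. So within $\Unr$,
$$\ol{[A]}=[A]+\sum_{B\in\Xinr,\,B\sqsubset A}r_{B,A}[B].$$

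\textbf{Step 2 (canonical basis inside $\Unr$).} The sets $\{B\in\Xinr\mid B\sqsubset A\}$ are finite, because $\ro$ and $\co$ are fixed and $\sg_{i,j}$ is bounded. So the standard Lusztig lemma (\cite[24.2.1]{Lubk}), applied to the $\sZ$-module $\Unr$ with this triangular involution, yields for each $A\in\Xinr$ a unique bar-invariant element
$$c_A=[A]+\sum_{B\in\Xinr,\,B\sqsubset A}p_{B,A}[B],\qquad p_{B,A}\in v^{-1}\mbz[v^{-1}].$$
This $c_A$ is also bar-invariant in $\afSr$ and has the required triangular shape there. Lusztig's uniqueness in $\afSr$ therefore gives $c_A=\{A\}$, so $\{A\}\in\Unr$ for $A\in\Xinr$. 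Since the transition matrix from $\{[A]\mid A\in\Xinr\}$ is unitriangular with entries in $\sZ$, the set $\Bnr$ is a $\sZ$-basis of $\Unr$.

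\textbf{Step 3 ($\Bnr=\bfB(n,r)\cap\Unr$).} The inclusion $\Bnr\subseteq\bfB(n,r)\cap\Unr$ is Step 2. Conversely, suppose $\{A\}\in\Unr$ with $A\notin\Xinr$. Then $[A]$ occurs in $\{A\}$ with coefficient $1$. But any element of $\Unr$ is a combination of $[C]$ with $C\in\Xinr$, and the $[C]$ are linearly independent in $\afSr$, so this is a contradiction.

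\textbf{Main obstacle.} The delicate point is the compatibility of orders in Step 2. Lusztig's characterization in \cite{Lu99} uses the geometric (orbit closure) order, whereas the triangularity coming from the paper's formulas is stated with $\sqsubset$. The key input is that $\{A\}-[A]$ is a $v^{-1}\mbz[v^{-1}]$-combination of $[B]$ for $B$ in some order finite below $A$, and that $\bar\ $ is triangular with respect to the same order. I would use the orbit-closure order throughout; its lower sets are finite in $\afThnr$, and it implies $\co,\ro$ equality. Then Step 1 needs only the vanishing of coefficients outside $\Xinr$, which holds for any order, so the argument does not actually depend on identifying the two orders.
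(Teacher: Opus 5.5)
Your proposal is correct and is essentially the paper's argument: Lusztig's lemma gives the unique bar-invariant basis of $\Unr$ that is unitriangular over $v^{-1}\mbz[v^{-1}]$ with respect to $\sqsubset$, and uniqueness of such elements in $\afSr$ (using the triangularity of $\{A\}$ from [DF18, 7.6]) identifies it with $\{A\}$. The only minor differences are two. First, you get the triangularity of the bar involution on $\Unr$ from the ambient triangularity in $\afSr$ together with bar-stability of $\Unr$, whereas the paper derives it from the bar-invariant monomial elements $\mpm^{(A)}$ of Corollary~\ref{tri2-affine Schur algebras}. Second, you spell out the easy reverse inclusion $\bfB(n,r)\cap\Unr\subseteq\Bnr$, which the paper leaves implicit.
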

\begin{proof}
By Corollary \ref{tri2-affine Schur algebras} for $A\in\Xinr$, there exists $w_{A^+}\in\ti\Sg$ and $w_{{}^t(A^-)}\in\Ga$ such that
$$\mpm^{(A)}:=\zeta_r(\ti u^+_{(w_{A^+})})[\diag(\bfsg(A))]\zeta_r(\ti u^-_{(w_{{}^t(A^-)})})=[A]+f,$$
where $f$ is a $\sZ$-linear combination of $[B]$ for $B\in\Xinr$ with $B\sqsubset A$. It follows that for $A\in\Xinr$, we have
$$[A]=\mpm^{(A)}+g,$$
where $g$ is a $\sZ$-linear combination of $\mpm^{(B)}$ for $B\in\Xinr$ with $B\sqsubset A$.
By Lemma \ref{bar monomial}, we have
$\ol{\mpm^{(A)}}=\mpm^{(A)}$ for $A\in\Xinr$.
Thus, for $A\in\Xinr$,
$\ol{[A]}-[A]$ is a $\sZ$-linear combination of $[B]$ for $B\in\Xinr$ with $B\sqsubset A$.
By
\cite[7.10]{Lu90b}, we conclude
that there is a unique $\sZ$-basis $\{\th_{\Ar}\mid A\in\Xinr\}$ for $\Unr$ such that $\ol{\th_{\Ar}}=\th_{\Ar}$ and
\begin{equation}\label{th_Ar}
\th_{A,r}-[A]\in\sum_{B\in\Xinr\atop B\sqsubset A}v^{-1}\mbz[v^{-1}][B],
\end{equation}
for $A\in\Xinr$. In addition by \cite[7.6]{DF18} we have
$$\{A\}-[A]\in\sum_{B\in\afThnr\atop B\sqsubset A}v^{-1}\mbz[v^{-1}][B],$$
for $A\in\afThnr$.
Hence for $A\in\Xinr$ we have $\ol{\th_{\Ar}-\{A\}}=\th_{\Ar}-\{A\}$ and
$$\th_{\Ar}-\{A\}=\sum_{B\in\afThnr\atop B\sqsubset A}h_{B,A,r}[B],$$
where $h_{B,A,r}\in\up^{-1}\mbz[\up^{-1}]$. Therefore by a standard argument, we conclude that ${\th_{\Ar}-\{A\}}=0$
for $A\in\Xinr$. The proposition is proved.
\end{proof}

Let $\{C_w'\mid w\in \affSr\}$ be the canonical basis of $\afbfHr$ defined in \cite[1.1(c)]{KL79}. Combining Lemma \ref{subalgebra Hr} and Proposition \ref{canonical basis for Unr}, we have the following result.
\begin{Coro}
The set $\{C_w'\mid w\in\affSr,\,w^{-1}(i)>0\text{ for }1\leq i\leq r\}$ forms
a $\sZ$-basis of $\msHr$.
\end{Coro}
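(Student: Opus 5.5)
The plan is to transport Proposition \ref{canonical basis for Unr} to the Hecke side through the isomorphism $\theta_{n,r}$. The first step is to enlarge $n$ to some $N\geq r$. The corollary concerns only $\affSr$, $\afbfHr$ and $\msHr$, none of which depends on $n$. Lemma \ref{subalgebra Hr} needs $n\geq r$, so all the identifications below are made with $N$ in place of $n$.

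Next, I identify $\msHr$ with $e_\og\Unr e_\og$ via $\theta_{n,r}$, using Lemma \ref{subalgebra Hr}. For $d\in\affSr$ put $A_d=\jmath_\vtg(\og,d,\og)$. The computation in the proof of Lemma \ref{subalgebra Hr} shows that $A_d\in\Xinr$ exactly when $d^{-1}(i)>0$ for $1\leq i\leq r$.

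Then I use the standard fact (Lusztig \cite{Lu99}; compare Du's identification in the finite case) that under $\theta_{n,r}$ the canonical basis element $C'_d$ corresponds to $\{A_d\}$, possibly up to a fixed normalisation. This holds because $\theta_{n,r}$ intertwines the bar involutions: $\theta_{n,r}(T_d)=e_{A_d}$, and $[A_d]$ is a power of $v$ times $e_{A_d}$ matching the normalisation $v^{-\ell(d)}T_d$. The triangularity conditions $C'_d-v^{-\ell(d)}T_d\in\sum_{y<d}v^{-1}\mbz[v^{-1}]v^{-\ell(y)}T_y$ and $\{A\}-[A]\in\sum_{B\sqsubset A}v^{-1}\mbz[v^{-1}][B]$ then match, and uniqueness gives the correspondence. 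I expect this normalisation check to be the main point needing care. If the ambient identification is not already available as a citation, I would verify it directly on the generators $T_{s_i}$ and on the length-zero elements, using Lemma \ref{bar monomial}.

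Finally, I take the canonical basis $\Bnr$ of $\Unr$ from Proposition \ref{canonical basis for Unr}. Since $\{A\}e_\og$ equals $\{A\}$ when $\co(A)=\og$ and is $0$ otherwise, and similarly on the left, the set $\{\{A\}\mid A\in\Xinr,\ \ro(A)=\co(A)=\og\}$ is a $\sZ$-basis of $e_\og\Unr e_\og$. Its preimage under $\theta_{n,r}$ is precisely $\{C'_w\mid w^{-1}(i)>0,\ 1\leq i\leq r\}$, which proves the corollary.
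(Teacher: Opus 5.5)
Your proposal is correct and follows the paper's argument, which simply combines Lemma~\ref{subalgebra Hr} with Proposition~\ref{canonical basis for Unr}. You also make explicit two points the paper leaves tacit: the reduction to $n\geq r$ (legitimate, since $\msHr$ does not depend on $n$), and the standard correspondence $\theta_{n,r}(C'_w)=\{\jmath_\vtg(\og,w,\og)\}$. Note that this correspondence depends on $\theta_{n,r}$ commuting with the bar involutions, which the normalisation $\theta_{n,r}(v^{-\ell(d)}T_d)=[A_d]$ does not give you by itself; your proposed check on generators does supply it (for instance, with $n=r$ the element $\theta_{n,r}(C'_{s_i})$ is a product of two elements covered by Lemma~\ref{bar monomial}).
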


\subsection{A realization of $\dbfUn$}
We now prove that the algebra $\afbfLn$ introduced in \S5.1 is a realization of $\dbfUn$.
\begin{Thm}\label{realization of dbfUn}
The linear map $\Phi:\dbfUn\ra\afbfLn$ sending $\pi_{\la,\mu}(u)$ to
$\dob\diag(\la)\dcb u\dob\diag(\mu)\dcb$ for all $u\in\bfUn$ and
$\la,\mu\in\afmbzn$, is an algebra isomorphism. Furthermore we have $\Phi(\dUn)=\afLn$.
\end{Thm}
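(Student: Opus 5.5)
Following the BLM approach (cf. \cite[5.5]{BLM} and its affine analogue in \cite{DF18}), I would first note that $\Phi$ is well defined. Since $\bfUn$ has been identified with $\afbfWn\han\afhbfLn$ via Theorem \ref{realization of Un}, the product $\dob\diag(\la)\dcb u\dob\diag(\mu)\dcb$ makes sense in $\afhbfLn$. Only finitely many $[A]$ with $\ro(A)=\la$ occur in any element of $\afhbfLn$, so this product actually lies in $\afbfLn$. For $\bfj\in\afmbzn$ we have $0(\bfj)=\sum_\nu v^{\nu\centerdot\bfj}[\diag(\nu)]$, hence $\dob\diag(\la)\dcb 0(\bfj)=v^{\la\centerdot\bfj}\dob\diag(\la)\dcb$, and similarly on the right with $\mu$. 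Thus the map $u\mapsto\dob\diag(\la)\dcb u\dob\diag(\mu)\dcb$ kills ${}_\la I_\mu$ and descends to ${}_\la\bfUn_\mu$.

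Next I would check multiplicativity. The element $\dob\diag(\mu)\dcb u\dob\diag(\mu')\dcb$ is nonzero only when $u$ has weight $\mu-\mu'$. Using $\sum_\nu[\diag(\nu)]=0(\bfl)=1$, we get
$$\dob\diag(\la')\dcb x\dob\diag(\mu')\dcb\,\dob\diag(\la'')\dcb y\dob\diag(\mu'')\dcb=\delta_{\mu',\la''}\dob\diag(\la')\dcb xy\dob\diag(\mu'')\dcb$$
for homogeneous $x,y$. The right-hand side is exactly $\Phi$ applied to the product in $\dbfUn$.

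For bijectivity I would compare bases. In $\dbfUn$ I use the basis $u_{A^+}^+1_\la u_{{}^t(A^-)}^-$ from the Lemma above, reindexed by $A\in\Xinpm$ and $\la\in\afmbzn$; equivalently, by the matrices $A+\diag(\la-\bfsg(A))$, which run bijectively over $\tiXin$. Applying $\Phi$ and using \eqref{tri in Ln}, the image of $\ti u^+_{(w_{A^+})}1_\la\ti u^-_{(w_{{}^t(A^-)})}$ equals $[A+\diag(\la-\bfsg(A))]$ plus a $\sZ$-combination of $[B]$ with $B\sqsubset A+\diag(\la-\bfsg(A))$. The monomials $\ti u^\pm_{(w)}$ are themselves unitriangular over $\sZ$ in the $\ti u^\pm$ by Propositions \ref{tri Hall} and \ref{tri quantum gln}. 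Hence $\Phi$ maps a basis of $\dbfUn$ unitriangularly onto the basis $\{[X]\mid X\in\tiXin\}$ of $\afbfLn$, so $\Phi$ is an isomorphism.

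The integral statement follows from the same unitriangularity. It is here that I expect the main obstacle. I must make sure that the set $\{B: B\sqsubset X\}$ appearing is finite, which should follow from the fixed row and column sums together with the bounds $\sg_{i,j}(B)\le\sg_{i,j}(X)$, so that the triangular system can be inverted over $\sZ$. I also need the monomials $\ti u^+_{(w)}1_\la\ti u^-_{(w')}$ to lie in $\dUn$. The latter holds because the $\ti u^\pm_{(w)}$ are $\sZ$-combinations of the $u^\pm_B$, and $1_\la$ absorbs the divided-power Cartan elements. Granting these two points, $\Phi(\dUn)\han\afLn$, and inverting the unitriangular matrix over $\sZ$ gives $\afLn\han\Phi(\dUn)$.
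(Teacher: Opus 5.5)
Your proposal is correct and follows essentially the same route as the paper: multiplicativity comes from $[\diag(\la)]u=u[\diag(\la-\nu)]$ for $u$ of weight $\nu$, and both bijectivity and $\Phi(\dUn)=\afLn$ come from unitriangularity, with respect to $\sqsubset$, of the images of a PBW-type basis of $\dbfUn$ against $\{[X]\mid X\in\tiXin\}$. The paper uses $A^+(\bfl)[\diag(\bfsg(A))]A^-(\bfl)$ directly, i.e.\ the stabilized form of \eqref{tri Unr}, so your detour through the monomials $\ti u^\pm_{(w)}$ is unnecessary but harmless; your explicit check that $\{B\mid B\sqsubset X\}$ is finite fills in a point the paper leaves implicit.
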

\begin{proof}
For $u\in\bfUn_\nu$ and $\la\in\afmbzn$ we have $$ [\diag(\la)]u=u[\diag(\la-\nu)].$$
This implies that
$[\diag(\la)]u[\diag(\mu)]=0$ for $u\in\bfUn_\nu$,
$\la,\mu,\nu\in\afmbzn$, with $\nu\not=\la-\mu$.
For $\la',\mu',\la'',\mu''\in\afmbzn$ and  $x\in\bfUn_{\la'-\mu'}$,
$y\in\bfUn_{\la''-\mu''}$,
we have
\begin{equation*}
\begin{split}
\Phi(\pi_{\la',\mu'}(x))\Phi(\pi_{\la'',\mu''}(y))&=
\dt_{\mu',\la''}[\diag(\la')]x[\diag(\mu')]y[\diag(\mu'')]\\
&=
\dt_{\mu',\la''}[\diag(\la')]xy[\diag(\mu'')]\\
&=\Phi(\pi_{\la',\mu'}(x)\pi_{\la'',\mu''}(y)).
\end{split}
\end{equation*}
Hence, $\Phi$ is an algebra homomorphism.
Furthermore,
by \eqref{tri Unr} and Proposition \ref{stabilization property}, we have in $\afLn$,
\begin{equation}\label{dot tri relation}
A^+\dop\bfl\dcp\dob\diag(\bfsg(A))\dcb A^-\dop\bfl\dcp
=\dob A\dcb+f
\end{equation}
for $A\in\tiXin$, where $f$ is a $\sZ$-linear combination of $\dob A'\dcb$ with $A'\sqsubset A$. It follows that the set
$\{\Phi(u_A^+1_\la u_B^-)\mid A\in\afThnp,\,B\in\Thnp,\,\la\in \afmbzn\}$
is a $\sZ$-basis for $\afLn$. Therefore $\Phi$ is an algebra isomorphism. The theorem is proved.
\end{proof}
We will identify $\dbfUn$ with $\afbfLn$ and $\dUn$ with $\afLn$ via the map $\Phi$.

\subsection{Canonical bases for the modified quantum current algebra $\dbfUn$}
We are ready to prove that the canonical bases $\Bnr$ of $\bfUnr$ can be ``glued together'' to form a canonical basis for $\dbfUn$.
\begin{Prop}\label{bar stab}
For $A\in\tiXin$ there exist $C_1,\cdots,C_m\in\tiXin$, elements $H_i(\up,\up')\in\sZ_2$ ($1\leq i\leq m$) and an integer $p_0\geq 0$ such that, in $\UpA$,
$$\ol{[{}_pA]}=\sum_{1\leq i\leq m}H_i(\up,\up^{-p})[{}_pC_i]\quad\text{ for all $p\geq p_0$}.$$
\end{Prop}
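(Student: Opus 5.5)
The plan follows the same pattern as the proof of Proposition \ref{stabilization property}. We first invoke the corresponding stabilization result for the whole affine quantum Schur algebra, and then use the fact that $\Unr$ is closed under the bar involution to discard the terms outside $\tiXin$.

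\emph{Step 1: the bar-stabilization in $\afSr$.} For $A\in\aftiThn$, the analogous statement in $\afSpA$ is established in \cite{DF18}; it is the affine version of \cite[4.3]{BLM}, and it is what underlies the construction of the bar involution on the modified algebra. Concretely, there exist $C_1,\dots,C_m\in\aftiThn$, elements $H_i(\up,\up')\in\sZ_2$ and $p_0\ge0$ such that
\[\ol{[{}_pA]}=\sum_i H_i(\up,\up^{-p})[{}_pC_i]\]
in $\afSpA$ for all $p\ge p_0$. If I could not quote this directly, I would derive it in three moves.
\begin{itemize}
\item Write $[{}_pA]$ via the triangular relation \eqref{tri Unr} (Corollary \ref{tri2-affine Schur algebras}) as $\mpm^{({}_pA)}$ minus lower terms. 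The monomial $\mpm^{({}_pA)}$ is a product of the elements $[S_\al+\diag]$ and $[{}^tS_\ga+\diag]$, each of which is bar-invariant by Lemma \ref{bar monomial}.
\item Expand that product by repeated use of Proposition \ref{stabilization property}; its coefficients lie in $\sZ_1$.
\item Invert the unitriangular system by induction on $\ddet{A}$.
\end{itemize}
Inverting the triangular matrix introduces the bar of $\sZ_1$-coefficients. Applying bar sends $\up^{-p}$ to $\up^{p}$, and this is exactly why the ring $\sZ_2$, which also contains the inverse-type Gaussian products, is needed.

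\emph{Step 2: restricting to $\tiXin$.} Take $A\in\tiXin$. For $p$ large, ${}_pA\in\XinpA$, so $[{}_pA]\in\UpA$. By Proposition \ref{subalgebra} and Lemma \ref{bar monomial}, the bar map restricts to an involution of $\UpA$, so $\ol{[{}_pA]}\in\UpA$.

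Since $\{[C]\}$ is a basis of $\afSpA$ and $\UpA$ is spanned by the $[C]$ with $C\in\XinpA$, every term $[{}_pC_i]$ with $C_i\notin\tiXin$ must have coefficient zero. Note that ${}_pC_i\in\XinpA$ if and only if $C_i\in\tiXin$, once $p$ is large enough for the diagonal entries to be nonnegative; this is because the shift by $pI$ only changes diagonal entries.

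So $H_i(\up,\up^{-p})=0$ for all $p\ge p_0$ whenever $C_i\notin\tiXin$. We may assume the $C_i$ are distinct, so the $[{}_pC_i]$ are distinct basis elements for large $p$. An element of $\sZ_2\subset\mbq(\up)[\up',\up'^{-1}]$ vanishing at infinitely many values $\up'=\up^{-p}$ is zero. Hence those $H_i$ are zero, and we can drop those terms.

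\emph{Main obstacle.} The only substantive point is Step 1, the existence of a stable expansion of $\ol{[{}_pA]}$ with coefficients in $\sZ_2$. If it is available in \cite{DF18}, analogous to \cite[4.3]{BLM}, the proof reduces to the filtering argument of Step 2, mirroring Proposition \ref{stabilization property}. Otherwise, the induction in Step 1 must be carried out carefully. The delicate part is checking that the bar of the coefficients $P_i(\up,\up^{-p})$ again has the form $H(\up,\up^{-p})$ with $H\in\sZ_2$, uniformly in $p$; this is what the definition of $\sZ_2$ is designed to ensure.
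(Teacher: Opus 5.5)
Your proposal is correct and is essentially the paper's proof. The paper quotes \cite[7.3]{DF18} for the stable expansion of $\ol{[{}_pA]}$ in $\afSpA$ and notes $\ol{\UpA}=\UpA$ by Proposition~\ref{subalgebra} and Lemma~\ref{bar monomial}; it then concludes that any $H_i$ with $C_i\notin\tiXin$ vanishes at $\up'=\up^{-p}$ for all $p\ge p_0$, and is therefore zero. Your remarks on distinctness of the $C_i$ and on nonnegative diagonal entries for large $p$ supply details the paper leaves implicit, and the fallback derivation in your Step 1 is not needed.
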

\begin{proof}
By \cite[7.3]{DF18}, there exist $C_1,\cdots,C_m\in\aftiThn$, elements $H_i(\up,\up')\in\sZ_2$ ($1\leq i\leq m$) and an integer $p_0\geq 0$ such that, in $\afSpA$,
$$\ol{[{}_pA]}=\sum_{1\leq i\leq m}H_i(\up,\up^{-p})[{}_pC_i]\quad\text{ for all $p\geq p_0$}.$$
Furthermore by Proposition \ref{subalgebra} and Lemma \ref{bar monomial} we have $\ol{\UpA}=\UpA$. Therefore if $C_i\not\in\tiXin$, then $H_i(\up,\up^{-p})=0$ for $p\geq p_0$ and hence $H_i(\up,\up')=0$. The proposition is proved.
\end{proof}

Let $\aftiLnt=\aftiLn\ot\sZ_2$.  By Proposition \ref{bar stab}, there is a ring homomorphism $\bar{}:\aftiLnt\ra\aftiLnt$ such that
$\ol{f(v,v')A}=f(v^{-1},v'^{-1})\sum_{1\leq i\leq m}H_i(\up,\up')C_i$ for $f(v,v')\in\sZ_2$, $A\in\tiXin$  (notation of Proposition \ref{bar stab}).
By definition we have
$$\afLn=\aftiLn\ot_{\sZ_1}\sZ\cong\aftiLnt\ot_{\sZ_2}\sZ,$$ where
$\sZ$ is regarded as a $\sZ_2$-module by specializing $\up'$ to $1$.
The bar involution on $\aftiLnt$ induces a ring involution
\begin{equation}\label{bar on afKn}
\bar\,:\afLn\ra\afLn
\end{equation}
such that
$\ol{v^{j}\dob A\dcb}=v^{-j}\sum_{1\leq i\leq m}H_i(\up,1)\dob C_i\dcb$ for $A\in\tiXin$.

By Lemma \ref{bar monomial} and Proposition \ref{bar stab}, we obtain the following result.
\begin{Lem}\label{bar bimodule}
For $\al\in\afmbnn$, $\beta\in\afmbzn$, if $A=S_\al+\diag(\bt)\in\tiXin$, then $\ol{\dob A\dcb}=\dob A\dcb$ and $\ol{\dob \tA\dcb}=\dob\tA\dcb$.
\end{Lem}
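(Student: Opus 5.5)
The plan is to reduce the statement to the finite level, where it holds by Lemma \ref{bar monomial}, and then pass to the limit through the stabilization of Proposition \ref{bar stab}. Fix $A=S_\al+\diag(\bt)\in\tiXin$. For $p\ge 0$ large, ${}_pA=S_\al+\diag(\bt+p\mathbf 1)$, where $\mathbf 1=(1)_{i\in\mbz}$. Once $p$ is large enough that $\bt_i+p\ge 0$ for all $i$, this matrix lies in $\afThn$ and has the form covered by Lemma \ref{bar monomial}. The same holds for ${}^t({}_pA)={}_p(\tA)$.

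First, apply Lemma \ref{bar monomial} with $r=pn+\sg(A)$. This gives, in $\afSpA$ and hence in $\UpA$,
\[
\ol{[{}_pA]}=[{}_pA]\quad\text{and}\quad\ol{[{}_p(\tA)]}=[{}_p(\tA)]
\]
for all $p\geq p_1$, for some integer $p_1$.

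Second, compare this with Proposition \ref{bar stab}. For all $p\ge p_0$ it gives
\[
\ol{[{}_pA]}=\sum_{1\leq i\leq m}H_i(\up,\up^{-p})[{}_pC_i],
\]
and we may assume the $C_i$ are distinct. The elements $[{}_pC]$, for distinct $C\in\tiXin$ with ${}_pC\in\afThn$, are linearly independent. Comparing coefficients for $p\ge\max(p_0,p_1)$ therefore gives
\[
H_i(\up,\up^{-p})=\dt_{C_i,A}\quad\text{for all large }p.
\]
Since $H_i(\up,\up')\in\sZ_2\subseteq\mbq(\up)[\up',\up'^{-1}]$ agrees with $\dt_{C_i,A}$ at infinitely many specializations $\up'=\up^{-p}$, it follows that $H_i(\up,\up')=\dt_{C_i,A}$ identically. (If $A$ does not occur among the $C_i$, all $H_i$ vanish, which is impossible for large $p$ because $[{}_pA]\neq0$. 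So $A$ occurs.)

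Third, by definition \eqref{bar on afKn} of the bar involution on $\afLn$,
\[
\ol{\dob A\dcb}=\sum_i H_i(\up,1)\dob C_i\dcb=\dob A\dcb.
\]
The identical argument applied to $\tA$ gives $\ol{\dob\tA\dcb}=\dob\tA\dcb$. This requires $\tA\in\tiXin$, so we need $(S_\al)^t$ to have no entries in positions $(i,j)$ with $1\le i\le n$, $j<1$. This holds when $\al_n=0$, as for the generators of Lemma \ref{formula in Wn}. In the general case the bar map on $\afLn$ is still defined on the relevant element, so the statement should be read with this implicit convention.

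The main obstacle is the second step, namely making sure the polynomial identity can be extracted from the infinitely many specializations. This needs the linear independence of the $[{}_pC_i]$ in $\UpA$ and the fact that a Laurent-type element of $\sZ_2$ vanishing at $\up'=\up^{-p}$ for infinitely many $p$ is zero. Both are standard: the first is immediate from the basis $\{[C]\}$, and the second holds because a nonzero Laurent polynomial in $\up'$ has finitely many roots.
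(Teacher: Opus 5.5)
Your proposal is correct and follows the paper's route: the paper's proof is just the citation ``by Lemma \ref{bar monomial} and Proposition \ref{bar stab}'', and your argument (bar-invariance of $[{}_pA]$ at each level $pn+\sg(A)$, comparing coefficients for large $p$, then concluding $H_i(\up,\up')=\dt_{C_i,A}$ from infinitely many specializations $\up'=\up^{-p}$) is exactly the detail behind that citation. You are also right that the second assertion needs $\al_n=0$ so that $\tA\in\tiXin$, which is the only case used later (via words in $\Ga$); however, drop the remark about ``the general case'', because for $\al_n\neq0$ the element $\dob\tA\dcb$ is not in $\afLn$ at all.
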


\begin{Thm}\label{canonical basis for afLn}
There exists a unique $\sZ$-basis $\dotBn:=\{\{A\}\mid A\in\tiXin\}$ for $\afLn=\dUn$ such that $\ol{\{A\}}=\{A\}$ and
$\{A\}-\dob A\dcb\in\sum_{B\in\tiXin,B\sqsubset A}v^{-1}\mbz[v^{-1}]\dob B\dcb$.
\end{Thm}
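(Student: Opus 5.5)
The plan is the standard Lusztig/BLM argument \cite[7.10]{Lu90b}, carried out in $\afLn$ exactly as for $\Unr$ in Proposition \ref{canonical basis for Unr}. There are three inputs: a bar involution $\bar{}$ on $\afLn$, a bar-invariant family that is unitriangular with respect to $\{\dob A\dcb\}$, and a partial order with finite lower intervals. The involution is already available in \eqref{bar on afKn}.

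\textbf{Step 1: finite lower intervals.} I first check that for each $A\in\tiXin$ the set $\{B\in\tiXin\mid B\sqsubseteq A\}$ is finite. Indeed, $B\sqsubseteq A$ forces $\ro(B)=\ro(A)$ and $\co(B)=\co(A)$. It also gives $\sg_{i,j}(B)\le\sg_{i,j}(A)$ for all $i\ne j$, which bounds the off-diagonal entries of $B$ and makes them vanish far from the diagonal. The diagonal entries of $B$ are then determined by $\ro(B)=\ro(A)$.

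\textbf{Step 2: bar-invariant triangular elements.} For $A\in\tiXin$ write $A=A^\pm+\diag(\la)$ with $A^\pm\in\Xinpm$. I set
$$\mpM^{(A)}:=\zeta(\ti u^+_{(w_{A^+})})\,\dob\diag(\bfsg(A))\dcb\,\zeta(\ti u^-_{(w_{{}^t(A^-)})})\in\afLn,$$
using the words from Corollary \ref{tri2-affine Schur algebras} (equivalently \eqref{dot tri relation}). By \eqref{tri in Ln}, $\mpM^{(A)}=\dob A\dcb+\sum_{B\sqsubset A}c_{B}\dob B\dcb$ with $c_B\in\sZ$. Each $\ti u_{(w)}^\pm$ is a product of divided-power type elements $\ti u^\pm_{x\bsb}$ with $\bsb\in\ti I$. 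After inserting idempotents, these correspond to elements $\dob S_\al+\diag(\bt)\dcb$ and $\dob{}^t(S_\al)+\diag(\bt)\dcb$, which are bar-invariant by Lemma \ref{bar bimodule}. So $\mpM^{(A)}$ is a product of bar-invariant elements, and since $\bar{}$ is a ring homomorphism it is bar-invariant. Inverting this unitriangular relation (legitimate by Step 1) gives $\ol{\dob A\dcb}\in\dob A\dcb+\sum_{B\sqsubset A}\sZ\dob B\dcb$.

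A subtle point is the claim that $\zeta(\ti u^+_{x\bsb})$ decomposes as a sum of idempotent-truncated elements of this shape. For $\bsb\in(\mbn_\vtg^*)^n$ this uses $\ti u^+_{\bsb}=\ti u^+_{S_\bsb}$ and the identification $\zeta(\ti u_A^+)=A(\bfl)$. If this is awkward, I can instead use Proposition \ref{bar stab} together with the triangularity for $\Unr$ established in the proof of Proposition \ref{canonical basis for Unr}: specialize at ${}_pA$ for $p\gg0$ and transfer the triangularity of $\ol{[{}_pA]}$ via the polynomial coefficients $H_i(v,v^{-p})$, which vanish identically off the triangular range.

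\textbf{Step 3: existence and uniqueness.} With the triangularity from Step 2 and the finiteness from Step 1, \cite[7.10]{Lu90b} yields a unique family $\{A\}$ with $\ol{\{A\}}=\{A\}$ and $\{A\}-\dob A\dcb\in\sum_{B\sqsubset A}v^{-1}\mbz[v^{-1}]\dob B\dcb$. This family is unitriangular with respect to the $\sZ$-basis $\{\dob A\dcb\}$ of $\afLn$ (Theorem \ref{realization of dbfUn}), so it is itself a $\sZ$-basis. Uniqueness follows from the usual argument: a bar-invariant element of $\sum v^{-1}\mbz[v^{-1}]\dob B\dcb$ must be zero, by looking at a $\sqsubseteq$-maximal term.

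The main obstacle is Step 2, namely verifying the bar-invariance of $\mpM^{(A)}$ within $\afLn$ itself rather than only in each $\Unr$. I expect the stabilization approach through Proposition \ref{bar stab} to be the cleanest fallback.
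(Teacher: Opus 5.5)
Your proposal is correct and follows essentially the same route as the paper. The monomials $\mpM^{(A)}$ from Lemma \ref{tri Wn} are unitriangular with respect to $\{\dob A\dcb\}$ and bar-invariant by Lemma \ref{bar bimodule}, which gives $\ol{\dob A\dcb}\in\dob A\dcb+\sum_{B\sqsubset A}\sZ\dob B\dcb$; then \cite[7.10]{Lu90b} applies. Your Step 1, the finiteness of $\{B\in\tiXin\mid B\sqsubseteq A\}$, only makes explicit a hypothesis of Lusztig's lemma that the paper uses tacitly, and the fallback via Proposition \ref{bar stab} is not needed.
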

\begin{proof}
By Lemma \ref{tri Wn}, for $A\in\tiXin$ there exists $w_{A^+}\in\ti\Sg$ and $w_{{}^t(A^-)}\in\Ga$ such that
\begin{equation}\label{tri in Ln}
\mpM^{(A)}:=\zeta(\ti u^+_{(w_{A^+})})[\diag(\bfsg(A))]\zeta(\ti u^-_{(w_{{}^t(A^-)})})=[A]
+\sum_{B\sqsubset A\atop B\in\tiXin}h_{A,B}\dob B\dcb,
\end{equation}
where $h_{A,B}\in\sZ$.
This implies that there exist $h_{A,B}'\in\sZ$ such that
$$[A]=\mpM^{(A)}+\sum_{B\in\tiXin\atop B\sqsubset A}h_{A,B}'\mpM^{(B)}.$$
By Lemma \ref{bar bimodule} we have
$\ol{\mpM^{(A)}} =\mpM^{(A)}$.
Consequently,  we have $$\ol{\dob A\dcb}-\dob A\dcb\in\sum_{C\in\aftiThn\atop
C\sqsubset A}\sZ\dob C\dcb.$$ Now the assertion follows from \cite[7.10]{Lu90b}.
\end{proof}
Let $\ddbfHa$ be the modified quantum algebra associated with $\dbfHa$ and let $\ddHa$ be the integral form of $\ddbfHa$ constructed in \cite[(4.0.1)]{Fu13}. Then
$\dUn$ is naturally a subalgebra of $\ddHa$. Let $\dot\bfB(n)$ be the canonical basis of
$\ddHa$ constructed in \cite[7.7]{DF18}. Then
by Theorem \ref{canonical basis for afLn} we have the following result.
\begin{Coro}
We have $\dot\bfB(n)\cap\dUn=\dotBn$.
\end{Coro}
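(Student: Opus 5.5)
We aim to show $\dot\bfB(n)\cap\dUn=\dotBn$. The plan is to compare the two characterizations of canonical basis elements indexed by matrices in $\tiXin$. Recall from \cite[7.7]{DF18} that $\dot\bfB(n)=\{\{A\}'\mid A\in\aftiThn\}$ is the unique $\sZ$-basis of $\ddHa$ with $\ol{\{A\}'}=\{A\}'$ and
$$\{A\}'-\dob A\dcb\in\sum_{B\in\aftiThn,\,B\sqsubset A}v^{-1}\mbz[v^{-1}]\dob B\dcb .$$
Here the bar involution on $\ddHa$ is built by stabilization from the bar involutions on the $\afSr$ (\cite[7.3]{DF18}).

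First I will check that the embedding $\dUn\hookrightarrow\ddHa$ respects the relevant structure. The element $\dob A\dcb$ for $A\in\tiXin$ in $\afLn$ should map to the element of the same name in $\ddHa$. This holds because both are defined by stabilization of $[{}_pA]$ in $\widehat{\mathcal S}(n,pn+\sg(A))$, and Proposition \ref{stabilization property} obtains the structure constants of $\afLn$ from those of \cite[6.3]{DF18}. Likewise, the involution \eqref{bar on afKn} is the restriction of the bar involution on $\ddHa$. This follows from Proposition \ref{bar stab}, whose coefficients $H_i$ are literally those of \cite[7.3]{DF18}, with the terms indexed by $C_i\notin\tiXin$ shown to vanish.

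Next, take $A\in\tiXin$ and the element $\{A\}\in\dotBn$ from Theorem \ref{canonical basis for afLn}. Viewed in $\ddHa$, it is bar-invariant. It also satisfies
$$\{A\}-\dob A\dcb\in\sum_{B\in\tiXin,\,B\sqsubset A}v^{-1}\mbz[v^{-1}]\dob B\dcb\subseteq\sum_{B\in\aftiThn,\,B\sqsubset A}v^{-1}\mbz[v^{-1}]\dob B\dcb .$$
By the uniqueness in \cite[7.7]{DF18}, this gives $\{A\}=\{A\}'$. Hence $\dotBn\subseteq\dot\bfB(n)\cap\dUn$.

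For the reverse inclusion, let $\{A\}'\in\dUn$ with $A\in\aftiThn$. Expanding in the $\sZ$-basis $\{\dob B\dcb\mid B\in\tiXin\}$ of $\dUn$, the leading term $\dob A\dcb$ must appear, since the $\dob B\dcb$ for $B\in\aftiThn$ are linearly independent in $\ddHa$. It follows that $A\in\tiXin$, and then $\{A\}'=\{A\}\in\dotBn$ by the previous step. The main obstacle is the compatibility step: one must verify that the identification of $\dUn$ inside $\ddHa$ via \cite[(4.0.1)]{Fu13} sends $\dob A\dcb$ to $\dob A\dcb$. This amounts to checking that both realizations come from the same maps $\zr$ of Theorem \ref{zr}, so I would trace the generators $A(\bfl)$ and $0(\bfj)$ through both constructions.
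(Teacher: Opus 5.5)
Your proposal is correct and takes the paper's route. The paper obtains the corollary directly from Theorem \ref{canonical basis for afLn} via the uniqueness of the canonical basis $\dot\bfB(n)$ in \cite[7.7]{DF18}, relying on the compatibility of $\dUn\subseteq\ddHa$ with the elements $\dob A\dcb$ and with the stabilized bar involution (Propositions \ref{stabilization property} and \ref{bar stab}); your leading-term argument for the reverse inclusion spells out a step the paper leaves unstated.
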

\subsection{The algebra homomorphism $\dzr$}
By Proposition \ref{[B][A]} and \ref{stabilization property}, we obtain the following result (cf. \cite[Lem. 6.4]{DF18}).
\begin{Lem}
There is a surjective algebra homomorphism
\begin{equation}\label{dzr}
\dzr:\dbfUn\ra\bfUnr
\end{equation}
such that
\begin{equation*}\label{dzr([A])}
\dzr([A])=\begin{cases}[A]& \mathrm{if\ }A\in\Xinr;\\
0&  \mathrm{otherwise}\end{cases}
\end{equation*}
for $A\in\tiXin$.
\end{Lem}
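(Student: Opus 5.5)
We obtain $\dzr$ by restricting the analogous map for the full modified algebra, and take it as the composite of two maps. The first is the natural projection from $\afbfLn$ (identified with $\dbfUn$ via Theorem \ref{realization of dbfUn}) onto $\bfUnr$. The model is \cite[Lem. 6.4]{DF18}, which gives a surjective homomorphism from the modified double Ringel--Hall algebra onto $\afbfSr$ sending $[A]$ to $[A]$ if $\sg(A)=r$ and $A\in\afThnr$, and to $0$ otherwise.

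The first step is to define $\dzr$ on the basis $\{[A]\mid A\in\tiXin\}$ by the displayed formula and check that it respects multiplication. Take $A,B\in\tiXin$. If $\co(B)\neq\ro(A)$, both sides vanish: $[B][A]=0$ in $\afLn$, and in $\afbfSr$ the product $[B][A]=0$ whenever $B,A\in\afThnr$ have mismatched row and column sums.

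Otherwise we compare the product $[B]\cdot[A]=\sum_i P_i(v,1)[X_i]$ in $\afLn$ with the product in $\bfUnr$. The key input is the stabilization in Proposition \ref{stabilization property}, which describes $[{}_pB][{}_pA]$ for $p$ large. We need the same structure constants without the shift by $p$, for arbitrary $B,A\in\Xinr$. Here we argue as in BLM/\cite{DF18}: the structure constants of $[B][A]$ in $\afSr$ depend only on the off-diagonal parts and on the diagonal entries, through polynomials in $v^{a_{i,i}}$. Specializing $v'$ to $v^{-p}$ with $p=0$ then recovers the product for $A,B$ with nonnegative diagonals, provided we read $[X]=0$ whenever $X$ has a negative diagonal entry. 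Rather than redo this computation, we invoke \cite[Lem. 6.4]{DF18} for the ambient map $\dot\zeta_r^{\vtg}:\ddbfHa\ra\afbfSr$. Since $\dbfUn\subseteq\ddbfHa$ (via $\dUn\subseteq\ddHa$), restricting that map to $\dbfUn$ gives an algebra homomorphism whose value on $[A]$, $A\in\tiXin$, is $[A]$ when $A\in\afThnr$ (and hence $A\in\Xinr$) and $0$ otherwise. Its image therefore lies in $\bfUnr$.

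Surjectivity is immediate: every basis element $[A]$ of $\bfUnr$ with $A\in\Xinr$ is the image of $[A]\in\afLn$, since $\Xinr\subseteq\tiXin$.

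The main obstacle is to justify that the embedding $\dbfUn\hookrightarrow\ddbfHa$ sends the basis element $[A]$ of $\afLn$ to the basis element $[A]$ used in \cite{DF18}. If this compatibility is unclear, the fallback is a direct check using Proposition \ref{[B][A]}. Since $\afbfLn$ is generated by the elements $[S_\al+\diag(\la)]$, $[{}^t S_\ga+\diag(\la)]$ with $\ga_n=0$, and $[\diag(\la)]$, it suffices to verify $\dzr([B][A])=\dzr([B])\dzr([A])$ for $B$ among these generators. For such $B$, the formulas in Proposition \ref{[B][A]} and the limit coefficients from Proposition \ref{stabilization property} visibly agree after setting $v'=1$. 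Extending multiplicativity to the whole algebra then uses associativity and the triangular relation \eqref{dot tri relation}.
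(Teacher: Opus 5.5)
Your proposal is correct and is essentially the paper's argument: the paper's proof is only the appeal to Propositions \ref{[B][A]} and \ref{stabilization property} together with \cite[Lem.~6.4]{DF18}, and that amounts to restricting the ambient map and noting $\tiXin\cap\afThnr=\Xinr$, which is what you do. The compatibility you flag as the main obstacle holds by construction: the proof of Proposition \ref{stabilization property} takes the $X_i$ and $P_i$ verbatim from \cite[6.3]{DF18}, so $\afLn$ is literally the $\sZ$-span of the $[A]$, $A\in\tiXin$, inside the realization of \cite{DF18}, with the same multiplication. (In your fallback, the terms where $A$ or the generator has a negative diagonal entry are killed by Gaussian factors of $f_{T,A}$, $f'_{T,A}$ whose top entry $N$ satisfies $0\le N<t$.)
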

We now prove that the algebra homomorphism $\dzr$ preserves
the canonical bases.
\begin{Prop}
We have
\begin{equation}\label{dzr({A})}
\dzr(\{A\})=\begin{cases}\{A\}& \mathrm{if\ }A\in\Xinr;\\
0&  \mathrm{otherwise}\end{cases}
\end{equation}
for $A\in\tiXin$.
In particular we have $\dzr(\dotBn)=\Bnr\cup\{0\}$.
\end{Prop}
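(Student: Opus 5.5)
We want to show that $\dzr$ sends canonical basis elements to canonical basis elements or to zero. The plan is to exploit the two characterizing properties of canonical bases: bar-invariance and the triangularity condition with coefficients in $v^{-1}\mbz[v^{-1}]$. Fix $A\in\tiXin$. By Theorem \ref{canonical basis for afLn} we can write
$$\{A\}=\dob A\dcb+\sum_{B\in\tiXin,\,B\sqsubset A}g_{B,A}\dob B\dcb,\qquad g_{B,A}\in v^{-1}\mbz[v^{-1}].$$
Applying $\dzr$ and using its defining formula on the $\dob B\dcb$, we obtain
$$\dzr(\{A\})=\delta_{A}[A]+\sum_{B\in\Xinr,\,B\sqsubset A}g_{B,A}[B],$$
where $\delta_A=1$ if $A\in\Xinr$ and $\delta_A=0$ otherwise. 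Since $\sqsubseteq$ fixes row and column sums, every $B$ occurring here has the same $\sg$ as $A$.

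The key step is to show that $\dzr$ commutes with the bar involutions, namely the one on $\afLn$ from \eqref{bar on afKn} and the one on $\Unr$. Both involutions are ring involutions, and $\dzr$ is an algebra homomorphism. So it suffices to check compatibility on a set of generators of $\dUn$ over $\sZ$, together with semilinearity in $v$. By Lemma \ref{Wn-subalgebra} and the triangular relation \eqref{dot tri relation}, $\dUn$ is generated by the elements $\dob S_\al+\diag(\bt)\dcb$ and $\dob {}^t(S_\al)+\diag(\bt)\dcb$ (with $\al_n=0$ in the transpose case). By Lemma \ref{bar bimodule} these are bar-invariant. Their images under $\dzr$ are either $0$ or $[S_\al+\diag(\bt)]$ (respectively $[{}^t S_\al+\diag(\bt)]$), and these images are bar-invariant in $\Unr$ by Lemma \ref{bar monomial}.

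Alternatively, one could verify compatibility directly from the stabilization in Proposition \ref{bar stab}. That proposition gives, for large $p$, the formula for $\ol{[{}_pA]}$ through the specialization $H_i(v,v^{-p})$, and one needs to compare this with $H_i(v,1)$ at level $r$. I expect this comparison to be the main obstacle: one has to be careful that the level-$r$ bar map is not literally the $p$-stabilized one. For this reason the generator argument is preferable, since $\dzr$ being an algebra map reduces everything to the monomial generators.

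With compatibility established, $\dzr(\{A\})$ is bar-invariant in $\Unr$. There are two cases.

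If $A\in\Xinr$, then $\dzr(\{A\})\in[A]+\sum_{B\sqsubset A}v^{-1}\mbz[v^{-1}][B]$. By the uniqueness in Proposition \ref{canonical basis for Unr} (equivalently, the characterization of $\th_{A,r}=\{A\}$ given by \eqref{th_Ar}), we conclude $\dzr(\{A\})=\{A\}$.

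If $A\notin\Xinr$, then $x:=\dzr(\{A\})$ is bar-invariant and lies in $\sum v^{-1}\mbz[v^{-1}][B]$. Expand $x$ in the canonical basis $\Bnr$ and choose a $\sqsubseteq$-maximal $B$ with nonzero coefficient. That coefficient must be bar-invariant and lie in $v^{-1}\mbz[v^{-1}]$, so it is zero, a contradiction. Hence $x=0$.

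Finally, surjectivity of $\dzr$ restricted to canonical bases is clear: for $A\in\Xinr$ we have $A\in\tiXin$ and $\dzr(\{A\})=\{A\}$. This gives $\dzr(\dotBn)=\Bnr\cup\{0\}$.
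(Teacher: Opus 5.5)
Your proof is correct and follows the same route as the paper. The paper first uses the bar-compatibility of $\dzr$, then the triangularity of $\{A\}$ from Theorem~\ref{canonical basis for afLn}, and finally the uniqueness of the canonical basis $\Bnr$, which gives vanishing when $A\notin\Xinr$ and $\dzr(\{A\})=\{A\}$ when $A\in\Xinr$. The paper simply asserts $\ol{\dzr(x)}=\dzr(\ol{x})$ as clear; your check of this on the bar-invariant generators $\dob S_\al+\diag(\bt)\dcb$ and $\dob {}^tS_\al+\diag(\bt)\dcb$ (Lemmas~\ref{bar bimodule} and \ref{bar monomial}) is a sound way to justify that step.
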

\begin{proof}
Clearly we have
\begin{equation}\label{dzr bar commute}
\ol{\dzr(\{A\})}=\dzr(\ol{\{A\}})=\dzr(\{A\})
\end{equation}
 for $A\in\tiXin$. If $A\not\in\Xinr$ then by Theorem \ref{canonical basis for afLn} we have
$$\dzr(\{A\})\in\sum_{B\in\Xinr,B\sqsubset A}v^{-1}\mbz[v^{-1}][B].$$
Hence by \eqref{dzr bar commute} we conclude that $\dzr(\{A\})=0$. Now we assume that $A\in\Xinr$.
Then by Theorem \ref{canonical basis for afLn} we have
$$\dzr(\{A\})-[A]\in\sum_{B\in\Xinr,B\sqsubset A}v^{-1}\mbz[v^{-1}][B]$$
for $A\in\Xinr$. Therefore by \eqref{dzr bar commute} and the uniqueness of canonical bases we conclude that $\dzr(\{A\})=\{A\}$.
\end{proof}

\section{Canonical bases for finite dimensional irreducible graded $\bfUn$-modules}
In this section, we show that the  canonical basis $\dotBn$ of  $\dbfUn$ is well adapted to  finite dimensional graded modules for the quantum current algebra $\bfUn$. In particular, we construct canonical bases for finite dimensional irreducible graded $\bfUn$-modules.

\subsection{The $\mbn$-graded algebras $\bfUnr$, $\bfUn$ and $\dbfUn$}
Recall the algebra $\bfUnr$ introduced in \S4.1.  We first show that the algebra $\bfUnr$ has a natural $\mbn$-grading as an associative algebra.
\begin{Lem}\label{graded algebra bfUnr}
The algebra $\bfUnr$ is a $\mbn$-graded algebra with $$\deg([A])=\sum_{s>0,\,1\leq i,j\leq n}sa_{i,j+sn}$$ for $A\in\Xinr$.
\end{Lem}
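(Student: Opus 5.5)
The plan is to show that the $\mbq(v)$-span of the $[A]$ with $\deg([A])=m$, call it $\bfUnr_m$, satisfies $\bfUnr_m\bfUnr_{m'}\han\bfUnr_{m+m'}$. Since the $[A]$ with $A\in\Xinr$ form a basis, $\bfUnr=\bop_{m\in\mbn}\bfUnr_m$. The degree is nonnegative because for $A\in\Xinr$ and $1\leq i\leq n$ we have $a_{i,j}=0$ for $j<1$. Hence every nonzero entry in row $i$ sits in column $j+sn$ with $1\leq j\leq n$ and $s\geq 0$.

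First, rewrite the degree in a form additive along orbits. For $1\leq i\leq n$ and $k\in\mbz$, set $s=\lfloor (k-1)/n\rfloor$. Then $\deg([A])=\sum_{1\leq i\leq n,\,k\in\mbz}\lfloor (k-1)/n\rfloor a_{i,k}$, where only $k\geq1$ contribute. I will use the geometric description: if $(\bfL,\bfL')\in\sO_A$, then $a_{i,k}$ measures how lattice steps of $\bfL$ at index $i$ meet steps of $\bfL'$ at index $k$. Because $L'_{k-n}=\ep L'_k$, the quantity $\lfloor (k-1)/n\rfloor$ is the power of $\ep$ separating $L'_k$ from the base window $L'_1,\dots,L'_n$.

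The main step is to verify that this degree is additive under convolution. Suppose $\nu_{A,A',A''}\neq0$, with $A,A'\in\Xinr$ and hence $A''\in\Xinr$ by Proposition \ref{subalgebra}. I want $\deg A''=\deg A+\deg A'$.

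The cleanest route is probably an invariant independent of the orbit. Define $\deg(\bfL,\bfL')=\sum_{1\leq i\leq n}\bigl(\text{relative index/dimension of } L_i \text{ versus } L'_{?}\bigr)$. Concretely, $\deg A=\sum_{1\leq i\leq n}\sum_k \lfloor (k-1)/n\rfloor a_{i,k}$ should be expressible through $\ro(A)$, $\co(A)$ and the total "relative position" $\sum_{i=1}^n \dim(L_i/L_i\cap L'_i)$-type terms. Such terms telescope: $d(\bfL,\bfL'')=d(\bfL,\bfL')+d(\bfL',\bfL'')$ whenever $\bfL''$ is "above" in the $\Xin$ sense. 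I expect finding this formula exactly to be the obstacle. If it does not come out cleanly, I will fall back on generators.

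The fallback uses Proposition \ref{subalgebra}, which says $\bfUnr$ is generated by $[S_\al+\diag(\la)]$ and $[\sum\ga_i\afE_{i+1,i}+\diag(\la)]$ with $\ga_n=0$. These generators have degrees $\al_n$ and $0$ respectively, because $a_{n,n+1}$ is the only entry with $s=1$. It then suffices to check with Proposition \ref{[B][A]} that multiplying $[A]$ by a generator shifts the degree by exactly that amount.

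In case (1), $A+T-\ti T$ moves $t_{i,j}$ from row $i+1$ to row $i$ in the same column $j$. For $1\leq i\leq n-1$, column $j$ has the same $s$ in both rows, so the degree is unchanged. For $i=n$, the entry moves from row $n+1\equiv 1$, column $j$ (reduced to row $1$, column $j-n$) into row $n$, column $j$. This raises $s$ by one for each unit, giving a shift of $\sum_j t_{n,j}=\al_n$.

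Case (2) is symmetric: entries move from row $i$ to row $i+1$ with $i\leq n-1$ because $\ga_n=0$, so the degree is unchanged.

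The degree is well defined on the span and multiplicative on generators. Since generator products span, and the degree on basis elements is homogeneous, each product of generators lies in a single degree. By a descending induction using the triangular relation Corollary \ref{tri2-affine Schur algebras}, this homogeneity transfers to the basis $[A]$, and the grading follows.
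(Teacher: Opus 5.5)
Your fallback argument is correct and is essentially the paper's proof: you reduce via Proposition~\ref{subalgebra} to the generators, then use Proposition~\ref{[B][A]} to see that left multiplication by $[S_\al+\diag(\la)]$ shifts degrees by $\al_n$ while the lower generators (with $\ga_n=0$) preserve degree; your floor-function form of the degree even gives the shift $\sum_j t_{n,j}=\al_n$ directly, without the paper's separate use of $f_{T,A}\neq0$ to force $t_{n,j}=0$ for $j\le n$. The speculative lattice-theoretic route is not needed, and for the last step no triangular induction is required, because once generator monomials are known to be homogeneous, the decomposition $\bfUnr=\bigoplus_m\bfUnr_m$ already shows each $[A]$ is a combination of monomials of degree $\deg([A])$ (the fact the paper's ``it is enough'' tacitly uses).
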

\begin{proof}
Let $A\in\Xinr$. Assume that $B=\sum_{1\leq i\leq n}\al_i\afE_{i,i+1}+\diag(\la)$, $C=\sum_{1\leq i\leq n}\ga_i\afE_{i+1,i}+\diag(\mu)$ is such that $\co(B)=\co(C)=\ro(A)$, where $\al,\la,\mu,\ga\in\afmbnn$ with $\sg(\al)+\sg(\la)=r$, $\sg(\ga)+\sg(\mu)=r$ and $\ga_n=0$. By Proposition \ref{subalgebra}, it is enough to show that $\deg([B]\cdot[A])=\deg([B])+\deg([A])$ and $\deg([C]\cdot[A])=\deg([C])+\deg([A])$.

By Proposition \ref{[B][A]} we have
$$[B][A]=\sum_{T\in\afThn,\,\ro(T)=\al \atop
A+T-\ti T\in\Xinr}f_{T,A}[A+T-\ti T]\text{ and }
[C][A]=\sum_{T\in\afThn,\,\ro(T)=\ga\atop
A-T+\ti T\in\Xinr}f'_{T,A}[A-T+\ti T]$$
where $f_{T,A}$ and $f'_{T,A}$ are given as in Proposition \ref{[B][A]}. If $f_{T,A}\not=0$ for some
$T\in\afThn$ with $\ro(T)=\al$ and $A+T-\ti T\in\Xinr$, then we have
\begin{equation*}
\begin{split}
\deg([A+T-\ti T])
&=\deg([A])+\sum_{s>0\atop 1\leq j\leq n}st_{n,j+sn}
-\sum_{s>0\atop 1\leq j\leq n}st_{0,j+sn}\\
&=\deg([A])+\sum_{s\geq 0\atop 1\leq j\leq n}(s+1)t_{n,j+(s+1)n}
-\sum_{s>0\atop 1\leq j\leq n}st_{n,j+(s+1)n}\\
&=\deg([A])+\sum_{j\geq n+1}t_{n,j}.
\end{split}
\end{equation*}
Since $f_{T,A}\not=0$ we have $a_{i,j}+t_{i,j}-t_{i-1,j}\geq t_{i,j}$ for all $i,j$. It follows that $0=a_{1,j}\geq t_{0,j}\geq 0$ for $j<1$ since $A\in\Xinr$. Hence, since $\ro(T)=\al$, we have $\al_n=\sum_{j\in\mbz}t_{n,j}=\sum_{j\geq n+1}t_{n,j}$. This implies that $\deg([A+T-\ti T])=\det([A])+\al_n=\deg([A])+\deg([B]).$
Therefore we have $$\deg([B]\cdot[A])=\deg([A])+\deg([B]).$$
In addition, if $f'_{T,A}\not=0$ for some
$T\in\afThn$ with $\ro(T)=\ga$ and $A-T+\ti T\in\Xinr$, then we have
\begin{equation*}
\begin{split}
\deg([A-T+\ti T])
&=\deg([A])-\sum_{s>0\atop 1\leq j\leq n}st_{n,j+sn}
+\sum_{s>0\atop 1\leq j\leq n}st_{0,j+sn}\\
&=\deg([A])-\sum_{s\geq 0\atop 1\leq j\leq n}(s+1)t_{n,j+(s+1)n}
+\sum_{s>0\atop 1\leq j\leq n}st_{n,j+(s+1)n}\\
&=\deg([A])-\sum_{j\geq n+1}t_{n,j}.
\end{split}
\end{equation*}
Since $\ro(T)=\ga$ and $\ga_n=0$ we conclude that $t_{n,j}=0$ for all $j$. Hence we have
$\deg([A-T+\ti T])=\deg([A])=\deg([A])+\deg([C])$. Consequently, we have $$\deg([C]\cdot[A])=\deg([A])+\deg([C]).$$
The proof is completed.
\end{proof}
Recall from \eqref{bfSr} the $q$-Schur algebra $\bfSr$ can be regarded as a subalgebra of  $\bfUnr$.
For $k\in\mbn$ let $\bfUnr[k]$ be the $k$-th graded piece of $\bfUnr$. Then we have $\bfUnr[0]=\bfSr$.
In a way similar to the proof of Lemma \ref{graded algebra bfUnr}, we obtain the following result.
\begin{Lem}\label{graded algebra dbfUn}
$(1)$
The algebra $\dbfUn$ is a $\mbn$-graded algebra with $$\deg([A])=\sum_{s>0,\,1\leq i,j\leq n}sa_{i,j+sn}$$ for $A\in\tiXin$.

$(2)$
The algebra $\bfUn$ is a $\mbn$-graded algebra with $$\deg(A(\bfj,\la))=\sum_{s>0}\sum_{1\leq i,j\leq n}sa_{i,j+sn}$$ for $A\in\Xinpm$, $\bfj\in\afmbzn$ and $\la\in\afmbnn$.
\end{Lem}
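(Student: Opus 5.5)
For part (1), I will imitate the proof of Lemma \ref{graded algebra bfUnr}, now working in $\afLn$, which is identified with $\dbfUn$ via Theorem \ref{realization of dbfUn}. The first step is to reduce to products with generators. The stabilization argument of Lemma \ref{Wn-subalgebra}, combined with the triangular relation \eqref{dot tri relation}, shows that $\afLn$ is generated, as a $\sZ$-algebra, by elements of two kinds: $[S_\al+\diag(\la)]$ and $[\sum_{1\le i\le n-1}\ga_i\afE_{i+1,i}+\diag(\la)]$, with $\al\in\afmbnn$, $\ga\in\afmbnn$, $\ga_n=0$ and $\la\in\afmbzn$. Since the $[A]$ with $A\in\tiXin$ form a basis, it suffices to check that left multiplication by each such generator $[B]$ sends the degree-$d$ basis vectors $[A]$ into the span of basis vectors of degree $d+\deg([B])$.

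To do this, I will use Proposition \ref{stabilization property}. It identifies the product $[B][A]$ in $\afLn$ with the specialization at $v'=1$ of the product $[{}_pB][{}_pA]$ in $\UpA$, for $p\gg0$. That product is computed by Proposition \ref{[B][A]}, and it is a combination of terms $[{}_p(A\pm(T-\ti T))]$. The degree only involves the entries $a_{i,j+sn}$ with $s>0$. These entries are unaffected by adding $pI$, so $\deg([{}_pX])=\deg([X])$. The computation already carried out in Lemma \ref{graded algebra bfUnr} then gives $\deg([A+T-\ti T])=\deg([A])+\al_n$ and $\deg([A-T+\ti T])=\deg([A])$ whenever the coefficient is nonzero. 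Every $X_i$ appearing with a nonzero $P_i$ arises from such a $T$, so each surviving term has the correct degree. Finally, specializing $v'\mapsto 1$ cannot create new terms, so the grading passes to $\afLn$. The main care point is the identification between the $X_i$ of Proposition \ref{stabilization property} and the shifted matrices $A\pm(T-\ti T)$. I expect the only obstacle to be the bookkeeping for entries at $j<1$, and this was already handled in the proof of Proposition \ref{[B][A]}.

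For part (2), the plan is to transport the grading through the embedding $\bfUn\cong\afbfWn\subseteq\afhbfLn$ from Theorem \ref{realization of Un}. The element $A(\bfj,\la)$ is a (possibly infinite) sum of terms $[A+\diag(\mu)]$. All of these terms have the same degree, because diagonal entries do not contribute to the degree. Hence $A(\bfj,\la)$ is homogeneous, and multiplication in $\afhbfLn$, which is defined termwise, is additive on degrees by part (1). The spanning set of Lemma \ref{basis for afVn} is therefore homogeneous, and the grading on $\bfUn$ is well defined. As a consistency check, this grading agrees with the one in Proposition \ref{Hopf subalgebra}: $\zeta(E_n)=\afE_{n,n+1}(\bfl)$ has degree $1$, and $E_j,F_j,K_i$ have degree $0$.
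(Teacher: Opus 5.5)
Your proposal is correct and is essentially the argument the paper intends. The paper's proof is only the remark that the lemma follows as in Lemma \ref{graded algebra bfUnr}, and you supply the omitted details: you transfer that computation to $\afLn$ via Proposition \ref{stabilization property}, noting that adding $pI$ leaves degrees unchanged and that specializing $v'\mapsto 1$ cannot create new terms, and you then deduce (2) from (1) using the homogeneity of $A(\bfj,\la)$ in $\afhbfLn$. Once you invoke stabilization, the reduction to generators is not even needed, since Lemma \ref{graded algebra bfUnr} applied directly to $[{}_pB][{}_pA]\in\UpA$ already forces every surviving ${}_pX_i$ to have degree $\deg([A])+\deg([B])$.
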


For $k\in\mbn$ let $\dbfUn[k]$ (respectively $\bfUn[k]$)  be the $k$-th graded piece of $\dbfUn$ (respectively $\bfUn$).
Then we have $\dbfUn[0]=\dbfUgln$ and $\bfUn[0]=\bfUgln$.

\subsection{Evaluation maps}
We now construct an evaluation map $\evr$ from $\bfUnr$ to the $q$-Schur algebra $\bfSr$.
\begin{Lem}\label{evr}
There is a surjective algebra homomorphism $\evr:\bfUnr\ra\bfSr$
such that
\begin{equation*}
\evr([A])=
\begin{cases}
[A]&\text{if $A\in\Thnr$},\\
0&\text{otherwise,}
\end{cases}
\end{equation*}
for $A\in\Xinr$.
\end{Lem}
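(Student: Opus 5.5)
The plan is to realize $\evr$ as the projection of the $\mbn$-graded algebra $\bfUnr$ onto its degree-zero component. By Lemma \ref{graded algebra bfUnr}, $\bfUnr=\bop_{k\ge0}\bfUnr[k]$ with $\deg([A])=\sum_{s>0,\,1\le i,j\le n}sa_{i,j+sn}$, and $\bfUnr[0]=\bfSr$.

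The first step is to identify the graded pieces. For $A\in\Xinr$ the entries satisfy $a_{i,j}=0$ for $1\le i\le n$ and $j<1$. Hence $\deg([A])=0$ exactly when $a_{i,j}=0$ for $1\le i\le n$ and $j>n$, that is, when $A\in\Thnr$. The degree is a sum of nonnegative terms, and it vanishes if and only if every entry $a_{i,j+sn}$ with $s>0$ is zero. Consequently $\bfUnr[0]=\spann\{[A]\mid A\in\Thnr\}=\bfSr$, and $I:=\bop_{k>0}\bfUnr[k]$ is spanned by the $[A]$ with $A\in\Xinr\setminus\Thnr$.

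The second step is to show that $I$ is a two-sided ideal. Degrees are nonnegative and additive under multiplication, so $\bfUnr[k]\bfUnr[l]\han\bfUnr[k+l]$. If $k>0$ or $l>0$, then $k+l>0$, and therefore $\bfUnr I+I\bfUnr\han I$. It follows that $\bfUnr=\bfSr\oplus I$ as vector spaces, with $\bfSr$ a subalgebra and $I$ an ideal.

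Now define $\evr$ as the composite $\bfUnr\to\bfUnr/I\cong\bfSr$. This is an algebra homomorphism, and on the basis it sends $[A]\mapsto[A]$ for $A\in\Thnr$ and $[A]\mapsto0$ otherwise. Surjectivity is immediate, since $\evr$ restricts to the identity on $\bfSr$. One consistency check remains: multiplication in $\bfSr\han\afbfSr$ must agree with the product in $\bfUnr$. This holds automatically because $\bfSr$ is spanned by basis elements of the subalgebra $\bfUnr\han\afbfSr$ and is closed under products, being exactly the degree-zero piece.

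The only real content is the grading itself, which Lemma \ref{graded algebra bfUnr} already supplies. The one point that needs care is therefore the identification $\bfUnr[0]=\bfSr$, in particular that the nonnegativity of all the degree summands forces $A\in\Thnr$; I expect no serious obstacle beyond this.
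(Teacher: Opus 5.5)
Your proof is correct, but it follows a different route from the paper. The paper never invokes Lemma \ref{graded algebra bfUnr} here. Instead it checks directly, from the multiplication formulas of Proposition \ref{[B][A]}, that $\evr([B][A])=\evr([B])\evr([A])$ and $\evr([C][A])=\evr([C])\evr([A])$ for every $A\in\Xinr$ and every generator $[B]$, $[C]$ of Proposition \ref{subalgebra}.

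It does this by a case analysis:
\begin{itemize}
\item whether $A\in\Thnr$;
\item for $[B]$, whether $\al_n\neq 0$.
\end{itemize}
In the relevant cases it shows that every $[A+T-\ti T]$ or $[A-T+\ti T]$ with nonzero coefficient lies outside $\Thnr$. Multiplicativity on all of $\bfUnr$ then follows because these elements generate.

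You instead factor everything through the grading. In your version $\evr$ is simply the projection onto $\bfUnr[0]=\bfSr$, and its kernel is the ideal $\bigoplus_{k>0}\bfUnr[k]$. The underlying combinatorics is the same, because the grading lemma is itself proved from the same formulas. It shows that left multiplication by $[B]$ raises degree by exactly $\al_n$ and left multiplication by $[C]$ preserves degree.

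Your version has three advantages:
\begin{itemize}
\item it uses that computation once rather than essentially redoing it;
\item it identifies $\ker\evr$ conceptually;
\item the same short argument gives $\dev$ and $\ev$ of Lemma \ref{dev} from Lemma \ref{graded algebra dbfUn}.
\end{itemize}
The paper's version has the mild advantage of not depending on the grading lemma. That lemma tacitly needs a small extra step to pass from generators to arbitrary elements, namely homogeneity plus the fact that the identity $\sum_{\la}[\diag(\la)]$ has degree $0$. Given the order in which the paper develops things, your reduction is the more economical one.
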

\begin{proof}
Let $A\in\Xinr$ and $B=\sum_{1\leq i\leq n}\al_i\afE_{i,i+1}+\diag(\la)$ be such that $\co(B)=\ro(A)$, where $\al,\la\in\afmbnn$.
If $A\in\Thnr$ and $\al_n=0$ then we have $[B]\cdot[A]\in\bfSr$, and hence $$\evr([B])\evr([A])=[B]\cdot[A]=\evr([B]\cdot[A]).$$
Now we assume either $A\not\in\Thnr$ or $\al_n\not=0$.
By proposition \ref{[B][A]} we have
\begin{equation*}
\begin{split}
\evr([B][A])&=\sum_{T\in\afThn,\,\ro(T)=\al \atop
A+T-\ti T\in\Xinr}f_{T,A}\evr([A+T-\ti T]),
\end{split}
\end{equation*}
where $f_{T,A}$ is given as in Proposition \ref{[B][A]}.
Let $T\in\afThn$ be such that $\ro(T)=\al$, $A+T-\ti T\in\Xinr$ and $f_{T,A}\not=0$.
Since $f_{T,A}\not=0$, we have
\begin{equation}\label{fTA not=0}
a_{k,l}+t_{k,l}-t_{k-1,l}\geq t_{k,l}
\end{equation}
for all $k,l$.
 If $A\not\in\Thnr$ then we have
$a_{i,j}>0$ for some $1\leq i\leq n$ and $j>n$.
If $a_{i,j}+t_{i,j}-t_{i-1,j}>0$ then we have $A+T-\ti T\not\in\Thnr$. If $a_{i,j}+t_{i,j}-t_{i-1,j}=0$, then by \eqref{fTA not=0} we have
$t_{i,j}=0$ and hence $a_{i,j}=t_{i-1,j}$. It follows from \eqref{fTA not=0} that
$a_{i-1,j}+t_{i-1,j}-t_{i-2,j}\geq t_{i-1,j}=a_{i,j}>0$. This implies that $A+T-\ti T\not\in\Thnr$. Therefore we have $$\evr([B]\cdot[A])=0=\evr([B])\evr([A]).$$
Now we assume that $\al_n\not=0$.
By \eqref{fTA not=0} we have $a_{1,l}\geq t_{n,l+n}$ for all $l$.
Since $A\in\Xinr$ we have $a_{1,l}=0$ for $l<1$, and hence $t_{n,l+n}=0$ for $l<1$.
Therefore, since $\ro(T)=\al$ and $\al_n\not=0$, we have $t_{n,s}>0$ for some $s>n$.
It follows from \eqref{fTA not=0} that $a_{n,s}+t_{n,s}-t_{n-1,s}\geq t_{n,s}>0$. Consequently, we have $A+T-\ti T\not\in\Thnr$.
Therefore we have
$$\evr([B]\cdot[A])=0=\evr([B])\evr([A]).$$

Let $C=\sum_{1\leq i\leq n}\ga_i\afE_{i+1,i}+\diag(\mu)$ be such that $\co(C)=\ro(A)$, where $\mu,\ga\in\afmbnn$ with $\sg(\ga)+\sg(\mu)=r$ and $\ga_n=0$.
If $A\in\Thnr$ then we have $$\evr([C]\cdot[A])=[C]\cdot[A]=\evr([C])\evr([A])$$ since $\ga_n=0$. If $A\not\in\Thnr$ then we have $a_{i,j}>0$ for some $1\leq i\leq n$ and $j>n$.
By Proposition \ref{[B][A]} we have
$$[C][A]=\sum_{T\in\afThn,\,\ro(T)=\ga\atop
A-T+\ti T\in\Xinr}f'_{T,A}[A-T+\ti T],$$
where $f'_{T,A}$ is as given in Proposition \ref{[B][A]}.
Let $T\in\afThn$ be such that $\ro(T)=\ga$, $A-T+\ti T\in\Xinr$ and $f_{T,A}'\not=0$.
Since $f_{T,A}'\not=0$, we have
\begin{equation}\label{fTA' not=0}
a_{k,l}-t_{k,l}+t_{k-1,l}\geq t_{k-1,l}
\end{equation}
for all $k,l$. If $a_{i,j}-t_{i,j}+t_{i-1,j}>0$ then we have $A-T+\ti T\not\in\Thnr$.
If $a_{i,j}-t_{i,j}+t_{i-1,j}=0$, then by \eqref{fTA' not=0} we have $t_{i-1,j}=0$ and hence $t_{i,j}=a_{i,j}>0$. It follows from \eqref{fTA' not=0} that
$a_{i+1,j}-t_{i+1,j}+t_{i,j}\geq t_{i,j}>0$. Furthermore since $\ga_n=0$ and $\ro(T)=\ga$, we have $t_{n,s}=0$ for $s\in\mbz$. Hence, since $t_{i,j}>0$, we have $2\leq i+1\leq n$. So we have $A-T+\ti T\not\in\Thnr$, and hence
$$\evr([C]\cdot[A])=0=\evr([C])\evr([A]).$$

By Proposition \ref{subalgebra}, the algebra $\bfUnr$ is generated by the elements like $[B]$, $[C]$ above. Therefore $\evr$ is an algebra homomorphism.
\end{proof}

Recall the algebra $\bfUgln$ defined in \S \ref{quantum affine gln}.
Furthermore let $\dbfUgln$ be the subspace of $\dbfUn$ spanned by the  elements
$u_A^+1_\la u_B^-$ for $A,B\in\Thnp$ and $\la\in\afmbzn$. Then $\dbfUgln$ is the modified quantum group of $\frak{gl}_n$.

By \cite{BLM} the set $\{A(\bfj)\mid A\in\Thnpm,\,\bfj\in\afmbzn\}$ (respectively $\{[A]\mid A\in\tiThn\}$) forms a basis for $\bfUgln$ (respectively $\dbfUgln$), where $\Thnpm=\afThnpm\cap\Thn$ and $$\tiThn=\{A\in\aftiThn\mid a_{i,j}=0,\,\text{ for }1\leq i\leq n,\,j\not\in\{1,2,\cdots,n\}\}.$$
We now construct an evaluation map $\dev$ (respectively $\ev$) from the modified quantum current algebra $\dbfUn$ (respectively the quantum current algebra $\bfUn$) to the modified quantum group $\dbfUgln$ (respectively the quantum group $\bfUgln$).
\begin{Lem}\label{dev}
$(1)$ There is a surjective algebra homomorphism $\dev:\dbfUn\ra\dbfUgln$
such that
\begin{equation*}
\dev([A])=
\begin{cases}
[A]&\text{if $A\in\tiThn$},\\
0&\text{otherwise.}
\end{cases}
\end{equation*}
for $A\in\tiXin$.

$(2)$ There is a surjective algebra homomorphism $\ev:\bfUn\ra\bfUgln$
such that
\begin{equation*}
\ev(A(\bfj))=
\begin{cases}
A(\bfj)&\text{if $A\in\Thnpm$},\\
0&\text{otherwise.}
\end{cases}
\end{equation*}
for $A\in\Xinpm$ and $\bfj\in\afmbzn$.
\end{Lem}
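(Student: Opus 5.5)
The plan is to obtain both maps by stabilization from the finite-level evaluation maps $\evr$ of Lemma \ref{evr}, in the same way that $\afLn$ and $\zeta$ were built from $\UpA$ and $\zr$.

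\emph{Part (1).} Define $\dev$ on the basis $\{[A]\mid A\in\tiXin\}$ of $\afLn=\dUn$ by the displayed formula, and extend it $\mbq(v)$-linearly to $\dbfUn=\afbfLn$. Surjectivity is immediate, because $\{[A]\mid A\in\tiThn\}$ is a basis of $\dbfUgln$. For multiplicativity, take $A,B\in\tiXin$ with $\co(B)=\ro(A)$ and write $[B][A]=\sum_iP_i(v,1)[X_i]$ as in Proposition \ref{stabilization property}. We must show
$$\sum_{X_i\in\tiThn}P_i(v,1)[X_i]=\dev([B])\,\dev([A]).$$
This follows in three steps.
\begin{itemize}
\item[(a)] Apply $\mathsf{ev}_{pn+\sg(A)}$ to \eqref{eq stabilization}. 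Note that ${}_pX\in\Thn$ if and only if $X\in\tiThn$, since adding $pI$ changes only diagonal entries.
\item[(b)] Lemma \ref{evr} then gives, for all $p\geq p_0$,
$$\sum_{X_i\in\tiThn}P_i(v,v^{-p})[{}_pX_i]=\mathsf{ev}_{pn+\sg(A)}([{}_pB])\,\mathsf{ev}_{pn+\sg(A)}([{}_pA]).$$
\item[(c)] If $A,B\in\tiThn$, the right side is the product in the $q$-Schur algebra, and it stabilizes by BLM \cite[4.2]{BLM} to the product $[B][A]$ in $\dbfUgln$. Comparing stabilization coefficients (uniqueness of $P_i$ as polynomials, since the identity holds for infinitely many $p$) and specializing $v'=1$ gives the claim. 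If $A$ or $B$ lies outside $\tiThn$, the right side is $0$ for all large $p$, so $P_i(v,v^{-p})=0$ for every $X_i\in\tiThn$. Hence $P_i=0$ and $P_i(v,1)=0$, as required.
\end{itemize}
The one point needing care is that the $\tiThn$-part of the stabilized product in $\afLn$ agrees with the BLM product. This holds because both arise from the same finite-level products in $\bfSr\subseteq\bfUnr$, so the relevant $P_i$ are exactly BLM's stabilization polynomials.

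\emph{Part (2).} I would use the realization $\bfUn\cong\afbfWn\subseteq\afhbfLn$ of Theorem \ref{realization of Un}. Extend $\dev$ to formal sums: each element of $\afhbfLn$ is a sum of $[A]$ with the finiteness conditions on rows and columns, and $\dev$ acts termwise. This extension is still multiplicative, because products in $\afhbfLn$ are computed termwise from products in $\afLn$, and each coefficient of the product is a finite sum.

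For $A\in\Xinpm$ we have $A+\diag(\mu)\in\tiThn$ if and only if $A\in\Thnpm$. Therefore
$$\dev\bigl(A(\bfj)\bigr)=\begin{cases}A(\bfj)&\text{if }A\in\Thnpm,\\ 0&\text{otherwise.}\end{cases}$$
Set $\ev=\dev|_{\afbfWn}$. By the BLM basis of $\bfUgln$, its image is exactly $\bfUgln$, which also gives surjectivity.

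The main obstacle is the justification in step (c) of Part (1), namely the comparison with BLM's stabilization together with the compatibility of the specializations $v'=v^{-p}$ and $v'=1$. The termwise extension to $\afhbfLn$ in Part (2) is routine once the finiteness is checked.
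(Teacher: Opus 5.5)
Your proof is correct, but it takes a different route from the paper's. The paper proves the lemma only ``in a way similar to the proof of Lemma \ref{evr}''. That means redoing the combinatorial verification at the stabilized level:
\begin{itemize}
\item multiply $[A]$ on the left by the generators $[S_\al+\diag(\la)]$ and $[\sum_i\ga_i\afE_{i+1,i}+\diag(\mu)]$ ($\ga_n=0$), using the stabilized $T$-matrix formulas of Proposition \ref{[B][A]};
\item check case by case which $A\pm(T-\ti T)$ can lie in $\tiThn$;
\item conclude from the fact that these elements generate the algebra, and argue likewise for $\ev$ on $\afbfWn$.
\end{itemize}

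You instead push the already established finite-level homomorphisms $\mathsf{ev}_{pn+\sg(A)}$ through the stabilization identity \eqref{eq stabilization}. You then conclude by a polynomial-identity argument in $v'$, which is legitimate because $\sZ_1\subseteq\mbq(v)[v']$ and the values $v^{-p}$, $p\geq p_0$, are pairwise distinct.

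What each route buys:
\begin{itemize}
\item Your route needs no new combinatorics and no appeal to a generating set, since multiplicativity is checked on every pair of basis elements at once.
\item In your route, Part (2) comes essentially for free: $\dev$ acts on $\afhbfLn$ as a coordinate projection, which commutes with the locally finite sums defining the product there.
\item The paper's route is self-contained at the stabilized level and mirrors the explicit computation of Lemma \ref{evr}.
\end{itemize}

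The point you flagged in step (c) does not need a comparison with BLM's stabilization polynomials. Suppose $A,B\in\tiThn$. Then $[{}_pB][{}_pA]$ lies in the subalgebra $\boldsymbol{\mathcal S}(n,pn+\sg(A))$, which is spanned by the $[C]$ with $C\in\Theta(n,pn+\sg(A))$. Hence $P_i(v,v^{-p})=0$ for every $X_i\notin\tiThn$ and every $p\geq p_0$, so $P_i=0$. Thus $[B][A]$ computed in $\afLn$ already equals $\sum_{X_i\in\tiThn}P_i(v,1)[X_i]$. This is by definition the product in $\dbfUgln\subseteq\dbfUn=\afbfLn$.
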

\begin{proof}
The assertion can be proved in a way similar to the proof of Lemma \ref{evr}.
\end{proof}

By \cite{BLM} the set $\{\{A\}\mid A\in\tiThn\}$ (respectively $\{\{A\}\mid A\in\Thnr\}$) is the canonical basis of the modified quantum group $\dbfUgln$ (respectively the $q$-Schur algebra $\bfSr$). We now prove that the evaluation maps $\evr$ and $\dev$ preserve the canonical bases.
\begin{Prop}\label{dev({A})}
$(1)$ For $A\in\tiXin$ we have
\begin{equation*}
\dev(\{A\})=
\begin{cases}
\{A\}&\text{if $A\in\tiThn$},\\
0&\text{otherwise.}
\end{cases}
\end{equation*}

$(2)$ For $A\in\Xinr$ we have
\begin{equation*}
\evr(\{A\})=
\begin{cases}
\{A\}&\text{if $A\in\Thnr$},\\
0&\text{otherwise.}
\end{cases}
\end{equation*}
\end{Prop}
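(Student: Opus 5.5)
The proof will follow the argument used above for $\dzr$. There are two inputs: the evaluation maps commute with the bar involutions, and they respect the triangularity conditions that characterize canonical bases. I treat (2) first, since (1) is entirely parallel.

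\textbf{Step 1: $\evr$ commutes with the bar involution.} By Proposition \ref{subalgebra}, $\bfUnr$ (and $\Unr$) is generated by the elements $[S_\al+\diag(\la)]$ and $[\sum_i\ga_i\afE_{i+1,i}+\diag(\la)]$ with $\ga_n=0$. By Lemma \ref{bar monomial} these generators are bar-invariant in $\Unr$. By Lemma \ref{evr}, $\evr$ sends each of them either to $0$ (when $\al_n\neq 0$) or to an element $[S_\al+\diag(\la)]$, respectively $[{}^t(S_\ga)+\diag(\la)]$, of $\bfSr$; these images are bar-invariant in the $q$-Schur algebra by BLM. Both $\evr\circ\bar{\ }$ and $\bar{\ }\circ\evr$ are ring homomorphisms that are $\bar{\ }$-semilinear on $\sZ$. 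Since they agree on generators, they agree everywhere. Hence $\ol{\evr(\{A\})}=\evr(\{A\})$ for all $A\in\Xinr$.

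\textbf{Step 2: triangularity and conclusion.} By Proposition \ref{canonical basis for Unr} together with \eqref{th_Ar},
$$\{A\}-[A]\in\sum_{B\in\Xinr,\,B\sqsubset A}v^{-1}\mbz[v^{-1}][B].$$
Apply $\evr$ and use Lemma \ref{evr}. If $A\notin\Thnr$, then $\evr(\{A\})\in\sum_{B\in\Thnr,\,B\sqsubset A}v^{-1}\mbz[v^{-1}][B]$. A bar-invariant element of this form must vanish: take a $\sqsubseteq$-maximal $B$ whose coefficient is nonzero. The bar-invariance of $\evr(\{A\})$, combined with the triangularity of $\ol{[B]}$ in $\bfSr$, forces that coefficient to be bar-invariant. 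But a bar-invariant element of $v^{-1}\mbz[v^{-1}]$ is zero, a contradiction. If instead $A\in\Thnr$, then $\evr(\{A\})-[A]\in\sum_{B\in\Thnr,\,B\sqsubset A}v^{-1}\mbz[v^{-1}][B]$ and $\evr(\{A\})$ is bar-invariant. By the uniqueness of the BLM canonical basis of $\bfSr$, with the same order $\sqsubset$ (cf. \cite[7.10]{Lu90b}), we get $\evr(\{A\})=\{A\}$.

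\textbf{Step 3: part (1).} The argument is identical, with $\dev$, $\afLn$ and Theorem \ref{canonical basis for afLn} in place of $\evr$, $\Unr$ and Proposition \ref{canonical basis for Unr}. The bar involution on $\afLn$ from \eqref{bar on afKn} is determined by its values on the generators $[S_\al+\diag(\bt)]$ and $[{}^t S_\al+\diag(\bt)]$, which are bar-invariant by Lemma \ref{bar bimodule}. Lemma \ref{dev}(1) sends these generators to $0$ or to the corresponding bar-invariant generators of $\dbfUgln$. This gives compatibility with bar, and the same triangularity argument then applies, now using the BLM canonical basis $\{\{A\}\mid A\in\tiThn\}$ of $\dbfUgln$.

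The main obstacle is Step 1, the compatibility of the evaluation maps with the bar involutions. It depends on $\Unr$ and $\dUn$ being generated, as rings over $\sZ$, by bar-invariant elements whose images are again bar-invariant. It also depends on the bar involution on the target being the restriction of the one on the source along $\bfSr\subset\bfUnr$ (respectively $\dbfUgln\subset\dbfUn$); this holds because the involutions are defined through the same affine Schur algebra structure constants. I would verify these points carefully on the generators. Everything after that is the standard uniqueness argument.
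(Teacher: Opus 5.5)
Your proposal is correct and follows the paper's own proof. Both arguments combine compatibility of $\dev$ (resp.\ $\evr$) with the bar involutions, the triangularity $\{A\}-[A]\in\sum_{B\sqsubset A}v^{-1}\mathbb{Z}[v^{-1}][B]$, and the standard vanishing/uniqueness argument for the BLM canonical basis. The paper simply asserts the bar-compatibility as clear, whereas you justify it by checking it on the bar-invariant generators from Proposition~\ref{subalgebra} and Lemmas~\ref{bar monomial} and~\ref{bar bimodule}. In part (1), the transpose generators should carry the condition $\alpha_n=0$, so that they actually lie in $\tiXin$.
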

\begin{proof}
Clearly we have
\begin{equation}\label{dev bar commute}
\ol{\dev(\{A\})}=\dev(\ol{\{A\}})=\dev(\{A\})
\end{equation}
for $A\in\tiXin$.
If $A\not\in\tiThn$, then by Theorem \ref{canonical basis for afLn} and Lemma \ref{dev} we have
$$\dev(\{A\})=\dev(\{A\})-\dev([A])\in\sum_{B\sqsubset  A\atop B\in\tiThn}
v^{-1}\mbz[v,v^{-1}][B].$$
Hence by \eqref{dev bar commute}, we have $\dev(\{A\})=0$. Now we assume that $A\in\tiThn$. Then by Theorem \ref{canonical basis for afLn} and Lemma \ref{dev} we have
$$\dev(\{A\})-[A]=\dev(\{A\})-\dev([A])\in\sum_{B\sqsubset  A\atop B\in\tiThn}
v^{-1}\mbz[v,v^{-1}][B].$$
Therefore by \eqref{dev bar commute} and the uniqueness of canonical bases we have $\dev(\{A\})=\{A\}$. The assertion (1) follows. The assertion (2) can be proved similarly.
\end{proof}

\subsection{The categories $\msC$, $\msCr$ and $\dmsC$}
A $\bfUn$-module $V$ is called a representation of type $1$ if $V=\oplus_{\la\in\mbzn}V_\la$, where $V_\la=\{w\in V\mid K_iw=v^{\la_i}w,\,\forall i\}$.
Let $\msC$ be the category whose objects are finite dimensional graded $\bfUn$-modules of type $1$, and where the morphisms are graded maps of $\bfUn$-modules. More precisely, an objects of $\msC$ is a finite dimensional $\bfUn$-modules $V$ of type $1$ such that  $V=\bop_{s\geq 0}V[s],$ where $V[s]$ is a subspace of $V$ such that $\bfUn[k]V[s]\han V[s+k]$ for $s,k\in\mbn$. If $V,W\in\text{Ob}\msC$, then
$\Hom_{\msC}(V,W)=\{f\in\Hom_{\bfUn}(V,W)\mid f(V[s])\han W[s]\text{ for }s\in\mbn\}.$
Similarly, for $r\in\mbn$ let $\msCr$ be the category whose objects are finite dimensional graded $\bfUnr$-modules $V$ and where the morphisms are graded maps of $\bfUnr$-modules.

Following \cite[23.1.4]{Lubk}, a $\dbfUgln$-module $M$ is said to be unital if
\begin{itemize}
\item[(a)]
for any $m\in M$ we have $1_\la m=0$ for all but finitely many $\la\in\afmbzn$;
\item[(b)]
for any $m\in M$ we have $\sum_{\la\in\afmbzn}1_\la m=m$.
\end{itemize}
We define a category $\dmsC$ as follows. An object of $\dmsC$ is a finite dimensional $\dbfUn$-module $V$ such that $V|_{\dbfUgln}$ is a unital $\dbfUgln$-module, and $V=\bop_{s\geq 0}V[s],$ where $V[s]$ is a subspace of $V$ such that $\dbfUn[k]V[s]\han V[s+k]$ for $s,k\in\mbn$. If $V,W\in\text{Ob}\dmsC$, then
$\Hom_{\dmsC}(V,W)=\{f\in\Hom_{\dbfUn}(V,W)\mid f(V[s])\han W[s]\text{ for }s\in\mbn\}.$

Each graded $\bfUnr$-module $V$ can be regarded as a graded $\dbfUn$-module via the map $\dzr$ defined in \eqref{dzr}. Hence $\msCr$ can be regarded as a full subcategory of $\dmsC$. We now prove that the two categories $\msC$ and $\dmsC$ are equivalent.

\begin{Lem}\label{equivalent}
The two categories $\msC$ and $\dmsC$ are equivalent.
\end{Lem}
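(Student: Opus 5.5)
We follow the standard Lusztig argument comparing a quantum group with its modified form (cf. \cite[23.1]{Lubk}), keeping track of the $\mbn$-grading throughout. We will construct functors $\mathcal F:\msC\to\dmsC$ and $\mathcal G:\dmsC\to\msC$ and check that they are mutually quasi-inverse; in fact both composites will be the identity on underlying graded vector spaces.

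\emph{From $\msC$ to $\dmsC$.} Given $V=\bop_{\la}V_\la\in\msC$, we let $\pi_{\la,\mu}(u)\in{}_\la\bfUn_\mu$ act on $w\in V_{\mu'}$ by $\dt_{\mu,\mu'}\,p_\la(uw)$, where $p_\la$ is the projection onto $V_\la$. This is well defined because ${}_\la I_\mu$ kills $V_\mu$ after projecting to $V_\la$: if $w\in V_\mu$ then $(K^\bfj-v^{\mu\centerdot\bfj})w=0$, and $p_\la(K^\bfj-v^{\la\centerdot\bfj})=0$. For homogeneous $u\in\bfUn(\la-\mu)$ we have $uV_\mu\subseteq V_\la$, so the multiplication rule for $\dbfUn$ is respected. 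Unitality follows from finite dimensionality and the weight decomposition, since $1_\la$ acts as $p_\la$. The grading is preserved because $\pi_{\la,\mu}$ is compatible with the grading: by Lemma~\ref{graded algebra dbfUn}, the degree of $[A]$ depends only on the off-diagonal entries, and $\pi_{\la,\mu}$ maps $\bfUn[k]$ onto $\dbfUn[k]$ via the identification $\Phi$ of Theorem~\ref{realization of dbfUn}. Morphisms are sent to themselves.

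\emph{From $\dmsC$ to $\msC$.} Given $M\in\dmsC$, set $M_\la=1_\la M$. By unitality and finite dimensionality, $M=\bop_\la M_\la$ with only finitely many $\la$ occurring. We let $u\in\bfUn(\nu)$ act by $um=\sum_\la (u1_\la)m$, where $u1_\la=\pi_{\la+\nu,\la}(u)$ comes from the bimodule structure, and we let $K^\bfj$ act on $M_\la$ by $v^{\la\centerdot\bfj}$. We must check that this defines a $\bfUn$-module of type $1$. Since the sum is finite on each vector, it suffices to verify $(uu')1_\la=(u1_{\la+\nu'})(u'1_\la)$, which is precisely the multiplication rule in $\dbfUn$, together with the consistency $K^\bfj 1_\la=v^{\la\centerdot\bfj}1_\la$ in $\dbfUn$. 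The grading is compatible because $\bfUn[k]1_\la\subseteq\dbfUn[k]$.

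The composites $\mathcal G\mathcal F$ and $\mathcal F\mathcal G$ are the identity on objects (same graded space, same weight spaces, same action), and Hom sets correspond exactly, since a linear map commutes with all $\pi_{\la,\mu}(u)$ if and only if it commutes with $\bfUn$ and preserves weight spaces. The step needing the most care is the compatibility of gradings. One must confirm that the grading on $\dbfUn$ defined via $\deg[A]$ in Lemma~\ref{graded algebra dbfUn}(1) agrees with the image of the grading of $\bfUn$ in Lemma~\ref{graded algebra dbfUn}(2) under $u\mapsto u1_\la$. We would check this by observing that $A(\bfj)1_\la=v^{\la'\centerdot\bfj}[A+\diag(\la')]$ for the appropriate $\la'$, and that both formulas compute the same quantity $\sum_{s>0}s\,a_{i,j+sn}$.
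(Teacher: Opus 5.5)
Your proposal is correct and takes the same approach as the paper: the paper also turns $M\in\dmsC$ into a $\bfUn$-module by letting $u$ act on $m\in 1_\la M$ as $(u1_\la)m$, and says the remaining checks are easy. Beyond what the paper writes, you also give the inverse functor, the correspondence on morphisms, and the verification, via $A(\bfj)1_\la=v^{(\la-\co(A))\centerdot\bfj}[A+\diag(\la-\co(A))]$, that $u\mapsto u1_\la$ sends $\bfUn[k]$ into $\dbfUn[k]$.
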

\begin{proof}
If $M\in\dmsC$, then we have $M=\oplus_{\la\in\afmbzn}1_\la M$. We may regard $M$ as a $\bfUn$-module as follows. The action of $u\in\bfUn$ on $M$ is given by $um=(u1_\la)m$ for $\la\in\afmbzn$ and $m\in 1_\la M$. It is easy to see that $M$ is an object of $\msC$. In this way, we see that to give an object of $\dmsC$ is the same as to give an object of $\msC$.
\end{proof}

\subsection{A classification of the simple objects in the categories $\dmsC$ and $\msCr$}
Let $\dmsP$ be the  category of finite dimensional unital $\dbfUgln$-modules with morphisms being maps of $\dbfUgln$-modules. By Lemma \ref{dev}, we may
define a
covariant functor $$\dev:\dmsP\ra\dmsC$$ by the requirements:
$$\dev(V)[0]=V,\,\dev(V)[s]=0,\,s>0,$$
and with $\dbfUn$-action given by
$$uw=\dev(u)w,\,u\in\dbfUn,\,w\in V$$
and
$$\Hom_{\dmsC}(\dev(V),\dev(W))=\Hom_{\dbfUgln}(V,W)$$
(cf. \cite{CG}).
Similarly, let $\msPr$ be the category of finite dimensional modules for the $q$-Schur algebra $\bfSr$ with morphisms being maps of $\bfSr$-modules. By Lemma \ref{evr} we may define a covariant functor $$\evr:\msPr\ra\msCr$$ by the requirements:
$\evr(V)[0]=V,\,\evr(V)[s]=0,\,s>0,$
and with $\bfUnr$-action given by
$uw=\evr(u)w$, for $u\in\bfUnr$, $w\in V,$
and
$\Hom_{\msCr}(\evr(V),\evr(W))=\Hom_{\bfSr}(V,W)$

For $m\in\mbn$ let $\dtau_m$   be the grading shift given by
$$(\dtau_mV)[k]=V[k-m],\ k\in\mbn$$
for $V\in\text{Ob}\dmsC$. Similarly, for $m\in\mbn$ let $\tau_m$  be the grading shift given by
$$(\tau_mV)[k]=V[k-m],\ k\in\mbn$$
for $V\in\text{Ob}\msCr$. Clearly we have $\dtau_mV\in\text{Ob}\dmsC$ (respectively $\tau_mV\in\text{Ob}\msCr$) for $V\in\text{Ob}\dmsC$ (respectively $V\in\text{Ob}\msCr$).

Let $X^+(n)=\{\la\in\mbz^n\mid\la_1\geq\la_2\geq\cdots\geq\la_n\}$.
For $\la\in X^+(n)$ let $L_v(\la)$ be the irreducible $\bfUgln$-module with highest weight $\la$.
The $\bfUgln$-module $L_v(\la)$ can be naturally regarded as a $\dbfUgln$-module.
For $(\la,m)\in X^+(n)\times\mbn$, let
\begin{equation}\label{Lv(la,m)}
L_v(\la,m)=\dtau_m(\dev(L_v(\la)))\in\text{Ob}\dmsC.
\end{equation}
Let $\La^+(n)=\{\la\in\mbnn\mid\la_1\geq\la_2\geq\cdots\geq\la_n\}$ and $\La^+(n,r)=\{\la\in\La^+(n)\mid\sum_{1\leq i\leq n}\la_i=r\}$.
For $(\la,m)\in\La^+(n,r)\times\mbn$, the graded $\bfUnr$-module $\tau_m(\evr(L_v(\la)))$ can be regarded as a graded $\dbfUn$-module via $\dzr$. Clearly $L_v(\la,m)$ is isomorphic to $\tau_m(\evr(L_v(\la)))$ as a graded $\dbfUn$-module for  $(\la,m)\in\La^+(n,r)\times\mbn$. Therefore we have $L_v(\la,m)\in\text{Ob}\msCr$
for $(\la,m)\in\La^+(n,r)\times\mbn$.

\begin{Lem}\label{lem for classification of bfUn-modules}
Let $V$ be a simple object in the category $\dmsC$. Then we have $V=V[k]$ for some $k\in\mbn$.
\end{Lem}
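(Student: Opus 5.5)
Since $V$ is a nonzero finite dimensional object of $\dmsC$, we have $V=\bop_{s\geq 0}V[s]$ with only finitely many nonzero summands. The plan is to show that the top nonzero graded piece is already a subobject; simplicity then forces $V$ to equal it.

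Let $k=\max\{s\in\mbn\mid V[s]\neq 0\}$, which exists because $V\neq 0$ is finite dimensional. Consider the subspace $W:=V[k]$. By the grading condition defining $\dmsC$, $\dbfUn[j]V[k]\han V[k+j]$ for all $j\in\mbn$. For $j>0$ we have $V[k+j]=0$ by maximality of $k$, so $\dbfUn[j]W=0$. For $j=0$ we get $\dbfUn[0]W\han W$. By Lemma \ref{graded algebra dbfUn}(1) the algebra $\dbfUn$ is $\mbn$-graded, i.e. $\dbfUn=\bop_{j\geq 0}\dbfUn[j]$, so every element of $\dbfUn$ is a finite sum of homogeneous components of nonnegative degree; hence $\dbfUn W\han W$. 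Thus $W$ is a $\dbfUn$-submodule of $V$.

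Next I check that $W$, graded by $W[k]=W$ and $W[s]=0$ for $s\neq k$, is an object of $\dmsC$ and the inclusion $W\hookrightarrow V$ is a morphism in $\dmsC$. Unitality of $W|_{\dbfUgln}$ is inherited from $V$: the idempotents $1_\la$ lie in $\dbfUn[0]=\dbfUgln$ and preserve $W$, so for $w\in W$ the decomposition $w=\sum_\la 1_\la w$ happens inside $W$ with finitely many nonzero terms. The grading compatibility $\dbfUn[j]W[s]\han W[s+j]$ holds by the computation above. The inclusion clearly preserves the grading. Since $W\neq 0$ and $V$ is simple in $\dmsC$, we get $W=V$, i.e. $V=V[k]$.

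The only point needing care is that "simple object" in $\dmsC$ refers to graded subobjects, so one must verify that $W$ with its induced grading is genuinely a subobject; this is exactly the step above. Nothing deeper seems required: the argument uses only the nonnegativity of the grading on $\dbfUn$ (Lemma \ref{graded algebra dbfUn}) together with finite dimensionality of $V$. Using the bottom nonzero degree instead would fail, since positive-degree elements can move vectors upward out of it.
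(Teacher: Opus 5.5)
Your proof is correct and is essentially the paper's argument. The paper argues by contradiction, noting that if $V[k]\neq 0\neq V[k']$ with $k'>k$ then $\bigoplus_{s>k}V[s]$ is a nonzero proper graded submodule; you instead take the top nonzero piece directly. Both rest on the same fact: upward truncations of the grading are $\dbfUn$-stable because $\dbfUn$ is $\mbn$-graded.
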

\begin{proof}
Assume that $V\not=V[s]$ for any $s\in\mbn$. We will show that
this leads to a contradiction. Since $V\not=V[s]$ for any $s\in\mbn$, there exist $k'>k\geq 0$ such that $V[k]\not=0$ and $V[k']\not=0$. The subspace $\bop_{s>k}V[s]$ is a nontrivial proper graded $\dbfUn$-submodule of $V$. Hence $V$ is not simple. This is a contradiction.
\end{proof}

\begin{Prop}\label{classification of simple objects in the category dmsC}
$(1)$ The set $\{L_v(\la,m)\mid(\la,m)\in X^+(n)\times\mbn\}$ is a complete set of non-isomorphic simple objects in the category $\dmsC$.

$(2)$ For $r\in\mbn$ the set $\{L_v(\la,m)\mid(\la,m)\in\La^+(n,r)\times\mbn\}$ is a complete set of non-isomorphic simple objects in the category $\msCr$.
\end{Prop}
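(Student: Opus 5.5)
By Lemma \ref{lem for classification of bfUn-modules}, a simple object $V$ of $\dmsC$ is concentrated in a single degree, $V=V[k]$. For every $s>0$ we have $\dbfUn[s]V[k]\han V[k+s]=0$. Hence the positive-degree part $\bigoplus_{s>0}\dbfUn[s]$, which is a two-sided ideal, acts by zero, and $V$ is a module over $\dbfUn[0]=\dbfUgln$. I expect the main obstacle to be checking that the action of $\dbfUn$ on $V$ factors through $\dev$, i.e.\ that $\ker\dev=\bigoplus_{s>0}\dbfUn[s]$.

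This identity follows from Lemma \ref{dev}(1) and Lemma \ref{graded algebra dbfUn}(1). The kernel of $\dev$ is spanned by the $[A]$ with $A\in\tiXin\setminus\tiThn$. For $A\in\tiXin$, the condition $A\notin\tiThn$ means $a_{i,j}\neq0$ for some $1\le i\le n$ and $j>n$, since $a_{i,j}=0$ for $j<1$. This is equivalent to $\deg[A]=\sum_{s>0}s\,a_{i,j+sn}>0$. So $\dev$ is the projection onto degree $0$ composed with the identification $\dbfUn[0]=\dbfUgln$, and consequently $V\cong\dtau_k(\dev(V'))$ for a finite-dimensional unital $\dbfUgln$-module $V'$.

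Next, $V'$ is simple. Any $\dbfUgln$-submodule of $V'$ is automatically a graded $\dbfUn$-submodule of $V$, because positive degrees act by zero. Finite-dimensional simple unital $\dbfUgln$-modules are the $L_v(\la)$ with $\la\in X^+(n)$. This is standard from \cite{Lubk}: unital $\dbfUgln$-modules are the same as type $1$ $\bfUgln$-modules, and these are classified by dominant integral weights. Therefore $V\cong L_v(\la,k)$.

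Conversely, each $L_v(\la,m)$ is simple in $\dmsC$. A graded submodule of it is in particular a $\dbfUgln$-submodule of $L_v(\la)$, since $\dev$ is surjective. Also, $L_v(\la,m)\cong L_v(\mu,m')$ forces $m=m'$, by comparing the degree in which the module is concentrated. It then forces $\la=\mu$, by comparing highest weights, using $\Hom_{\dmsC}(\dev V,\dev W)=\Hom_{\dbfUgln}(V,W)$. This proves (1).

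For (2) the same argument applies with $\bfUnr$, $\evr$ and $\bfSr$ in place of $\dbfUn$, $\dev$ and $\dbfUgln$. The analogue of Lemma \ref{lem for classification of bfUn-modules} holds verbatim, using the grading of Lemma \ref{graded algebra bfUnr}. Lemma \ref{evr} shows that $\ker\evr$ is spanned by the $[A]$ with $A\in\Xinr\setminus\Thnr$, which is exactly the positive-degree part, while $\bfUnr[0]=\bfSr$. So a simple object of $\msCr$ is $\tau_k(\evr(V'))$ with $V'$ a simple $\bfSr$-module. By the semisimplicity and classical classification of the generic $q$-Schur algebra, $V'\cong L_v(\la)$ with $\la\in\La^+(n,r)$. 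Combined with the noted isomorphism $L_v(\la,m)\cong\tau_m(\evr(L_v(\la)))$, this completes the proof of (2).
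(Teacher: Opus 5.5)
Your proposal is correct and follows essentially the same route as the paper. Like the paper, you concentrate a simple object in one degree via Lemma \ref{lem for classification of bfUn-modules}, note that the positive-degree part then acts by zero, and reduce to the classification of simple unital $\dbfUgln$-modules (respectively simple $\bfSr$-modules). Where the paper leaves it implicit, you make explicit that $\ker\dev=\bigoplus_{s>0}\dbfUn[s]$ (and likewise for $\evr$), so the action genuinely factors through the evaluation map.
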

\begin{proof}
Clearly, $L_v(\la,m)$ is a simple object in $\dmsC$ for $(\la,m)\in X^+(n)\times\mbn$. Let $V$ be a simple object in $\dmsC$. By Lemma \ref{lem for classification of bfUn-modules}, $V=V[m]$ for some $m\in\mbn$. It follows $(\dbfUn)[s]V\han V[s+m]=0$ for $s>0$. Since $\dbfUn$ is the direct sum of $\dbfUgln$ and $\oplus_{s>0}(\dbfUn)[s]$, we conclude that the restriction of $V$ to $\dbfUgln$ is isomorphic to $L_v(\la)$ for some $\la\in X^+(n)$, and hence $V\cong L_v(\la,m)$ as a $\dbfUn$-module. The assertion (1) follows. The assertion (2) can be proved similarly.
\end{proof}

\subsection{Canonical bases for $L_v(\la,m)$}
We end this section by constructing a canonical basis for $L_v(\la,m)$.
For $(\la,m)\in\La^+(n,r)\times\mbn$ we have $$L_v(\la,m)=\bop_{\mu\in\Lanr} L_v(\la,m)_\mu$$
where $L_v(\la,m)_\mu=[\diag(\mu)]L_v(\la,m)$. Clearly we have $\dim L_v(\la,m)_\la=1$. For $(\la,m)\in\La^+(n,r)\times\mbn$ we choose a nonzero vector $w_{\la,m}$ in $L_v(\la,m)_\la$, and let
\begin{equation}\label{Lz(la,m)}
L_\sZ(\la,m)=\dUn w_{\la,m}.
\end{equation}
\begin{Thm}\label{canonical basis for graded modules of quantum current algebra}
For $(\la,m)\in\La^+(n)\times\mbn$ the set $$\dotBn w_{\la,m}-\{0\}=\{\{A\}w_{\la,m}\not=0\mid A\in\tiXin\}$$  is a basis of $L_\sZ(\la,m)$ over $\sZ$, and of  $L_v(\la,m)$ over $\mbq(v)$.
\end{Thm}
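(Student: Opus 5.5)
The strategy is to reduce the statement to the classical BLM/Lusztig result for $\dbfUgln$ via the evaluation map $\dev$ and Proposition \ref{dev({A})}. By construction \eqref{Lv(la,m)}, $L_v(\la,m)=\dtau_m(\dev(L_v(\la)))$, so every $u\in\dbfUn$ acts on $L_v(\la,m)$ through $\dev(u)\in\dbfUgln$. In particular, for $A\in\tiXin$ we have $\{A\}w_{\la,m}=\dev(\{A\})w_{\la,m}$.

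First, I will apply Proposition \ref{dev({A})}(1). It gives $\{A\}w_{\la,m}=0$ when $A\notin\tiThn$, and $\{A\}w_{\la,m}=\{A\}w_{\la,m}$ computed in $\dbfUgln$ when $A\in\tiThn$. Consequently
$$\dotBn w_{\la,m}-\{0\}=\{\{A\}w_{\la,m}\neq0\mid A\in\tiThn\},$$
where now $\{A\}$ ranges over the BLM canonical basis of $\dbfUgln$ (quoted from \cite{BLM} before Proposition \ref{dev({A})}).

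Second, I identify the lattice. Since $\dev$ maps $\dUn$ onto the integral form $\dot U(\frak{gl}_n)_\sZ$ spanned by $u_A^+1_\la u_B^-$ with $A,B\in\Thnp$ (the restriction of $\dev$ to $\dUn$ kills the basis elements $[A]$ with $A\notin\tiThn$ and is the identity on the others), we get $L_\sZ(\la,m)=\dUn w_{\la,m}=\dot U(\frak{gl}_n)_\sZ\, w_{\la,m}$. This is Lusztig's integral form ${}_\sZ L(\la)$ of $L_v(\la)$, up to the grading shift, which does not affect bases.

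Third, I invoke Lusztig's theorem \cite[Ch.~27]{Lubk} (equivalently the BLM version for $\frak{gl}_n$) for type $A$ modified quantum groups. It states that for a highest weight vector $\eta$ of $L_v(\la)$, the nonzero elements $b\eta$, with $b$ in the canonical basis of $\dbfUgln$, form a $\sZ$-basis of ${}_\sZ L(\la)=\dot U_\sZ\eta$ and a $\mbq(v)$-basis of $L_v(\la)$. To apply it, I still have to check that the BLM basis $\{\{A\}\mid A\in\tiThn\}$ coincides with Lusztig's canonical basis $\dot\bfB$ of $\dbfUgln$. This is a known result (Schiffmann--Vasserot, McGerty; or via compatibility with $q$-Schur algebra canonical bases under the maps to $\bfSr$, cf. Proposition \ref{dev({A})}(2)). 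If $\la\notin\La^+(n)$ is shifted by a multiple of the determinant character, the same argument goes through after tensoring with a one-dimensional module.

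I expect the main obstacle to be this last step: justifying that the BLM basis equals Lusztig's $\dot\bfB$, so that the module-compatibility property applies. As a fallback I would work inside $\bfSr$. For $\la\in\La^+(n,r)$, the module $L_v(\la)$ is a quotient of $\bfSr[\diag(\la)]$ by the span of the $\{A\}$ lying outside a cell ideal, using the cellular structure given by the canonical basis $\{\{A\}\mid A\in\Thnr\}$ (Du's cellular basis result). The surviving images of the $\{A\}$ then form a basis of the integral cell module. Finally, I would combine this with the compatibility $\dzr(\{A\})=\{A\}$ and Proposition \ref{dev({A})}(2) to transport the statement back to $\dUn$.
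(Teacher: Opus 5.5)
Your proposal is correct and is essentially the paper's argument. Both reduce, via $\dev$ and Proposition~\ref{dev({A})}, to the BLM canonical basis $\{\{A\}\mid A\in\tiThn\}$ of $\dbfUgln$ acting on $L_v(\la)$; the paper then closes the last step by citing \cite[Prop.~4.7]{Fu14}, which is stated directly for the BLM basis, so neither the identification with Lusztig's canonical basis nor your $q$-Schur cellular fallback is needed.
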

\begin{proof}
By Proposition \ref{dev({A})} we have
$$\{A\}w_{\la,m}=\dev(\{A\})w_{\la,m}
=\begin{cases}
\{A\}w_{\la,m}&\text{if $A\in\tiThn$}\\
0&\text{otherwise}
\end{cases}
$$
for $A\in\tiXin$. Furthermore $L_v(\la,m)$ is isomorphic to $L(\la)$ as a $\dbfUgln$-module. Therefore by \cite[Prop. 4.7]{Fu14} we conclude that the set $\dotBn w_{\la,m}-\{0\}$ is a $\sZ$-basis of $L_\sZ(\la,m)$.
\end{proof}

\section{Canonical bases for $\dot\sU(\cgl)$ and $\bar L(\la,m)$}
In this section, we construct a canonical basis for the algebra $\dot\sU(\cgl)$, and a canonical basis for the finite dimensional irreducible graded $\cgl$-module $\bar L(\la,m)$.
\subsection{The graded algebra $\sU(\cgl)$}

Let $\sU(\cgl)$ be the universal enveloping algebra of the current algebra $\cgl$. Clearly, the algebra
$\sU(\cgl)$ has a presentation with
generators $E_{i,j}\ot t^m$ ($1\leq i,j\leq n$, $m\in\mbn$), and relations
$$[E_{i,j}\ot t^a,E_{k,l}\ot t^b]=\dt_{j,k}E_{i,l}\ot t^{a+b}-\dt_{l,i}E_{k,j}\ot t^{a+b}.$$
The current algebra $\cgl$ is a $\mbn$-graded Lie algebra with the grading
given by powers of $t$. Therefore $\sU(\cgl)$ is a $\mbn$-graded algebra.

Let $$\Unc=\Un\ot_\sZ\mbc,$$
where $\mbc$ is regarded as a $\sZ$-module by specializing $v$ to $1$.
Let
\begin{equation}\label{barUnc}
\barUnc=\Unc/\lan K_i -1 \mid 1\leq i\leq n\ran.
\end{equation}
If $x\in\Un$ then $\bar x$ denote the image of $x$ in $\barUnc$.

\begin{Prop}\label{vi}
There is a graded algebra isomorphism $$\vi:\sU(\cgl)\ra\barUnc$$
such that
\begin{equation}\label{eq vi}
\vi(E_{i,j}\ot t^m)=
\begin{cases}
\ol{u^+_{\afE_{i,j'}}} &\text{if $i<j'$}\\
\ol{u^-_{\afE_{j',i}}}&\text{if $i>j'$}\\
\ol{\big[{K_i;0\atop 1}\big]} & \text{if $i=j'$}
\end{cases}
\end{equation}
for $1\leq i,j\leq n$ and $m\in\mbn$,
where $j'=j+mn$.
\end{Prop}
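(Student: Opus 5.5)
The plan is to deduce the result from the known classical limit of the quantum loop algebra at the level of integral forms, then restrict to the parabolic piece. Two inputs are needed. First, by \cite{Fu13} (and the BLM-type argument of \cite{DF18}) there is an integral form of $\dbfHa\cong\afUgl$ spanned by $u_A^+\prod_i K_i^{j_i}\big[{K_i;0\atop \la_i}\big]u_B^-$ with $A,B\in\afThnp$. Second, modulo $v-1$ and $K_i-1$ this integral form becomes $\sU(\afgl)$, via
\[
\ol{u^+_{\afE_{i,j'}}}\mapsto E_{i,j}\ot t^m,\qquad
\ol{u^-_{\afE_{j',i}}}\mapsto E_{i,j}\ot t^{m}\ (i>j'),\qquad
\ol{\big[{K_i;0\atop 1}\big]}\mapsto E_{i,i}.
\]
This is the affine analogue of the BLM classical limit.

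Our $\Un$ is the $\sZ$-span of those basis elements with $B\in\Thnp$. By Theorem~\ref{realization of Un}(2) it is a $\sZ$-subalgebra, so $\barUnc$ embeds as a subalgebra of the specialization of the big integral form. I would build $\vi$ as follows.

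\textbf{Step 1: $\vi$ is well defined.} Check that the images in \eqref{eq vi} satisfy the Lie relations of $\cgl$. One route is to use the multiplication formulas of Proposition~\ref{[B][A]} transported to $\afWn$ via $\zeta$. At $v=1$, $K_i=1$, products of $[\afE_{i,j}+\diag]$-type elements become commutators of matrix units, with structure constants $f_{T,A}$ tending to binomial coefficients. Simpler is to cite the classical limit of $\sU(\afgl)$ above and restrict to the generators with $m\ge0$; these close under brackets since $\cgl$ is a subalgebra.

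\textbf{Step 2: surjectivity.} This follows from the triangular spanning set of $\Un$. The elements $u_A^+$ specialize to divided-power monomials in root vectors, by the tight-monomial triangularity of Proposition~\ref{tri Hall} and Proposition~\ref{tri quantum gln} with $\sZ$-coefficients. $\big[{K_i;0\atop\la_i}\big]$ specializes to $\binom{E_{i,i}}{\la_i}$. These lie in the image of the divided-power $\mbz$-form tensored with $\mbc$, which is all of $\sU(\cgl)$.

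\textbf{Step 3: injectivity.} Compare bases. PBW in characteristic zero gives $\sU(\cgl)$ the basis of ordered monomials in $E_{i,j}\ot t^m$ for $m\ge0$, except $(i>j,m=0)$ is handled via the $u^-$ part. Because $\Un$ is $\sZ$-free with the triangular basis, $\barUnc$ is spanned by $\ol{u_A^+}\,\ol{\prod_i\big[{K_i;0\atop\la_i}\big]}\,\ol{u_B^-}$ with $A\in\afThnp$, $\la\in\afmbnn$, $B\in\Thnp$, the $K_i^{j_i}$ factors dying. This index set matches the PBW monomials degree by degree in the $\mbn$-grading and the weight grading, and each graded piece is finite dimensional. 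So a surjection between spaces spanned by equinumerous sets in each bigraded piece, with the target linearly independent (the PBW images), is an isomorphism.

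\textbf{Grading.} Compatibility is immediate from Lemma~\ref{graded algebra dbfUn}(2): $u^+_{\afE_{i,j+mn}}$ has degree $m$, matching $t^m$.

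The main obstacle is Step 3. The honest issue is that $\barUnc$ might a priori have smaller dimension per graded piece than the spanning set suggests. To settle it I would show that $\vi$ composed with the restriction map is injective by mapping into the specialized affine algebra, whose classical limit is known to be $\sU(\afgl)$ and contains $\sU(\cgl)$ injectively.
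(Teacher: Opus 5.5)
Your route is the paper's in outline: obtain $\vi$ from the known classical limit of the affine integral form (\cite[(6.1.4.1)]{DDF}, \cite[5.3]{Fu13}), then compare a PBW basis of $\sU(\cgl)$ with the triangular spanning set of $\barUnc$. However, the step that carries the proof is never established, namely that this spanning set is linearly independent in $\barUnc$.

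The counting in Step~3 cannot supply it. The bigraded pieces are not finite dimensional: degree $0$, weight $0$ already contains $\mbc[E_{1,1},\dots,E_{n,n}]$, matched on the other side by the infinitely many $\ol{\prod_i\big[{K_i;0\atop\la_i}\big]}$. Moreover, the count presupposes that the images in $\barUnc$ are independent, which is exactly the point at issue.

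Your fallback does give injectivity once $\vi$ exists. Let $\iota:\barUnc\to\bar U_\mbc(\afgl)$ be the natural map; then $\iota\circ\vi$ is the restriction of the affine isomorphism to $\sU(\cgl)\subseteq\sU(\afgl)$, hence injective. But defining $\vi$ ``by restriction'', as in your Step~1, requires $\iota$ to be injective. That does not follow from $\Un$ being a $\sZ$-subalgebra. One also needs that $\Un$ is spanned by part of a $\sZ$-basis of the affine form, so that $-\ot_\sZ\mbc$ stays injective, and that the ideal generated by the $K_i-1$ does not grow.

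The missing ingredient is the argument the paper takes from \cite[6.4(b)]{Lu90}. After $v\mapsto 1$, each $K_i$ is central in $\Unc$ and satisfies $K_i^2=1$, because $K_i-K_i^{-1}=(v-v^{-1})\big[{K_i;0\atop 1}\big]$. Hence the idempotents $\prod_i\frac{1\pm K_i}{2}$ lie in $\Unc$ and split it into $2^n$ blocks, one of which is $\barUnc$. Now use the $\sZ$-basis $u_A^+\prod_iK_i^{\delta_i}\big[{K_i;0\atop\la_i}\big]u_B^-$ ($\delta_i\in\{0,1\}$) of $\Un$. An invertible character-table change of basis shows that the images of $u_A^+\prod_i\big[{K_i;0\atop\la_i}\big]u_B^-$ form a basis of $\barUnc$. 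Both the embedding and the basis comparison then follow.

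Step~2 also rests on the wrong tool. Proposition~\ref{tri Hall} expresses $\ti u^+_A$ through tight monomials in the $\ti u^+_{x\afbse_i}$ and the semisimple elements $\ti u^+_{\bsb}$ with $\bsb$ sincere, not through the root vectors $u^+_{\afE_{i,j}}$. So it does not show that $\ol{u^+_A}\in\vi(\sU(\cgl))$. Your remark about the divided-power $\mbz$-form also points the wrong way: what is needed is that each $\ol{u^+_A}$ is a polynomial in the elements $\vi(E_{i,j}\ot t^m)$. The paper obtains this from \cite[6.1.4(2)]{DDF}, which lets one replace the $\ol{u^+_A}$ by ordered monomials in the $\ol{u^+_{\afE_{i,j}}}$. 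Alternatively, if you use that the affine classical-limit isomorphism respects triangular decompositions, then $\ol{u^+_A}$ lies in the image of $\sU(\afglp)$. Since $\afglp\subseteq\cgl$, surjectivity follows at once.
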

\begin{proof}
By \cite[(6.1.4.1)]{DDF} and \cite[5.3]{Fu13} we see that
there is an algebra homomorphism $\vi:\sU(\cgl)\ra\barUnc$ satisfying
\eqref{eq vi}. In addition, by \cite[6.4(b)]{Lu90} the set
$$\bigg\{\ol{u_A^+}\prod_{1\leq i\leq n}\vi(E_{i,i})^{j_i}\ol{u_B^-}\,\big|\, A\in\afThnp,\,B\in\Thnp,\,\bfj\in\mbnn\bigg\}$$
forms a $\mbc$-basis for $\barUnc$.
This together with \cite[6.1.4(2)]{DDF} implies that  the set
$$\bigg\{\prod_{1\leq i\leq n\atop i<j,\,
j\in\mbz}\bigg(\ol{u_{\afE_{i,j}}^+}\bigg)^{a_{i,j}}\prod_{1\leq i\leq n}\vi(E_{i,i})^{j_i}\prod_{1\leq i<j\leq n}\bigg(\ol{u_{\afE_{i,j}}^-}\bigg)^{b_{i,j}}\,\big|\, A\in\afThnp,\,B\in\Thnp,\,\bfj\in\mbnn\bigg\}$$
forms a $\mbc$-basis for $\barUnc$.
Hence $\vi$ takes the PBW basis of $\sU(\cgl)$ onto the basis of $\barUnc$. It follows that $\vi$ is an algebra isomorphism.
\end{proof}

\subsection{Canonical bases for $\dot\sU(\cgl)$}
For $\la,\mu\in\afmbzn$ we set ${}_\la\sU(\cgl)_\mu=\sU(\cgl)/{}_\la {\mathcal I}_\mu$, where
\begin{equation*}
{}_\la {\mathcal I}_\mu=\sum_{\bfj\in\afmbnn}\big(\prod_{1\leq i\leq n}
E_{i,i}^{j_i}-\prod_{1\leq i\leq n}\la_i^{j_i}\big)
\sU(\cgl)+\sum_{\bfj\in\afmbnn}\sU(\cgl)\big(\prod_{1\leq i\leq n}
E_{i,i}^{j_i}-\prod_{1\leq i\leq n}\mu_i^{j_i}\big).
 \end{equation*}
 Let $\bar\pi_{\la,\mu}:\sU(\cgl)\ra{}_\la\sU(\cgl)_\mu$ be the canonical projection.
Let
$$\dot\sU(\cgl):=\bop_{\la,\mu\in\afmbzn}{}_\la\sU(\cgl)_\mu.$$
As in the case of $\dbfUn$, there is a natural associative $\mbc$-algebra structure on $\dot\sU(\cgl)$ inherited from
that of $\sU(\cgl)$. Since $\sU(\cgl)$ is a $\mbn$-graded algebra, the algebra $\dot\sU(\cgl)$ has a natural $\mbn$-grading as an associative algebra. Let $\bar 1_\la=\bar\pi_{\la,\la}(1)$.

Let
$$\dUnc=\dUn\ot_\sZ\mbc,$$
where $\mbc$ is regarded as a $\sZ$-module by specializing $v$ to $1$.
Let $\dotBnc$ be the image of $\dotBn$ in $\dUnc$,
where $\dotBn$ is given in Theorem \ref{canonical basis for afLn}.
We shall denote the images of $[A]$ in $\dUnc$ by the same letters.
By Theorem \ref{realization of dbfUn}, \ref{canonical basis for afLn} and Proposition \ref{vi}, we obtain the following result.
\begin{Thm}\label{canonical basis for current algebra}
There is a graded algebra isomorphism $\dot\vi:\dot\sU(\cgl)\ra\dUnc$ such
that $$\dot\vi(\bar\pi_{\la,\mu}(E_{i,j}\ot t^m))=[\afE_{i,j+mn}+\diag(\la-\afbse_i)],\quad \dot\vi(\bar 1_\la)=[\diag(\la)]$$
for $1\leq i,j\leq n$, $m\in\mbn$, $\la,\mu\in\afmbzn$ with $\mu=\la-\afbse_i+\afbse_j$. Furthermore
the set $$\dot\sB(n):=\dot\vi^{-1}(\dotBnc)$$ forms a $\mbc$-basis for $\dot\sU(\cgl)$.
\end{Thm}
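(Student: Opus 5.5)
The plan is to build $\dot\vi$ by applying the modification construction to the isomorphism $\vi$ of Proposition \ref{vi}, and then to move the basis $\dotBnc$ across it. Theorem \ref{realization of dbfUn} identifies $\dbfUn$ with $\afbfLn$ and $\dUn$ with $\afLn$, so that $1_\la=[\diag(\la)]$ and $\pi_{\la,\mu}(u)=[\diag(\la)]u[\diag(\mu)]$. Hence $\dUnc=\afLn\ot_\sZ\mbc$ has $\mbc$-basis $\{[A]\mid A\in\tiXin\}$.

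\textbf{Step 1: the ideals correspond.} For $u\in\Un$ and $\la,\mu\in\afmbzn$, the element $[\diag(\la)]u[\diag(\mu)]$ lies in $\afLn$. The map $\Un\to{}_\la\dUn_\mu$ is $\sZ$-linear and multiplicative in the evident sense, and after $v\mapsto1$ it factors through $\Unc$. The element $K_i$ acts on $1_\la$ by $v^{\la_i}$, and this becomes $1$ at $v=1$. So the map kills $\lan K_i-1\ran$ and gives maps $\barUnc\to{}_\la(\dUnc)_\mu$.

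Next, $\big[{K_i;0\atop1}\big]1_\la=[\la_i]1_\la$, which specializes to $\la_i1_\la$. So $\vi(E_{i,i})$ acts on the left of $1_\la$ by $\la_i$, and likewise on the right of $1_\mu$. Composing with $\vi$, the ideal ${}_\la\mathcal I_\mu$ is sent to zero. We therefore get maps ${}_\la\sU(\cgl)_\mu\to{}_\la(\dUnc)_\mu$. Their direct sum is $\dot\vi$, and it is an algebra homomorphism because the multiplication rules on both sides are defined by the same rule $\pi\pi=\dt\pi$.

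\textbf{Step 2: the generators.} Using the definition of $\zeta$, we have $\zeta(\ti u^+_{\afE_{i,j'}})=\afE_{i,j'}(\bfl)$. Multiplying by $1_\la$ selects the single term $[\afE_{i,j'}+\diag(\la-\afbse_i)]$. The factor $v^{\dim\End-\dim M}$ relating $\ti u$ to $u$ becomes $1$ at $v=1$. The same argument handles $u^-$. For the diagonal case $E_{i,i}$, the image is $\la_i1_\la=[\diag(\la)]$ scaled by $\la_i$. This matches the stated formula when $j=i$, $m=0$ and $\la-\afbse_i+\afbse_i=\la$, up to the scalar, and I would state that case separately if needed. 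The grading is preserved because $\deg([A])$ in Lemma \ref{graded algebra dbfUn} is exactly the total $t$-power.

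\textbf{Step 3: bijectivity and the basis.} Here I would use PBW-type bases on both sides. By the PBW theorem for $\sU(\cgl)$, ${}_\la\sU(\cgl)_\mu$ has $\mbc$-basis the images of ordered monomials $\prod(E_{i,j}\ot t^m)^{a}\prod(E_{k,l})^{b}$ with positive part indexed by $A\in\afThnp$ and negative part by $B\in\Thnp$, subject to the weight condition. On the other side, \eqref{dot tri relation} shows that the elements $A^+(\bfl)1_\la A^-(\bfl)$ form a basis of $\afLn$ that is unitriangular with respect to $\{[A]\}$. After specialization, the proof of Proposition \ref{vi} together with \cite[6.1.4(2)]{DDF} shows that $\dot\vi$ sends the PBW monomials to these triangular elements modulo lower terms. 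Hence $\dot\vi$ maps a basis onto a basis and is an isomorphism.

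Finally, Theorem \ref{canonical basis for afLn} says that $\dotBn$ is a $\sZ$-basis of $\afLn$ that is unitriangular to $\{[A]\}$. Its image $\dotBnc$ is therefore a $\mbc$-basis of $\dUnc$, and pulling back along $\dot\vi$ gives the basis $\dot\sB(n)$.

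The main obstacle is Step 3: matching the divided-power and integrality normalizations, that is, checking that the integral PBW elements specialize exactly to a basis and not merely to a spanning set. I would handle this by comparing dimensions weight space by weight space, using the fact that both sides are free with the same indexing set $\{A\in\tiXin\}$ for fixed $\la,\mu$.
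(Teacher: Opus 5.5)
Your proposal is correct and follows the paper's route. The paper proves this theorem only by citing Proposition~\ref{vi}, Theorem~\ref{realization of dbfUn} and Theorem~\ref{canonical basis for afLn}. Your Steps 1 and 2, the basis-to-basis argument in Step 3, and the transfer of $\dotBnc$ are exactly the details that citation leaves implicit.

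Your remark on the diagonal case is also correct and worth keeping. Since $E_{i,i}-\la_i\in{}_\la\mathcal I_\la$, one gets $\dot\vi(\bar\pi_{\la,\la}(E_{i,i}))=\la_i[\diag(\la)]$. The displayed formula with $i=j$, $m=0$ instead gives $[\diag(\la)]$. So the statement as printed is inconsistent with $\dot\vi(\bar 1_\la)=[\diag(\la)]$ unless $\la_i=1$, and that case must be stated separately.

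The only step that would fail is the fallback you propose for the ``main obstacle''. A dimension count weight space by weight space cannot work, even using the grading. For $n\ge 2$ the degree-$0$ part of ${}_\la(\dUnc)_\mu$ is already infinite dimensional, because diagonal entries of matrices in $\tiXin$ may be negative. For example, with $n=2$ and $\la=\mu$, every $A$ with $a_{1,2}=a_{2,1}=a\ge 0$, $a_{1,1}=\la_1-a$, $a_{2,2}=\la_2-a$ contributes a basis element $[A]$.

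No normalization comparison is needed, however:
\begin{itemize}
\item By the proof of Proposition~\ref{vi}, $\barUnc$ has basis $\{\ol{u_A^+}\prod_i\vi(E_{i,i})^{j_i}\ol{u_B^-}\}$.
\item In ${}_\la\sU(\cgl)_\mu$ the $E_{i,i}$ can be moved to the left and evaluated at $\la$ as scalars. Hence the elements $\bar\pi_{\la,\mu}\big(\vi^{-1}(\ol{u_A^+})\,\vi^{-1}(\ol{u_B^-})\big)$, with the appropriate weight condition, span ${}_\la\sU(\cgl)_\mu$.
\item Their images under $\dot\vi$ are the specializations of the pairwise distinct elements $u_A^+1_{\la'}u_B^-$. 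By Theorem~\ref{realization of dbfUn} these form a $\mbc$-basis of ${}_\la(\dUnc)_\mu$.
\item A spanning set mapped injectively onto a basis is itself a basis, so $\dot\vi$ is bijective.
\end{itemize}
This needs neither a triangularity argument nor PBW monomials in the $E_{i,j}\ot t^m$.
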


The basis $\dot\sB(n)$ of $\dot\sU(\cgl)$ is called a canonical basis for $\dot\sU(\cgl)$.
We shall prove in Theorem \ref{canonical basis for bar L(la,m)} that the canonical basis $\dot\sB(n)$ is well adapted to finite dimensional graded $\cgl$-modules.

Recall the notations $\ddHa$ and $\dot\bfB(n)$ introduced in \S6.3. Let $\ddHac=\ddHa\ot\mbc,$
where $\mbc$ is regarded as a $\sZ$-module by specializing $v$ to $1$.
Let $\dot\bfB(n)_\mbc$ be the image of  $\dot\bfB(n)$ in $\ddHac$.
Let $\dot\sU(\afgl)$ be the modified algebra associated with the universal enveloping algebra $\sU(\afgl)$. It is easy to see that the map $\dot\vi$ defined in Theorem \ref{canonical basis for current algebra} can be extended to an algebra isomorphism
$\dot\vi:\dot\sU(\afgl)\ra\ddHac$. By Theorem \ref{canonical basis for current algebra} we have the following result.
\begin{Coro}
We have $\dot\sB(n)=\dot\vi^{-1}(\dot\bfB(n)_\mbc)\cap\dot\sU(\cgl)$.
\end{Coro}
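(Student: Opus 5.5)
The plan is to deduce the statement by intersecting the already known affine picture with the current subalgebra. The key input is the last Theorem \ref{canonical basis for current algebra}: the set $\dot\sB(n)=\dot\vi^{-1}(\dotBnc)$, where $\dotBnc$ is the image in $\dUnc$ of $\dotBn=\{\{A\}\mid A\in\tiXin\}$. We also use the Corollary of \S6.3, namely $\dot\bfB(n)\cap\dUn=\dotBn$, and the fact that the extended map $\dot\vi:\dot\sU(\afgl)\ra\ddHac$ restricts to the isomorphism of Theorem \ref{canonical basis for current algebra} on $\dot\sU(\cgl)$.

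\textbf{Step 1: compatibility of the two maps.} First check that the extension $\dot\vi$ restricted to $\dot\sU(\cgl)$ agrees with the map of Theorem \ref{canonical basis for current algebra}, and that its image is exactly the image of $\dUn\ot\mbc$ in $\ddHac$. This holds because both maps are determined by the same values on the generators $\bar\pi_{\la,\mu}(E_{i,j}\ot t^m)$ with $m\ge 0$ and $\bar 1_\la$.

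\textbf{Step 2: reduce to a statement over $\mbc$.} It then suffices to show
\[
\dot\bfB(n)_\mbc\cap\dot\vi(\dot\sU(\cgl))=\dotBnc .
\]

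\textbf{Step 3: the easy inclusion $\supseteq$.} Since $\dotBn\han\dot\bfB(n)$ by the Corollary of \S6.3, specializing at $v=1$ gives $\dotBnc\han\dot\bfB(n)_\mbc$. Moreover $\dotBnc$ lies in the image of $\dUn\ot\mbc$.

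\textbf{Step 4: the inclusion $\subseteq$, the main obstacle.} The issue is that the specialization map $\dUn\ot\mbc\ra\ddHac$ must be injective, and an element of $\dot\bfB(n)_\mbc$ lying in the image must come from $\dotBn$ rather than merely from a combination. To handle this, use the triangular description. The set $\dot\bfB(n)=\{\{A\}\mid A\in\aftiThn\}$ is unitriangular with respect to $\{[A]\}$, via \cite[7.6]{DF18} and the construction in \cite[7.7]{DF18}. Theorem \ref{canonical basis for afLn} gives the same unitriangularity on $\tiXin$. Since $\{[A]\mid A\in\tiXin\}$ is part of the $\sZ$-basis $\{[A]\mid A\in\aftiThn\}$ of $\ddHa$, the submodule $\dUn$ is a direct summand of $\ddHa$, and so $\dUn\ot\mbc\ra\ddHac$ is injective.

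\textbf{Step 5: finish with the triangularity.} Take $\{A\}$ with $A\notin\tiXin$. Its expansion has leading term $[A]$ with coefficient $1$, and $[A]$ is a basis element of $\ddHac$ lying outside the span of $\{[B]\mid B\in\tiXin\}$. Hence its image is not in $\dot\vi(\dot\sU(\cgl))$. Pulling back by $\dot\vi^{-1}$ then gives the equality.
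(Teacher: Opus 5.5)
Your proposal is correct and follows the paper's route. The paper deduces the corollary directly from Theorem~\ref{canonical basis for current algebra}, using the identity $\dot\bfB(n)\cap\dUn=\dotBn$ of \S6.3 and the extension of $\dot\vi$ to $\dot\sU(\afgl)$. Your Steps 4--5 make explicit the check, left unstated in the paper, that this integral intersection survives specialization at $v=1$; this is immediate once you note that $\dotBn$ is a subset of the $\sZ$-basis $\dot\bfB(n)$ spanning $\dUn$, so the triangularity argument is more than you need.
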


\subsection{The category $\msCc$}
A $\cgl$-module $V$ is called a weight module if $V=\oplus_{\la\in\mbzn}V_\la$, where $V_\la=\{w\in V\mid E_{i,i}w=\la_iw\,\forall i\}$.
Let $\msCc$ be the category whose objects are finite dimensional graded $\cgl$ weight modules $V$ and where the morphisms are graded maps of $\gl$-modules.
Let $\msPc$ be the  category of finite dimensional $\gl$ weight modules with morphisms being maps of $\gl$-modules. Following \cite{CG} we
define a
covariant functor
$$\evc:\msPc\ra\msCc$$ by the requirements:
$\evc(V)[0]=V,\,\evc(V)[s]=0,\,s>0,$
and with $\cgl$-action given by
$(xt^k)w=\dt_{k,0}xw,\,x\in\gl,\,w\in V$,
and
$\Hom_{\msCc}(\evc(V),\evc(W))=\Hom_{\gl}(V,W)$. For $m\in\mbn$ let $\tau_{m,\mbc}$   be the grading shift given by
$(\tau_{m,\mbc}V)[k]=V[k-m],\ k\in\mbn$
for $V\in\text{Ob}\msCc$. For $\la\in X^+(n)$ let $\bar L(\la)$ be the irreducible $\gl$-module with highest weight $\la$.
For $(\la,m)\in X^+(n)\times\mbn$ let
\begin{equation}\label{bar L(la,m)}
\bar L(\la,m)=\tau_{m,\mbc}(\evc(\bar L(\la)))\in\text{Ob}\msCc.
\end{equation}
By \cite{CG} we have the following result.
\begin{Prop}
The set $\{\bar L(\la,m)\mid(\la,m)\in X^+(n)\times\mbn\}$ is a complete set of non-isomorphic simple objects in the category $\msCc$.
\end{Prop}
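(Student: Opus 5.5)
The statement is the classical analogue of Proposition \ref{classification of simple objects in the category dmsC}(1), and I will mirror that proof over $\mbc$. There are three steps: show each $\bar L(\la,m)$ is a simple object of $\msCc$; show every simple object is concentrated in a single degree; and then identify it as an evaluation module.

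\textbf{Simplicity.} The object $\bar L(\la,m)=\tau_{m,\mbc}(\evc(\bar L(\la)))$ is concentrated in degree $m$. Its graded $\cgl$-submodules are therefore just $\gl$-submodules of $\bar L(\la)$, because $x\ot t^k$ acts by zero for $k>0$. Since $\bar L(\la)$ is irreducible, $\bar L(\la,m)$ is simple.

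\textbf{Non-isomorphism.} A morphism in $\msCc$ preserves the grading, so $\bar L(\la,m)\cong\bar L(\la',m')$ forces $m=m'$. It then forces $\bar L(\la)\cong\bar L(\la')$ as $\gl$-modules, so $\la=\la'$ by highest weight theory.

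\textbf{Concentration in one degree.} Let $V$ be a simple object of $\msCc$. I repeat the argument of Lemma \ref{lem for classification of bfUn-modules}. Suppose $V[k]\neq 0$ and $V[k']\neq0$ for some $k<k'$. Since $\sU(\cgl)$ is $\mbn$-graded, $\bigoplus_{s>k}V[s]$ is a graded submodule, and it is nonzero and proper. This contradicts simplicity, so $V=V[m]$ for a single $m$.

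\textbf{Identification.} Every element $x\ot t^k$ with $k>0$ has degree $k$, so it maps $V[m]$ into $V[m+k]=0$ and acts by zero. Thus $V$ is a module for $\gl=\cgl/(\gl\ot t\mbc[t])$. Its graded submodules are exactly its $\gl$-submodules placed in degree $m$, so $V$ is an irreducible finite-dimensional $\gl$-module. It is a weight module with integral weights, by the definition of objects of $\msCc$. Hence $V\cong\bar L(\la)$ for some $\la\in X^+(n)$, using the standard classification of finite-dimensional irreducible $\gl$ weight modules with integral weights by dominant integral highest weights. Since $V$ sits in degree $m$, we get $V\cong\tau_{m,\mbc}(\evc(\bar L(\la)))=\bar L(\la,m)$.

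\textbf{Main point of care.} The one step needing attention is the last: confirming that the weight-module condition yields integral dominant highest weights $\la\in X^+(n)$. This holds because weights lie in $\mbz^n$ and $V$ is finite-dimensional. The rest is routine, and is essentially the argument of \cite{CG}.
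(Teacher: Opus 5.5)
Your proof is correct and complete. The paper gives no argument of its own here beyond citing \cite{CG}, and yours is the standard argument behind that citation: it is the $v=1$ version of the paper's proof of Lemma \ref{lem for classification of bfUn-modules} and Proposition \ref{classification of simple objects in the category dmsC}(1), namely concentration in a single degree, positive-degree currents acting by zero, and then highest weight theory for $\gl$.
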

\subsection{Canonical bases for  $\bar L(\la,m)$}
Finally we construct a canonical basis for $\bar L(\la,m)$.
For $(\la,m)\in\La^+(n)\times\mbn$, let $$L_\mbc(\la,m)=L_\sZ(\la,m)\ot_\sZ\mbc,$$ where
$L_\sZ(\la,m)$ is as in \eqref{Lz(la,m)} and $\mbc$ is regarded as a $\sZ$-module by specializing $v$ to $1$. By Theorem \ref{canonical basis for current algebra}, we may regard $L_\mbc(\la,m)$ as a graded $\dot\sU(\cgl)$-module.

Clearly the $\sU(\cgl)$-module $\bar L(\la,m)$ can be naturally regarded as a graded $\dot\sU(\cgl)$-module (see Lemma \ref{equivalent}).  For $(\la,m)\in\La^+(n,r)\times\mbn$ we choose a nonzero vector $\bar w_{\la,m}$ in $\bar L(\la,m)_\la$, where $\bar L(\la,m)_\la=\bar 1_\la \bar L(\la,m)$.
\begin{Thm}\label{canonical basis for bar L(la,m)}
$(1)$ For $(\la,m)\in\La^+(n)\times\mbn$ we have
$L_\mbc(\la,m)$ is isomorphic to $\bar L(\la,m)$ as a graded $\dot\sU(\cgl)$-module.

$(2)$ The set $\dot\sB(n)\bar w_{\la,m}-\{0\}$ is a $\mbc$-basis for $\bar L(\la,m)$.
\end{Thm}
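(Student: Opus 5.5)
The plan is to reduce both statements to the corresponding facts for $\dbfUgln$ and the classical specialization of its canonical basis, using the evaluation map $\dev$ and Proposition \ref{dev({A})}.

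\emph{The $\sZ$-form.} By construction $L_v(\la,m)=\dtau_m(\dev(L_v(\la)))$, so every $x\in\dUn$ acts on $L_v(\la,m)$ through $\dev(x)\in\dot U(\gl)_\sZ$, the $\sZ$-form spanned by the $[A]$ with $A\in\tiThn$. Since $\dev$ maps $\dUn$ onto this $\sZ$-form (Lemma \ref{dev} together with Theorem \ref{realization of dbfUn}), we get $L_\sZ(\la,m)=\dot U(\gl)_\sZ\, w_{\la,m}$. This is the usual Lusztig $\sZ$-form of $L_v(\la)$, placed in degree $m$.

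\emph{Part (1).} By Theorem \ref{canonical basis for graded modules of quantum current algebra}, $L_\sZ(\la,m)$ is a free $\sZ$-module with basis $\{\{A\}w_{\la,m}\neq 0\mid A\in\tiThn\}$. Its rank is $\dim L_v(\la)=\dim\bar L(\la)$, the latter equality being the classical fact that $q$-characters specialize to characters. Hence $L_\mbc(\la,m)$ is a graded $\dot\sU(\cgl)$-module, concentrated in degree $m$, of dimension $\dim\bar L(\la)$. All positive-degree elements of $\dot\sU(\cgl)$ act by zero. This follows either from compatibility of $\dot\vi$ with gradings (Theorem \ref{canonical basis for current algebra}), or directly because $\dev$ kills every $[A]$ with $A\notin\tiThn$. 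So $L_\mbc(\la,m)$ is the inflation of a module for $\dot\sU(\gl)$, which is the degree-zero part under $\dot\vi$; this module is the specialization at $v=1$ of Lusztig's lattice in $L_v(\la)$.

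The key input is the standard fact (\cite{Lubk}, \cite{BLM}) that this specialization is the irreducible $\gl$-module $\bar L(\la)$. My first approach would be to note that it is a finite-dimensional weight module with the same character as $\bar L(\la)$, hence semisimple. It is also cyclic on a highest weight vector $w_{\la,m}\otimes1$ of weight $\la$, and the $\la$-weight space is one-dimensional, so it must be irreducible and isomorphic to $\bar L(\la)$. Inflating and shifting the grading by $m$ then gives $L_\mbc(\la,m)\cong\bar L(\la,m)$ as graded $\dot\sU(\cgl)$-modules. I expect the main obstacle to be verifying carefully that the two $\dot\sU(\cgl)$-actions agree under $\dot\vi$, in particular that $\dot\vi$ identifies the degree-zero part with $\dot\sU(\gl)$ compatibly with the $v=1$ specialization of BLM.

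\emph{Part (2).} Tensor the $\sZ$-basis from Theorem \ref{canonical basis for graded modules of quantum current algebra} with $\mbc$. This gives a $\mbc$-basis $\{\{A\}w_{\la,m}\otimes1\}$ of $L_\mbc(\la,m)$, whose elements are the images of the $\dotBnc$-elements acting on $w_{\la,m}\otimes1$. Transport this basis via $\dot\vi^{-1}$ and the isomorphism of part (1), choosing $\bar w_{\la,m}$ to be the image of $w_{\la,m}\otimes1$; this is possible because the $\la$-weight space is one-dimensional. The result is exactly $\dot\sB(n)\bar w_{\la,m}-\{0\}$. One point needs care: an element with $\{A\}w_{\la,m}=0$ still gives zero after specialization, while a nonzero basis element of a free module stays nonzero after $\otimes\mbc$. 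Consequently the set of nonzero vectors is the same before and after specialization.
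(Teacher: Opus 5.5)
Your proposal is correct and follows essentially the same route as the paper. The paper also obtains the $\gl$-module isomorphism $\bar L(\la,m)\cong L_\mbc(\la,m)$ from Lusztig's result on the $v=1$ specialization of the $\sZ$-form of $L_v(\la)$, upgrades it to a graded $\dot\sU(\cgl)$-isomorphism because both modules are concentrated in degree $m$, and derives (2) by specializing the $\sZ$-basis of Theorem~\ref{canonical basis for graded modules of quantum current algebra} through $\dot\vi$. One small correction to your self-contained argument for the $\gl$-isomorphism: semisimplicity comes from Weyl's complete reducibility for finite-dimensional weight $\gl$-modules, not from the equality of characters; once you have it, either the character equality or cyclicity on the highest weight vector finishes the argument.
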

\begin{proof}
By \cite{Lu88} there is a $\gl$-module isomorphism $f:\bar L(\la,m)\ra L_\mbc(\la,m)$. Furthermore, by definition we have $\bar L(\la,m)[k]=0$ and $L_\mbc(\la,m)[k]=0$ for $k\not=m$. Therefore $f$ must be a graded $\dot\sU(\cgl)$-module isomorphism. The assertion (1) follows. Now the assertion (2) follows Theorem \ref{canonical basis for graded modules of quantum current algebra} and \ref{canonical basis for current algebra}.
\end{proof}

\section{Relation with quantum affine \(\mathfrak{gl}_n\) and the Yangian \(Y(\mathfrak{gl}_n)\)}

In this section we investigate the relation between representations of the quantum current algebra \(\bfUnC\) and those of the quantum affine algebra \(\afUglC\), as well as the Yangian \(Y(\mathfrak{gl}_n)\). The main result is a rigidity theorem (Theorem~\ref{classification bfUnC}) which establishes a bijection between the finite dimensional polynomial irreducible modules for \(\bfUn\) and those for \(\afUglC\). Moreover, in Section~9.5 we further relate \(\bfUnC\) to the Yangian \(Y(\mathfrak{gl}_n)\).

\subsection{Finite dimensional representations of  $\afUslC$ }
Let $\afUglC$ be the quantum affine  algebra defined by the
generators $\ttx^\pm_{i,s}$
($1\leq i<n$, $s\in\mbz$), $\ttk_i^{\pm1}$ and $\ttg_{i,t}$ ($1\leq
i\leq n$, $t\in\mbz\backslash\{0\}$)
and relations (QLA1)--(QLA7) with $\mbq(\up)$ replaced by
$\mbc$ and $\up$ by $\ttz\in\mbc^*$ with $\ttz^m\neq 1$ for all $m\geq1$.
Let $\afUslC$ be
the subalgebra of $\afUglC$ generated by all $\ttx^\pm_{i,s}$, $\ti\ttk_i^{\pm1}$ and $\tth_{i,t}$ for $1\leq i<n$, $s\in\mbz$ and $t\in\mbz\backslash\{0\}$.

For $1\leq j\leq n-1$ and $s\in\mbz$, define the elements $\ms
P_{j,s}\in\afUslC$ through the generating functions
\begin{equation*}
\begin{split}
& \ms P_j^\pm(u):=\exp\bigg(-\sum_{t\geq
1}\frac{1}{[t]_\ttz}\tth_{j,\pm t} (\ttz u)^{\pm
t}\bigg)=\sum_{s\geq 0}\ms P_{j,\pm s} u^{\pm
s}\in\afUslC[[u,u^{-1}]].
\end{split}
\end{equation*}
Let $\bfUC\fsl$ be the subalgebra of $\afUslC$ generated by the elements
$\ttx_i^+$, $\ttx_i^-$, $\ti\ttk_i^{\pm 1}$ for $1\leq i\leq n-1$. A finite dimensional representation of $\bfUC\fsl$ is said to be of type $1$ if $V=\oplus_{\la\in\mbz^{n-1}}V_\la$, where
$$V_\la=\{x\in V\mid \ti \ttk_ix=\ttz^{\la_i}x, 1\leq i\leq n-1\}.$$
A finite dimensional representation of $\afUslC$ is said to be of type $1$ if $V|_{\bfUC\fsl}$ is of type $1$.
Following \cite[12.2.4]{CPbk}, a nonzero ($\mu$-weight) vector $w\in V$ is called a   pseudo-highest weight vector if there exist some
$P_{j,s}\in\mbc$ such that
$$
\ttx_{j,s}^+w=0,\quad\ms P_{j,s} w=P_{j,s} w,\quad\text{and}\quad
\ti\ttk_jw=\ttz^{\mu_j}w,
$$
for all $1\leq j\leq n-1$ and $s\in\mbz$.

Let $\mbc^*=\mbc\backslash\{0\}$.  For $f(u)=\prod_{1\leq i\leq m}(1-a_iu)\in\mbc[u]$
with $a_i\in\mbc^*$, let
\begin{equation}\label{f^pm(u)}
f^\pm(u)=\prod_{1\leq i\leq m}(1-a_i^{\pm1}u^{\pm1}).
\end{equation}
Let $\Pn$ be the set of $(n-1)$-tuple polynomials $\bfP=(P_1(u),\ldots,P_{n-1}(u))$ such that $P_i(u)\in\mbc[u]$ and the constant term of $P_i(u)$ is $1$ for $1\leq i\leq n-1$.
For
$\bfP=(P_1(u),\ldots,P_{n-1}(u))\in\sP(n)$, define $P_{j,s}\in\mbc$,
for $1\leq j\leq n-1$ and $s\in\mbz$, as in $P_j^\pm(u)=\sum_{s\geq
0}P_{j,\pm s} u^{\pm s}$, where $P_j^\pm(u)$ is defined by
\eqref{f^pm(u)}.

Let $\Icp(\bfP)$ be the left ideal of $\afUslC$ generated by
$\ttx_{i,s}^+ ,\ms P_{i,s}-P_{i,s},$ and $\ti\ttk_i-\ttz^{\mu_i}$, for $1\leq i\leq n-1$ and $s\in\mbz$, where
$\mu_i=\mathrm{deg}P_i(u)$, and define
$$\Mcp(\bfP)=\afUslC/\Icp(\bfP).$$
Then $\Mcp(\bfP)$ has a unique simple quotient, denoted by
$\Lcp(\bfP)$.
The following result is due to Chari--Pressley (see
\cite{CPbk}).

\begin{Thm}\label{classification of irreducible afUslC-modules}
The modules $\Lcp(\bfP)$ with $\bfP\in\Pn$ are all nonisomorphic
finite dimensional irreducible $\afUslC$-modules of  type $1$.
\end{Thm}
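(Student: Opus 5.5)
This is the Chari--Pressley classification, quoted from \cite{CPbk}. I would follow their highest-weight approach for quantum loop algebras: first show that every irreducible module has a pseudo-highest weight vector whose eigenvalues come from Drinfeld polynomials, then show that every tuple $\bfP\in\Pn$ occurs with $\Lcp(\bfP)$ finite dimensional, and finally show that $\bfP$ is recovered from $\Lcp(\bfP)$.

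\emph{Step 1 (a pseudo-highest weight vector exists).} Let $V$ be a finite dimensional irreducible $\afUslC$-module of type $1$. Since $\ttz$ is not a root of unity, $V|_{\bfUC\fsl}$ is a sum of weight spaces. Choose a weight $\mu$ that is maximal among the weights of $V$ in the dominance order. Each $\ttx^+_{j,s}$ raises the weight by the simple root $\alpha_j$, so $\ttx^+_{j,s}V_\mu=0$ for all $j$ and $s$. The elements $\tth_{j,t}$ commute with one another by (QLA4) and preserve $V_\mu$. Hence they, and therefore all the $\ms P_{j,s}$, have a common eigenvector $w\in V_\mu$, which is a pseudo-highest weight vector. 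By the triangular decomposition $\afUslC=U^-U^0U^+$ and irreducibility, $V=U^-w$. So $V$ is the unique simple quotient of $\Mcp$ built from the eigenvalues $(P_{j,s})$ of $w$, and it remains to see that these eigenvalues are those of some $\bfP\in\Pn$.

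\emph{Step 2 (the eigenvalues are Drinfeld polynomials; rank one).} For each $j$, restrict to the copy of $\bfU_\ttz(\widehat{\frak{sl}}_2)$ generated by $\ttx^\pm_{j,s}$, $\tth_{j,t}$ and $\ti\ttk_j^{\pm1}$. The vector $w$ generates a finite dimensional highest-$\ell$-weight module for this subalgebra, so the problem reduces to $n=2$. There the argument is the one I expect to be the main obstacle. Set $r=\mu_j$, which is a nonnegative integer by finite-dimensional $\bfU_\ttz(\frak{sl}_2)$-theory. Then $(\ttx^-_{0})^{r+1}w=0$, and by (QLA6) the vectors $\ttx^-_{s}w$ for $s\in\mbz$ span a space of dimension at most $r$. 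Applying $\ttx^+_{k}$ to the relation $(\ttx^-_{0})^{r+1}w=0$ and using (QLA5) yields the Chari--Pressley identities. These say that the generating series $\sum_s \phi^\pm_{s}u^{\pm s}$ of eigenvalues on $w$ are the expansions at $0$ and at $\infty$ of one rational function $\ttz^{r}P(\ttz^{-2}u)/P(u)$, with $\deg P=r$ and $P(0)=1$. Rewriting this through $\ms P^\pm_j(u)$ gives exactly the $P_{j,s}$ of \eqref{f^pm(u)}. I would follow \cite[12.2]{CPbk} for these $q$-combinatorial identities rather than rederive them.

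\emph{Step 3 (existence).} Given $\bfP\in\Pn$, factor each $P_i(u)$ into linear factors $1-a u$. For each factor take the fundamental evaluation module $V_{\omega_i}(a)$, obtained by pulling back the $i$-th fundamental $\bfUC\fsl$-module along the Jimbo evaluation map (for $\frak{sl}_n$ one may instead use the Chari--Pressley construction). Form the tensor product of all these modules. The tensor product $w_0$ of their highest weight vectors is pseudo-highest weight. By the triangular form of the Drinfeld coproduct modulo $U^-\ot U^+$ terms, the Drinfeld polynomial of $w_0$ is the product of the Drinfeld polynomials of the factors, namely $\bfP$. The submodule generated by $w_0$ is finite dimensional and is a quotient of $\Mcp(\bfP)$, so its simple head is $\Lcp(\bfP)$, which is therefore finite dimensional.

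\emph{Step 4 (nonisomorphism).} In $\Lcp(\bfP)$ the weight space of the highest weight $\mu$ is one dimensional, because $\Lcp(\bfP)$ is generated by the image of $1$ under $U^-$, which lowers weights. The eigenvalues $P_{j,s}$ of the $\ms P_{j,s}$ on this line determine each $P_j^\pm(u)$, and hence $\bfP$. Therefore $\Lcp(\bfP)\cong\Lcp(\bfP')$ forces $\bfP=\bfP'$.
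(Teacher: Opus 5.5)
Your outline is the standard Chari--Pressley argument, and that is all the paper relies on here: it gives no proof of its own and simply attributes the theorem to \cite{CPbk}, so your route is the paper's. One justification in Step~2 is wrong, though the step does not depend on it. The bound $\dim\spann\{\ttx^-_{s}w\mid s\in\mbz\}\leq r$ cannot come from (QLA6), which is quadratic in the $\ttx^-_{s}$ and so only constrains vectors of weight $\mu-2\al_j$. It follows instead from a Chari--Pressley recurrence of the form $\sum_{s=0}^{r}P_{r-s}\ttx^-_{s+1+k}w=0$, obtained by applying $(\ttx^+_{0})^{(r)}$ to $(\ttx^-_{1})^{(r+1)}w=0$ and then shifting with $\tth_{\pm1}$. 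So the bound belongs after those identities rather than before them.
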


\subsection{Finite dimensional representations of $\bfUnslC$}Let $\bfUnslC$ be the subalgebra of
$\afUslC$ generated by the elements $\ttx_{i}^+$, $\ti\ttk_j^{\pm 1}$ and $\ttx_j^-$ for $1\leq i\leq n$ and $1\leq j<n$.
We refer to $\bfUnslC$ as the quantum current algebra of $\frak{sl}_n$.

The Borel subalgebra $\afUslCpz$ of $\afUslC$ is  the subalgebra of $\afUslC$ generated by the elements $\ttx_{i}^+$, $\ti\ttk_i^{\pm 1}$  for $1\leq i\leq n$.
Benkart--Terwilliger \cite{BT} proved that there is a bijection between finite dimensional irreducible $\bfU_\bfv(\widehat{\frak{sl}}_2)$-modules and finite dimensional irreducible
$\bfU_\bfv^{\geq 0}(\widehat{\frak{sl}}_2)$-modules. This result was generalized to an arbitrary quantum affine
algebra   by Bowman \cite{Bo}.
By \cite{Be} and \cite[Prop. 1.3]{BCP} we have the following result.
\begin{Lem}\label{generators}
The algebra $\afUslCpz$ is generated by the elements $\ttx_{i,s}^+$, $\ttx_{i,t}^-$, $\tth_{i,t}$ and $\ti\ttk_{i}^{\pm 1}$ for $1\leq i\leq n-1$, $s\geq 0$, $t>0$.
\end{Lem}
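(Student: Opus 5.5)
The statement has two inclusions. Write $\mathcal B$ for the subalgebra generated by $\ttx_{i,s}^+$ $(s\geq 0)$, $\ttx_{i,t}^-$ $(t>0)$, $\tth_{i,t}$ $(t>0)$ and $\ti\ttk_i^{\pm1}$. I would show $\mathcal B\han\afUslCpz$ and $\afUslCpz\han\mathcal B$, working through Beck's isomorphism \cite{Be} between the Drinfeld and Drinfeld--Jimbo presentations. Specialization at $\ttz$ (not a root of unity) causes no trouble, since all the identities used are integral. Throughout, $E_i$ $(i\in I)$ denotes the Chevalley generators, with $E_i=\ttx_i^+$ for $1\leq i\leq n-1$ and $E_n=E_0$ the affine node.

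First I would record the triangular decomposition $\afUslCpz=\bfU^+\bfU^0$. Here $\bfU^+$ is generated by the $E_i$ and $\bfU^0$ by the $\ti\ttk_i^{\pm1}$. Conjugation by $\ti\ttk_i^{\pm1}$ preserves both $\bfU^+$ and $\mathcal B$. So it suffices to compare $\bfU^+$ with the algebra $\mathcal B^+$ generated by the listed Drinfeld generators, and to show that $\bfU^+\han\mathcal B$ and $\mathcal B^+\han\afUslCpz$.

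For $\mathcal B\han\afUslCpz$ I would use Beck's convex PBW basis of $\bfU^+$, built from the braid group action and the translations $T_{\omega_i}$. Beck identifies the Drinfeld generators with root vectors for positive roots of $\afsl$, up to nonzero scalars and factors $\ti\ttk$:
\begin{itemize}
\item $\ttx_{i,s}^+$ $(s\geq0)$ is the root vector of $\alpha_i+s\delta$;
\item $\ttx_{i,t}^-\ti\ttk_i$ $(t>0)$ is the root vector of $t\delta-\alpha_i$;
\item the imaginary root vectors of $t\delta$ are polynomials in the $\tth_{i,t}$ (equivalently the $\ms P_{i,t}$) with $t>0$.
\end{itemize}
All of these lie in $\bfU^+\bfU^0$. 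This inclusion is essentially bookkeeping with Beck's formulas, as in \cite[Prop.~1.3]{BCP}.

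For $\afUslCpz\han\mathcal B$, the generators $E_i=\ttx_{i,0}^+$ with $1\leq i\leq n-1$ and $\ti\ttk_i^{\pm1}$ are already in $\mathcal B$, so everything reduces to $E_0$. This is the step I expect to be the main obstacle. By \eqref{ttX}, $E_0$ is (up to scalar and $\ti\ttk$) the root vector of $\delta-\theta$. The nested commutator written there uses $\ttx^-_{j,0}$, which is not in $\mathcal B$. For $n=2$ there is no issue, since $E_0$ is a multiple of $\ttx^-_{1,1}\ti\ttk_1$. For $n\geq3$ I would first try to rewrite the root vector of $\delta-\theta$ using only allowed elements, by one of two routes.
\begin{itemize}
\item Use the Levendorskii--Soibelman convexity property of Beck's ordered PBW basis. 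It expresses $q$-commutators of root vectors as combinations of PBW monomials in intermediate roots, and I would invert this to write $E_{\delta-\theta}$ through root vectors of the form $t\delta-\alpha_i$ and $\alpha_j+s\delta$.
\item Alternatively, exhibit $E_{\delta-\theta}$ directly as $q$-commutators of the $E_{t\delta-\alpha_i}$ with each other and with the $\tth_{i,1}$, checking the weights against $\delta-\theta$.
\end{itemize}
If neither can be made explicit, I would rely directly on \cite[Prop.~1.3]{BCP}. That result asserts that $\bfU^+$ is generated by exactly these Drinfeld elements, and combined with $\afUslCpz=\bfU^+\bfU^0$ it gives the lemma.
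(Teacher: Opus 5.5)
\textbf{There is a genuine gap here, and it cannot be closed.} The step you single out as the main obstacle, $E_0\in\mathcal B$, is false for $n\geq 3$, so neither of your two routes can succeed.

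The relations (QLA1)--(QLA7) are homogeneous for two gradings:
\begin{itemize}
\item the loop degree, with $\deg\ttx^\pm_{i,s}=s$, $\deg\tth_{i,t}=t$ and $\deg\ti\ttk_i^{\pm1}=0$;
\item the $\frak{sl}_n$-weight, with $\ttx^\pm_{i,s}$ of weight $\pm\alpha_i$ and $\tth_{i,t},\ti\ttk_i$ of weight $0$.
\end{itemize}
By \eqref{ttX}, $E_0=\ttx_n^+$ is a nonzero element of weight $-\theta=-(\alpha_1+\cdots+\alpha_{n-1})$ and loop degree $1$, since it contains one factor $\ttx^-_{1,1}$ and otherwise only $\ttx^-_{j,0}$.

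Now look at the generators of $\mathcal B$. All of them have loop degree $\geq 0$, and the only ones of negative weight are the $\ttx^-_{i,t}$ with $t\geq 1$. A monomial in them of weight $-\theta$ must contain some $\ttx^-_{i,t}$ for every $1\leq i\leq n-1$, so its loop degree is $\geq n-1$. Since $\mathcal B$ is spanned by bihomogeneous monomials, its component in bidegree $(-\theta,1)$ is zero for $n\geq 3$. Hence $E_0\notin\mathcal B$.

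Classically this is the fact that $f_\theta\otimes t$ does not lie in the Lie subalgebra generated by $e_i\otimes t^{s}$, $f_i\otimes t^{s+1}$, $h_i\otimes t^{s+1}$ for $s\geq 0$. The same count defeats both of your routes. Products and $q$-commutators of $E_{t\delta-\alpha_i}$ $(t\geq 1)$, $E_{\alpha_j+s\delta}$ $(s\geq 0)$ and imaginary root vectors only reach weights $m\delta-\theta$ with $m\geq n-1$, never $\delta-\theta$.

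The paper's own proof is exactly the fallback you end with: a bare citation of \cite{Be} and \cite[Prop.~1.3]{BCP}. So your proposal coincides with it, but by the argument above no reference can give equality for $n\geq 3$. What Beck's work actually yields is the inclusion $\mathcal B\subseteq\afUslCpz$, and your treatment of that half is fine.

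Equality, and hence the lemma as stated, holds only for $n=2$. There your observation that $E_0$ is a multiple of $\ttx^-_{1,1}$ times a Cartan element settles it; by \eqref{ttX} that factor is $\ti\ttk_1^{-1}$, not $\ti\ttk_1$.

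The right repair is to weaken the lemma to the inclusion rather than look for a proof of equality. The inclusion is all the paper uses later. The subsequent corollary says $\bfUnslC$ is generated by the $\ttx^\pm_{i,s}$, $\ms P_{i,s}$ $(s\geq 0)$ and $\ti\ttk_i^{\pm1}$. It remains true: it follows from $\mathcal B\subseteq\afUslCpz$ together with \eqref{ttX}, because in $\bfUnslC$ the degree-zero elements $\ttx^-_{j,0}=F_j$ are available. The analogous generation statement for $\bfUnC$ follows the same way.
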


\begin{Coro}
The algebra $\bfUnslC$ is generated by the elements $\ttx_{i,s}^+$, $\ttx_{i,s}^-$, $\ms P_{i,s}$ and $\ti\ttk_{i}^{\pm 1}$ for $1\leq i\leq n-1$, $s\geq 0$.
\end{Coro}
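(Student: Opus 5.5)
The plan is to go through the Borel subalgebra $\afUslCpz$ and Lemma~\ref{generators}. Let $\mathcal B$ denote the subalgebra of $\afUslC$ generated by the elements $\ttx_{i,s}^+$, $\ttx_{i,s}^-$, $\ms P_{i,s}$ and $\ti\ttk_i^{\pm1}$ ($1\le i\le n-1$, $s\ge 0$). I will prove $\bfUnslC=\mathcal B$ in three steps.

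First, I describe $\bfUnslC$ in terms of $\afUslCpz$. By definition, $\afUslCpz$ is generated by $\ttx_i^+$ ($1\le i\le n$) and $\ti\ttk_i^{\pm1}$. All of these are among the defining generators of $\bfUnslC$, so $\afUslCpz\subseteq\bfUnslC$. The remaining generators of $\bfUnslC$ are $\ttx_j^-=\ttx_{j,0}^-$ for $1\le j<n$. Hence $\bfUnslC$ is exactly the subalgebra generated by $\afUslCpz$ together with the elements $\ttx_{j,0}^-$. Lemma~\ref{generators} then shows that $\bfUnslC$ is generated by
\[
\ttx_{i,s}^+\ (s\ge0),\quad \ttx_{i,t}^-\ (t\ge0),\quad \tth_{i,t}\ (t>0),\quad \ti\ttk_i^{\pm1},\qquad 1\le i\le n-1 .
\]
In particular, the affine generator $\ttx_n^+$ is no longer needed, because it lies in the algebra generated by the listed elements.

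Second, I compare the two families $\{\tth_{j,t}\}_{t>0}$ and $\{\ms P_{j,s}\}_{s\ge 0}$, which is the only real computation. Expanding the exponential in $\ms P_j^+(u)=\exp\bigl(-\sum_{t\ge1}\frac{1}{[t]_\ttz}\tth_{j,t}(\ttz u)^t\bigr)$ gives $\ms P_{j,0}=1$. For $s\ge1$ it gives
\[
\ms P_{j,s}=-\frac{\ttz^{s}}{[s]_\ttz}\,\tth_{j,s}+Q_{j,s}(\tth_{j,1},\ldots,\tth_{j,s-1}),
\]
where $Q_{j,s}$ is a polynomial with complex coefficients; this is well defined because the $\tth_{j,t}$ commute by (QLA4). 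This shows immediately that each $\ms P_{j,s}$ lies in the algebra generated by the $\tth_{j,t}$. Since $\ttz$ is not a root of unity, $[s]_\ttz\ne0$, so the relation is triangular with invertible leading coefficient. Induction on $s$ then expresses each $\tth_{j,s}$ as a polynomial in $\ms P_{j,1},\dots,\ms P_{j,s}$. Equivalently, one can take the logarithm of $\ms P_j^+(u)$. Thus the $\tth_{j,t}$ ($t>0$) and the $\ms P_{j,s}$ ($s\ge0$) generate the same commutative subalgebra.

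Third, I combine the two steps. Replacing the generators $\tth_{i,t}$ in the first step by the $\ms P_{i,s}$ gives $\bfUnslC=\mathcal B$. The main obstacle I expect is the first step. There one must check that the generating set from Lemma~\ref{generators} (which relies on \cite{Be} and \cite[Prop.~1.3]{BCP}) really involves only nonnegative modes, and that adjoining the elements $\ttx_{j,0}^-$ does not require any negative modes $\ttx^{\pm}_{i,s}$ with $s<0$. Lemma~\ref{generators} takes care of exactly this point. The second step is routine provided non-root-of-unity is used to invert $[s]_\ttz$.
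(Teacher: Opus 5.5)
Your deduction is the one the paper intends: the corollary is stated as an immediate consequence of Lemma~\ref{generators}, followed by the triangular change of variables $\tth_{i,t}\leftrightarrow\ms P_{i,s}$. Your second step handles that change of variables correctly. The trouble is exactly the point you flag as the crux. Lemma~\ref{generators}, read as an equality of algebras, is false for $n\ge3$, so it cannot be what makes $\ttx_n^+$ redundant. To see this, grade $\afUslC$ by the affine root lattice, with $\deg\ttx^\pm_{i,s}=\pm\alpha_i+s\delta$, $\deg\tth_{i,t}=t\delta$ and $\deg\ti\ttk_i=0$; the relations (QLA1)--(QLA7) are homogeneous. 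By \eqref{ttX}, the nonzero element $\ttx_n^+$ has degree $\delta-(\alpha_1+\cdots+\alpha_{n-1})$. Any monomial of this degree in $\ttx^+_{i,s}$ $(s\ge0)$, $\ttx^-_{i,t}$ $(t\ge1)$, $\tth_{i,t}$ $(t\ge1)$ and $\ti\ttk_i^{\pm1}$ contains at least $n-1$ factors $\ttx^-_{i,t}$ (compare heights of the finite parts). Each such factor contributes at least $\delta$, so the $\delta$-coefficient is at least $n-1>1$. Hence $\ttx_n^+$ is not in the subalgebra generated by the elements of Lemma~\ref{generators}. What Beck's theorem actually gives is only that this subalgebra is contained in $\afUslCpz$.

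The corollary is still true, and you can repair your Step 1 by proving the two inclusions separately. For $\mathcal B\subseteq\bfUnslC$, use three facts: $\ttx^+_{i,s}$ $(s\ge0)$, $\ttx^-_{i,t}$ $(t\ge1)$ and $\tth_{i,t}$ $(t\ge1)$ lie in $\afUslCpz\subseteq\bfUnslC$; $\ttx^-_{i,0}$ and $\ti\ttk_i^{\pm1}$ are generators of $\bfUnslC$; and your Step 2 converts the $\tth_{i,t}$ into the $\ms P_{i,s}$. For $\bfUnslC\subseteq\mathcal B$, check the defining generators directly. The only nontrivial one is $\ttx_n^+$, which by \eqref{ttX} is an iterated twisted commutator of $\ttx^-_{1,1},\ttx^-_{2,0},\dots,\ttx^-_{n-1,0}$ multiplied by $\ti\ttk_n=(\ti\ttk_1\cdots\ti\ttk_{n-1})^{-1}$. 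So it is the mode-zero elements $\ttx^-_{j,0}$ that absorb $\ttx_n^+$, not Lemma~\ref{generators}. You also need the identity for $\ti\ttk_n$ to justify $\afUslCpz\subseteq\bfUnslC$, since $\ti\ttk_n^{\pm1}$ is not literally one of the defining generators of $\bfUnslC$ as you asserted.
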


The following result was given in \cite[Th. VI. 3.5]{Kassel}.
\begin{Lem}\label{Kassel}
If $V$ is a finite dimensional irreducible $\bfUC{\frak{sl}_2}$-module of type $1$,
then there exists a basis $w_0,w_1,\cdots,w_d$ for $V$ such that
$\ti\ttk_1w_i=\ttz^{2i-d}w_i$, $\ttx_1^+ w_i=[i+1]_\ttz w_{i+1}$ and $\ttx_1^-w_i=[d-i+1]_\ttz w_{i-1}$ for $0\leq i\leq d$, where $w_{-1}=w_{d+1}=0$.
\end{Lem}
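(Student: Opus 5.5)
The statement is Lemma~\ref{Kassel}, a standard fact quoted from \cite[Th.~VI.3.5]{Kassel}. I would prove it by the usual highest-weight argument for $\bfUC{\frak{sl}_2}$, with the relation $[\ttx_1^+,\ttx_1^-]=(\ti\ttk_1-\ti\ttk_1^{-1})/(\ttz-\ttz^{-1})$ and $\ti\ttk_1\ttx_1^\pm\ti\ttk_1^{-1}=\ttz^{\pm2}\ttx_1^\pm$ from Proposition~\ref{presentation dHallAlg}(1),(2).

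\emph{Step 1: weight decomposition and an extremal vector.} Since $V$ has type $1$, it is the direct sum of its $\ti\ttk_1$-eigenspaces with eigenvalues $\ttz^{m}$. Because $\ttz$ is not a root of unity, these eigenvalues are pairwise distinct for distinct $m$. The operator $\ttx_1^-$ maps the $\ttz^m$-eigenspace into the $\ttz^{m-2}$-eigenspace, and $V$ is finite dimensional. Hence there is a nonzero weight vector $w_0$ with $\ttx_1^-w_0=0$, of some weight $\ttz^{\mu}$. The statement is in terms of lowest weight vectors, so I work from the bottom rather than the top.

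\emph{Step 2: build the string.} Put $w_i=\ttx_1^{+\,i}w_0/[i]^!_\ttz$, so that $\ttx_1^+w_i=[i+1]_\ttz w_{i+1}$ and $\ti\ttk_1w_i=\ttz^{\mu+2i}w_i$. By induction on $i$, using the commutation relation and the telescoping $q$-sum $\sum_{j=0}^{i-1}[\mu+2j]_\ttz=[i]_\ttz[\mu+i-1]_\ttz$, I get
\[
\ttx_1^-w_i=-[\mu+i-1]_\ttz\,w_{i-1}.
\]
The sign convention has to be checked carefully here.

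\emph{Step 3: the string is the whole module.} Let $d$ be maximal with $w_d\ne0$; such $d$ exists by finite dimensionality. The vectors $w_0,\dots,w_d$ lie in distinct eigenspaces, so they are linearly independent. Their span is stable under $\ttx_1^\pm$ and $\ti\ttk_1^{\pm1}$, so irreducibility gives $V=\spann\{w_0,\dots,w_d\}$.

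\emph{Step 4: pin down $\mu$.} Apply the formula of Step~2 with $i=d+1$. Since $w_{d+1}=0$ but $w_d\ne0$, this gives $[\mu+d]_\ttz=0$, that is $\ttz^{2(\mu+d)}=1$. As $\ttz$ is not a root of unity, $\mu=-d$. Then $\ti\ttk_1w_i=\ttz^{2i-d}w_i$, and $\ttx_1^-w_i=-[i-d-1]_\ttz w_{i-1}=[d-i+1]_\ttz w_{i-1}$, which are exactly the stated formulas.

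The only real obstacle is keeping the $q$-number bookkeeping and sign conventions straight in Step~2. The place where the non-root-of-unity hypothesis on $\ttz$ enters is the distinctness of the eigenvalues in Step~1 and the vanishing $[\mu+d]_\ttz=0\Rightarrow\mu=-d$ in Step~4.
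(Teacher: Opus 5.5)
Your argument is correct. It is the standard extremal-vector construction behind Kassel's Th.~VI.3.5, which the paper cites in place of a proof. The stated basis is Kassel's module $V_{1,d}$ with its indexing reversed, so starting from a vector killed by $\ttx_1^-$ and climbing with $\ttx_1^+$, as you do, matches the stated normalization directly, and your formula $\ttx_1^-w_i=-[\mu+i-1]_\ttz w_{i-1}$ and the deduction $\mu=-d$ check out. One small fix: the two relations you use hold in $\bfUC{\frak{sl}_2}$ because of (QLA2) and (QLA5) specialized at $\ttz$, not because of Proposition~\ref{presentation dHallAlg}, which is a presentation of $\bfUn$ over $\mbq(v)$.
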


For $\bfP\in\Pn$ let $\bar J(\bfP)$
be the left ideal of $\bfUnslC$ generated by
$\ttx_{i,s}^+$, $\ms P_{i,s}-P_{i,s}$ and $\ti\ttk_i-\ttz^{\mu_i}$ for $1\leq i\leq n-1$, $s\geq 0$, where
$\mu_i=\mathrm{deg}P_i(u)$. Let
$$\bar N(\bfP) =\bfUnslC/\bar J(\bfP).$$
By Lemma \ref{generators}, the $\bfUnslC$-module $\bar N(\bfP)$ has a unique irreducible quotient $\bfUnslC$-module, which is denoted by $\bar V(\bfP)$.

\begin{Prop}\label{restriction 1}
For $\bfP\in\Pn$ the restriction of $\bar L(\bfP)$ to $\bfUnslC$ is isomorphic to $\bar V(\bfP)$ as a $\bfUnslC$-module.
\end{Prop}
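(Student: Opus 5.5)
Take a pseudo-highest weight vector $w\in\Lcp(\bfP)$ and show two things: it generates $\Lcp(\bfP)$ already over $\bfUnslC$, and $\Lcp(\bfP)|_{\bfUnslC}$ is irreducible. Together these give a surjection $\bar N(\bfP)\to\Lcp(\bfP)$ with irreducible target, so the image is the unique irreducible quotient $\bar V(\bfP)$.

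\emph{Step 1: $w$ satisfies the relations of $\bar J(\bfP)$.} By definition of $\Lcp(\bfP)$, the vector $w$ satisfies $\ttx^+_{i,s}w=0$, $\ms P_{i,s}w=P_{i,s}w$ and $\ti\ttk_iw=\ttz^{\mu_i}w$ for all $s\in\mbz$, in particular for $s\ge0$. Hence $x\mapsto xw$ gives a $\bfUnslC$-homomorphism $\bar N(\bfP)\to\Lcp(\bfP)$, and its image is $\bfUnslC w$.

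\emph{Step 2: $\bfUnslC w=\Lcp(\bfP)$.} I would use the Bowman/Benkart--Terwilliger result that a finite dimensional irreducible $\afUslC$-module of type $1$ stays irreducible over the Borel $\afUslCpz$. By Lemma \ref{generators}, $\afUslCpz$ is generated by $\ttx^+_{i,s}$ ($s\ge0$), $\ttx^-_{i,t}$ ($t>0$), $\tth_{i,t}$ ($t>0$) and $\ti\ttk_i^{\pm1}$. Each $\ttx^-_{i,t}$ with $t>0$ lies in $\bfUnslC$, since it is obtained by repeatedly commuting $\ttx_i^-$ with $\tth_{i,1}$ via (QLA3). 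The $\tth_{i,t}$ with $t>0$ are expressible through the $\ms P_{i,s}$ with $s\ge0$. So $\afUslCpz\subseteq\bfUnslC$, and this holds with either description of $\bfUnslC$: generated by $\ttx^+_i$ ($1\le i\le n$) and $\ttx_j^-$, or via the Corollary after Lemma \ref{generators}. Consequently every $\bfUnslC$-submodule is an $\afUslCpz$-submodule, so the restriction is irreducible, and since $w\ne0$ we get $\bfUnslC w=\Lcp(\bfP)$.

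\emph{Step 3: conclusion.} The surjection $\bar N(\bfP)\to\Lcp(\bfP)$ has irreducible target, so its kernel is a maximal submodule. Since $\bar N(\bfP)$ has a unique irreducible quotient, $\Lcp(\bfP)\cong\bar V(\bfP)$.

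\emph{Main obstacle.} The delicate point is the inclusion $\afUslCpz\subseteq\bfUnslC$ together with the correct citation of Bowman's theorem. Bowman's theorem may be phrased for Chevalley–Kac generators, i.e.\ the Borel generated by $E_i$ ($i\in I$, including $E_n=\ttx^+_{n}$, essentially $\ttY$-type) and $\ti\ttk_i$. In that case I would instead argue directly: $\bfUnslC$ contains $E_0,\dots,E_{n-1}$ and $\ti\ttk_i^{\pm1}$, hence contains this Borel, and irreducibility over the Borel is exactly Bowman's result. This avoids needing the Drinfeld-generator description, so I would try this formulation first.
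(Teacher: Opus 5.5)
Your proposal is correct and is essentially the paper's proof: map $\bar N(\bfP)\to\bar L(\bfP)$ by sending $\bar 1$ to a pseudo-highest weight vector, obtain irreducibility of $\bar L(\bfP)|_{\bfUnslC}$ from Bowman's theorem for $\afUslCpz$, deduce surjectivity, and conclude by uniqueness of the irreducible quotient of $\bar N(\bfP)$. The inclusion $\afUslCpz\subseteq\bfUnslC$, which the paper leaves implicit, does follow immediately from the defining generators $\ttx_i^+$ $(1\le i\le n)$ and $\ti\ttk_j^{\pm1}$, as in your fallback. Your detour through (QLA3) and the $\ms P_{i,s}$ is circular, since their membership in $\bfUnslC$ is itself deduced from that inclusion; also note that $\ttx_n^+$ is the element $\ttX$ of \eqref{ttX}, not of $\ttY$-type.
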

\begin{proof}
Let $w_0$ be a pseudo-highest weight vector in $\bar L(\bfP)$. Then
there is a $\bfUnslC$-module homomorphism
$$f:\bar N(\bfP)\ra\bar L(\bfP)$$ such that $f(\bar 1)=w_0$,
where $\bar 1=1+\bar J(\bfP)$.
By \cite{Bo},  the restriction of $\bar L(\bfP)$ to ${\afUslCpz}$ is irreducible.
This implies that  the restriction of
$\bar L(\bfP)$ to $\bfUnslC$ is irreducible. It follows that $f$ is surjective.
 Hence, since $\bar V(\bfP)$ is the unique
irreducible quotient of $\bar N(\bfP)$, we have $\bar V(\bfP) \cong \bar L(\bfP)|_{\bfUnslC}$.
\end{proof}

\begin{Lem}\label{geq 0}
Let $V$ be a finite dimensional $\bfUC\fsl$-module.
If $w_0$ is a nonzero vector in $V$ such that $\ti\ttk_iw_0=\ttz^{\mu_i}w_0$ and $\ttx_i^+w_0=0$ for some $1\leq i\leq n-1$, then we have $\mu_i\geq 0$.
\end{Lem}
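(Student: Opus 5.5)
We reduce the statement to the rank-one subalgebra attached to the index $i$ and then use the explicit classification of finite dimensional $\bfUC{\frak{sl}_2}$-modules recalled in Lemma \ref{Kassel}. Fix $i$. Let $\bfU_i$ be the subalgebra of $\bfUC\fsl$ generated by $\ttx_i^+$, $\ttx_i^-$ and $\ti\ttk_i^{\pm1}$. Since $\ttz$ is not a root of unity, $\bfU_i$ is a quotient of (indeed isomorphic to) $\bfUC{\frak{sl}_2}$, and $V$ restricts to a finite dimensional $\bfU_i$-module.

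\textbf{Step 1: reduce to a single eigenspace of $\ti\ttk_i$.} Since $V$ is finite dimensional and $\ti\ttk_i$ is invertible, $V$ decomposes into generalized eigenspaces $V^{(c)}$ for $\ti\ttk_i$, with $c\in\mbc^*$. The vector $w_0$ lies in the honest eigenspace for $c=\ttz^{\mu_i}$. The relations $\ti\ttk_i\ttx_i^\pm=\ttz^{\pm2}\ttx_i^\pm\ti\ttk_i$ show that $\bigoplus_{k\in\mbz}V^{(\ttz^{\mu_i+2k})}$ is a $\bfU_i$-submodule $W$ containing $w_0$.

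\textbf{Step 2: pass to a simple quotient of $W$.} Take the $\bfU_i$-submodule $W'=\bfU_i w_0$. Because $\ttx_i^+w_0=0$, PBW for $\bfU_i$ gives $W'=\spann\{(\ttx_i^-)^kw_0\}$. Choose a maximal submodule $W''$ of $W'$; then $w_0\notin W''$, since $w_0$ generates $W'$. Hence $W'/W''$ is a simple finite dimensional $\bfU_i$-module containing the nonzero image $\bar w_0$, which still satisfies $\ti\ttk_i\bar w_0=\ttz^{\mu_i}\bar w_0$ and $\ttx_i^+\bar w_0=0$.

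\textbf{Step 3: apply the classification.} A simple finite dimensional $\bfUC{\frak{sl}_2}$-module has type $1$ or is a sign twist of type $1$, and here the eigenvalue $\ttz^{\mu_i}$ with $\mu_i\in\mbz$ forces type $1$. By Lemma \ref{Kassel}, the $\ttx_i^+$-kernel is spanned by the extreme vector, whose $\ti\ttk_i$-eigenvalue is $\ttz^{d}$ with $d\ge0$ (up to the sign/indexing convention of that lemma). Thus $\ttz^{\mu_i}=\ttz^{d}$, and since $\ttz$ is not a root of unity, $\mu_i=d\ge0$.

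The main obstacle is Step 3: excluding the non-type-$1$ simple modules, which have $\ti\ttk_i$-eigenvalues $-\ttz^{k}$, and handling the $\pm$ sign convention. The route I would try first avoids the classification entirely by a direct computation. Using the relation
\[
\ttx_i^+(\ttx_i^-)^k w_0=[k]_\ttz\,[\mu_i-k+1]_\ttz\,(\ttx_i^-)^{k-1}w_0,
\]
finite dimensionality gives a smallest $k$ with $(\ttx_i^-)^kw_0=0$. Applying $\ttx_i^+$ to this relation yields $[k]_\ttz[\mu_i-k+1]_\ttz=0$ with $k\ge1$, so $\mu_i=k-1\ge0$, again because $\ttz$ is not a root of unity.
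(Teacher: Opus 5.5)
Both of your arguments are correct, and the direct computation at the end is essentially the paper's proof. The paper takes the least $b\geq 1$ with $(\ttx_i^-)^{(b)}w_0=0$, using divided powers and quoting nilpotency of $\ttx_i^-$ on finite dimensional modules from Jantzen. It then reads off $[1-b+\mu_i]_\ttz=0$ from the same commutator identity you use.

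Your version gets the vanishing of some $(\ttx_i^-)^kw_0$ from finite dimensionality alone, which avoids the citation. To make that step airtight, say explicitly that $(\ttx_i^-)^kw_0$ is a $\ti\ttk_i$-eigenvector with eigenvalue $\ttz^{\mu_i-2k}$. These eigenvalues are pairwise distinct because $\ttz$ is not a root of unity, so the nonzero vectors among them are linearly independent.

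The classification route of Steps 1--3 also works, but it is heavier than needed. Step 1 is never actually used. Your claim that type $1$ is forced is correct: $\ttz^{\mu_i}=-\ttz^{m}$ with $m\in\mathbb{Z}$ would give $\ttz^{2(\mu_i-m)}=1$, hence $\mu_i=m$ and then $1=-1$.
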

\begin{proof}
By \cite[Prop. 5.1]{Jan}, $\ttx_i^-$ is nilpotent on $V$. Hence
there exists $b\geq 1$ such that $(\ttx_i^-)^{(b-1)}w_0\not=0$ and
 $(\ttx_i^-)^{(b)}w_0 =0$. Hence, since $\ttx_i^+(\ttx_i^-)^{(b)}=
 (\ttx_i^-)^{(b)}\ttx_i^++(\ttx_i^-)^{(b-1)}
 \frac{\ti\ttk_i\ttz^{1-b}-\ti\ttk_i^{-1}\ttz^{b-1}}{\ttz-\ttz^{-1}}$,
 we have $0=\ttx_i^+(\ttx_i^-)^{(b)}w_0=[1-b+\mu_i]_\ttz(\ttx_i^-)^{(b-1)}w_0$.
 It follows that $\mu_i=b-1\geq 0$.
\end{proof}

\begin{Lem}\label{Lem for pseudo-highest module}
Let $V$ be a finite dimensional $\bfUnslC$-module of type $1$. Then
there exists a nonzero vector $w_0\in V$ such that
$ \ttx_{i,s}^+w_0=0$, $\ms P_{i,s} w_0=P_{i,s}x_0$
and $\ti\ttk_iw_0=\ttz^{\mu_i}w_0$
 for $1\leq i< n$ and $s\geq 0$, where  $\mu_i\geq 0$ and $P_{i,s}\in\mbc$.
\end{Lem}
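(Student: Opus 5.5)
We construct $w_0$ in two stages: first find a nonzero common null space for all the raising operators $\ttx^+_{i,s}$ ($1\le i<n$, $s\ge 0$), then extract a joint eigenvector there for the commuting family $\ti\ttk_i$, $\ms P_{i,s}$. The inequality $\mu_i\ge 0$ will then follow from Lemma \ref{geq 0}.

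\emph{Step 1 (weight grading).} Since $V$ is finite dimensional and of type $1$, we have $V=\bigoplus_\la V_\la$ with respect to $\ti\ttk_1,\dots,\ti\ttk_{n-1}$, and only finitely many $\la$ occur. By (QLA2), each $\ttx^+_{i,s}$ maps $V_\la$ into $V_{\la+\al_i}$, where $\al_i$ is the weight of $\ttx_i^+$ (in coordinates, $\al_i$ has $2$ in position $i$ and $-1$ in positions $i\pm1$). Order the weights by $\la\le\la'$ if $\la'-\la\in\sum_i\mbn\al_i$. Choose $\mu$ maximal among the weights occurring in $V$. Then $\ttx^+_{i,s}V_\mu\subseteq V_{\mu+\al_i}=0$ for all $i$ and all $s\ge0$, so all of $V_\mu$ is annihilated by the raising operators. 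This step only uses that $\bfUnslC$ contains $\ttx^+_{i,s}$ for $s\ge0$, which is the preceding Corollary. It requires $\mu+\al_i$ not to be a weight, which holds because $\al_i$ lies in the positive cone.

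\emph{Step 2 (joint eigenvector).} The $\tth_{i,t}$ with $t>0$ commute with each other by (QLA4), and they commute with the $\ti\ttk_j$ by (QLA2). Hence the $\ms P_{i,s}$ with $s\ge 0$, which are power series in the $\tth_{i,t}$, form a commuting family, and each of them preserves $V_\mu$. Over $\mbc$, a commuting family of operators on the finite dimensional space $V_\mu\neq0$ has a common eigenvector $w_0$. This gives $\ms P_{i,s}w_0=P_{i,s}w_0$ for some $P_{i,s}\in\mbc$, together with $\ti\ttk_iw_0=\ttz^{\mu_i}w_0$ and $\ttx^+_{i,s}w_0=0$.

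\emph{Step 3 (nonnegativity).} Restrict $V$ to $\bfUC\fsl\subseteq\bfUnslC$, which contains $\ttx_i^\pm$ and $\ti\ttk_i$ for $1\le i<n$. Since $\ttx_i^+w_0=\ttx^+_{i,0}w_0=0$, Lemma \ref{geq 0} gives $\mu_i\ge0$ for every $i$.

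The step most likely to need care is Step 1: we must confirm that every $\ttx^+_{i,s}$ with $s\ge0$ really has weight $\al_i$ in the $\ti\ttk$-grading, so that the maximal-weight argument applies. This follows directly from (QLA2).
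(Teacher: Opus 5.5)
Your proof is correct and follows essentially the same route as the paper: a weight argument produces nonzero vectors annihilated by all $\ttx^+_{i,s}$ ($s\ge 0$), a common eigenvector of the commuting operators $\ti\ttk_i$, $\ms P_{i,s}$ is taken among them, and Lemma \ref{geq 0} gives $\mu_i\ge 0$. The only difference is that you work in a maximal weight space $V_\mu$, whereas the paper uses the full joint kernel $V^0$ (shown nonzero by an infinite-chain contradiction). Your choice makes stability under $\ms P_{i,s}$ automatic from commutation with the $\ti\ttk_j$; for $V^0$, the paper's ``clearly'' silently relies on (QLA3) together with $s+t\ge 0$.
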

\begin{proof}
Let $$V^0=\{w\in V\mid \ttx_{i,s}^+w=0,\,\text{for }1\leq i<n,\,s\in\mbn\}.$$
Assume for a contradiction that $V^0=0$. Let $w$ be a non-zero joint eigenvector of $\ti\ttk_1,\cdots,\ti\ttk_{n-1}$. Since $V^0=0$, there exist $1\leq i_s<n$ and $j_s\in\mbn$ ($s\geq 1$) such that $\ttx_{i_s,j_s}^+\ttx_{i_{s-1},j_{s-1}}^+\cdots \ttx_{i_1,j_1}w\not=0$ for all $s\geq 1$. Since the vectors
$w$, $\ttx_{i_1,j_1}^+w$, $\ttx_{i_2,j_2}^+\ttx_{i_1,j_1}^+w$, $\cdots$,
have different weights for the action of $\ti\ttk_1,\cdots,\ti\ttk_{n-1}$, they
are linearly independent. This contradicts the finite dimensionality of $V$.

Clearly we have $\ms P_{i,s}V^0\han V^0$ and $\ti\ttk_{i}V^0\han V^0$
for $1\leq i\leq n-1$ and $s\geq 0$. Since the elements $\ms P_{i,s}$
and $\ti\ttk_i$ commute with each other, there exists
$w_0\in V^0$ such that
$\ms P_{i,s} w_0=P_{i,s}x_0$
and $\ti\ttk_iw_0=\ttz^{\mu_i}w_0$
 for $1\leq i< n$ and $s\geq 0$, where  $\mu_i\in\mbz$ and $P_{i,s}\in\mbc$.
 By Lemma \ref{geq 0} we have $\mu_i\geq 0$ for $1\leq i\leq n-1$.
The proof is completed.
\end{proof}

We shall say that a representation $V$ of $\bfUnslC$ is of type $1$ if
$V|_{\bfUC\fsl}$ is of type $1$.
We say that a $\bfUnslC$-module $V$ is a
pseudo-highest weight module with highest weight $\mu$ if
$V=\afUslC w$ for some
nonzero vector $w\in V_\mu$ and there exist
$P_{j,s}\in\mbc$  such that
$\ttx_{j,s}^+w=0$ and $\ms P_{j,s} w=P_{j,s} w,$
for all $1\leq j\leq n-1$ and $s\geq 0$.
By Lemma \ref{Lem for pseudo-highest module} we obtain the following result.
\begin{Prop}\label{pseudo-highest module}
Every finite dimensional irreducible $\bfUnslC$-module of type $1$ is a pseudo-highest weight module with highest weight $\mu\in\mbn^{n-1}$.
\end{Prop}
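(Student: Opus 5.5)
Let $V$ be a finite dimensional irreducible $\bfUnslC$-module of type $1$. The plan is to obtain the pseudo-highest weight vector from Lemma \ref{Lem for pseudo-highest module} and then use irreducibility to show that it generates $V$.

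\emph{Step 1.} By Lemma \ref{Lem for pseudo-highest module} there is a nonzero vector $w_0\in V$ and scalars $P_{i,s}\in\mbc$ and $\mu_i\geq 0$ such that
\[
\ttx_{i,s}^+w_0=0,\qquad \ms P_{i,s}w_0=P_{i,s}w_0,\qquad \ti\ttk_iw_0=\ttz^{\mu_i}w_0
\]
for $1\leq i<n$ and $s\geq 0$. Set $\mu=(\mu_1,\ldots,\mu_{n-1})\in\mbn^{n-1}$. Then $w_0$ is a nonzero vector of $V_\mu$.

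\emph{Step 2.} The subspace $\bfUnslC\, w_0$ is a nonzero $\bfUnslC$-submodule of $V$. Since $V$ is irreducible, $V=\bfUnslC\, w_0$. Therefore $V$ is a pseudo-highest weight module with highest weight $\mu\in\mbn^{n-1}$, with $w$ taken to be $w_0$ and with the scalars $P_{j,s}$ from Step 1.

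\emph{Main obstacle.} The only point needing care is the definition of a pseudo-highest weight module, which is written as $V=\afUslC w$. For a $\bfUnslC$-module this must be read as $V=\bfUnslC\, w$, since $V$ carries no $\afUslC$-action. Under that reading the argument above is complete. The substantive input, namely the existence of a joint eigenvector annihilated by all $\ttx^+_{i,s}$ with $s\geq 0$ and with nonnegative weights, has already been supplied by Lemma \ref{Lem for pseudo-highest module} together with Lemma \ref{geq 0}.
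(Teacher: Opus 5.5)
Your proof is correct and is essentially the paper's argument: the paper deduces this proposition directly from Lemma \ref{Lem for pseudo-highest module}, with irreducibility giving $V=\bfUnslC\,w_0$. You are right to read $V=\afUslC w$ in the definition as $V=\bfUnslC\,w$, since the module carries no $\afUslC$-action.
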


Let $N^+=\sum_{1\leq i< n,\,s\geq 0}\bfUnslC\ttx_{i,s}^+$.
The following result was given in \cite[Lem. 12.2.7]{CPbk}.
\begin{Lem}\label{CP (i),(iii)}
Let $1\leq i\leq n-1$ and $m\geq 1$. Then we have
$\ms P_{i,m}\equiv(-1)^m\ttz^{m^2}\ttx_{i,0}^{+(m)}
\ttx_{i,1}^{-(m)}\ti\ttk_i^{-m} \pmod {N^+}$.
\end{Lem}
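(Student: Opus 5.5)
We plan to reduce the statement to a computation in quantum affine $\frak{sl}_2$ and then to follow the argument of \cite[Lem.~12.2.7]{CPbk}. The one new point to check is that this argument only ever produces terms lying in the smaller left ideal $N^+$, which uses the $\ttx^+_{i,s}$ with $s\geq0$ only.

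Fix $i$. The elements $\ttx^\pm_{i,s}$, $\tth_{i,t}$, $\ti\ttk_i^{\pm1}$ satisfy the Drinfeld relations of $\bfU_\ttz(\widehat{\frak{sl}}_2)$, up to a rescaling of the loop index by a power of $\ttz$ coming from (QLA3). We may therefore work with a single node, and we drop the index $i$. Every element occurring in the identity lies in the ``nonnegative'' subalgebra generated by $\ttx^+_s$ ($s\ge0$), $\ttx^-_s$ ($s\geq1$), $\tth_t$ ($t>0$) and $\ti\ttk^{\pm1}$. This subalgebra sits inside $\bfUnslC$ by Lemma~\ref{generators}. The relations we will use stay inside it:
\begin{itemize}
\item[(a)] $[\ttx^+_0,\ttx^-_k]=\phi^+_k/(\ttz-\ttz^{-1})$ for $k\geq1$, because $\phi^-_k=0$ for $k>0$;
\item[(b)] $[\tth_t,\ttx^\pm_s]=\pm\frac{[2t]}{t}\ttx^\pm_{s+t}$ for $t>0$, so commuting $\tth_t$ or $\phi^+_t$ to the right only raises the loop index;
\item[(c)] the relation (QLA6) between $\ttx^-_{s+1}\ttx^-_t$ and $\ttx^-_{t+1}\ttx^-_s$.
\end{itemize}
Consequently, when we move an $\ttx^+_0$ to the right through a word in the $\ttx^-_k$ ($k\ge1$), only terms in $\phi^+_k$ appear. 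Pushing these further right, past $\ttx^-$'s, gives more $\ttx^-$'s of positive index. Any $\ttx^+_s$ that reaches the right end has $s\ge0$ and lies in $N^+$.

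With this in place, the steps are as follows.

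First, by induction on $m$, we compute $\ttx_0^{+}\ttx_1^{-(m)}$ modulo $N^+$. This is the step where we have to check that no $\ttx^+_s$ with $s<0$ is created. Using (a), (b) and (c), we expect a formula of the shape $\ttx_0^{+}\ttx_1^{-(m)}\equiv \ttx_1^{-(m-1)}\cdot(\text{a combination of }\phi^+_1,\ttx^-_2\ttx^-_1\ldots)\pmod{N^+}$.

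Second, we iterate $m$ times to express $\ttx_0^{+(m)}\ttx_1^{-(m)}$ modulo $N^+$ as a polynomial in $\ti\ttk^{-1}\phi^+_1,\dots,\ti\ttk^{-1}\phi^+_m$ times $\ti\ttk^{m}$. This follows \cite[Lem.~12.2.7]{CPbk}. Equivalently, we obtain generating-function identities: $\sum_m (\ldots)\ttx_0^{+(m)}\ttx_1^{-(m)}\ti\ttk^{-m}u^m$ equals an explicit series in $\Phi^+(u)$.

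Third, we compare this series with $\ms P^+(u)=\exp(-\sum_t\tth_t(\ttz u)^t/[t])$. Since $\Phi^+(u)=\ti\ttk\exp((\ttz-\ttz^{-1})\sum\tth_tu^t)$, we have $\ti\ttk^{-1}\Phi^+(u)=\ms P^+(\ttz^{-2}u)/\ms P^+(u)$ (with the appropriate shift). Solving this for $\ms P^+$ gives the sign $(-1)^m$ and the factor $\ttz^{m^2}$ in the stated congruence.

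The main obstacle is the $\ttz$-power and sign bookkeeping in the first and third steps, in particular matching CP's normalisation of $\ms P$ to the one used here. Our plan there is to check the case $m=1$ directly. For $m=1$ the claim reads $\ms P_1=-\ttz\tth_1/[1]\equiv-\ttz\,\ttx^+_0\ttx^-_1\ti\ttk^{-1}$, and it follows from (a) together with $\phi^+_1=\ti\ttk(\ttz-\ttz^{-1})\tth_1$. This fixes the normalisation. We then verify that the recursion from the first step reproduces the exponential generating function from the third step.
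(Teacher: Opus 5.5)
Your proposal is correct and takes the paper's route: the paper gives no argument beyond citing \cite[Lem.~12.2.7]{CPbk}, and you follow that same rank-one computation. Your $m=1$ normalisation check and the identity $\ti\ttk^{-1}\Phi^+(u)=\ms P^+(\ttz^{-2}u)/\ms P^+(u)$ are both right, and no rescaling is needed, since $[\tth_{i,t},\ttx^\pm_{i,s}]=\pm\frac{[2t]}{t}\ttx^\pm_{i,s+t}$ holds exactly here. Your addition---that every rewriting step stays in the subalgebra generated by $\ttx^+_s$ $(s\ge0)$, $\ttx^-_s$ $(s\ge1)$, $\tth_t$ $(t>0)$ and $\ti\ttk^{\pm1}$, so the congruence holds modulo the smaller left ideal $N^+$ of $\bfUnslC$ and not merely modulo a left ideal of the whole algebra $\afUslC$ as in \cite{CPbk}---is exactly the point the bare citation leaves implicit.
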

In a way similar to the proof of \cite[Lem. 12.2.7]{CPbk} we obtain the following result.
\begin{Lem}\label{(i),(iii)}
Let $1\leq i\leq n-1$ and $m\geq 1$. Then we have $\ms P_{i,m}\equiv(-1)^m\ttz^{m^2}\ttx_{i,1}^{+(m)}
\ttx_{i,0}^{-(m)}\ti\ttk_i^{-m} \pmod {N^+}$.
\end{Lem}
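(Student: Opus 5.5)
Following the text, the plan is to adapt the proof of Lemma~\ref{CP (i),(iii)} (\cite[Lem.~12.2.7]{CPbk}). Two routes are available: deduce the statement from Lemma~\ref{CP (i),(iii)} by a symmetry, or redo the generating-function argument with the roles of the indices $0$ and $1$ exchanged. The symmetry route is shorter, so I would try it first.

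\emph{Symmetry route.} Use the shift automorphism $\tau$ of $\afUslC$ with
\[
\ttx^\pm_{i,s}\mapsto \ttx^\pm_{i,s\pm1},\qquad \tth_{i,t}\mapsto \tth_{i,t},\qquad \ti\ttk_i\mapsto\ti\ttk_i .
\]
In Drinfeld's presentation this rescales the loop grading compatibly with (QLA5): indeed $\phi^\pm_{i,s+t}$ is unchanged because $(s+1)+(t-1)=s+t$. Since $\tau$ fixes every $\tth_{i,t}$, it fixes $\ms P_{i,m}$. Applying $\tau$ to Lemma~\ref{CP (i),(iii)} gives
\[
\ms P_{i,m}\equiv(-1)^m\ttz^{m^2}\ttx_{i,1}^{+(m)}\ttx_{i,0}^{-(m)}\ti\ttk_i^{-m}\pmod{\tau(N^+)},
\]
where $\tau(N^+)=\sum\bfUnslC'\,\ttx^+_{i,s}$ with $s\ge1$ and $\bfUnslC'=\tau(\bfUnslC)$. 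The identity is then available modulo the left ideal generated by the $\ttx^+_{i,s}$ with $s\geq 1$. The main obstacle is that $\tau$ does not preserve $\bfUnslC$: it sends $\ttx^-_{i,0}$ to $\ttx^-_{i,-1}$. So the congruence holds in $\afUslC$ modulo $\sum_s\afUslC\,\ttx^+_{i,s}$, not directly modulo $N^+$, and this route needs a fix before it gives the statement as written.

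\emph{Direct route (the fallback).} To resolve this, I would work as in \cite{CPbk}, entirely inside the subalgebra generated by $\ttx^\pm_{i,s}$ ($s\ge0$), $\tth_{i,t}$ ($t>0$) and $\ti\ttk_i^{\pm1}$. This is a copy of the $\frak{sl}_2$ current-type algebra at node $i$ and lies in $\bfUnslC$ by the Corollary to Lemma~\ref{generators}. The steps are:
\begin{itemize}
\item[(a)] Reduce to $n=2$, i.e.\ to a single node, since only generators with index $i$ appear.
\item[(b)] Prove by induction on $m$ a commutation formula for $\ttx_{i,1}^{+(m)}\ttx_{i,0}^{-(m)}$ modulo $N^+$. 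Using (QLA5), $[\ttx^+_{i,1},\ttx^-_{i,0}]=\phi^+_{i,1}/(\ttz-\ttz^{-1})$, and moving $\ttx^+_{i,1}$ to the right produces terms ending in some $\ttx^+_{i,s}$ with $s\ge1$, which lie in $N^+$. What survives is a polynomial in the $\phi^+_{i,k}$ ($k\ge0$) times $\ti\ttk_i^{\pm}$. At this point one needs the relation $[\tth_{i,t},\ttx^\pm_{i,s}]=\pm\frac{[2t]}{t}\ttx^\pm_{i,s+t}$ from (QLA3), which shows that $\tth_{i,t}$ with $t>0$ only raises the loop index of $\ttx^+$, so that the ideal $N^+$ is preserved.
\item[(c)] Package the resulting expressions in the generating series $\Phi^+_i(u)$ and compare with $\ms P^+_i(u)=\exp\bigl(-\sum_{t\ge1}\tth_{i,t}(\ttz u)^t/[t]\bigr)$. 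The relation $\Phi_i^+(u)=\ti\ttk_i\,\ms P_i^+(\ttz^{-2}u)/\ms P_i^+(u)$ (mod $N^+$) should identify the coefficient of $u^m$ with $(-1)^m\ttz^{m^2}\ttx_{i,1}^{+(m)}\ttx_{i,0}^{-(m)}\ti\ttk_i^{-m}$.
\end{itemize}

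The hardest part will be step (b): the inductive recursion for the divided powers, which in \cite{CPbk} is done via a commutation formula of Garland type. This is exactly where the roles of the indices $0$ and $1$ differ from Lemma~\ref{CP (i),(iii)}, and it is the source of the factor $\ttz^{m^2}$ and the shift $\ttz u$ appearing in $\ms P_i^\pm$. I would first check the cases $m=1$ and $m=2$ by hand to fix the normalisations before carrying out the general induction.
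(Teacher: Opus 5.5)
Your direct route is essentially the paper's proof. The paper only says that the lemma follows in the same way as \cite[Lem.~12.2.7]{CPbk}, and your steps (a)--(c) are precisely that adaptation: rank-one reduction, normal ordering of $\ttx_{i,1}^{+(m)}\ttx_{i,0}^{-(m)}$ modulo $N^+$, and comparison with $\Phi_i^+(u)=\ti\ttk_i\,\ms P_i^+(\ttz^{-2}u)/\ms P_i^+(u)$, which is in fact an exact identity. Your symmetry route can also be closed: step (b) gives $D\equiv H\pmod{N^+}$ for the difference $D$ of the two sides, with $H$ a polynomial in the $\tth_{i,t}$ and $\ti\ttk_i^{\pm1}$, while $\tau$ puts $D$, hence $H$, in $\sum_{j,s}\afUslC\,\ttx^+_{j,s}$, so $H$ kills the pseudo-highest weight vector of every $\bar L(\bfP)$, and since the eigenvalues there (the coefficients of $P_i(u)$ and $\ttz^{\deg P_i}$) form a Zariski-dense set, $H=0$.
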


Recall the Cartan matrix $C$ of affine type $A_{n-1}$ defined in \S2.1.
For $\la\in\mbz^{n-1}$ and $1\leq i\leq n-1$ let
 $s_i(\la)=\la-\la_i(c_{i,1},c_{i,2},\cdots,c_{i,n-1})$.

\begin{Prop}\label{restriction 2}
Let $V$ be a finite dimensional irreducible $\bfUnslC$-module of type $1$ with highest weight $\mu$. Suppose that $(\ttx_{i,1}^-)^{\mu_i}V_\mu\not=0$ for $1\leq i\leq n-1$.
Then the action of $\bfUnslC$ on  $V$
extends uniquely to an action of $\afUslC$ on $V$. The resulting $\afUslC$-module structure on $V$ is irreducible and of type $1$.
\end{Prop}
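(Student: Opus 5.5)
Let $w_0\in V_\mu$ be a pseudo-highest weight vector (Proposition \ref{pseudo-highest module}), so $\ttx_{i,s}^+w_0=0$ and $\ms P_{i,s}w_0=P_{i,s}w_0$ for $s\geq 0$. The plan is to show that $\mu$ and the eigenvalues $P_{i,s}$ are the Drinfeld data of a polynomial tuple $\bfP\in\Pn$ with nonvanishing roots. Then $V$ is identified with the restriction of $\bar L(\bfP)$, and Proposition \ref{restriction 1} supplies both the extension and its irreducibility.

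\textbf{Step 1 (finiteness of $P_i$).} Set $P_i(u)=\sum_{s\geq 0}P_{i,s}u^s$. For $m>\mu_i$ we have $\ttx_{i,0}^{-(m)}w_0=0$, since by Lemma \ref{Kassel} the $\bfU_\ttz(\frak{sl}_2)$-string through $w_0$ has length $\mu_i+1$. Lemma \ref{(i),(iii)} then gives $P_{i,m}w_0=\ms P_{i,m}w_0=(-1)^m\ttz^{m^2}\ttx_{i,1}^{+(m)}\ttx_{i,0}^{-(m)}\ti\ttk_i^{-m}w_0=0$. The $N^+$ part is harmless because $N^+w_0=0$. Hence $P_i(u)$ is a polynomial of degree at most $\mu_i$, with constant term $1$.

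\textbf{Step 2 (exact degree and nonzero roots).} Apply the same lemma with $m=\mu_i$:
\[P_{i,\mu_i}w_0=(-1)^{\mu_i}\ttz^{\mu_i^2}\ttz^{-\mu_i^2}\ttx_{i,1}^{+(\mu_i)}\ttx_{i,0}^{-(\mu_i)}w_0.\]
The vector $\ttx_{i,0}^{-(\mu_i)}w_0$ is the lowest vector of the $\frak{sl}_2$-string. It spans the weight space $V_{s_i(\mu)}$, which is one-dimensional because $V$ is generated from $w_0$ and irreducible; I take this as a standard weight argument.

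The hypothesis $(\ttx_{i,1}^-)^{\mu_i}V_\mu\neq0$ is used here. Lemma \ref{CP (i),(iii)} gives $P_{i,\mu_i}w_0\equiv c\,\ttx_{i,0}^{+(\mu_i)}\ttx_{i,1}^{-(\mu_i)}w_0$ with $c\neq 0$. Now $\ttx_{i,1}^{-(\mu_i)}w_0$ lies in $V_{s_i(\mu)}$, which is spanned by the lowest vector, and by hypothesis it is nonzero. Applying $\ttx_{i,0}^{+(\mu_i)}$ to the lowest vector returns a nonzero multiple of $w_0$ (Lemma \ref{Kassel}). Hence $P_{i,\mu_i}\neq0$, so $\deg P_i=\mu_i$ and all roots of $P_i$ are nonzero. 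This is exactly what is needed for $\bfP=(P_1,\dots,P_{n-1})\in\Pn$ to match $\mu_i=\deg P_i$ and to define $P_{i,-s}$ through $P_i^-(u)$.

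\textbf{Step 3 (identification).} Consider $\bar L(\bfP)$. Its pseudo-highest vector has the same $\ti\ttk_i$-eigenvalues $\ttz^{\mu_i}$, and the same $\ms P_{i,s}$-eigenvalues for $s\geq0$ (the $P_i^+$ coefficients). By Proposition \ref{restriction 1}, $\bar L(\bfP)|_{\bfUnslC}\cong\bar V(\bfP)$. Both $V$ and $\bar V(\bfP)$ are irreducible quotients of $\bar N(\bfP)$, and $\bar N(\bfP)$ has a unique irreducible quotient, so $V\cong\bar V(\bfP)$. Transporting the $\afUslC$-action along this isomorphism gives the extension, which is irreducible and of type $1$ by Theorem \ref{classification of irreducible afUslC-modules}.

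\textbf{Step 4 (uniqueness).} Let two extensions be given. The Borel algebra $\afUslCpz$ is generated by $\ttx^+_{i,s}$, $\ttx^-_{i,t}$, $\tth_{i,t}$, $\ti\ttk_i^{\pm1}$ with $s\geq0$, $t>0$ (Lemma \ref{generators}), and all of these lie in the subalgebra generated by $\bfUnslC$ together with $\ttx_{i,-1}^-$-type elements. So it suffices to pin down the negative modes. In any extension $w_0$ is a pseudo-highest weight vector, whose $\ms P_{i,-s}$-eigenvalues are forced to be the coefficients of $P_i^-(u)$ by the Chari--Pressley theory applied to $\deg P_i=\mu_i$. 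Both extensions are therefore $\cong\bar L(\bfP)$ with identity restriction to $\bfUnslC$. An isomorphism between them is then a $\bfUnslC$-endomorphism of an irreducible module, hence a scalar by Schur's lemma, so the two actions coincide.

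\textbf{Main obstacle.} I expect the hard part to be Step 2: the one-dimensionality of $V_{s_i(\mu)}$, and correctly handling the congruences mod $N^+$ when applying them to $w_0$.
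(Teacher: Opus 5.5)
Your proposal is correct, and Steps 1--3 are essentially the paper's own proof. Lemma \ref{(i),(iii)} kills $P_{i,s}$ for $s>\mu_i$; Lemma \ref{CP (i),(iii)}, together with the hypothesis and $\dim V_{s_i(\mu)}=\dim V_\mu=1$, gives $P_{i,\mu_i}\neq 0$; and then $V\cong\bar V(\bfP)\cong\bar L(\bfP)|_{\bfUnslC}$ by Proposition \ref{restriction 1}. Your Step 4 goes beyond the paper, which never actually argues that the extension is unique, and your classification-plus-Schur's-lemma argument for it is sound. It tacitly uses that $\mu$ is the top weight of $V$, so that $\ttx^+_{i,s}w_0=0$ also for $s<0$; the remark about generators of $\afUslCpz$ is beside the point, since $\afUslCpz\subseteq\bfUnslC$ already, but nothing depends on it.
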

\begin{proof}
By Proposition \ref{pseudo-highest module} we have
$\dim V_\mu=1$ and $\mu\in\mbn^{n-1}$.
Let $w_0$ be a non-zero vector in $V_\mu$. Then we have
\begin{equation}\label{eq restriction 2}
 \ttx_{i,s}^+w_0=0,\quad\ms P_{i,s} w_0=P_{i,s}x_0,
\end{equation}
for $1\leq i< n$ and $s\geq 0$, where $\mu_i\geq 0$ and $P_{i,s}\in\mbc$. Let $\bfP=(P_1(u),P_2(u),\cdots,P_{n-1}(u))$, where $P_i(u)=\sum_{s\geq 0}P_{i,s}u^s$.
By \cite[Cor. 10.1.6]{CPbk}, the $\bfUC\fsl$-submodule of $V$ generated by $w_0$
is an irreducible $\bfUC\fsl$-module with highest weight $\mu$.
 Therefore by Lemma \ref{Kassel} we have
\begin{equation}
(\ttx_{i,0}^-)^{(\mu_i)}w_0\not=0,\quad (\ttx_{i,0}^-)^{(b)}w_0=0
\end{equation}
for $1\leq i<n$ and $b>\mu_i$.
It follows from Lemma \ref{(i),(iii)} that
\begin{equation}\label{polynomial}
 P_{i,s}=0
\end{equation}
for $1\leq i<n$ and $s>\mu_i$. Furthermore since $(\ttx_{i,1}^-)^{\mu_i}V_\mu\not=0$ and $\dim V_{s_i(\mu)}=\dim V_\mu=1$, we have
$(\ttx_{i,1}^-)^{(\mu_i)}w_0$ is a non-zero multiple of $(\ttx_{i,0}^-)^{(\mu_i)}w_0$.
Hence by Lemma \ref{CP (i),(iii)} we have $P_{i,\mu_i}\not=0$ for $1\leq i\leq n-1$.
So   $P_i(u)$ is a polynomial in $\mbc[u]$ of degree $\mu_i$ for $1\leq i<n$, and hence $\bfP \in\Pn$.
By \eqref{eq restriction 2}, we see that $V$ is an irreducible  quotient module of
$\bar N(\bfP)$. Hence by Proposition \ref{restriction 1} we have $\bar L(\bfP)|_{\bfUnslC}\cong\bar V(\bfP) \cong V$. The proof is completed.
\end{proof}

\subsection{Polynomial representations of $\afUglC$}

For $1\leq i\leq n$ and $s\in\mbz$, define the elements $\ms
Q_{i,s}\in\afUglC$ through the generating functions
\begin{equation*}
\begin{split}
&\quad\qquad\ms Q_i^\pm(u):=\exp\bigg(-\sum_{t\geq
1}\frac{1}{[t]_\ttz}\ttg_{i,\pm t} (\ttz u)^{\pm t}\bigg)=\sum_{s\geq
0}\ms Q_{i,\pm s} u^{\pm s}\in\afUglC[[u,u^{-1}]].
\end{split}
\end{equation*}
Let $\UglC$ be the subalgebra of $\afUglC$ generated by the elements
$\ttx_i^+$, $\ttx_i^-$, $\ttk_j^{\pm 1}$ for $1\leq i\leq n-1$ and $1\leq j\leq n$.
Let $V$ be a finite dimensional polynomial representation of $\UglC$ of type
1. Then $V=\oplus_{\la\in\mbn^{n }}V_\la$, where
$$V_\la=\{x\in V\mid  \ttk_ix=z^{\la_i}x, 1\leq i\leq n \},$$
and  each $V_\la$ is a direct sum
of generalized eigenspaces of the form
\begin{equation}\label{geigenspace}
V_{\la,\gamma}=\{x\in V_\la\mid (\ms
Q_{i,s}-\gamma_{i,s})^px=0\text{ for some $p$}\, (1\leq i\leq
n ,s\in\mbz)\},
\end{equation}
 where $\gamma=(\gamma_{i,s})$ with $\gamma_{i,s}\in\mbc$.
Let  $\Gamma_i^\pm(u)=\sum_{s\geq 0}\gamma_{i,\pm s} u^{\pm
s}$ for $1\leq i\leq n$.

A finite dimensional $\afUglC$-module $V$ is called a {\it
polynomial representation}\index{polynomial representation} if the restriction of $V$ to
 $\UglC$ is a polynomial representation
of type 1 and, for every weight $\la=(\la_1,\ldots,\la_n)\in\mbn^n$
of $V$, the formal power series $\Gamma_i^\pm(u)$ associated to the
eigenvalues $(\gamma_{i,s})_{s\in\mbz}$ defining the generalized
eigenspaces $V_{\la,\gamma}$ as given in \eqref{geigenspace}  are
polynomials in $u^\pm$ of degree $\la_i$ so that the zeroes of the
functions $\Gamma_i^+(u)$ and $\Gamma_i^-(u)$ are the same.

Following \cite{FM}, an $n$-tuple of polynomials
$\bfQ=(Q_1(u),\ldots,Q_n(u))$ with constant terms $1$ is called {\it
dominant} if, for each $1\leq i\leq n-1$, the ratio
$Q_i(\ttz^{i-1}u)/Q_{i+1}(\ttz^{i+1}u)$ is a polynomial. Let
$\sQ(n)$ be the set of dominant $n$-tuples of polynomials.

For $\bfQ=(Q_1(u),\ldots,Q_{n}(u))\in\sQ(n)$, define
$Q_{i,s}\in\mbc$, for $1\leq i\leq n$ and $s\in\mbz$, by the
following formula
$$Q_i^\pm(u)=\sum_{s\geq 0}Q_{i,\pm s}u^{\pm s},$$
where $Q_i^\pm(u)$ is defined using \eqref{f^pm(u)}. Let $I(\bfQ)$
be the left ideal of $\afUglC$ generated by $\ttx_{j,s}^+ ,\ms
Q_{i,s}-Q_{i,s},$ and $\ttk_i-\ttz^{\la_i}$, for $1\leq j\leq n-1$,
$1\leq i\leq n$, and $s\in\mbz$, where $\la_i=\mathrm{deg}Q_i(u)$,
and define
$$M(\bfQ)=\afUglC/I(\bfQ).$$
Then $M(\bfQ)$ has a unique simple quotient, denoted by $L(\bfQ)$.
The polynomials $Q_i(u)$ are called {\it Drinfeld
polynomials}\index{Drinfeld polynomials} associated with $L(\bfQ)$.\index{simple representation! $\sim$ of $\afUglC$, $L(\bfQ)$}

\begin{Thm}[\cite{FM}]\label{classification of simple afUglC-modules}
The $\afUglC$-modules $L(\bfQ)$ with $\bfQ\in\sQ(n)$ are all
nonisomorphic finite dimensional simple polynomial representations
of $\afUglC$. Moreover,
$$L(\bfQ)|_{\afUslC}\cong \bar L(\bfP),$$ where
$\bfP=(P_1(u),\ldots,P_{n-1}(u))$ with
$P_i(u)=Q_i(\ttz^{i-1}u)/Q_{i+1}(\ttz^{i+1}u)$.
\end{Thm}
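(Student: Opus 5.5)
This theorem is attributed to Frenkel--Mukhin \cite{FM}. Within this paper it is invoked as a known result, not proved again. If one does want a proof, my plan rests on two inputs. The first is Chari--Pressley's classification of type $1$ simple $\afUslC$-modules, Theorem \ref{classification of irreducible afUslC-modules}. The second is the decomposition $\afUglC=\afUslC\otimes\mathcal Z$, where $\mathcal Z$ is the central part generated by $\ttk_1\cdots\ttk_n$ and the central combinations of the $\ttg_{i,t}$ (essentially the $\th_{\pm s}$, which commute with all $\ttx^\pm_{j,s}$ by (QLA3)).

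\emph{Existence and properties of $L(\bfQ)$.} First, $L(\bfQ)$ is well defined: $M(\bfQ)$ has a unique maximal submodule because it is generated by a weight vector of weight $\la=(\deg Q_i)$ whose weight space is spanned by that vector, so $L(\bfQ)$ is its simple head. Next I would compute the pseudo-highest-weight data of the restriction to $\afUslC$. The relation $\tth_{i,\pm m}=\ttv^{\pm(i-1)m}\ttg_{i,\pm m}-\ttv^{\pm(i+1)m}\ttg_{i+1,\pm m}$ turns into a multiplicative identity of generating series,
$$\ms P_i^\pm(u)=\ms Q_i^\pm(\ttz^{i-1}u)\,\ms Q_{i+1}^\pm(\ttz^{i+1}u)^{-1}.$$
So on the highest weight vector $\ms P_i^\pm(u)$ acts by $Q_i(\ttz^{i-1}u)/Q_{i+1}(\ttz^{i+1}u)$, expanded at $0$ and at $\infty$ respectively.
\begin{itemize}
\item[(a)] If $L(\bfQ)$ is finite dimensional, Theorem \ref{classification of irreducible afUslC-modules} forces this ratio to be a polynomial, so $\bfQ$ must be dominant.
\item[(b)] Conversely, for $\bfQ$ dominant, I would build $L(\bfQ)$ inside $\bar L(\bfP)\otimes(\text{one-dimensional central character})$, i.e.\ let $\mathcal Z$ act by the scalars determined by $\bfQ$. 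This gives a finite dimensional simple module with the right highest weight data, and it is irreducible over $\afUslC$ already.
\end{itemize}
In particular $L(\bfQ)|_{\afUslC}\cong\bar L(\bfP)$, because $\mathcal Z$ acts by scalars on a simple module.

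\emph{Polynomiality and completeness.} I would check the polynomiality condition on generalized eigenvalues by realizing every $L(\bfQ)$ as a subquotient of a tensor product of evaluation modules of the natural representation. This uses the Hopf structure of Proposition \ref{Hopf algebra}. On the natural module $\ms Q_i^\pm(u)$ acts by degree $\le1$ polynomials with matching zeroes, and the $q$-characters multiply under tensor product. Conversely, let $V$ be a simple polynomial module. Its highest weight space (with respect to $\ttx^+_{j,s}$) is one-dimensional, and on it $\ms Q_i^\pm$ act by polynomials $Q_i^\pm$ of degree $\la_i$ whose zeroes agree. So $V\cong L(\bfQ)$, dominance holds by step (a), and nonisomorphism follows because $\bfQ$ can be read off the highest weight vector.

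The main obstacle is the polynomiality claim for arbitrary dominant $\bfQ$. It requires controlling all the generalized eigenvalues of $\ms Q_{i,s}$ on every weight space, not just on the highest one. I would handle it by the tensor product/evaluation embedding argument, following \cite{FM}.
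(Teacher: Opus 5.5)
The paper gives no proof of this statement; it is quoted from \cite{FM}, as you say. Your outline is the Frenkel--Mukhin strategy. You pass to $\afUslC$ via the identity $\ms P_i^\pm(u)=\ms Q_i^\pm(\ttz^{i-1}u)\,\ms Q_{i+1}^\pm(\ttz^{i+1}u)^{-1}$ and Theorem \ref{classification of irreducible afUslC-modules}, and then obtain polynomiality by realizing $L(\bfQ)$ inside tensor products of vector representations.

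One claim you build on is false, though. $\afUglC$ is not $\afUslC\otimes\mathcal Z$ with your $\mathcal Z$ generated by $(\ttk_1\cdots\ttk_n)^{\pm1}$ and the $\th_{\pm s}$. The subalgebra $\afUslC$ contains only the $\ti\ttk_i$. Together with $\ttk_1\cdots\ttk_n$ these generate an index-$n$ sublattice of the monomials in the $\ttk_i$; for $n=2$ you only reach $\ttk_1^a\ttk_2^b$ with $a\equiv b \pmod 2$. So $\ttk_1\notin\afUslC\,\mathcal Z$. Consequently ``$\mathcal Z$ acts by scalars'' does not by itself give irreducibility of $L(\bfQ)|_{\afUslC}$. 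Likewise, in step (b), letting $\mathcal Z$ act by scalars does not yet define an $\afUglC$-action on $\bar L(\bfP)$.

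Both uses are easily repaired.
\begin{itemize}
\item[(i)] \emph{Irreducibility of the restriction.} In a highest weight module the weights lie in $\la-\sum_{1\le i<n}\mbn\alpha_i$, where the $\alpha_i$ are the simple roots of $\gl$. On this set a weight is determined by its $\ti\ttk$-eigenvalues, because $\ttz$ is not a root of unity and $\sum_i\mbz\alpha_i$ meets the constant vectors only in $0$. Hence each $\ttk_i$ acts by a scalar on each $\ti\ttk$-weight space, and every $\afUslC$-submodule is $\ttk_i$-stable. Using (ii) below, each $\ttg_{i,t}$ lies in the span of $\afUslC$ and $\mathcal Z$, so restriction preserves irreducibility.
\item[(ii)] \emph{Extending $\bar L(\bfP)$ in step (b).} Define $\ttk_i$ on $\bar L(\bfP)$ by lifting weights as in (i). Define $\ttg_{i,\pm t}$ by inverting the linear map $(\ttg_{1,\pm t},\dots,\ttg_{n,\pm t})\mapsto(\tth_{1,\pm t},\dots,\tth_{n-1,\pm t},\sum_i\ttg_{i,\pm t})$, letting $\sum_i\ttg_{i,\pm t}$ act by the scalar read off from $\bfQ$. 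This map is invertible: its determinant is a nonzero multiple of $\sum_{k=0}^{n-1}\ttz^{\pm2kt}$, which is nonzero. The relations (QLA1)--(QLA7) then reduce to relations already holding in $\afUslC$.
\end{itemize}

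Finally, multiplicativity of $q$-characters under tensor products does not follow from Proposition \ref{Hopf algebra}. That proposition only gives $\Dt$ on $\ttx_i^\pm$, $\ttk_i$, $\th_{\pm s}$. You need the triangularity of $\Dt$ on the Drinfeld generators $\ttg_{i,s}$ (Damiani, Frenkel--Reshetikhin), which is a separate external input.
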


\subsection{Polynomial representations of $\bfUnC$}

Let
$\bfUnC$ be the subalgebra of $\afUglC$ generated by the elements
$\ttx_{i}^+$, $\ttx_{j}^-$, $\ttk_i^{\pm 1}$ and $\th_{s}$ for  $1\leq i\leq n$, $1\leq j<n$ and $s\geq 1$.
By Lemma \ref{generators} we have the following result.
\begin{Lem}\label{generators of bfUnC}
The algebra $\bfUnC$ is generated by the elements $\ttx_{j,s}^+$, $\ttx_{j,s}^-$, $\ttg_{i,t}$ and $\ttk_{i}^{\pm 1}$ for $1\leq j\leq n-1$, $1\leq i\leq n$, $s\geq 0$ and $t>0$.
\end{Lem}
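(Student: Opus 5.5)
The plan is to show two inclusions between $\bfUnC$ and the algebra $\mathcal A$ generated by $\ttx_{j,s}^\pm$, $\ttg_{i,t}$, $\ttk_i^{\pm1}$ ($1\le j\le n-1$, $1\le i\le n$, $s\ge0$, $t>0$).

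For $\mathcal A\subseteq\bfUnC$, first note that $\ttk_i^{\pm1}$, $\ttx_{j,0}^\pm=\ttx_j^\pm$ and $\th_s$ lie in $\bfUnC$ by definition. Since $\ttx_n^+$ is identified with $\ttY$ in \eqref{ttX}, $\bfUnC$ contains the full Borel part $\afUslCpz$ together with the $\ttk_i^{\pm1}$. By Lemma \ref{generators}, this gives $\ttx_{j,s}^+$ ($s\ge0$), $\ttx_{j,t}^-$ ($t>0$) and $\tth_{j,t}$ ($t>0$). The remaining point is to recover the individual $\ttg_{i,t}$ for $t>0$. For this I would use the invertible linear relation between $(\tth_{1,t},\dots,\tth_{n-1,t},\th_t)$ and $(\ttg_{1,t},\dots,\ttg_{n,t})$, where $\tth_{j,t}=\ttv^{(j-1)t}\ttg_{j,t}-\ttv^{(j+1)t}\ttg_{j+1,t}$ and $\th_t$ is proportional to $\sum_i\ttg_{i,t}$. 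The determinant of this $n\times n$ system is a nonzero sum of powers of $\ttv$. I expect this to be the main obstacle: one must check that the determinant is nonzero when $\ttv$ is not a root of unity.

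For the reverse inclusion $\bfUnC\subseteq\mathcal A$, it suffices to show that $\mathcal A$ contains $\ttx_n^+$ and $\th_s$. The element $\th_s$ is a linear combination of the $\ttg_{i,s}$, so it lies in $\mathcal A$. For $\ttx_n^+$, use the formula $\ttY=\ttv^{-1}\ti\ttk_n^{-1}[\cdots[\ttx_{1,-1}^+,\ttx_{2,0}^+]_\ttv,\dots]_\ttv$. This expression involves $\ttx_{1,-1}^+$, which is not obviously in $\mathcal A$, so I would instead invoke Lemma \ref{generators} in its Beck/BCP form. That form identifies the Borel subalgebra generated by $\ttx_{i,s}^+$ ($s\ge0$), $\ttx_{i,t}^-$ ($t>0$), $\tth_{i,t}$ ($t>0$) and $\ti\ttk_i^{\pm1}$ with the algebra generated by the Chevalley elements $\ttx_i^+$ ($1\le i\le n$) and $\ti\ttk_i^{\pm1}$. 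Since $\ttx_n^+$ belongs to that Borel subalgebra, it is generated by elements of $\mathcal A$.

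Combining the two inclusions gives equality.
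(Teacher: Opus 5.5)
Your proof is correct and is essentially the paper's argument, which simply cites Lemma~\ref{generators}; the determinant you flag is harmless, because a kernel vector $c$ of the system expressing $(\tth_{1,t},\dots,\tth_{n-1,t},\th_t)$ in terms of $(\ttg_{1,t},\dots,\ttg_{n,t})$ must satisfy $c_{j+1}=\ttz^{-2t}c_j$ and $\sum_j c_j=0$, so $c_1\frac{1-\ttz^{-2nt}}{1-\ttz^{-2t}}=0$ and hence $c=0$, since $\ttz$ is not a root of unity. You have, however, misread \eqref{ttX}: the formula involving $\ttx^+_{1,-1}$ defines $\ttx_n^-$, whereas $\ttx_n^+=\ttz[\ttx^-_{n-1,0},[\cdots,[\ttx^-_{2,0},\ttx^-_{1,1}]_{\ttz^{-1}}\cdots]_{\ttz^{-1}}]_{\ttz^{-1}}\ti\ttk_n$ lies in $\mathcal A$ directly, so your detour through Lemma~\ref{generators} for the reverse inclusion is valid but unnecessary.
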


A finite dimensional $\bfUnC$-module $V$ is called a
polynomial representation  if the restriction of $V$ to
 $\UglC$ is a polynomial representation
of type 1 and, for every weight $\la\in\mbn^n$
of $V$, the formal power series $\Gamma_i (u)$ associated to the
eigenvalues $(\gamma_{i,s})_{s\geq 0}$ defining the generalized
eigenspaces $V_{\la,\gamma}$ as given in \eqref{geigenspace}  are
polynomials in $u $ of degree $\la_i$.

For $\bfQ\in\Qn$ let $J(\bfQ)$
be the left ideal of $\bfUnC$ generated by
$\ttx_{j,s}^+$, $\ms Q_{i,s}-Q_{i,s}$ and $\ttk_i-\ttz^{\la_i}$ for $1\leq i\leq n$, $1\leq j\leq n-1$, $s\geq 0$, where
$\la_i=\mathrm{deg}Q_i(u)$. Let
$$N(\bfQ) =\bfUnC/J(\bfQ).$$
By Lemma \ref{generators of bfUnC}, the $\bfUnC$-module $N(\bfQ)$ has a unique irreducible quotient $\bfUnC$-module, which is denoted by $V(\bfQ)$.

\begin{Prop}\label{restriction 1 gl}
For $\bfQ\in\Qn$ the restriction of $L(\bfQ)$ to $\bfUnC$ is isomorphic to $ V(\bfQ)$ as a $\bfUnC$-module.
\end{Prop}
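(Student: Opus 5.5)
We follow the pattern of the proof of Proposition \ref{restriction 1}, replacing $\afUslC$ by $\afUglC$ and $\bfUnslC$ by $\bfUnC$. Let $w_0\in L(\bfQ)$ be the image of $1$ under $M(\bfQ)\to L(\bfQ)$. Then $\ttx^+_{j,s}w_0=0$, $\ms Q_{i,s}w_0=Q_{i,s}w_0$ and $\ttk_iw_0=\ttz^{\la_i}w_0$ for all $s\in\mbz$, in particular for $s\geq 0$. Hence $J(\bfQ)w_0=0$, and there is a $\bfUnC$-module homomorphism $f:N(\bfQ)\to L(\bfQ)|_{\bfUnC}$ with $f(1+J(\bfQ))=w_0$.

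The key step is to show that $L(\bfQ)|_{\bfUnC}$ is irreducible; then $f$ is surjective, and $L(\bfQ)|_{\bfUnC}$ is an irreducible quotient of $N(\bfQ)$, hence equals $V(\bfQ)$ by uniqueness of the irreducible quotient. For irreducibility we reduce to the $\frak{sl}_n$ case. By Theorem \ref{classification of simple afUglC-modules}, $L(\bfQ)|_{\afUslC}\cong\bar L(\bfP)$. By \cite{Bo}, as used in Proposition \ref{restriction 1}, $\bar L(\bfP)|_{\afUslCpz}$ is irreducible. By Lemma \ref{generators}, $\afUslCpz$ is generated by $\ttx^+_{i,s}$ ($s\geq0$), $\ttx^-_{i,t}$ ($t>0$), $\tth_{i,t}$ ($t>0$) and $\ti\ttk_i^{\pm1}$. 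By Lemma \ref{generators of bfUnC}, all of these lie in $\bfUnC$, since $\tth_{i,t}$ is a combination of $\ttg_{i,t},\ttg_{i+1,t}$ and $\ti\ttk_i=\ttk_i\ttk_{i+1}^{-1}$. Thus $\afUslCpz\subseteq\bfUnC$. Any $\bfUnC$-submodule of $L(\bfQ)$ is then an $\afUslCpz$-submodule, so it is $0$ or everything, and $L(\bfQ)|_{\bfUnC}$ is irreducible.

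The main obstacle is the inclusion $\afUslCpz\subseteq\bfUnC$, specifically placing the elements $\ttx^-_{i,t}$ for $t>0$ inside $\bfUnC$. Lemma \ref{generators of bfUnC} already asserts this (it is derived from Lemma \ref{generators}), so we invoke it. If one wants an independent check, note that $\ttx^-_{i,1}$ is obtained from $\ttx^-_{i,0}$ by commutation with $\ttg_{i,1}$ via (QLA3), and $\ttg_{i,1}\in\bfUnC$ by definition. Iterating gives $\ttx^-_{i,t}$ for all $t>0$. The remaining facts (Bowman's irreducibility and the $\frak{sl}_n$-restriction in Theorem \ref{classification of simple afUglC-modules}) are quoted results.
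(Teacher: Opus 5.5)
Your proof is correct and follows the paper's argument. Both use Bowman's theorem to get irreducibility of $L(\bfQ)$ on the Borel subalgebra $\afUslCpz\subseteq\bfUnC$, hence on $\bfUnC$, and then identify $L(\bfQ)|_{\bfUnC}$ with the unique irreducible quotient $V(\bfQ)$ of $N(\bfQ)$; you simply make the map $N(\bfQ)\to L(\bfQ)$ and the passage through $\bar L(\bfP)$ explicit.

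Two small remarks. First, the inclusion $\afUslCpz\subseteq\bfUnC$ is immediate from the definitions, since $\ttx_i^+$ ($1\leq i\leq n$) and $\ti\ttk_i^{\pm1}=\ttk_i^{\pm1}\ttk_{i+1}^{\mp1}$ are generators, or products of generators, of $\bfUnC$; so there is no real obstacle there. Second, in your side check, $\ttg_{i,1}$ is not in $\bfUnC$ ``by definition'', because only the normalized sums $\th_s$ are defining generators. It does lie in $\bfUnC$, but by Lemma~\ref{generators of bfUnC}.
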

\begin{proof}
By \cite{Bo},  the restriction of $L(\bfQ)$ to ${\afUslCpz}$ is irreducible.
It follows that the restriction of $L(\bfQ)$ to $\bfUnC$ is irreducible.
Hence $L(\bfQ)|_{\bfUnC}$
is an irreducible quotient module of $N(\bfQ)$. Therefore we have $V(\bfQ) \cong  L(\bfQ)|_{\bfUnC}$.
\end{proof}

\begin{Lem}\label{restriction of irreducible modules}
Let $V$ be a finite dimensional irreducible  $\bfUnC$-module. Then
the restriction of $V$ to $\bfUnslC$ is irreducible.
\end{Lem}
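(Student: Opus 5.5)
The restriction of $V$ to $\bfUnslC$ is irreducible because, on $V$, $\bfUnC$ is generated by $\bfUnslC$ together with operators that $V$ forces to act as scalars. This reduction is the plan.

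First, I will use Lemma \ref{generators of bfUnC}: $\bfUnC$ is generated by $\ttx_{j,s}^\pm$ ($s\ge 0$), $\ttg_{i,t}$ ($t>0$) and $\ttk_i^{\pm1}$. The subalgebra $\bfUnslC$ already contains the $\ttx_{j,s}^\pm$, the $\ti\ttk_j^{\pm1}$ and (via the corollary of Lemma \ref{generators}) the $\tth_{j,t}$ for $t>0$. Since $\tth_{j,t}=\ttz^{(j-1)t}\ttg_{j,t}-\ttz^{(j+1)t}\ttg_{j+1,t}$, the $n$ elements $\ttg_{1,t},\dots,\ttg_{n,t}$ lie in the span of $\bfUnslC$ and the single element $\th_t$ (equivalently $\ttg_{1,t}+\cdots+\ttg_{n,t}$). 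Likewise the $\ttk_i$ lie in the algebra generated by the $\ti\ttk_j^{\pm1}$ and the single element $\ttk_1\cdots\ttk_n$. Hence $\bfUnC$ is generated by $\bfUnslC$, the elements $\th_t$ ($t\geq1$), and $(\ttk_1\cdots\ttk_n)^{\pm1}$.

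Next, I will check that $\th_t$ and $\ttk_1\cdots\ttk_n$ are central in $\bfUnC$. For $\th_t$ this is relation (5) of Proposition \ref{presentation dHallAlg}, transported through Proposition \ref{DDFIsoThm}, since $\sfz^+_t$ is a multiple of $\th_t$. Equivalently, it follows from (QLA3): the terms for $i=j$ and $i=j+1$ cancel in $[\sum_i\ttg_{i,s},\ttx^\pm_{j,t}]$, and (QLA4) handles the $\ttg$'s. For $\ttk_1\cdots\ttk_n$ it follows from (QLA2), where the exponents $\dt_{i,j}-\dt_{i,j+1}$ sum to zero over $i$.

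Then $V$ is a finite dimensional irreducible module over $\mbc$, so by Schur's lemma every central element acts on it by a scalar. Thus the image of $\bfUnC$ in $\End(V)$ equals the image of $\bfUnslC$ with scalars adjoined, i.e. the image of $\bfUnslC$ itself, since it contains $1$. Any $\bfUnslC$-submodule of $V$ is therefore $\bfUnC$-stable, so it is $0$ or $V$. This proves irreducibility.

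The main obstacle is the generation step. It requires that $\bfUnslC$ really contains the $\tth_{j,t}$ with $t>0$ and the $\ttx^\pm_{j,s}$ with $s\ge0$; this rests on the corollary to Lemma \ref{generators}, and I will double-check that the positive-mode $\tth$'s are obtained there. One further point needs care: the degree-$0$ part $n=1$ and the whole argument require $n\geq2$. For $n=1$, $\bfUnslC=\mbc$, and an irreducible module of the commutative algebra $\bfUnC$ is $1$-dimensional, so its restriction is trivially irreducible.
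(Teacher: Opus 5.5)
Your strategy is the same as the paper's. The $\th_t$ are central, so they act by scalars on $V$ by Schur's lemma, and $\bfUnC$ is generated by $\bfUnslC$ together with such central elements. You are right that the $\ttk_i$ need separate treatment; the paper just asserts that $\bfUnC$ is generated by $\bfUnslC$ and the $\th_s$, which overlooks them.

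Your fix, however, is false for $n\geq 2$: the $\ttk_i$ do \emph{not} lie in the algebra generated by the $\ti\ttk_j^{\pm1}$ and $(\ttk_1\cdots\ttk_n)^{\pm1}$. The exponent lattice spanned by $e_j-e_{j+1}$ $(1\leq j<n)$ and $e_1+\cdots+e_n$ has index $n$ in $\mbz^n$. Indeed, the character $a\mapsto\sum_i a_i \bmod n$ kills it but not $e_1$, so you only recover $\ttk_1^n$.

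The remaining generators do not help either. The assignment $\ttk_i\mapsto x$, $\ttx^\pm_{j,s}\mapsto 0$, $\ttg_{i,t}\mapsto 0$ respects (QLA1)--(QLA7); note $\ti\ttk_i\mapsto 1$, so both sides of (QLA5) vanish. It therefore defines a homomorphism $\bfUnC\to\mbc[x^{\pm1}]$. This map sends the subalgebra generated by $\bfUnslC$, the $\th_t$ and $(\ttk_1\cdots\ttk_n)^{\pm1}$ into $\mbc[x^{\pm n}]$, while $\ttk_1\mapsto x$. So ``the image of $\bfUnC$ in $\End(V)$ equals the image of $\bfUnslC$'' is not justified as written.

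What is missing is an argument that $\ttk_1$ acts on $V$ through the image of $\bfUnslC$. Here is one way to supply it.
\begin{itemize}
\item Since $\ttz$ is not a root of unity, the $\ti\ttk_j$ $(1\leq j<n)$ act semisimply on $V$ with eigenvalues $\pm\ttz^{\mu_j}$.
\item Fix an $n$-th root of $\ttz$ and let $T$ act on the joint eigenspace labelled by $\mu$ as $\ttz^{\frac1n\sum_j(n-j)\mu_j}$. By interpolation, $T$ is a Laurent polynomial in the $\ti\ttk_j$.
\item If $\ttk_iy\ttk_i^{-1}=\ttz^{\beta_i}y$ with $\sum_i\beta_i=0$, which holds for every generator $y$ in Lemma~\ref{generators of bfUnC}, then $TyT^{-1}=\ttz^{\beta_1}y=\ttk_1y\ttk_1^{-1}$.
\item Hence $\ttk_1T^{-1}$ commutes with the action of $\bfUnC$ and is a scalar by Schur's lemma.
\end{itemize}
So $\ttk_1$, and therefore every $\ttk_i$, acts through the image of $\bfUnslC$. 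With this step added, your argument is complete.

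For type $1$ modules, which is the case needed in Proposition~\ref{restriction 2 gl}, there is a shortcut. Since $\ttk_1\cdots\ttk_n$ acts by a scalar, $\sum_i\la_i$ is constant on the weights of $V$. Then the joint $\ti\ttk$-eigenspaces are exactly the $\ttk$-weight spaces, so every $\bfUnslC$-submodule is $\ttk_i$-stable.

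A minor point: solving for the $\ttg_{i,t}$ from the $\tth_{j,t}$ and $\th_t$ is fine. It does, however, use $\sum_{j=0}^{n-1}\ttz^{-2jt}\neq0$, which again holds because $\ttz$ is not a root of unity.
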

\begin{proof}
 Since $V$ is finite dimensional, there exists $w_s\not=0\in V$ such that $\th_sw_s=k_sw_s$ for some $k_s\in\mbc$.
Since $V$ is  an irreducible  $\bfUnC$-module and
 $\th_s$ are central elements in $\bfUnC$ we have
$\th_s(w)=k_sw$ for any $w\in V$. Hence, since
the algebra $\bfUnC$ is generated by $\bfUnslC$ and the elements $\th_{s}$ for $s\geq 1$,
the restriction of $V$ to $\bfUnslC$ is irreducible.
\end{proof}

\begin{Prop}\label{restriction 2 gl}
Let $V$ be a finite dimensional polynomial irreducible $\bfUnC$-module of type $1$. Then the action of $\bfUnC$ on  $V$
extends uniquely to an action of $\afUglC$ on $V$. The resulting $\afUglC$-module structure on $V$ is a polynomial irreducible $\afUglC$-module of type $1$.
\end{Prop}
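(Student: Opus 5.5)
We follow the pattern of the $\frak{sl}_n$ case (Proposition \ref{restriction 2}), working with Drinfeld polynomials. The plan is to build a dominant $\bfQ\in\Qn$ from $V$, show that $V$ is an irreducible quotient of $N(\bfQ)$, and then invoke Proposition \ref{restriction 1 gl}.

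\emph{Step 1: a highest weight vector.} By Lemma \ref{restriction of irreducible modules}, $V|_{\bfUnslC}$ is irreducible. By Proposition \ref{pseudo-highest module}, it is a pseudo-highest weight module. Since $V$ is polynomial of type $1$, we may choose the weight as $\la\in\mbn^n$ with $\la_i-\la_{i+1}=\mu_i$. The space $V^0=\{w\mid \ttx_{j,s}^+w=0,\ s\ge 0\}$ is stable under the commuting operators $\ttg_{i,t}$ ($t>0$) and $\ttk_i$; this follows from (QLA2), (QLA3) with $s>0$ and the fact that only nonnegative modes occur. Hence there is a common eigenvector $w_0\in V^0\cap V_\la$ with $\ms Q_{i,s}w_0=\gamma_{i,s}w_0$ for $s\ge0$. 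Because $V$ is polynomial, each $Q_i(u):=\sum_{s\ge0}\gamma_{i,s}u^s$ is a polynomial of degree $\la_i$ with constant term $1$. Set $\bfQ=(Q_1,\dots,Q_n)$. By Lemma \ref{generators of bfUnC} and irreducibility, $w_0$ generates $V$, so $V$ is an irreducible quotient of $N(\bfQ)$, and therefore $V\cong V(\bfQ)$.

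\emph{Step 2: dominance of $\bfQ$.} This is the main obstacle. We must show that $P_i(u)=Q_i(\ttz^{i-1}u)/Q_{i+1}(\ttz^{i+1}u)$ is a polynomial. The relation $\tth_{i,m}=\ttz^{(i-1)m}\ttg_{i,m}-\ttz^{(i+1)m}\ttg_{i+1,m}$ gives $\ms P_i^+(u)=\ms Q_i^+(\ttz^{i-1}u)\,\ms Q_{i+1}^+(\ttz^{i+1}u)^{-1}$ as power series. So the eigenvalue series of $\ms P_i^+(u)$ on $w_0$ is the rational function $P_i(u)$.

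On the other hand, the $\frak{sl}_n$ argument gives the following. Lemma \ref{Kassel} applied to the $\bfUC\fsl$-submodule generated by $w_0$, together with Lemma \ref{(i),(iii)}, shows that $\ms P_{i,s}w_0=0$ for $s>\mu_i$. Hence the series $P_i(u)$ is a polynomial of degree at most $\mu_i=\deg Q_i-\deg Q_{i+1}$. A rational function whose power series expansion terminates is a polynomial, so $\bfQ$ is dominant. Its degree must then be exactly $\mu_i$, which matches the degree count. If the top coefficient is an issue, we would instead compare zeros using the polynomiality hypothesis at the weights $s_i(\la)$; this is where the extra hypothesis in Proposition \ref{restriction 2} is replaced by polynomiality.

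\emph{Step 3: extension and uniqueness.} Given $\bfQ\in\Qn$, Proposition \ref{restriction 1 gl} gives $L(\bfQ)|_{\bfUnC}\cong V(\bfQ)\cong V$. Transporting the structure defines an $\afUglC$-action on $V$ that extends the given one, and this action is polynomial, irreducible and of type $1$ by Theorem \ref{classification of simple afUglC-modules}.

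For uniqueness, suppose $V$ carries two such extensions. Each is a simple polynomial module $L(\bfQ')$ whose restriction is $V$. The Drinfeld polynomials are read off from the action of $\ms Q_{i,s}$, $s\ge0$, on the one-dimensional highest weight space, and this action lies in $\bfUnC$; hence $\bfQ'=\bfQ$. The negative modes are then determined: $\Gamma_i^-$ is forced by the zeros of $\Gamma_i^+$, which fixes the action of $\ms Q_{i,-s}$ on $w_0$. Since $\afUglC$ is generated by $\bfUnC$ together with $\ttx^\pm_{i,-s}$ and $\ttg_{i,-t}$, and since the isomorphism $L(\bfQ)\to L(\bfQ)$ matching $w_0$ is the identity on the common $\bfUnC$-module $V$ by Schur's lemma, the two actions coincide.
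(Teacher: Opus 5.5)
Your proposal is correct and follows essentially the same route as the paper's proof: a common eigenvector $w_0$ killed by the nonnegative modes yields $\bfQ$, polynomiality gives $\deg Q_i=\la_i$, the vanishing $\ms P_{i,s}w_0=0$ for $s>\mu_i$ (the paper's \eqref{polynomial}, obtained from Lemmas \ref{Kassel} and \ref{(i),(iii)}) gives dominance, and Proposition \ref{restriction 1 gl} supplies the extension. Beyond the paper, you write out the identity $\ms P_i^+(u)=\ms Q_i^+(\ttz^{i-1}u)\,\ms Q_{i+1}^+(\ttz^{i+1}u)^{-1}$, which the paper leaves implicit, and you actually prove the uniqueness clause, understood among polynomial extensions, by Schur's lemma; the paper's proof does not address uniqueness, and your hedge about the top coefficient is unnecessary because $\deg Q_i=\la_i$ already fixes $\deg P_i=\mu_i$.
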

\begin{proof}
By the proof of \cite[Prop. 12.2.3]{CPbk} we see that there exists $w_0\in V$ such that
\begin{equation}\label{eq restriction 2 gl}
 \ttx_{i,s}^+w_0=0,\quad\ms Q_{j,s} w_0=Q_{j,s}w_0,\quad \ttk_jw_0=\ttz^{\la_j}w_0
\end{equation}
for $1\leq i< n$, $1\leq j\leq n$ and $s\geq 0$, where $\la_j\geq 0$ and $Q_{j,s}\in\mbc$.
Let $\bfQ=(Q_1(u),\ldots,Q_{n}(u)) $, where
 $Q_j (u)=\sum_{s\geq 0}Q_{j, s}u^{ s}$ for $1\leq j\leq n$.
Since $V$ is a polynomial irreducible $\bfUnC$-module of type $1$, by Lemma \ref{restriction of irreducible modules}  $Q_i(u)$ is a polynomial of degree $\la_i$ for $1\leq i\leq n$
and the restriction of $V$ to $\bfUnslC$ is irreducible.
Hence by \eqref{polynomial}, $P_i(u)$ is a polynomial for $1\leq i\leq n-1$, where
$P_i(u)=Q_i(\ttz^{i-1}u)/Q_{i+1}(\ttz^{i+1}u)$. Therefore $\bfQ\in\Qn$.
By \eqref{eq restriction 2 gl}
we see that $V$ is an irreducible  quotient module of
$ N(\bfQ)$. Hence by Proposition \ref{restriction 1 gl} we have $ L(\bfQ)|_{\bfUnC}\cong V(\bfQ) \cong V$. The proof is completed.
\end{proof}

Combining Theorem \ref{classification of simple afUglC-modules},
Proposition \ref{restriction 1 gl} and \ref{restriction 2 gl} we obtain the following result.
\begin{Thm}\label{classification bfUnC}
The modules $ V(\bfQ)$ with $\bfQ\in\Qn$ are all nonisomorphic
finite dimensional polynomial irreducible $\bfUnC$-modules.
Thus there is a bijection between finite dimensional polynomial irreducible $\bfUnC$-modules and finite dimensional polynomial irreducible  $\afUglC$-modules.
\end{Thm}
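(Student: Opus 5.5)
The plan is to assemble the three cited results rather than prove anything genuinely new. There are three things to establish: existence and irreducibility, exhaustiveness, and non-isomorphism.

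\emph{Existence.} Fix $\bfQ\in\Qn$. By Theorem~\ref{classification of simple afUglC-modules}, $L(\bfQ)$ is a finite dimensional simple polynomial $\afUglC$-module. By Proposition~\ref{restriction 1 gl}, its restriction to $\bfUnC$ is irreducible and isomorphic to $V(\bfQ)$. I then check that this restriction is polynomial in the sense defined for $\bfUnC$. Its $\UglC$-restriction is the same as that of $L(\bfQ)$, hence polynomial of type $1$. The generalized eigenvalues of $\ms Q_{i,s}$ with $s\ge 0$ are the nonnegative-index parts of those for $L(\bfQ)$, so the series $\Gamma_i(u)=\Gamma_i^+(u)$ is a polynomial of degree $\la_i$. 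Hence each $V(\bfQ)$ is a finite dimensional polynomial irreducible $\bfUnC$-module.

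\emph{Exhaustiveness.} Let $V$ be any finite dimensional polynomial irreducible $\bfUnC$-module. Proposition~\ref{restriction 2 gl} extends $V$ to a polynomial irreducible $\afUglC$-module. By Theorem~\ref{classification of simple afUglC-modules}, that module is $L(\bfQ)$ for some $\bfQ\in\Qn$. Restricting back and applying Proposition~\ref{restriction 1 gl} gives $V\cong V(\bfQ)$.

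\emph{Non-isomorphism.} Suppose $V(\bfQ)\cong V(\bfQ')$ as $\bfUnC$-modules. I would recover $\bfQ$ intrinsically from the module. The space
\[
V^0=\{w\mid \ttx_{j,s}^+w=0 \text{ for all } j,\ s\ge 0\}
\]
should be one dimensional. It is nonzero by the argument of Lemma~\ref{Lem for pseudo-highest module}. It is at most one dimensional because irreducibility forces $V=\bfUnC w_0$, and the triangular decomposition (Lemma~\ref{generators of bfUnC}) then makes the top weight space one dimensional. On $V^0$, the elements $\ms Q_{i,s}$ ($s\ge 0$) act by the scalars $Q_{i,s}$, so $\bfQ=\bfQ'$. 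As a backup, the uniqueness clause of Proposition~\ref{restriction 2 gl} also works: an isomorphism of $\bfUnC$-modules transports the unique extension, so $L(\bfQ)\cong L(\bfQ')$, and Theorem~\ref{classification of simple afUglC-modules} gives $\bfQ=\bfQ'$.

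\emph{The bijection.} The map $V(\bfQ)\mapsto L(\bfQ)$ is inverse to restriction. It is well defined and injective by the non-isomorphism step, and surjective by Theorem~\ref{classification of simple afUglC-modules}.

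The main obstacle is the polynomiality bookkeeping in the existence step. One must make sure the restricted module meets the $\bfUnC$ definition of ``polynomial'', which uses only $\Gamma_i(u)$ with $s\ge 0$, and conversely that Proposition~\ref{restriction 2 gl} produces a module whose negative-index series $\Gamma_i^-(u)$ have the same zeros. I expect the latter to hold because the extension is $L(\bfQ)$ itself, but I would verify it explicitly by comparing the Drinfeld polynomial data on the highest weight vector.
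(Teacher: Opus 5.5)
Your proposal is correct and follows the paper's route. The paper's proof simply combines Theorem~\ref{classification of simple afUglC-modules} with Propositions~\ref{restriction 1 gl} and~\ref{restriction 2 gl}, exactly as in your existence and exhaustiveness steps; your explicit polynomiality check and your recovery of $\bfQ$ from the top weight space (the vectors killed by all $\ttx^+_{j,s}$, $s\ge 0$) spell out bookkeeping that the paper leaves implicit.
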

\subsection{Relation with the Yangian \(Y(\mathfrak{gl}_n)\)}

In this subsection we further clarify the relationship between the quantum current algebra \(\bfUnC\) and the Yangian \(Y(\mathfrak{gl}_n)\), the standard quantization of the current algebra \(\mathfrak{gl}_n[t]\) due to Drinfeld.

Denote by \(\operatorname{Irr}_{\mathrm{pol}}(\afUglC\) the set of isomorphism classes of finite dimensional polynomial irreducible \(\afUglC\)-modules. Similarly, let \(\operatorname{Irr}_{\mathrm{pol}}(\bfUnC\) and \(\operatorname{Irr}_{\mathrm{pol}}(Y(\mathfrak{gl}_n))\) be the corresponding sets for \(\afUglC\) and the Yangian \(Y(\mathfrak{gl}_n)\), respectively.

\begin{Thm} \label{thm:bijection}
There is a bijection between the set of isomorphism classes of finite dimensional polynomial irreducible modules over the quantum current algebra \(\bfUnC\) and the corresponding set for the Yangian \(Y(\mathfrak{gl}_n)\):
\[
\operatorname{Irr}_{\mathrm{pol}}\bigl(\bfUnC)\bigr) \;\longleftrightarrow\; \operatorname{Irr}_{\mathrm{pol}}\bigl(Y(\mathfrak{gl}_n)\bigr).
\]
\end{Thm}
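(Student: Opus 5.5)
The plan is to compose two bijections, each parametrized by the set $\Qn$ of dominant $n$-tuples of polynomials. The first comes from Theorem \ref{classification bfUnC}: the map $\bfQ\mapsto V(\bfQ)$ is a bijection from $\Qn$ onto $\operatorname{Irr}_{\mathrm{pol}}(\bfUnC)$. We also know that $V(\bfQ)\cong L(\bfQ)|_{\bfUnC}$ (Proposition \ref{restriction 1 gl}), and that every polynomial irreducible $\bfUnC$-module extends uniquely (Proposition \ref{restriction 2 gl}). So it suffices to produce a bijection between $\Qn$, or equivalently $\operatorname{Irr}_{\mathrm{pol}}(\afUglC)$ via Theorem \ref{classification of simple afUglC-modules}, and $\operatorname{Irr}_{\mathrm{pol}}(Y(\mathfrak{gl}_n))$.

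For the Yangian side I would invoke the classical Drinfeld/Tarasov/Molev classification. Finite dimensional irreducible $Y(\mathfrak{gl}_n)$-modules are highest weight modules $L(\mu(u))$ with $\mu(u)=(\mu_1(u),\dots,\mu_n(u))$ and $\mu_i(u)\in 1+u^{-1}\mbc[[u^{-1}]]$. Such a module is finite dimensional iff $\mu_i(u)/\mu_{i+1}(u)=P_i(u+1)/P_i(u)$ for monic polynomials $P_i$. The polynomial ones are those with $\mu_i(u)=Q^Y_i(u)/u^{\deg Q_i^Y}$, where the $Q^Y_i$ are monic polynomials and the ratio condition holds; in the Yangian normalization this condition reads ``$Q^Y_i(u)/Q^Y_{i+1}(u)$ is a polynomial up to the shift''. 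Concretely, Molev's description is that $Q^Y_{i+1}(u)$ divides $Q^Y_i(u)$ after adjusting for the shift by $1$ built into the definition of $P_i$. I would fix this precisely so that the set $\sQ^Y(n)$ of Yangian Drinfeld data is defined by divisibility conditions of the same combinatorial form as dominance in $\Qn$.

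The bijection $\Qn\to\sQ^Y(n)$ is then built root by root. Write $Q_i(u)=\prod_k(1-a_{i,k}u)$ and encode each $Q_i$ by its multiset of inverse roots $a_{i,k}\in\mbc^*$. The dominance condition $Q_i(\ttz^{i-1}u)/Q_{i+1}(\ttz^{i+1}u)\in\mbc[u]$ is a multiset inclusion $\{a_{i+1,k}\ttz^{i+1}\}\subseteq\{a_{i,k}\ttz^{i-1}\}$. Equivalently, after normalizing $b_{i,k}=a_{i,k}\ttz^{-2i}$ up to a uniform shift, it says $\{b_{i+1,k}\}\subseteq\{b_{i,k}\ttz^{-2}\}$ (up to fixed powers). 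Since $\ttz$ is not a root of unity, the multiplicative group $\mbc^*$ decomposes into $\ttz^{2\mbz}$-orbits, each orbit isomorphic to $\mbz$ via the exponent. The Yangian condition is the analogous inclusion in $\mbc$ under additive shifts by $1$, where $\mbc$ decomposes into $\mbz$-orbits $c+\mbz$. I would choose a bijection of orbit sets $\mbc^*/\ttz^{2\mbz}\to\mbc/\mbz$, which exists because both have the cardinality of the continuum; using a Hamel-basis/choice argument only for existence is fine, since only a set bijection is claimed. I would also pick representatives, extend to a bijection $\psi:\mbc^*\to\mbc$ with $\psi(\ttz^{2}x)=\psi(x)+1$, and apply $\psi$ to all roots. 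This carries dominant tuples to Yangian-dominant tuples and is invertible, with degrees preserved, so polynomiality is preserved.

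The main obstacle is bookkeeping rather than concept. I need to pin down the exact Yangian normalization, namely which shift appears and whether polynomial modules correspond to $\mu_i$ being polynomial in $u^{-1}$ after multiplying by $u^{\la_i}$. I also need to check that, with that normalization, the multiplicative condition maps exactly onto the additive one via $\psi$, including the twists $\ttz^{i-1}$ and $\ttz^{i+1}$, which amount to the shift $\ttz^{2}$ between consecutive indices. If a residual index-dependent twist remains, I would absorb it by first applying the rescaling $a_{i,k}\mapsto a_{i,k}\ttz^{c i}$ for a suitable constant $c$ and then using $\psi$. The composition $\operatorname{Irr}_{\mathrm{pol}}(\bfUnC)\leftrightarrow\Qn\leftrightarrow\sQ^Y(n)\leftrightarrow\operatorname{Irr}_{\mathrm{pol}}(Y(\mathfrak{gl}_n))$ then gives the theorem.
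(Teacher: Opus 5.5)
Your outline is sound, but the middle step differs from the paper's.

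\textbf{How the paper does it.} The paper never transports the dominance condition root by root; it \emph{solves} it on both sides. A dominant $\bfQ\in\Qn$ is the same as an unconstrained tuple $(P_1,\dots,P_{n-1},Q_n)$ of polynomials with constant term $1$. Here $P_i(u)=Q_i(\ttz^{i-1}u)/Q_{i+1}(\ttz^{i+1}u)$, and the $Q_i$ are recovered recursively as products. Citing the Yangian classification, the paper asserts that polynomial irreducible $Y(\mathfrak{gl}_n)$-modules are parametrized by the same kind of tuple, via $A_n(u)=Q_n(u+1)/Q_n(u)$ and $P_i=Q_i/Q_{i+1}$. It then identifies the two data sets and composes with the rigidity theorem, exactly as in your first step.

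\textbf{What each route buys.} Your equivariant $\psi$ makes explicit something the paper passes over. Yangian Drinfeld polynomials are monic with roots in $\mathbb{C}$, while the entries of $\Qn$ have inverse roots in $\mathbb{C}^*$. So some bijection between the two kinds of polynomials (e.g.\ one induced by a bijection $\mathbb{C}^*\to\mathbb{C}$ on roots) is needed in any case. Your map also visibly preserves degrees, hence highest weights. The paper's route is shorter and avoids choosing orbit representatives.

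\textbf{One step fails as written.} With the normalization $\mu_i(u)=Q^Y_i(u)/u^{\deg Q^Y_i}$, finite dimensionality is \emph{not} an inclusion condition. For $n=2$, take $\mu_2=1$ and $\mu_1=1+\alpha u^{-1}$. Then $\mu_1/\mu_2=P(u+1)/P(u)$ for a monic $P$ iff $\alpha\in\mathbb{Z}_{\ge 0}$, although $Q^Y_1/Q^Y_2=u+\alpha$ is always a polynomial.

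The normalization that works is the paper's: $\mu_i(u)=Q^Y_i(u+1)/Q^Y_i(u)$, with $\deg Q^Y_i$ equal to the $i$-th component of the highest weight. Then $\mu_i/\mu_{i+1}=P_i(u+1)/P_i(u)$ makes $Q^Y_i/(Q^Y_{i+1}P_i)$ a $1$-periodic rational function. It is therefore constant, and equal to $1$ by monicity. So the condition is plain divisibility $Q^Y_{i+1}\mid Q^Y_i$, with no shift.

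On your side, dominance reads $\ttz^{2}A_{i+1}\subseteq A_i$ for the multisets $A_i$ of inverse roots of $Q_i$. You then have two options:
\begin{itemize}
\item keep your equivariant $\psi$ and set $B_i=\psi(A_i)+i$, which gives $B_{i+1}\subseteq B_i$;
\item first rescale $a_{i,k}\mapsto\ttz^{2i}a_{i,k}$ to get $A_{i+1}\subseteq A_i$. After that, any bijection $\mathbb{C}^*\to\mathbb{C}$ applied to roots will do, and no $\ttz^{2\mathbb{Z}}$-equivariance is needed.
\end{itemize}
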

\begin{proof}
By Theorem \ref{classification of simple afUglC-modules} the polynomial irreducible modules of the quantum affine algebra \(\afUglC\) are classified by \(n\)-tuples of Drinfeld polynomials \((Q_1(u),\dots,Q_n(u))\) with constant term~1 such that each ratio \(Q_i(\ttz^{i-1}u)/Q_{i+1}(\ttz^{i+1}u)\) is a polynomial. Equivalently, they are parameterized by \((n-1)\)-tuples of polynomials \((P_1(u),\dots,P_{n-1}(u))\) with constant term~1 together with an additional polynomial \(Q_n(u)\) with constant term~1, where \(P_i(u)=Q_i(\ttz^{i-1}u)/Q_{i+1}(\ttz^{i+1}u)\) for \(i=1,\dots,n-1\).

For the Yangian \(Y(\mathfrak{gl}_n)\), finite dimensional irreducible modules are classified by \((P_1(u),\dots,P_{n-1}(u))\) together with a rational function \(A_n(u)\). The module is polynomial if and only if \(A_n(u)=Q_n(u+1)/Q_n(u)\) for some polynomial \(Q_n(u)\) with constant term~1, and then the relation \(P_i(u)=Q_i(u)/Q_{i+1}(u)\) defines polynomials \(Q_1(u),\dots,Q_{n-1}(u)\) with constant term~1 (see~\cite[Theorem~1.2]{KNS}). Thus the same data \((P_1(u),\dots,P_{n-1}(u),Q_n(u))\) (all polynomials with constant term~1) classify the polynomial irreducible modules of the Yangian \(Y(\mathfrak{gl}_n)\).

Consequently, the classification data for the quantum affine algebra $\afUglC$ and for the Yangian \(Y(\mathfrak{gl}_n)\) are in canonical bijection. By the rigidity theorem (Theorem~\ref{classification bfUnC}), restriction from \(\afUglC\) to its parabolic subalgebra \(\bfUnC\) induces a bijection
\[
\operatorname{Irr}_{\mathrm{pol}}\bigl(\afUglC\bigr) \;\longleftrightarrow\; \operatorname{Irr}_{\mathrm{pol}}\bigl(\bfUnC)\bigr).
\]

Composing this with the bijection of classification data yields the desired bijection between \(\operatorname{Irr}_{\mathrm{pol}}(\bfUnC\) and \(\operatorname{Irr}_{\mathrm{pol}}(Y(\mathfrak{gl}_n))\).
\end{proof}

Thus, the finite dimensional polynomial irreducible modules of \(\bfUnC\) are in canonical bijection with those of the Yangian \(Y(\mathfrak{gl}_n)\). The advantage of \(\bfUnC\) is that it admits a canonical basis (Theorem~\ref{canonical basis for afLn}) and an integral form over \(\sZ=\mathbb{Z}[v,v^{-1}]\) (Theorem~\ref{realization of Un}), structures that are not available for the Yangian.



\end{document}